\documentclass[11pt,english,sumlimits,reqno,oneside]{amsart}

\usepackage[T1]{fontenc}
\usepackage[utf8]{inputenc}
\usepackage{lmodern}

\usepackage{babel}

\usepackage[a4paper,textwidth=13.5cm,textheight=23cm]{geometry}

\usepackage{multirow}
\usepackage{hhline}
\usepackage{tabularx}
\usepackage{longtable}
\usepackage{subcaption}
\usepackage{booktabs}

\usepackage{xifthen}
\usepackage{ifthenx}

\usepackage{xargs}

\usepackage{graphicx}
\usepackage[table,svgnames,x11names]{xcolor}

\usepackage[disable,%
colorinlistoftodos,prependcaption,textsize=tiny,textwidth=3.5cm]{todonotes}
\presetkeys{todonotes}{fancyline}{}
\newcommandx{\todoin}[2][1=]{\todo[inline, caption={todo}, #1]{%
    \begin{minipage}{\textwidth-20pt}#2\end{minipage}}}

\newcommandx{\remark}[2][1=]{\todo[linecolor=Plum,backgroundcolor=Plum!25,bordercolor=Plum,#1]{#2}}
\newcommandx{\remarkin}[2][1=]{\remark[inline, caption={remark}, #1]{%
    \begin{minipage}{\textwidth-20pt}#2\end{minipage}}}

\newcommandx{\richard}[2][1=]{\todo[linecolor=Green,backgroundcolor=Green!25,bordercolor=Green,#1]{#2}}
\newcommandx{\richardin}[2][1=]{\richard[inline, caption={richard}, #1]{%
    \begin{minipage}{\textwidth-20pt}#2\end{minipage}}}

\newcommandx{\krystian}[2][1=]{\todo[linecolor=Pink,backgroundcolor=Pink!25,bordercolor=Pink,#1]{#2}}
\newcommandx{\krystianin}[2][1=]{\krystian[inline, caption={krystian}, #1]{%
    \begin{minipage}{\textwidth-20pt}#2\end{minipage}}}

\newcommandx{\issue}[2][1=]{\todo[linecolor=Purple,backgroundcolor=Purple!25,bordercolor=Purple,#1]{#2}}
\newcommandx{\issuein}[2][1=]{\issue[inline, caption={issue}, #1]{%
    \begin{minipage}{\textwidth-20pt}#2\end{minipage}}}

\newcommandx{\future}[2][1=]{\todo[linecolor=Blue,backgroundcolor=Blue!25,bordercolor=Blue,#1]{#2}}
\newcommandx{\futurein}[2][1=]{\future[inline, caption={future}, #1]{%
    \begin{minipage}{\textwidth-20pt}#2\end{minipage}}}

\newcommandx{\ntxt}[2][1=]{\todo[linecolor=Blue,backgroundcolor=Blue!25,bordercolor=Blue,#1]{#2}}

\usepackage{amssymb}
\usepackage{amsmath}
\usepackage{amsthm}
\usepackage{mathtools}
\mathtoolsset{showonlyrefs} 
\usepackage{exscale}
\usepackage{relsize}
\usepackage{bbm}
\usepackage{bm}
\usepackage{amsbsy}
\usepackage{mathdots}

\usepackage{mathrsfs}

\usepackage{stmaryrd}

\usepackage{mdframed}

\usepackage{breqn}

\usepackage[all]{xy}

\usepackage{fp}

\usepackage[section]{placeins}
\usepackage{float}

\usepackage{enumerate}

\newcounter{proof}
\newenvironment{myproof}%
{\stepcounter{proof}\begin{proof}}%
{\end{proof}}%
\newcounter{proofstep}[proof]
\newenvironment{proofstep}[1][]%
{\refstepcounter{proofstep}\bigskip\par\noindent%
  \ifthenelse{\isempty{#1}}
    {\textsc{Step \theproofstep. }}
    {\textsc{#1.}}
  \noindent}%
{\par}%
\newcounter{proofcase}[proof]
\newenvironment{proofcase}[1][]%
{\refstepcounter{proofcase}\bigskip\par\noindent%
  \ifthenelse{\isempty{#1}}
    {\textsc{Case \theproofcase. }}
    {\textsc{#1.}}
  \noindent}%
{\par}%

\usepackage{varioref}
\usepackage{hyperref}
\hypersetup{colorlinks,citecolor=blue}
\hypersetup{colorlinks,allcolors=DarkBlue}
\usepackage[nameinlink]{cleveref}
    
\theoremstyle{plain}
\newtheorem{thm}{Theorem}[section]
\newtheorem*{thm*}{Theorem}

\newcounter{maintheorem}

\newtheorem{mainthm}[maintheorem]{Theorem}

\newtheorem{pro}[thm]{Proposition}
\newtheorem{cor}[thm]{Corollary}
\newtheorem{lem}[thm]{Lemma}
\newtheorem{question}[thm]{Question}
\theoremstyle{definition}
\newtheorem{dfn}[thm]{Definition}
\theoremstyle{remark}

\newtheorem{rem}[thm]{Remark}

\numberwithin{equation}{section}

\AddToHook{env/pro/begin}{\crefalias{thm}{pro}}
\AddToHook{env/cor/begin}{\crefalias{thm}{cor}}
\AddToHook{env/lem/begin}{\crefalias{thm}{lem}}
\AddToHook{env/dfn/begin}{\crefalias{thm}{dfn}}
\AddToHook{env/rem/begin}{\crefalias{thm}{rem}}
\AddToHook{env/clm/begin}{\crefalias{thm}{clm}}
\AddToHook{env/question/begin}{\crefalias{thm}{question}}
\AddToHook{env/mainthm/begin}{\crefalias{maintheorem}{mainthm}}

\crefname{thm}{theorem}{theorems}
\crefname{mainthm}{theorem}{theorems}
\crefname{cor}{corollary}{corollaries}
\crefname{pro}{proposition}{propositions}
\crefname{lem}{lemma}{lemmas}
\crefname{dfn}{definition}{definitions}
\crefname{rem}{remark}{remarks}
\crefname{clm}{claim}{claims}
\crefname{question}{question}{questions}

\newcommandx{\textref}[2][1=]{\hyperref[#2]{#1\ref*{#2}}}
\newcommandx{\textrefp}[2][1=]{(\hyperref[#2]{#1\ref*{#2}})}

\newcommand{\charfun}{\scalebox{1.0}{\ensuremath{\mathbbm 1}}}

\DeclareMathOperator{\Id}{Id}

\DeclareMathOperator{\spn}{span}

\DeclareMathOperator{\cond}{\mathbb{E}}

\DeclareMathOperator{\ord}{ord}
\DeclareMathOperator{\cha}{\mathbbm{1}}
\renewcommand{\subset}{\subseteq}

\begin{document}

\title[Diagonal operators on Janson-Sobolev spaces]%
{Diagonal operators on Janson-Sobolev and Janson-Sobolev-Hardy spaces}%

\author[K.~Kazaniecki]{Krystian Kazaniecki}%
\address{Krystian Kazaniecki, Institute of Mathematics, University of Warsaw, Stefana Banacha 2, 02-097 Warszawa, Poland}%
\email{k.kazaniecki@mimuw.edu.pl}%

\author[R.~Lechner]{Richard Lechner}%
\address{Richard Lechner, Institute of Analysis, Johannes Kepler University Linz, Altenberger
  Strasse 69, A-4040 Linz, Austria}%
\email{Richard.Lechner@jku.at}%

\date{October 1, 2026}%

\subjclass[2010]{%
  46B25,
  46E30,
  47A68,
  60G46,
  42B30,
}

\keywords{Factorization of operators, diagonal operators, Janson-Sobolev and Janson-Sobolev-Hardy spaces, rearrangement invariant spaces, Plateau
  martingales}%

\thanks{The first author was supported by the Austrian Science Fund (FWF), grant I5231, and the second author was supported by the FWF, grant
  P34414.}%

\begin{abstract}
  We study the Banach space and operator factorization structure of Janson-Sobolev and Janson-Sobolev-Hardy spaces.  This new class of martingale
  spaces is determined by a $q$-adic filtration, a subspace $V\subset \mathbb{R}_0^{l\times q}$, and a rearrangement invariant function space $X$.  Our main result shows
  that, for every bounded diagonal operator $D$, the operator
  \begin{equation*}
    S
    = \sum_{t=1}^s \lambda_{\mathcal U}^{k_t}(D)Q_{\mathcal K_t}^{\mathcal B}
  \end{equation*}
  determined by the linear functionals $\lambda_{\mathcal U}^{k_t}(D)$ and the canonical projections $Q_{\mathcal K_t}^{\mathcal B}$, almost projectionally
  factors through $D$ with constant $1^+$.  As consequences, we obtain factorization results for diagonal operators both under a natural boundedness
  condition on the canonical projections and for all spaces equipped with the $L^1$-norm.
\end{abstract}

\maketitle%

\tableofcontents%

\makeatletter%
\providecommand\@dotsep{5}%
\def\listtodoname{List of Todos}%
\def\listoftodos{\@starttoc{tdo}\listtodoname}%
\makeatother%

\newcounter{mycounter}%

\section{Introduction}
\label{sec:introduction}

It is well established that martingale counterparts of objects from harmonic analysis provide useful models for analytic phenomena. A recurring
strategy is to study the properties of these martingale analogues and subsequently transfer the resulting insights to the classical analytic setting.
In this spirit, R.~Ayoush, D.~M.~Stolyarov, and M.~Wojciechowski~\cite{ASW} introduced Sobolev martingales and studied their fine properties.  Their
martingale model was inspired by S.~Janson's work~\cite{sJ77} on characterizations of martingale $H^1$ in terms of systems of singular integral
transforms.  In the present paper, we take a different perspective and investigate these spaces from the viewpoint of Banach space theory.

Their construction starts with a $q$-adic filtration and a subspace $V$ of the finite-dimensional matrix space $\mathbb{R}^{l\times q}_0$ consisting of matrices
with mean zero rows.  The choice of $V$ determines which martingale differences are admissible.  The resulting martingales reproduce several
characteristic features of functions in the classical Sobolev space $W^{1,1}$ or $\mathrm{BV}$: they established a martingale analogue of Van
Schaftingen's limiting Sobolev estimate~\cite{ASW}, sharp lower bounds for the Hausdorff dimension of terminal distributions~\cite{ASW}, and
martingale trace inequalities~\cite{MR4865002}.  Subsequent work~\cite{zbMATH07961358} has demonstrated that this analogy extends to further fine
properties of Sobolev functions and PDE-constrained measures.  Recent works by
D.~Stolyarov~\cite{zbMATH07733533,stolyarov2026anisotropicbourgainbrezisinequalities} highlight the fruitful transfer of techniques and insights from
discrete martingales to the classical analytic setting.

In this paper, we introduce a broader class of spaces of such martingales and investigate their Banach space structure.  We generalize the models by
replacing the ambient $L^1$ space with a rearrangement invariant space $X$ and their square function variants.

We call the resulting spaces \emph{Janson-Sobolev spaces} or \emph{Janson-Sobolev-Hardy spaces}, respectively, and denote them by
$Z_V = Z_V((\mathcal F_j),X,\boldsymbol{\sigma};\mathbb{R}^l)$.  As described above, the finite-dimensional subspace
$V\subset \mathbb{R}^{l\times q}_0$ specifies the admissible martingale differences, while the sequence of random signs $\boldsymbol{\sigma}$ distinguishes the Janson-Sobolev
and Janson-Sobolev-Hardy variants.  We develop methods that provide a unified approach to this class of spaces whose isomorphic structures exhibit
markedly different behavior.  Indeed, some choices of $V$ lead to spaces isomorphic to $H^1$, others give $L^1$-type spaces (both of which are
primary), and still others produce spaces which are not primary, e.g., $L^1\oplus H^1$.  Thus, these spaces provide a testing ground in which both positive
and negative aspects of primarity and related factorization properties occur naturally.

Recall that a Banach space $E$ is \emph{primary} if every decomposition $E = E_0\oplus E_1$ has a summand isomorphic to $E$.  A strongly related
operator-theoretic principle is the \emph{primary factorization property}: for every bounded operator $T\colon E\to E$, the identity $\Id_E$ factors
either through $T$ or through $\Id_E-T$.  It is well known that in the presence of the usual self-similarity assumptions, such as the accordion
property, the primary factorization property yields primarity.

A common strategy for establishing the primary factorization property is to reduce the problem for general operators to the case of operators that are
diagonal with respect to a Schauder basis of the underlying Banach space.  This reduction underscores the significance of understanding the
corresponding factorization problem for diagonal operators.  We say that a Banach space $E$ has the \emph{diagonal primary factorization property} if
there exists a Schauder basis $(e_n)_{n=1}^{\infty}$ of $E$ such that, for every bounded diagonal operator $D\colon E\to E$ with respect to
$(e_n)_{n=1}^{\infty}$, the identity factors through either $D$ or $\Id_E-D$ (refer to \Cref{dfn:factorization-modes} for a comprehensive list of
definitions).

Although its definition refers to a basis, the diagonal primary factorization property is an isomorphic invariant (see
\Cref{rem:diagonal-primary-factorization-property}).  As one of our examples shows (see \Cref{thm:non-primary-janson-sobolev}), it need not imply the
primary factorization property (which is stronger) or primarity.  For the class of Janson-Sobolev and Janson-Sobolev-Hardy spaces, we also prove that
primarity implies the diagonal primary factorization property (see \Cref{rem:factorization-properties}; see also \Cref{pro:factorization-properties}).

Despite the rich structure of Janson-Sobolev and Janson-Sobolev-Hardy spaces, we show that every bounded diagonal operator can, in general, be reduced
via factorization to fundamental building blocks, which we call canonical projections (see \Cref{mainthm:picewise-constant}).  Based on this result,
we obtain the diagonal primary factorization property for all Janson-Sobolev and Janson-Sobolev-Hardy spaces with Property~I, which is characterized
in terms of canonical projections (see \Cref{mainthm:main-prop-I}).  Finally, we show that all $L^1$-normed Janson-Sobolev and Janson-Sobolev-Hardy
spaces have the diagonal primary factorization property (see \Cref{mainthm:main-L1}).  To introduce the necessary notation, we postpone the detailed
statements of these results to \Cref{sec:main-results}.

%

\section{Preliminaries}
\label{sec:notation}

In this section, we introduce Janson-Sobolev and Janson-Sobolev-Hardy spaces, canonical projections, diagonal operators, and related definitions and
objects.  For convenience, throughout this paper all function spaces are understood to consist of mean zero functions.

\subsection{Janson-Sobolev and Janson-Sobolev-Hardy spaces}
\label{sec:basic-definitions}

Let $\mathbb{R}^n_0$ denote the subspace of all $(a_k)_{k=1}^n\in \mathbb{R}^n$ with mean zero coefficients $(a_k)_{k=1}^n$.  Moreover,
$\mathbb{R}^{m\times n}_0$ denotes the subspace of real $m$ by $n$ matrices with mean zero rows.

Suppose $V$ is a subspace of the matrix space $\mathbb{R}^{l\times q}_0$ which we equip with the scalar product $\langle \cdot ,\cdot \rangle\colon V^2\to \mathbb{R}$ given by
\begin{equation}\label{eq:defscalar}
  \langle u, v \rangle
  = \sum_{i=1}^l \frac{1}{q} \sum_{j=1}^q u_{ij} v_{ij},
  \qquad u = (u_{ij})_{\substack{1\leq i\leq l\\1\leq j\leq q}},\ v = (v_{ij})_{\substack{1\leq i\leq l\\1\leq j\leq q}}\in \mathbb{R}^{l\times q}_0,
\end{equation}
and denote the matrix norm induced by this scalar product by $\|\cdot\|_V$.

The set of atoms of any $\sigma$-algebra $\Sigma$ is denoted by $\mathcal{A}\Sigma$.  Let $(\mathcal{F}_j)_{j=0}^{\infty}$ be a filtration on $[0,1)$ such that for each
$P\in \mathcal{AF}_j$ the set of successors $\mathcal{S}(P)$ consists of $q$ intervals in $\mathcal{AF}_{j+1}$, $j\in \mathbb{N}_0$ of equal Lebesgue measure and that
$\mathcal{AF}_0 = \{[0,1)\}$.  We call such a filtration $(\mathcal{F}_j)_{j=0}^{\infty}$ \emph{$q$-adic} and put
$\mathcal{A} = \bigcup_{j=0}^{\infty} \mathcal{AF}_j$.  By $|\cdot|$, we denote the Lebesgue measure on the unit interval, and for
$1\leq p\leq \infty$, $L^p$ denotes the associated Lebesgue space.

For a $\sigma$-algebra $\mathcal{F}$, we denote as usual the conditional expectation of a function $f$ with respect to $\mathcal{F}$ by
$\cond(f | \mathcal{F})$.  Moreover, for a given filtration $(\mathcal{F}_j)_{j=0}^{\infty}$, we define
\begin{equation}\label{eq:24}
  \Delta_j^{(\mathcal{F}_n)}(f)
  = \cond( f | \mathcal{F}_j) - \cond( f | \mathcal{F}_{j-1}),
  \qquad j\in \mathbb{N}_0,
\end{equation}
where we put $\cond( f | \mathcal{F}_{-1}) = 0$.  If unambiguous, we will omit the superscript, i.e., we write $\Delta_j(f)$.

We now define the following linear orders on $\mathcal{A}$ and $\mathcal{A}\times \{1, \dots, r\}$.  We introduce the order $<_1$ on the set of atoms
$\mathcal{A}$ by viewing $\mathcal{A}$ as the set of vertices of a tree and ordering them according to the natural tree order --- namely, the order induced by a traversal
from left to right and top to bottom.
  
Moreover, we linearly order the set $\mathcal{A}\times\{1, \dots, r\}$ by $<_2$ as follows: $(P_0,k_0) <_2 (P_1,k_1)$ if and only if $P_0 <_1 P_1$, or if
$P_0 = P_1$ and $k_0 < k_1$.

In the following, we will omit the subindices from our orders whenever it is clear from the context.  A depiction of the orders $<_1$ and $<_2$ can be
found in \Cref{fig:order-1} and \Cref{fig:order-2}.

\begin{figure}
  \centering
  \begin{tikzpicture}
    \node[anchor=south west,inner sep=0] (img) {\includegraphics[width=.8\textwidth]{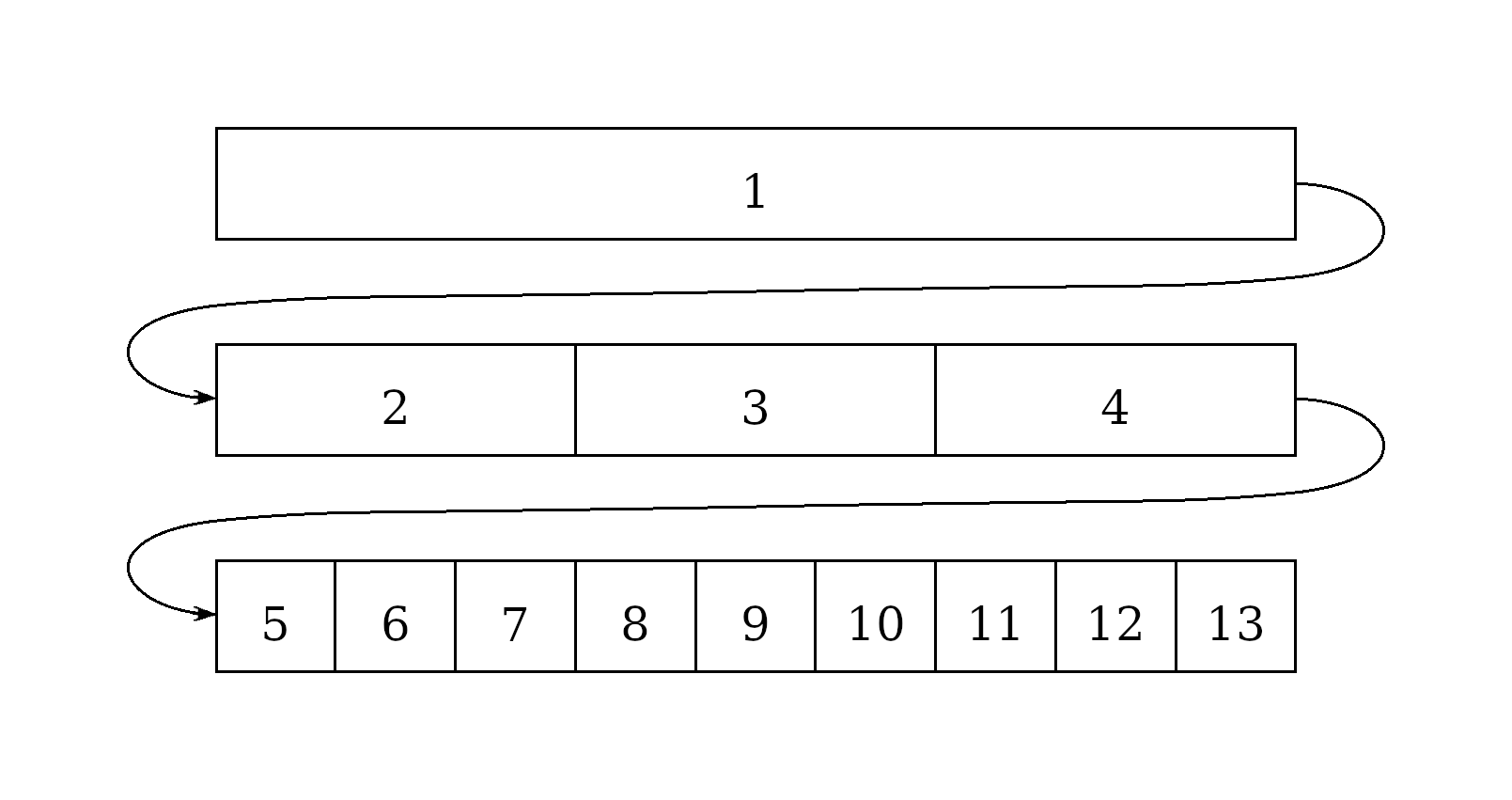}};
    \begin{scope}[x={(img.south east)},y={(img.north west)}]
      \node at (0.02,0.77) {{\small $\mathcal{AF}_0$}};%
      \node at (0.02,0.5) {{\small $\mathcal{AF}_1$}};%
      \node at (0.02,0.22) {{\small $\mathcal{AF}_2$}};%
    \end{scope}
  \end{tikzpicture}
  \caption{This figure illustrates the order $<_1$ on the first three levels of intervals in $\mathcal{A}$ in the triadic setting.}
  \label{fig:order-1}
\end{figure}

\begin{figure}
  \centering
  \begin{tikzpicture}
    \node[anchor=south west,inner sep=0] (img) {\includegraphics[width=\textwidth]{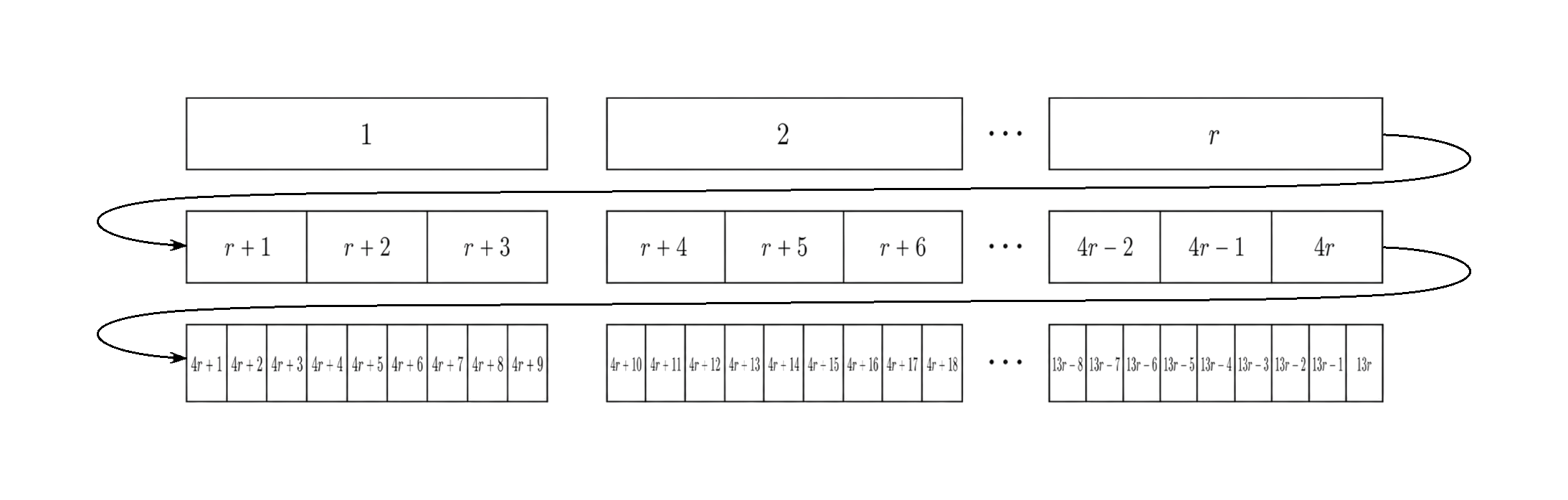}};
    \begin{scope}[x={(img.south east)},y={(img.north west)}]
      \node at (0.02,0.73) {{\tiny $\mathcal{AF}_0$}};%
      \node at (0.02,0.5) {{\tiny $\mathcal{AF}_1$}};%
      \node at (0.02,0.27) {{\tiny $\mathcal{AF}_2$}};%
    \end{scope}
  \end{tikzpicture}
  \caption{Representation of the order $<_2$ on $\mathcal{A}\times\{1,\dots,r\}$ for the first three levels of triadic intervals in $\mathcal{A}$.}
  \label{fig:order-2}
\end{figure}

On a $q$-adic filtration $(\mathcal{F}_j)_{j=0}^{\infty}$, we define $\mathrm{ord}\colon \mathcal{A}\to \{1, \dots, q\}$ such that for any given
$P\in \mathcal{A}$, the map $\ord_{|\mathcal{S}(P)}$ is compatible with the order $<_1$, i.e., $\ord(R_0) < \ord(R_1)$ if and only if $R_0 <_1 R_1$
for all $R_0, R_1\in \mathcal{S}(P)$.

For any orthonormal basis $\mathcal{B}=(v^1,\ldots, v^r)$ of $V$, we define the system of functions
$(\varphi_{P}^{k,\mathcal{B}} : P\in\mathcal{A},\ 1\leq k\leq r)$ by
\begin{equation}\label{eq:119}
  \varphi_{P}^{k,\mathcal{B}}
  = \sum_{R\in \mathcal{S}(P)} v_{\mathrm{ord}(R)}^k \cha_R,
  \qquad P\in \mathcal{A},\ 1\leq k\leq r,
\end{equation}
where $v^k_i$ denotes the $i$-th column of the matrix $v^k$.  Note that $(\varphi_P^{k,\mathcal{B}} : P\in \mathcal{A},\ 1\leq k\leq r)$ is orthogonal
in $L^2([0,1))$.  If unambiguous, we will omit the superscript $\mathcal{B}$ and simply write $\varphi_P^k$ instead of $\varphi_P^{k,\mathcal{B}}$.

\begin{dfn}\label{dfn:r.i.-spaces}
  Let $([0,1),\Sigma,\mu)$ denote a probability space and suppose that the Banach space $(X,\|\cdot\|)$ is a linear subspace of the real-valued
  $\mu$-measurable functions on $[0,1)$.  We say that $(X, \|\cdot\|_X)$ is \emph{rearrangement invariant} if for any two real-valued $\mu$-measurable
  functions $f$, $g$ the following two conditions are satisfied:
  \begin{enumerate}[(a)]
  \item if $|f|\leq |g|$ $\mu$-a.e.\@ and $g\in X$, then $f\in X$ and $\|f\|_X\leq \|g\|_X$;
  \item if $|f|$ and $|g|$ are equi-measurable and $g\in X$, then $f\in X$ and $\|f\|_X = \|g\|_X$.
  \end{enumerate}
\end{dfn}

For more information on rearrangement invariant spaces, we refer to~\cite{zbMATH03776286,MR540367,MR0527010,MR928802}.

\begin{dfn}\label{dfn:scaling}
  Let $X$ denote a rearrangement invariant space and let $0 < s \leq 1$.  Then we define the upscaling and downscaling coefficients $\gamma_X^u(s)$
  and $\gamma_X^d(s)$ by
  \begin{equation}\label{eq:71}
    \begin{aligned}
      \gamma_X^u(s)
      &= \sup\bigl\{ \|g\|_X : \|f\|_X = 1,\ |\{ |f| > \alpha \}| = s\cdot |\{ |g| > \alpha \}|\, (\forall \alpha > 0) \bigr\},\\
      \gamma_X^d(s)
      &= \sup\bigl\{ \|g\|_X^{-1} : \|f\|_X = 1,\ |\{ |f| > \alpha \}| = s\cdot |\{ |g| > \alpha \}|\, (\forall \alpha > 0) \bigr\}.
    \end{aligned}
  \end{equation}
\end{dfn}

\begin{rem}\label{rem:scaling}
  Since $X$ is rearrangement invariant, we can restrict the functions in~\eqref{eq:71} so that $f$ is a scaling of $g$, i.e.,
  \begin{equation}
    \gamma_X^d(s)
    = \sup\left\{ \|g\|_X^{-1} :  \|f\|_X = 1,\ f(x) =
      \begin{cases}
        g(x/s) & x\in [0,s)\\
        0 & x\in [s,1)
      \end{cases}
    \right\},
  \end{equation}
  and similarly for $\gamma_X^u$.  Thus, it is clear that $\gamma_X^d(s)$ and $\gamma_X^u(s)$ are just the norms of the scaling operator and its
  inverse on $X$.  We refer to~\cite[Chapter~II, Theorem~4.4]{zbMATH03776286} to see that $\gamma_X^u(s),\ \gamma_X^d(s) < \infty$ for all
  $0 < s \leq 1$ on all rearrangement invariant spaces.

  From the above observations and the definition itself it is clear that
  \begin{equation}\label{eq:153}
    \gamma_X^d(s)\leq 1\leq \gamma_X^u(s)
    \qquad\text{and}\qquad
    \gamma_X^u(s)\cdot \gamma_X^d(s) \geq 1
  \end{equation}
  for all $0 < s \leq 1$.  The proof of Theorem~4.5 in~\cite[Chapter~II]{zbMATH03776286} gives us that $\gamma_X^u(s) \to 1$ whenever $s\to 1$ from below, and
  thus we also obtain $\gamma_X^d(s)\to 1$ if $s\to 1$ from below.
\end{rem}

\begin{dfn}\label{dfn:W-auxiliary}
  Let $\boldsymbol{\sigma} = (\sigma_j)_{j=0}^{\infty}$, where $\boldsymbol{\sigma}$ is either a sequence of independent Rademacher functions, or
  $\sigma_j = 1$, $j\in \mathbb{N}_0$, and let $\cond_{\boldsymbol{\sigma}}$ denote the expectation.  Given a basis $\mathcal{B}$ for a matrix subspace
  $V$ of $\mathbb{R}_0^{l\times q}$, and a rearrangement invariant space $X$ on $[0,1)$, we define
  $Z_V = Z_V((\mathcal{F}_j),X,\boldsymbol{\sigma};\mathbb{R}^l)$ as the completion of $\spn\{\varphi_P^{k,\mathcal{B}} : P\in \mathcal{A},\ 1\leq k\leq r\}$ under the norm
  \begin{equation}\label{eq:39}
    \Bigl\| \sum_{k=1}^r \sum_{P\in \mathcal{A}} a_P^k \varphi_P^{k,\mathcal{B}} \Bigr\|_{Z_V}
    := \Bigl\|
    x\mapsto \cond_{\boldsymbol{\sigma}} \Bigl\|
    \sum_{j=0}^{\infty} \sigma_j \sum_{P\in \mathcal{AF}_j} \sum_{k=1}^r a_P^k \varphi_P^{k,\mathcal{B}}(x)
    \Bigr\|_{\ell^2_l}
    \Bigr\|_X,
  \end{equation}
  where $a_P^k\in \mathbb{R}$, $P\in \mathcal{A}$, $1\leq k\leq r$.  If $\boldsymbol{\sigma}$ is independent, we call $Z_V$ a \emph{Janson-Sobolev-Hardy space}, and if
  $\boldsymbol{\sigma}$ is constant, $Z_V$ is called \emph{Janson-Sobolev space}.

  Finally we define the $q$-adic Hardy space
  $H^1((\mathcal{F}_j);\mathbb{R}^l) := Z_{\mathbb{R}_{0}^{l\times q}}((\mathcal{F}_j),L^1,\boldsymbol{\sigma};\mathbb{R}^l)$, where
  $\boldsymbol{\sigma}$ denotes a sequence of independent Rademacher functions.  The dual of $H^1((\mathcal{F}_j);\mathbb{R}^l)$ is denoted by
  $\mathrm{BMO}((\mathcal{F}_j);\mathbb{R}^l)$, and its equivalent norm is given by
  \begin{equation}\label{eq:93}
    \|f\|^2
    = \sup_j \bigl\| \cond\bigl( \|f - \cond( f | \mathcal{F}_j)\|_{\ell^2_l}^2 \big| \mathcal{F}_j \bigr) \bigr\|_{L^{\infty}}.
  \end{equation}
  We refer to~\cite[Chapters~7 and~10]{MR3617459} for more information on vector valued $H^1$ and $\mathrm{BMO}$.
\end{dfn}

\begin{rem}\label{rem:H1}
  First, note that by projecting onto each coordinate, we can identify $H^1((\mathcal{F}_j);\mathbb{R}^l)$ with the direct sum of
  $H^1((\mathcal{F}_j);\mathbb{R})$.  Second, by Maurey's isomorphism (see~\cite{MR551766,MR0586594}) and the classification result~\cite{MR0879418}
  by P.~F.~X.~Müller, we know that the space $H^1((\mathcal{F}_j);\mathbb{R})$ does not depend on $(\mathcal{F}_j)$, and therefore, we sometimes
  abbreviate $H^1((\mathcal{F}_j);\mathbb{R}^l)$ by $H^1$.

  A theorem of P.~F.~X.~Müller~\cite[Theorem~6]{MR0955660} states that dyadic $H^1$ has the primary factorization property.  Since this property is an
  isomorphic invariant, the preceding discussion implies that $H^1((\mathcal{F}_j);\mathbb{R}^l)$ also has the primary factorization property.  For a more comprehensive
  overview, we refer the reader to~\cite{MR2157745}.
\end{rem}

\begin{rem}\label{rem:norm}
  Note that for any $f\in Z_V$, we have
  \begin{equation}\label{eq:204}
    \|f\|_{Z_V}
    = \Bigl\| x\mapsto \cond_{\boldsymbol{\sigma}} \Bigl\| \sum_{j=0}^{\infty} \sigma_j \Delta_j^{(\mathcal{F}_n)}f(x) \Bigr\|_{\ell^2_l} \Bigr\|_X,
  \end{equation}
  and therefore $\|\cdot\|_{Z_V}$ does not depend on the choice of $\mathcal{B}$.
\end{rem}

The Janson-Sobolev spaces introduced above extend the spaces that have arisen in the study of fine properties of measures in recent
works~\cite{ASW,MR4865002,zbMATH07961358}.  Specifically, we pass from the classical Lebesgue spaces to rearrangement invariant spaces and their
square function variants.  Implicitly, those spaces already appeared much earlier in S.~Janson's work~\cite{sJ77}.

To avoid certain technical inconveniences, we assume that all functions in the Janson-Sobolev space have mean zero.  Note that using the same method
as in~\cite[Proposition~4.10]{MR4894823} gives us that including constant functions yields a space which is isomorphic to our Janson-Sobolev space
with mean zero functions.

\begin{rem}\label{rem:conditional}
  Suppose that $\mathcal{G}$ is a $\sigma$-algebra generated by a finite subset of $\mathcal{A}$.  Using Jensen's inequality, it is a simple task to verify that
  $\cond(\cdot | \mathcal{G})\colon Z_V\to Z_V$ is a contraction.
\end{rem}

\begin{rem}\label{rem:W-auxiliary}
  For $1\leq p< \infty$, the space $Z_V = Z_V(L^p) = Z_V((\mathcal{F}_j),L^p,\boldsymbol{\sigma};\mathbb{R}^l)$ is isomorphic to $\ell^p(Z_V)$.  To see this, take any sequence
  $(P_j)_{j=1}^{\infty}$ of pairwise disjoint atoms in $\mathcal{A}$, and for each $j\in \mathbb{N}$ define $Z_V((P_j)_{j=1}^{\infty})$ as the closure of
  \begin{equation}
    \spn\{\varphi_P^k : P\in \mathcal{A},\ 1\leq k\leq r,\ \text{$\exists j\in \mathbb{N}$ such that $P\subset P_j$}\}
  \end{equation}
  in $Z_V$.  The space $Z_V((P_j)_{j=1}^{\infty})$ is complemented in $Z_V$ by the projection $\text{Pr}\colon Z_V\to Z_V$ given by
  \begin{equation}
    \text{Pr} f
    = \cha_{\bigcup_{j=1}^{\infty} P_j}\cdot (f - \cond( f | \mathcal{G})),
  \end{equation}
  where $\mathcal{G}$ denotes the $\sigma$-algebra generated by $(P_j)_{j=1}^{\infty}$.  By \Cref{rem:conditional} and since the $\sigma$-algebra
  $\mathcal{G}$ is generated by atoms, $\text{Pr}$ is bounded by $2$.  By scaling, we see that $Z_V$ is $1$-isomorphic to the complemented subspace
  \begin{equation}
    Z_n(L^p)
    = \{ \cha_{P_n}\cdot f : f\in Z_V((P_j)_{j=1}^{\infty}) \}
  \end{equation}
  of $Z_V((P_j)_{j=1}^{\infty})$ for each $n\in \mathbb{N}$.  Therefore, by the pairwise disjointness of $(P_j)_{j=1}^{\infty}$, we obtain
  \begin{equation}\label{eq:121}
    Z_V((P_j)_{j=1}^{\infty})
    \simeq \ell^p(Z_n(L^p))
    \simeq \ell^p(Z_V(L^p)).
  \end{equation}
  Clearly, $\ell^p(Z_V(L^p))$ is isomorphic to its $\ell^p$ direct sum, and thus, by~\eqref{eq:121}, so is $Z_V((P_j)_{j=1}^{\infty})$.  Hence,
  by~Pe{\l}czy\'{n}ski's decomposition technique, $Z_V(L^p)$ is isomorphic to $Z_V((P_j)_{j=1}^{\infty})$.  Using~\eqref{eq:121} again shows that
  $Z_V(L^p)$ is isomorphic to $\ell^p(Z_V(L^p))$.
\end{rem}

\begin{dfn}\label{dfn:janson-operator}
  Given an operator $L\colon \mathbb{R}^q_0\to \mathbb{R}^q_0$, we informally define the corresponding \emph{Janson operator} $T_L$ by
  \begin{equation*}
    T_L\bigl( \sum_{R\in \mathcal{S}(P)} v_{\mathrm{ord}(R)} \cha_R \bigr)
    = \sum_{R\in \mathcal{S}(P)} (Lv)_{\mathrm{ord}(R)} \cha_R,
  \end{equation*}
  for any $P\in \mathcal{A}$, $v = (v_1,\dots, v_q)\in \mathbb{R}^q_0$.
\end{dfn}

The significance of Janson operators is highlighted by the following two key results due to S.~Janson (see~\cite{sJ77}).
\begin{thm}[{\cite[Theorem~2]{sJ77}}]\label{thm:janson-0}
  Janson operators are bounded on $\mathrm{BMO}$ and $H^1$.
\end{thm}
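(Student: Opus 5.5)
The plan is to prove boundedness on $\mathrm{BMO}$ first, by a direct square-function computation, and then obtain $H^1$ by duality. The key structural fact is that $T_L$ acts \emph{locally}: for $f=\sum_P\sum_{R\in\mathcal S(P)} (v^P)_{\ord(R)}\cha_R$, with $v^P\in\mathbb{R}^q_0$, we have $\Delta_{j+1}(T_Lf) = T_L(\Delta_{j+1} f)$ atom by atom. Moreover, on each $P\in\mathcal{AF}_j$ the difference $\Delta_{j+1}f$ is the single vector $v^P$ spread over the $q$ equal-measure successors. Since the successors have equal measure, $\cond(|\Delta_{j+1}T_Lf|^2\mid\mathcal F_j) = \frac1q|Lv^P|^2 \le \|L\|^2\frac1q|v^P|^2 = \|L\|^2\cond(|\Delta_{j+1}f|^2\mid\mathcal F_j)$ on $P$, with $\|L\|$ the operator norm on $\mathbb{R}^q_0$ with the Euclidean norm.

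For $\mathrm{BMO}$, I use the norm~\eqref{eq:93} (with $l=1$). By orthogonality of martingale differences,
\[
\cond\bigl(|f-\cond(f|\mathcal F_j)|^2\,\big|\,\mathcal F_j\bigr)=\sum_{i>j}\cond\bigl(|\Delta_i f|^2\,\big|\,\mathcal F_j\bigr),
\]
and by the tower property each summand equals $\cond\bigl(\cond(|\Delta_i f|^2\mid\mathcal F_{i-1})\mid\mathcal F_j\bigr)$. Applying the pointwise inequality above term by term gives $\|T_Lf\|_{\mathrm{BMO}}\le\|L\|\,\|f\|_{\mathrm{BMO}}$ on finite sums. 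Extending by density (or via the weak$^*$ formulation) finishes the $\mathrm{BMO}$ case.

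For $H^1$, I argue by duality. The $L^2$-adjoint of $T_L$ is $T_{L^*}$, where $L^*$ is the adjoint in $\mathbb{R}^q_0$. This holds because $\langle T_Lf,g\rangle=\sum_P |P|\frac1q\langle Lv^P,w^P\rangle$ by the same local structure. Then
\[
\|T_Lf\|_{H^1}\simeq\sup_{\|g\|_{\mathrm{BMO}}\le1}\langle T_Lf,g\rangle=\sup\langle f,T_{L^*}g\rangle\lesssim\|L^*\|\,\|f\|_{H^1},
\]
using Fefferman duality for the $q$-adic martingale $H^1$ and $\mathrm{BMO}$, which the paper cites via~\cite{MR3617459}.

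The main obstacle is this duality step: making the pairing rigorous and ensuring the $H^1$–$\mathrm{BMO}$ duality constants are uniform for the $q$-adic filtration. I would first check these on finite linear combinations of the $\varphi_P$, which are dense in $H^1$. If I instead wanted a direct $H^1$ argument, I would use the atomic decomposition: $T_L$ maps an atom supported on $P$ to a function supported on $P$ with mean zero. Its $L^2$ norm is bounded by $\|L\|$ times that of the atom, by the local estimate summed over levels. Hence it is a multiple of an atom, and boundedness follows with constant $\lesssim\|L\|$.
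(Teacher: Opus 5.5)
The paper gives no proof of this statement; it is quoted from Janson~\cite{sJ77}. Your argument is correct, and it follows the same scheme the paper uses to prove \Cref{lem:canonical-projection-H1}. You commute the operator with the martingale differences and bound $\cond(|\Delta_i T_L f|^2\mid\mathcal{F}_{i-1})$ by $\|L\|^2\cond(|\Delta_i f|^2\mid\mathcal{F}_{i-1})$, using that each $\Delta_i f$ lives on single atoms with $q$ equal-measure successors. You insert this into the norm~\eqref{eq:93} and then pass to $H^1$ by duality, because the adjoint has the same form. In the paper's lemma the operator is an orthogonal projection, hence self-adjoint and contractive; in your case the adjoint of $T_L$ is $T_{L^*}$.

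One point needs correcting: finite linear combinations of the $\varphi_P$ are not norm dense in $\mathrm{BMO}$ (the space is not separable), so ``extending by density'' is not available there. It is also unnecessary. $T_L$ is bounded on $L^2_0$ with norm $\|L\|$, every $f\in\mathrm{BMO}$ lies in $L^2_0$, and your conditional square-function computation holds verbatim for every $f\in L^2_0$. Alternatively, apply it to $\cond(f\mid\mathcal{F}_N)$ and let $N\to\infty$, using that the norm~\eqref{eq:93} is lower semicontinuous under $L^2$ convergence.

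On the $H^1$ side finite sums are dense, so your duality step is fine. Your atomic alternative also works, since $T_L$ is $L^2$-bounded and the atomic decomposition of a finite sum can be taken finite.
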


\begin{thm}[{\cite[Theorem~3]{sJ77}}]\label{thm:janson-1}
  Assume that $A_1,\ldots, A_m\colon \mathbb{R}^q_0\to \mathbb{R}^q_0$ are linear operators that do not have a common real eigenvector and suppose that
  $(f_n)_{n=0}^{\infty}$ is a martingale such that $(f_n)_{n=0}^{\infty}$ and $(T_{A_j}f_n)_{n=0}^{\infty}$ are bounded in $L^1$.  Then
  $(f_n)_{n=0}^{\infty}$ and $(T_{A_j}f_n)_{n=0}^{\infty}$ are closed martingales in $H^1$.
\end{thm}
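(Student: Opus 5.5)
The plan is a strict-convexity argument for the $\mathbb{R}^{m+1}$-valued martingale
\begin{equation*}
  F_n = (f_n, T_{A_1}f_n, \dots, T_{A_m}f_n).
\end{equation*}
The hypotheses give $\sup_n \cond|F_n| < \infty$, where $|\cdot|$ is the Euclidean norm. The goal is to show that the $L^1$-bound forces the square function of $(f_n)$, and of each $(T_{A_j}f_n)$, to be integrable. Closedness in $H^1$ then follows, since $H^1$-bounded martingales converge in $H^1$.

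\emph{Step 1 (local convexity gain).} Fix $P\in\mathcal{AF}_{n-1}$ and write $\Delta_n f = \sum_{R\in\mathcal S(P)} v_{\ord(R)}\cha_R$ on $P$, with $v\in\mathbb{R}^q_0$. Then on the successor of order $i$ we have $F_n = a + w_i$, where $a = F_{n-1}(P)\in\mathbb{R}^{m+1}$ and
\begin{equation*}
  w_i = (v_i, (A_1 v)_i,\dots,(A_m v)_i).
\end{equation*}
The $w_i$ have mean zero over $i$. I will prove that there is $c>0$ depending only on $q$ and $A_1,\dots,A_m$ such that
\begin{equation*}
  \frac1q\sum_{i=1}^q |a+w_i| - |a| \ \geq\ c\,\min\Bigl(|v|,\ \frac{|v|^2}{|a|}\Bigr)
\end{equation*}
for all $a$ and $v$. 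By homogeneity it suffices to take $|a|+|v|=1$ and use compactness. The left side is always $\ge 0$ by convexity. It vanishes only if all the points $a+w_i$ lie on one ray from the origin in direction $a$. That means $w_i = t_i a$ for every $i$. If $a_0\neq 0$, this says $A_j v = (a_j/a_0)v$ for all $j$, so $v$ is a common real eigenvector unless $v=0$. If $a_0=0$, then $v_i = 0$ for all $i$, so again $v=0$. Hence the left side is positive on the compact set where $v\neq0$, and a quantitative version near $v=0$ comes from the second-order expansion of $|\cdot|$. The transversal part of $w_i$ has size $\gtrsim |v|$, by the same eigenvector argument applied to the unit sphere in $\mathbb{R}^q_0$. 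This gives the two regimes in the $\min$.

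\emph{Step 2 (summation).} Summing Step 1 over all atoms and levels yields
\begin{equation*}
  c\,\cond \sum_n \min\Bigl(|\Delta_n f|,\ \frac{|\Delta_n f|^2}{|F_{n-1}|}\Bigr) \le \sup_N \cond |F_N| < \infty .
\end{equation*}
Here I use that on a $q$-adic filtration $|\Delta_n f|$ is comparable, up to the factor $q$, to its conditional mean on $P$. That conditional mean is in turn comparable to $|v|$. The same holds for $\Delta_n T_{A_j}f = T_{A_j}\Delta_n f$, since $|A_j v|\lesssim |v|$.

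\emph{Step 3 (from the gain to the square function).} This is the main obstacle: passing from the weighted quantity in Step 2 to $\cond S(f) < \infty$. The first attempt is Cauchy--Schwarz,
\begin{equation*}
  \cond S(f) \le \Bigl(\cond\sum_n \frac{|\Delta_n f|^2}{F^*_{n}}\Bigr)^{1/2} (\cond F^*)^{1/2}, \qquad F^*_n = \max_{k\le n}|F_k| + |\Delta_n f|,
\end{equation*}
but this is circular unless $F^*\in L^1$. To break the circularity I would run a stopping-time (good-$\lambda$) argument. Let $\tau_\lambda$ be the first $n$ with $|F_n|>\lambda$. Regularity of the $q$-adic filtration keeps the overshoot $|F_{\tau_\lambda}|$ controlled by $\lambda$ plus a jump, and Step 1 bounds the jump's conditional size. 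I then apply Steps 1--2 to the stopped martingale. The aim is to obtain
\begin{equation*}
  |\{S(f)>2\lambda,\ F^*\le\delta\lambda\}| \le \varepsilon(\delta)\,|\{S(f)>\lambda\}|
\end{equation*}
together with the converse estimate. Combined with the weak-type bound $|\{F^*>\lambda\}|\lesssim \lambda^{-1}\sup_N\cond|F_N|$ from Doob's inequality for the submartingale $|F_n|$, this should yield $\cond F^*<\infty$ and $\cond S(f)<\infty$. If this fails, the fallback is Janson's original route: show that $\log(|F_n|+1)$-type functionals or powers $|F_n|^{p}$ with $p<1$ satisfy a reverse supermartingale inequality thanks to Step 1. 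That gives uniform integrability of $|F_n|$, hence convergence in $L^1$, and then $H^1$ via the Davis decomposition.
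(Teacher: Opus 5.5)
The paper gives no proof of this statement; it only quotes it from Janson~\cite{sJ77}. Janson's argument goes through a $p$-th power. The no-common-eigenvector hypothesis yields some $p<1$ for which $|F_n|^p$ is a submartingale. Doob's inequality in $L^{1/p}$ then gives $\cond\sup_n|F_n|\lesssim\sup_n\cond|F_n|$, and Davis' inequality turns this into a square function bound.

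Your Steps~1 and~2 are correct and give a genuinely different key lemma at $p=1$. The equality analysis of the triangle inequality checks out. So does the averaged tangential bound $\frac1q\sum_i|P_a^\perp w_i|^2\gtrsim|v|^2$, for $|a|=1$ and $P_a^\perp$ the orthogonal projection onto $a^\perp$. The summation using $|\Delta_nF|\le C|v|$ is also fine.

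The gap is Step~3, which you leave open, and the route you sketch there cannot close it. Good-$\lambda$ inequalities only compare $\|S(f)\|_{L^1}$ and $\|F^*\|_{L^1}$; they do not make either one finite. Doob's weak-type bound only puts $F^*$ in weak $L^1$. Combined, they give at best weak-$L^1$ control of $S(f)$.

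The fallback is misdescribed as well. Uniform integrability of $(|F_n|)$ only gives closedness in $L^1$, not membership in $H^1$. What the submartingale property of $|F_n|^p$ really provides is the strong maximal bound via Doob in $L^{1/p}$. Moreover, getting some $p<1$ out of your Step~1 needs an extra argument. Near $v=0$ the tangential gain must beat the radial second-order loss, which enters with the factor $1-p$. Away from $v=0$ one uses continuity in $p$.

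The missing idea is that the circularity you point out is harmless at a finite level, where everything is integrable. Both $F_N^*=\max_{n\le N}|F_n|$ and $S_N(F)\le|F_0|+\sum_{n=1}^N|\Delta_nF|$ lie in $L^1$. For $n\le N$ you have $\max(|F_{n-1}|,|\Delta_nF|)\le 2F_N^*$. The vector form of your Step~2 then gives $\cond\bigl[S_N(F)^2/F_N^*\bigr]\le KM$ with $M=\sup_n\cond|F_n|$; the extra term $|F_0|^2/F_N^*\le|F_0|\le M$ is harmless.

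Now apply Cauchy--Schwarz, and Davis' inequality $\cond F_N^*\le C\,\cond S_N(F)$ componentwise to the $\mathbb{R}^{m+1}$-valued martingale. This gives $\cond S_N(F)\le (KM)^{1/2}\bigl(C\,\cond S_N(F)\bigr)^{1/2}$, hence $\cond S_N(F)\le CKM$ uniformly in $N$. Monotone convergence yields $S(F)\in L^1$, which covers $f$ and all $T_{A_j}f$ at once. Closedness in $H^1$ follows from $\cond F^*\lesssim\cond S(F)$ and dominated convergence.

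With this replacement of Step~3 your route is a valid alternative to Janson's. It needs only the $p=1$ strict-convexity gain and uses Davis' inequality to absorb $F^*$. Janson's $p<1$ lemma instead delivers the maximal function bound directly from Doob's inequality.
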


\begin{dfn}\label{dfn:substriction}
  Given a Janson-Sobolev or Janson-Sobolev-Hardy space $Z_V$ and $Q\in \mathcal{AF}_n$ for some $n\in \mathbb{N}_0$, we define the operator $\Theta_Q\colon Z_V\to Z_V$ by
  \begin{equation}\label{eq:205}
    \Theta_Q \bigl( \sum_{P\in \mathcal{A}} \sum_{k=1}^r a_P^k \varphi_P^k \bigr)
    = \sum_{\substack{P\in \mathcal{A}\\P\subset Q}} \sum_{k=1}^r a_P^k \varphi_P^k.
  \end{equation}
\end{dfn}
Note that since $\Theta_Q f = \charfun_Q\cdot (f - \cond(f|\mathcal{F}_n))$, \Cref{rem:conditional}, and the contractivity of multiplication by
$\charfun_Q$ on $(\Id_{Z_V} - \cond(f|\mathcal{F}_n))(Z_V)$, we obtain
\begin{equation}\label{eq:206}
  \|\Theta_Q\colon Z_V\to Z_V\|
  \leq 2.
\end{equation}

\subsection{Canonical projections}
\label{sec:canon-proj}

In this subsection, we introduce the canonical projection operators associated with an orthonormal basis of $V$ and the corresponding classification
of the space $Z_V$.  As we will see in \Cref{sec:stabilization}, every diagonal operator can be reduced (by factorization) to a linear combination of
the canonical projections.  This bears striking similarity to the Capon projection introduced in~\cite{MR4816119}, and prior to that by M.~Capon's
condition $\mathcal{P}_p$ originating in~\cite{MR0688955}.

\begin{dfn}\label{dfn:Q}
  For any orthonormal basis $\mathcal{B}=(v^1,\ldots, v^r)$ of $V$ and any $\mathcal{K}\subset \{1,\dots, r\}$, we define the possibly unbounded
  operator $Q_{\mathcal{K}}^{\mathcal{B}}\colon Z_V\to Z_V$ by
  \begin{equation*}
    \sum_{k=1}^r \sum_{P\in \mathcal{A}} a_P^k \varphi_P^{k,\mathcal{B}}
    \mapsto \sum_{k\in \mathcal{K}} \sum_{P\in \mathcal{A}} a_P^k \varphi_P^{k,\mathcal{B}},
  \end{equation*}
  and put $Q_k^{\mathcal{B}} = Q_{\{k\}}^{\mathcal{B}}$ for $1\leq k\leq r$.  Note that $Q_{\{1, \dots, r\}}^{\mathcal{B}} = \Id_{Z_V}$.  If
  unambiguous, we will omit the superscript $\mathcal{B}$ and simply write $Q_k$ instead of $Q_k^{\mathcal{B}}$.
\end{dfn}

\begin{dfn}\label{dfn:finest-bounded-partition}
  Given an orthonormal basis $\mathcal{B}$ of $V$, we call the uniquely determined partition $\mathcal{K}_1, \dots, \mathcal{K}_s\neq \emptyset$ of
  $\{1, \dots, r\}$ such that
  \begin{itemize}
  \item all the $Q_{\mathcal{K}_t}^{\mathcal{B}}\colon Z_V\to Z_V$, $1\leq t\leq s$ are bounded, and
  \item whenever $\emptyset\neq \mathcal{K}\subsetneq \mathcal{K}_t$ for some $1\leq t\leq s$, then $Q_{\mathcal{K}}^{\mathcal{B}}$ is unbounded
  \end{itemize}
  the \emph{finest bounded partition of $\{1, \dots, r\}$ for $\mathcal{B}$ with respect to $Z_V$}.  If unambiguous, we simply refer to it as the
  \emph{finest bounded partition}.
\end{dfn}

As we will show in \Cref{thm:trichotomy}, depending on our choice of the subspace $V$ and the rearrangement invariant space $X$, the space
$Z_V = Z_V(X) = Z_V((\mathcal{F}_j),X,\boldsymbol{\sigma};\mathbb{R}^l)$ can exhibit at least one of the following three types of behavior.
\begin{dfn}\label{dfn:space-type}
  Let $Z_V = Z_V((\mathcal{F}_j),X,\boldsymbol{\sigma};\mathbb{R}^l)$ denote a Janson-Sobolev space or a Janson-Sobolev-Hardy space.
  \begin{enumerate}
  \item[Property~I] If we can choose an orthonormal basis $\mathcal{B}$ of $V$ such that the only bounded canonical projections
    $Q_{\mathcal{K}}^{\mathcal{B}}\colon Z_V\to Z_V$ with respect to this basis are either $Q_{\mathcal{K}}^{\mathcal{B}} = \Id_{Z_V}$ or
    $Q_{\mathcal{K}}^{\mathcal{B}} = 0$, then we say that $Z_V$ has \emph{Property~I}.
  \item[Property~IIA] If all canonical projections $Q_{\mathcal{K}}^{\mathcal{B}}\colon Z_V\to Z_V$ for all choices of an orthonormal basis
    $\mathcal{B}$ of $V$ are bounded and there exists a non-zero vector $b\in \mathbb{R}^q$ and subspace $W \subset \mathbb{R}^l$ such that
    \begin{equation}\label{eq:defofvectb}
      V
      = \{w b^t \,:\, w\in W\}
    \end{equation}
    we say that $Z_V$ has \emph{Property~IIA}.
  \item[Property~IIB] If all canonical projections $Q_{\mathcal{K}}^{\mathcal{B}}\colon Z_V\to Z_V$ for all choices of an orthonormal basis
    $\mathcal{B}$ of $V$ are bounded and $Z_V$ is not of Property~IIA, we say that $Z_V$ has \emph{Property~IIB}.
  \end{enumerate}
\end{dfn}
Building on the work of S.~Janson and J.-A.Chao~\cite{zbMATH03684627}, R.~Ayoush, D.~Stolyarov, and M.~Wojciechowski introduced in~\cite{ASW} a set of
\emph{structural conditions} on the subspace $V$, formulated in terms of the presence of certain rank-one matrices.  Throughout these works, as well
as in the present paper, whether or not $V$ contains rank one matrices plays an important role.

\begin{rem}\label{rem:dimension-1}
  Note that if $\dim V = 1$, then $Z_V$ has not only Property~I, but also either Property~IIA or Property~IIB.  However, for $\dim V > 1$, Property~I,
  Property~IIA and Property~IIB are mutually distinct.
\end{rem}

\begin{rem}\label{rem:space-type}
  We claim that for spaces with Property~IIA, the boundedness of the canonical projections $Q_{\mathcal{K}}^{\mathcal{B}}$ follows automatically from
  the structure of the subspace $V$ (see~\eqref{eq:defofvectb}), and therefore need not be assumed.

  To see this, suppose that $\mathcal{B} = (v^1, \dots, v^{r})$ is any orthonormal basis for the subspace $V$, which has the form given
  by~\eqref{eq:defofvectb}, i.e., $v^k = w^kb^t$, $1\leq k\leq r$.  By scaling $b\neq 0$ we can assume that $(w^k : 1\leq k\leq r)$ is orthonormal in
  $\mathbb{R}^l$.  Then clearly, for fixed $P\in \mathcal{A}$, $1\leq k\leq r$ and $x\in [0,1)$ we have
  \begin{equation}\label{eq:68}
    \begin{aligned}
      \varphi_P^{k,\mathcal{B}}(x)
      = \sum_{R\in \mathcal{S}(P)} v_{\mathrm{ord}(R)}^k \cha_R(x)
      = \sum_{R\in \mathcal{S}(P)} w^k b_{\mathrm{ord}(R)} \cha_R(x)
      = a^{k}(x) w^k,
    \end{aligned}
  \end{equation}
  for a suitable scalar $a^k(x)$.  For fixed $1\leq j\leq r$, we obtain
  \begin{equation}\label{eq:69}
    \begin{aligned}
      \bigl((w^j)^t\cdot \varphi_P^{k,\mathcal{B}}(x)\bigr) w^j
      = a^{k}(x) \bigl( (w^j)^t\cdot w^k \bigr) w^j.
    \end{aligned}
  \end{equation}
  Exploiting orthonormality of $(w^k : 1\leq k\leq r)$ and appealing to~\eqref{eq:68} yields
  \begin{equation}\label{eq:70}
    \bigl(Q_j^{\mathcal{B}} \varphi_P^{k,\mathcal{B}}\bigr) (x)
    = \bigl((w^j)^t\cdot \varphi_P^{k,\mathcal{B}}(x)\bigr) w^j.
  \end{equation}
  By linearity, we obtain
  \begin{equation}\label{eq:210}
    \bigl(Q_j^{\mathcal{B}} f\bigr) (x)
    = \bigl((w^j)^t\cdot f(x)\bigr) w^j,
    \qquad x\in [0,1),
  \end{equation}
  for all $f\in Z_V$.  Clearly, the right hand side of the equation determines a bounded operator on $Z_V$, i.e., all $Q_j^{\mathcal{B}}$,
  $1\leq j\leq r$ are bounded.
\end{rem}

In \Cref{sec:hardy-iso} we will give a simpler characterization of Janson-Sobolev spaces with Property~IIA and Property~IIB in the case $X=L^1$.  If a
Janson-Sobolev space (i.e., $\boldsymbol{\sigma}$ is the constant sequence) has Property~IIB, then
$Z_V((\mathcal{F}_j),L^1,\boldsymbol{\sigma};\mathbb{R}^l)$ is isomorphic to $H^1$.  If it has Property~IIA then it is isomorphic to
$Z_{\operatorname{span}\{b\}}((\mathcal{F}_j),L^1,\boldsymbol{\sigma};\mathbb{R}^l)$.  Moreover, we show that Janson-Sobolev-Hardy spaces (i.e.,
$\boldsymbol{\sigma}$ is a sequence of independent Rademacher functions) are isomorphic to $H^1$.

\subsection{Diagonal operators}
\label{sec:mart-mult}

Let $D = (d_P^k : P\in \mathcal{A},\ 1\leq k\leq r)\colon Z_V\to Z_V$ denote a bounded \emph{diagonal operator (with respect to $(\varphi_P^k)$)},
i.e., $D\colon Z_V\to Z_V$ is given by
\begin{equation}\label{eq:219}
  D\bigl( \sum_{P\in \mathcal{A}} \sum_{k=1}^r a_P^k \varphi_P^k \bigr)
  = \sum_{P\in \mathcal{A}} \sum_{k=1}^r d_P^k a_P^k \varphi_P^k,
\end{equation}
where $d_P^k\in \mathbb{R}$, $1\leq k\leq r$.  Note that
\begin{equation}\label{eq:74}
  |d_P^k|
  \leq \|D\|,
  \qquad P\in \mathcal{A},\ 1\leq k\leq r.
\end{equation}

We relate to $D$ the \emph{associated adapted processes} $(A_n^{D,k})_{n=0}^{\infty}$, $1\leq k\leq r$ given by
\begin{equation}\label{eq:1}
  A_n^{D,k}(x)
  = d_P^k,
  \qquad x\in P\in \mathcal{AF}_n,\ n\in \mathbb{N}_0,\ 1\leq k\leq r.
\end{equation}
It follows from~\eqref{eq:74} that the sequence $(d_P^k : P \in \mathcal{A},\ 1 \leq k \leq r)$ is bounded. Consequently, for each $1 \leq k \leq r$,
the sequence $(A_n^{D,k})_{n=0}^{\infty}$ is bounded in $L^{\infty}$. Therefore, there exists a subsequence $(n_j)_{j=0}^{\infty}$ such that, for all
$1 \leq k \leq r$, we have
\begin{equation}\label{eq:2}
  A_{n_j}^{D,k} \to M^{D,k}
  \quad \text{in the weak$^{*}$ topology of $L^{\infty}$}.
\end{equation}
The pointwise multipliers $M^{D,k}$, $1\leq k\leq r$ give rise to the closed martingales $(M_n^{D,k})_{n=0}^{\infty}$, $1\leq k\leq r$ given by
\begin{equation}\label{eq:3}
  M_n^{D,k} := \cond(M^{D,k}|\mathcal{F}_n),
  \qquad n\in \mathbb{N}_0,\ 1\leq k\leq r.
\end{equation}
We record the fact that $M_n^{D,k} \to M^{D,k}$ almost everywhere and in $L^1$, for all $1\leq k\leq r$.

Now, we relate to the pointwise multiplier $M^{D,k}$, $1\leq k\leq r$ the diagonal operator
$\widetilde{D} = (\widetilde{d}_P^k : P\in \mathcal{A},\ 1\leq k\leq r)$ defined by
\begin{equation}\label{eq:4}
  \widetilde{d}_P^k
  := M_n^{D,k}(x),
  \qquad x\in P\in \mathcal{AF}_n,\ n\in \mathbb{N}_0,\ 1\leq k\leq r.
\end{equation}
Finally, we note that $A_n^{\widetilde{D},k} = M_n^{D,k}$, $n\in \mathbb{N}_0$, $1\leq k\leq r$.  In particular,
$(A_n^{\widetilde{D},k})_{n=0}^{\infty}$ is a closed martingale.

The bi-parameter dyadic counterparts of the following linear functionals $\lambda_{\mathcal{U}}^k$ were first introduced in~\cite{MR4816119}.
\begin{dfn}\label{dfn:functionals}
  Let $D = (d_P^k : P\in \mathcal{A},\ 1\leq k\leq r)\colon Z_V\to Z_V$ denote a bounded diagonal operator, and let $\mathcal{U}$ denote a non-principal ultrafilter on
  $\mathbb{N}_0$.  Then we define the linear functional $\lambda_{\mathcal{U}}^k$ on the space of bounded diagonal operators by
  \begin{equation}\label{eq:173}
    \lambda_{\mathcal{U}}^k(D)
    = \lim_{n\to \mathcal{U}} \int A_n^{D,k}
    = \lim_{n\to \mathcal{U}} \sum_{P\in \mathcal{AF}_n} |P| d_P^k,
    \qquad 1\leq k\leq r.
  \end{equation}
\end{dfn}

\begin{figure}
  \centering
  \begin{tikzpicture}
    \node[anchor=south west,inner sep=0] (img) {\includegraphics[width=.8\textwidth]{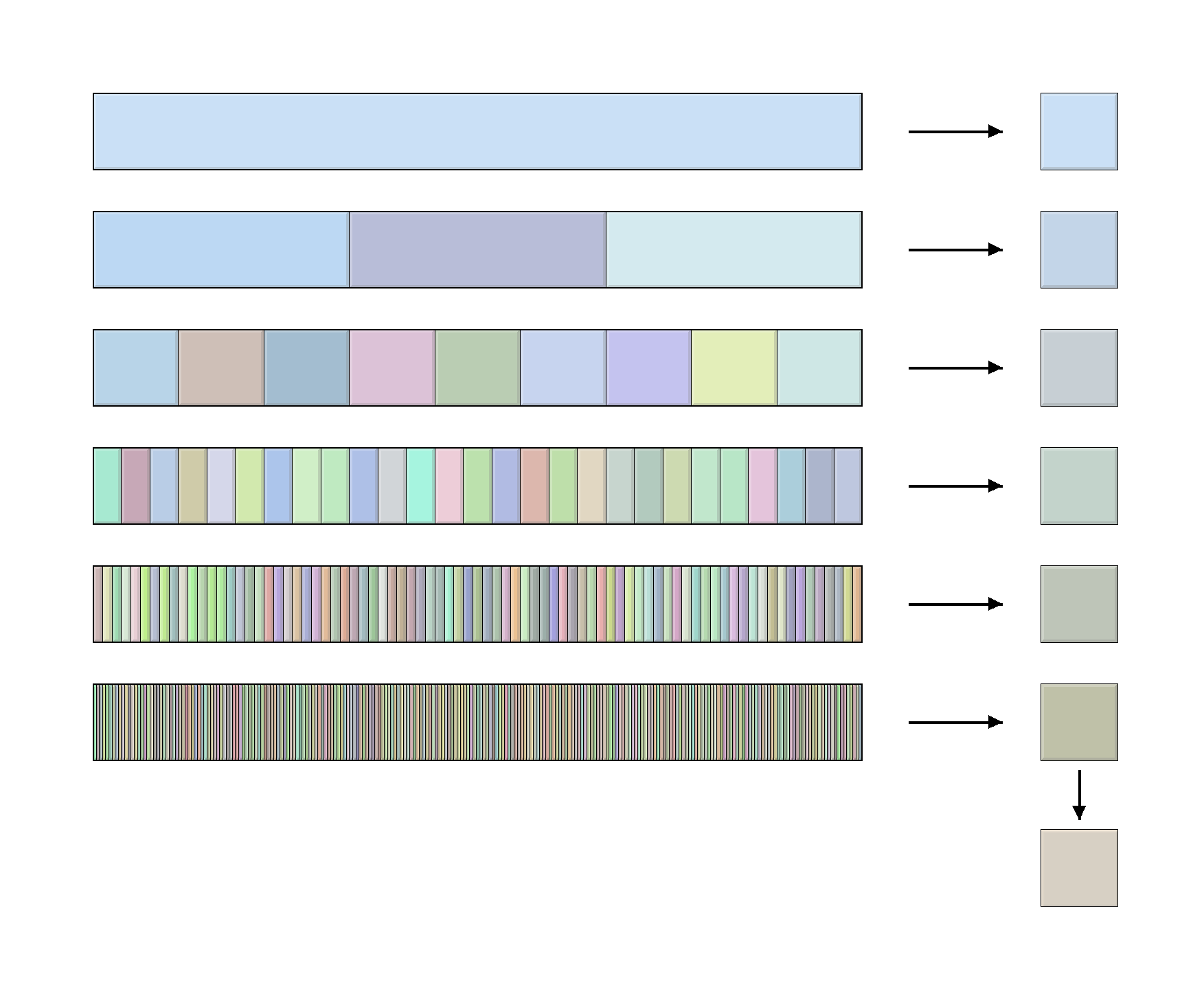}};
    \begin{scope}[x={(img.south east)},y={(img.north west)}]
      \node at (0.02,0.865) {{\small $\mathcal{AF}_0$}};%
      \node at (0.02,0.75) {{\small $\mathcal{AF}_1$}};%
      \node at (0.02,0.63) {{\small $\mathcal{AF}_2$}};%
      \node at (0.02,0.51) {{\small $\mathcal{AF}_3$}};%
      \node at (0.02,0.39) {{\small $\mathcal{AF}_4$}};%
      \node at (0.02,0.27) {{\small $\mathcal{AF}_5$}};%
      \node at (0.79,0.95) {{\small Avg}};%
      \node at (0.9875,0.2) {{\small $n\to \mathcal{U}$}};%
      \node at (0.9875,0.125) {{\small $\lambda_{\mathcal{U}}^k$}};%
    \end{scope}
  \end{tikzpicture}
  \caption{The colors on the triadic grid represent the different values of a diagonal operator $D = (d_P^k : P\in \mathcal{A},\ 1\leq k\leq r)$.  The squares to the
    right are the averages of the values $(d_P^k : P\in \mathcal{AF}_n)$ for each row $n$ for some fixed $1\leq k\leq r$, and
    $\lambda_{\mathcal{U}}^k$ is the ultrafilter limit of those averages.}
\end{figure}

\begin{rem}\label{rem:functionals}
  Note that if $(A_n^{D,k})_{n=0}^{\infty}$ converges pointwise to $M^{D,k}$ almost everywhere on $[0,1)$, then
  \begin{equation}\label{eq:191}
    \lambda_{\mathcal{U}}^k(D)
    = \lim_{n\to \infty} \int A_n^{D,k}
    = \int M^{D,k},
    \qquad 1\leq k\leq r.
  \end{equation}
\end{rem}

\begin{lem}\label{lem:functionals-classical-limit}
  Let $D = (d_P^k : P\in \mathcal{A},\ 1\leq k\leq r)\colon Z_V\to Z_V$ denote a bounded diagonal operator, and let $\mathcal{U}$ denote a non-principal ultrafilter on
  $\mathbb{N}_0$.  Then there exists an increasing sequence $(n_i)_{i=0}^{\infty}$ in $\mathbb{N}_0$ such that
  \begin{equation}\label{eq:66}
    \lambda_{\mathcal{U}}^k(D)
    = \lim_{i\to \infty} \int A_{n_i}^{D,k}
    = \lim_{i\to \infty} \sum_{P\in \mathcal{AF}_{n_i}} |P| d_P^k,
    \qquad 1\leq k\leq r.
  \end{equation}
\end{lem}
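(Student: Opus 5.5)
The plan is to use the standard fact that an ultrafilter limit of a bounded sequence of points in a finite-dimensional space can be realized as an ordinary limit along some increasing subsequence. We treat all $r$ coordinates at once by passing to $\mathbb{R}^r$. For $n\in\mathbb{N}_0$ we put
\begin{equation*}
  x_n = \Bigl( \int A_n^{D,1}, \dots, \int A_n^{D,r} \Bigr)\in \mathbb{R}^r .
\end{equation*}
By~\eqref{eq:74} each coordinate satisfies $|\int A_n^{D,k}|\leq \sum_{P\in\mathcal{AF}_n}|P|\,|d_P^k|\leq \|D\|$. Hence $(x_n)$ lies in the compact cube $[-\|D\|,\|D\|]^r$. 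Consequently the ultrafilter limit $\lambda := \lim_{n\to\mathcal{U}} x_n$ exists. Because limits along $\mathcal{U}$ are taken coordinatewise in the product topology, $\lambda$ equals the vector $(\lambda_{\mathcal{U}}^1(D),\dots,\lambda_{\mathcal{U}}^r(D))$.

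We then build $(n_i)$ inductively. For each $i\in\mathbb{N}$ we consider the set
\begin{equation*}
  U_i = \{ n\in\mathbb{N}_0 : \max_{1\leq k\leq r} |(x_n)_k - \lambda_k| < 1/i \}.
\end{equation*}
Each $U_i$ belongs to $\mathcal{U}$. It is the set of $n$ with $x_n$ in a neighborhood of $\lambda$, or equivalently the finite intersection over $k$ of sets that lie in $\mathcal{U}$ by the definition of the coordinatewise ultrafilter limits. Since $\mathcal{U}$ is non-principal, it contains no finite set, so every member of $\mathcal{U}$ is infinite. In particular $U_i\setminus\{0,\dots,n_{i-1}\}\in\mathcal{U}$ is nonempty. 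We choose $n_i$ in it, which gives $n_i>n_{i-1}$ and $|\int A_{n_i}^{D,k}-\lambda_{\mathcal{U}}^k(D)|<1/i$ for all $k$ simultaneously. Letting $i\to\infty$ yields~\eqref{eq:66}. The second equality in~\eqref{eq:66} is just the identity $\int A_n^{D,k}=\sum_{P\in\mathcal{AF}_n}|P|d_P^k$ from~\eqref{eq:1}.

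There is no real obstacle. The only point needing care is that a single subsequence must work for all $k$ at once. We handle this by intersecting finitely many sets in $\mathcal{U}$, or equivalently by working in $\mathbb{R}^r$. We also use non-principality to guarantee that the indices can be chosen strictly increasing.
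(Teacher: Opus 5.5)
Your proof is correct and follows essentially the same route as the paper. For each $i$ you intersect the finitely many sets in $\mathcal{U}$ that witness the coordinatewise ultrafilter limits, and you use non-principality (every member of $\mathcal{U}$ is infinite) to choose a strictly increasing $(n_i)$ along which all $r$ averages converge at once; your remarks on the existence of the limit via $|\int A_n^{D,k}|\leq \|D\|$ and on starting the induction only make explicit what the paper leaves implicit.
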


\begin{proof}
  By definition of the ultrafilter limit, for any $i\in \mathbb{N}_0$ and $1\leq k\leq r$ there exists $\Lambda_i^k\in \mathcal{U}$ such that
  $|\lambda_{\mathcal{U}}^k(D) - \int A_n^{D,k}|\leq \frac{1}{i}$ for all $n\in \Lambda_i^k$.  Since finite intersections of sets in the ultrafilter are infinite sets in
  $\mathcal{U}$, we can choose an increasing sequence $(n_i)_{i=0}^{\infty}$ in $\mathbb{N}_0$ such that $|\lambda_{\mathcal{U}}^k(D) - \int A_{n_i}^{D,k}|\leq \frac{1}{i}$, for all $i\in \mathbb{N}_0$.
\end{proof}

\subsection{Projectional reduction}
\label{sec:proj-reduct}

Here we provide the basic factorization tools that will be used throughout this work.

\begin{dfn}\label{dfn:factorization-modes}
  Given a Banach space $E$, bounded linear operators $S, T\colon E\to E$ and constants $C,\eta \ge 0$, we define the following:
  \begin{enumerate}[(a)]
  \item\label{enu:dfn:diagonal-operator} Given a Schauder basis $(e_n)_{n=1}^{\infty}$ of $E$ with biorthogonal functionals
    $(e_n^{*})_{n=1}^{\infty}$, we say that an operator $D\colon E\to E$ is \emph{diagonal (with respect to $(e_n)_{n=1}^{\infty}$)} if
    $\langle e_m^{*}, D e_n \rangle = 0$ whenever $n\neq m$.  We also say $D$ is a diagonal operator (with respect to $(e_n)_{n=1}^{\infty}$).
    
  \item\label{enu:dfn:factorization-modes:a} We say that \emph{$S$ factors through $T$ with constant $C$ and error $\eta$} if there exist linear
    operators $A,B\colon E\to E$ with $\|A\|\|B\|\leq C$ such that $\|S - ATB\|\leq \eta$.
    
  \item\label{enu:dfn:factorization-modes:a2} If~\eqref{enu:dfn:factorization-modes:a} holds and we additionally have $AB = \Id_E$, then we say that
    $S$ \emph{projectionally} factors through $T$ with constant $C$ and error $\eta$.
    
  \item\label{enu:dfn:factorization-modes:b} We say that $S$ (projectionally) factors through $T$ \emph{with constant $C^+$} and error $\eta$ if for
    every $\gamma > 0$, the operator $S$ (projectionally) factors through $T$ with constant $C + \gamma$ and error $\eta$.
    
  \item\label{enu:dfn:factorization-modes:e} Given $C \geq 1$, we say that $E$ has the \emph{$C^{+}$-diagonal primary factorization property} if there
    exists a Schauder basis $(e_n)_{n=1}^{\infty}$ of $E$ such that, for every diagonal operator $D$ with respect to $(e_n)_{n=1}^{\infty}$, the identity
    operator $\Id_E$ either factors through $D$ or $\Id_E - D$ with constant $C^+$.  Moreover, $E$ has the diagonal primary factorization property if
    it has the ${C'}^{+}$-diagonal primary factorization property for some $C' > 0$.
  \end{enumerate}
  If we omit the phrase ``with error $\eta$'' in~\eqref{enu:dfn:factorization-modes:a}, \eqref{enu:dfn:factorization-modes:a2}
  or~\eqref{enu:dfn:factorization-modes:b}, then we take that to mean that the error is~$0$.  Finally, if one of the properties holds for every error
  $\eta > 0$, then we prepend the modifier ``almost'', e.g., if for every $\eta > 0$ the operator $S$ projectionally factors through $T$ with constant
  $C$ and error $\eta$, we say that $S$ almost projectionally factors through $T$ with constant $C$.
\end{dfn}

Projectional factorization was explicitly introduced in~\cite{MR4430957}, where it was employed to circumvent the sometimes rather cumbersome criteria
for identifying large subspaces.  Indeed, the algebraic properties of projectional factorization allow the same factorization operators to be used to
simultaneously factor an operator $S$ through $D$ and $\Id-S$ through $\Id-D$.

\begin{rem}\label{rem:diagonal-primary-factorization-property}
  Note that the diagonal primary factorization property is an isomorphic invariant.  To see this, let $(e_n)_{n=1}^{\infty}$ denote a Schauder basis for
  the Banach space $E$, and suppose that $T\colon E\to F$ is an isomorphism.  Then clearly, $f_n = Te_n$, $n\in \mathbb{N}$ is a Schauder basis for
  $F$.  Now, let $D\colon F\to F$ denote any diagonal operator with respect to $(f_n)_{n=1}^{\infty}$.  Then it is simple to check that
  $\widetilde{D}\colon E\to E$ defined by $\widetilde{D} = T^{-1}DT$ is a diagonal operator with respect to $(e_n)_{n=1}^{\infty}$.  Thus, $\Id_E$ factors
  either through $\widetilde{D}$ or through $\Id_E - \widetilde{D}$, i.e.,
  \begin{equation}\label{eq:201}
    \Id_E = \widetilde{A}_1\widetilde{D}\widetilde{B}_1
    \qquad\text{or}\qquad \Id_E = \widetilde{A}_2(\Id_E - \widetilde{D})\widetilde{B}_2
  \end{equation}
  for some operators $\widetilde{A}_i, \widetilde{B}_i\colon E\to E$, $i=1,2$.  Consequently, putting
  \begin{equation}\label{eq:72}
    A_i
    = T\widetilde{A}_iT^{-1}
    \qquad{and}\qquad
    B_i
    = T\widetilde{B}_iT^{-1}
  \end{equation}
  for $i=1,2$ gives us
  \begin{equation}\label{eq:203}
    \Id_F = A_1DB_1
    \qquad\text{or}\qquad
    \Id_F = A_2(\Id_F - D)B_2.
  \end{equation}
\end{rem}

\begin{rem}\label{rem:factorization-properties}
  Clearly, the primary factorization property implies the diagonal primary factorization property whenever the underlying Banach space has a Schauder
  basis.  Moreover, under the usual self-similarity assumptions, Pe{\l}czyński's decomposition technique can be used to deduce primarity from the
  primary factorization property.

  The Janson-Sobolev space we will construct later in \Cref{thm:non-primary-janson-sobolev} is neither primary nor does it have the primary
  factorization property.  Nevertheless, it has Property~I and hence, by \Cref{mainthm:main-prop-I}, possesses the diagonal primary factorization
  property.  Finally, we note that any primary Janson-Sobolev or Janson-Sobolev-Hardy space necessarily has the diagonal primary factorization
  property.

  This follows from the more general result \Cref{pro:factorization-properties} below, together with \Cref{mainthm:picewise-constant}.
\end{rem}

\begin{pro}\label{pro:factorization-properties}
  Suppose that $(e_n)_{n=1}^{\infty}$ is a Schauder basis for the primary Banach space $E$ such that for every bounded diagonal operator
  $D\colon E\to E$ with respect to $(e_n)_{n=1}^{\infty}$ there exist scalars $\lambda_1, \dots, \lambda_n$ and bounded projections $Q_1, \dots, Q_n\colon E\to E$ with
  \begin{equation}\label{eq:11}
    \sum_{j=1}^n Q_j = \Id_E
    \qquad\text{and}\qquad
    Q_iQ_j = 0,\ i\neq j
  \end{equation}
  such that $\sum_{j=1}^n \lambda_j Q_j$ almost projectionally factors through $D$.  Then $E$ has the diagonal primary factorization property.
\end{pro}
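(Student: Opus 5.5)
Fix a bounded diagonal operator $D\colon E\to E$. By hypothesis there are scalars $\lambda_1,\dots,\lambda_n$ and bounded projections $Q_1,\dots,Q_n$ satisfying~\eqref{eq:11} such that $S=\sum_{j=1}^n\lambda_jQ_j$ almost projectionally factors through $D$ with some constant $C$. The plan is to use the algebraic feature of projectional factorization recorded after \Cref{dfn:factorization-modes}: if $\|S-ADB\|\le\eta$ and $AB=\Id_E$, then
\begin{equation*}
  \|(\Id_E-S)-A(\Id_E-D)B\| = \|AB - S - AB + ADB\| \leq \eta .
\end{equation*}
Thus $\Id_E-S=\sum_j(1-\lambda_j)Q_j$ almost factors through $\Id_E-D$ with the same constant $C$, using the same $A,B$.

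Next I use primarity together with~\eqref{eq:11}, which gives $E=Q_1E\oplus\dots\oplus Q_nE$. Iterating the definition of primarity on the decompositions $E=Q_1E\oplus(Q_2+\dots+Q_n)E$, and so on, I find an index $j_0$ such that $Q_{j_0}E$ is isomorphic to $E$. Say $J\colon E\to Q_{j_0}E$ is an isomorphism. Now split into two cases. If $|\lambda_{j_0}|\ge 1/2$, I set $\widehat B=J$ (viewed into $E$) and $\widehat A=\lambda_{j_0}^{-1}J^{-1}Q_{j_0}$. Since $Q_{j_0}Q_j=\delta_{j_0 j}Q_{j_0}$, this gives $\widehat A S\widehat B=\Id_E$. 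Otherwise $|1-\lambda_{j_0}|\ge 1/2$, and the same construction applied to $\Id_E-S$ gives $\widehat A(\Id_E-S)\widehat B=\Id_E$. In both cases $\|\widehat A\|\|\widehat B\|\le 2\|J\|\|J^{-1}\|\|Q_{j_0}\|$.

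Composing, in the first case I obtain
\begin{equation*}
  \|\Id_E-\widehat A A D B\widehat B\|\le \|\widehat A\|\|\widehat B\|\eta ,
\end{equation*}
and the analogous estimate holds for $\Id_E-D$ in the second case. Choosing $\eta$ small makes $U=\widehat AADB\widehat B$ invertible by a Neumann series with $\|U^{-1}\|\le 2$. Then $\Id_E=(U^{-1}\widehat AA)D(B\widehat B)$ is an exact factorization.

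The main obstacle is the uniformity of the constant demanded by the $C^+$-diagonal primary factorization property. The scalars, the projections and the factorization constant all depend on $D$, and so does $\|J\|\|J^{-1}\|$ through the choice of $j_0$. I would first check whether the definition allows the constant to depend on $D$ (``some $C'$'' is fixed globally). If it does not, I would argue that in the intended application the projections come from a finite family (the canonical projections $Q^{\mathcal B}_{\mathcal K_t}$, of which there are finitely many), so that all the constants are bounded by a maximum over finitely many choices. In the abstract statement I would read the hypothesis as supplying uniformly bounded data, and record this as the point requiring care.
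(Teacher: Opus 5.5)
Your proposal is correct and is essentially the paper's proof: primarity applied to $E=Q_1E\oplus\dots\oplus Q_nE$ gives $Q_{j_0}E\simeq E$, and the identity $(\Id_E-S)-A(\Id_E-D)B=ADB-S$ (from $AB=\Id_E$) lets the same $A,B$ handle the case $|1-\lambda_{j_0}|\ge 1/2$; conjugating by $\lambda_{j_0}^{-1}J^{-1}Q_{j_0}$ and $J$ and then using a Neumann series yields the exact factorization. Your caveat on uniformity is legitimate, and the paper's argument does not address it either, since it only produces a constant depending on $D$ (through $\|Q_{j_0}\|$, $\|J\|\|J^{-1}\|$ and $\|A\|\|B\|$); in the intended application via \Cref{mainthm:picewise-constant} the projections $Q^{\mathcal{B}}_{\mathcal{K}_t}$ are fixed independently of $D$ and the factorization constant is $1^+$, so $j_0$ can be chosen once and the bound becomes uniform.
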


\begin{proof}
  In order to verify this claim, we need to show that for a given bounded diagonal operator $D\colon E\to E$, the identity $\Id_E$ either factors
  through $D$ or $\Id_E - D$.  First, we pick $\lambda_1, \dots, \lambda_n$ and the bounded projections $Q_1, \dots, Q_n$ as in~\eqref{eq:11}.  By the primarity
  of $E$ and since $\sum_{j=1}^n Q_j = \Id_E$, we know that at least one of the $Q_j(E)$, $1\leq j\leq n$ is isomorphic to $E$ itself.  We assume without
  restriction that $Q_1(E)$ is $C$-isomorphic to $E$ for some $C > 0$, i.e., there exists an isomorphism $T\colon E\to Q_1(E)$ with
  \begin{equation}\label{eq:76}
    \|T\|\cdot \|T^{-1}\|
    \leq C.
  \end{equation}

  Next, we put $\eta = \frac{1}{4C\|Q_1\|}$ and observe that since $\sum_{j=1}^n \lambda_j Q_j$ almost projectionally factors through $D$, we can find bounded
  operators $A,B\colon E\to E$ with $AB = \Id_E$ such that
  \begin{equation}\label{eq:6}
    \bigl\| ADB - \sum_{j=1}^n \lambda_j Q_j \bigr\|
    < \eta
    \quad\text{and}\quad
    \bigl\| A(\Id_E - D)B - \sum_{j=1}^n (1 - \lambda_j) Q_j \bigr\|
    < \eta.
  \end{equation}
  Note that either $|\lambda_1| \geq 1/2$ or $|1 - \lambda_1| \geq 1/2$.  We assume without restriction $|\lambda_1| \geq 1/2$.  If not, we would use
  $\Id_E - D$ instead of $D$.

  Now, using~\eqref{eq:6}, gives us $\bigl\| Q_1ADB - \lambda_1 Q_1 \bigr\| < \eta \|Q_1\|$, and thus
  \begin{equation}\label{eq:40}
    \|Q_1 - \lambda_1^{-1}Q_1ADB\|
    < 2\eta \|Q_1\|.
  \end{equation}
  Combining $T^{-1}Q_1T = \Id_E$ with \eqref{eq:76} and~\eqref{eq:40} yields
  \begin{equation}\label{eq:73}
    \|\Id_E - T^{-1}\lambda_1^{-1}Q_1ADBT\|
    < 2C\eta \|Q_1\| = 1/2,
  \end{equation}
  i.e., $\Id_E$ factors through $D$ with error $1/2$.  Thus, $T^{-1}\lambda_1^{-1}Q_1ADBT$ is invertible, and hence, $\Id_E$ factors through $D$.
\end{proof}

%

\section{Main results}
\label{sec:main-results}

Over the years, numerous strategies and frameworks have been developed to investigate the factorization properties of classes of bounded operators
acting on a Banach space $E$, with the aim of obtaining structural results for operator ideals and complemented subspaces.  The factorization problems
considered in this work concern the question of whether the identity operator can be factored through certain classes of bounded operators.  A
particularly important subclass of bounded operators in this context consists of diagonal operators with respect to a Schauder basis of $E$.  Most
closely related to the present work is~\cite{MR4816119}, which studies multipliers on the biparameter Haar system acting on rearrangement invariant
spaces and their square function analogues, known as Haar system Hardy spaces.

The main results in~\cite{MR4816119} show that every Haar multiplier on a Haar system Hardy space is a projectional factor of a linear combination of
the Capon projection $\mathcal{C}$ and $\Id - \mathcal{C}$.  The Capon projection is the natural projection onto the lower triangular part of the biparameter Haar system,
and the coefficients of the linear combination are predetermined and can be computed via bounded linear functionals from the given diagonal operator.
If the Capon projection is unbounded, then the coefficients in the linear combination must coincide.  Consequently, the identity operator on a Haar
system Hardy space factors through either a given Haar multiplier $D$ or $\Id - D$.  In particular, this establishes that Haar system Hardy spaces
possess the diagonal primary factorization property, which is an isomorphic invariant introduced here in \Cref{sec:proj-reduct}.

Our main results establish factorization properties for operators acting on Janson-Sobolev and Janson-Sobolev-Hardy spaces $Z_V$ that are diagonal
with respect to their natural Schauder bases $(\varphi_P^{k,\mathcal{B}} : P\in\mathcal{A},\ 1\leq k\leq r)$.  A central role in our analysis is played by the canonical projections
$Q_{\mathcal{K}}^{\mathcal{B}}$, $\mathcal{K}\subset\{1,\dots,r\}$, which map a linear combination of the Schauder basis
$(\varphi_P^{k,\mathcal{B}} : P\in\mathcal{A},\ 1\leq k\leq r)$ onto the components corresponding to the indices $k\in\mathcal{K}$.  These projections play a role analogous to that of the Capon
projection $\mathcal{C}$ in~\cite{MR4816119}.  This is reflected in \Cref{mainthm:picewise-constant}, where it is shown that diagonal operators $D$ on
$Z_V$ are almost a projectional factor of a linear combination of canonical projections.  The coefficients of this linear combination are likewise
obtained from the diagonal operator via bounded linear functionals.

The occurring canonical projections are determined by the finest bounded partition $\mathcal{K}_t$, $1\leq t\leq s$ of $\{1, \dots, r\}$, that is the finest partition
such that every $Q_{\mathcal{K}_t}^{\mathcal{B}}$, $1\leq t\leq s$ is bounded.  Analogous to the results for the unbounded Capon projection above, if none of the nontrivial
canonical projections $Q_{\mathcal{K}}^{\mathcal{B}}$, $\emptyset\neq \mathcal{K}\subsetneq \{1, \dots, r\}$ is bounded (i.e., $Z_V$ has Property~I), then the coefficients of the linear combination
are forced to coincide.  This rigidity is instrumental in the proof of \Cref{mainthm:main-prop-I}.

From the perspective of factorization, the Capon projection in~\cite{MR4816119} and the canonical projections considered in the present work serve as
fundamental building blocks for diagonal operators.  Consequently, their behavior, as characterized by Properties~I, IIA, and IIB (see
\Cref{dfn:space-type}), plays a decisive role in determining the factorization properties of the entire class of diagonal operators.  Specializing to
the case $Z_V(L^1)$, we establish the diagonal primary factorization property in \Cref{mainthm:main-L1}.  Moreover, if $Z_V(L^1)$ has Property~IIB,
then it is isomorphic to $H^1$ and hence even possesses the primary factorization property.

We will now state our first main theorem and describe the ideas for its proof in more detail.
\begin{mainthm}\label{mainthm:picewise-constant}
  Let $Z_V = Z_V((\mathcal{F}_j),X,\boldsymbol{\sigma};\mathbb{R}^l)$ denote a Janson-Sobolev space or a Janson-Sobolev-Hardy space, and let
  $\mathcal{K}_1, \dots, \mathcal{K}_s\neq \emptyset$ denote the finest bounded partition of $\{1, \dots, r\}$ for the orthonormal basis $\mathcal{B}$ of
  $V$ with respect to $Z_V$.  Suppose that $D\colon Z_V\to Z_V$ is a bounded diagonal operator with respect to
  $(\varphi_P^{k,\mathcal{B}} : P\in \mathcal{A},\ 1\leq k\leq r)$, let $k_t\in \mathcal{K}_t$, $1\leq t\leq s$ and pick any non-principal ultrafilter $\mathcal{U}$ on $\mathbb{N}_0$.  Then $S\colon Z_V\to Z_V$ given by
  \begin{equation}\label{eq:21}
    S
    = \sum_{t=1}^{s} \lambda_{\mathcal{U}}^{k_t}(D) Q_{\mathcal{K}_t}^{\mathcal{B}}
  \end{equation}
  almost projectionally factors through $D$ with constant~$1^{+}$.
\end{mainthm}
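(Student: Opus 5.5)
The plan is to follow the three-stage strategy familiar from Haar multipliers on Haar system Hardy spaces (cf.~\cite{MR4816119}): first replace $D$ by a \emph{stabilized} diagonal operator, then by one whose entries are almost constant in $P$, and finally identify the result with $S$. All reductions are carried out with block-basis operators $A,B$ satisfying $AB=\Id_{Z_V}$ and $\|A\|\|B\|\le 1+\gamma$. Since almost projectional factorization with constant $1^+$ is transitive (errors add up, constants multiply), it suffices to chain finitely many such steps, each with arbitrarily small error and constant arbitrarily close to $1$.

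\emph{Step 1: building blocks.} For a collection $\mathcal{C}$ of pairwise disjoint atoms contained in a given atom $Q$, covering all of $Q$ up to small measure, I will form ``faithful'' block functions
\begin{equation*}
b_Q^k=\sum_{R\in\mathcal{C}}\varphi_R^k,
\end{equation*}
where $\mathcal{C}$ is taken at a deep level chosen according to the sequence $(n_i)$ from \Cref{lem:functionals-classical-limit}. I then define
\begin{equation*}
B\varphi_Q^k=b_Q^k,\qquad A f=\sum_{Q,k}\frac{\langle f,b_Q^k\rangle}{\|b_Q^k\|_{L^2}^2}\,\varphi_Q^k .
\end{equation*}
These blocks must reproduce the tree geometry. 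A sum of $\varphi_R^k$ over a full-measure family of descendants of $Q$ has the same distribution pattern, and hence nearly the same $Z_V$ norm, as $\varphi_Q^k$. The comparison uses rearrangement invariance of $X$ together with the scaling constants $\gamma_X^u,\gamma_X^d$ of \Cref{rem:scaling}, which tend to $1$. Using the same collection of atoms for every $k$ is essential, because only then does $B$ commute with the canonical projections, that is, $BQ_{\mathcal{K}}=Q_{\mathcal{K}}B$. This gives $\|A\|,\|B\|\le 1^+$, and $AB=\Id$ holds by orthogonality.

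\emph{Step 2: computing $ADB$.} The operator $ADB$ is diagonal with entries
\begin{equation*}
\sum_{R\in\mathcal{C}}|R|\,d_R^k\Big/\sum_{R\in\mathcal{C}}|R| ,
\end{equation*}
that is, averages of $d^k$ over the chosen level. I plan to select the $\mathcal{C}$'s by a randomization or averaging argument, in the style of the ``stabilization'' of \cite{MR4816119}. The selection should make these averages, for every block $Q$, close to $\int A_{n_i}^{D,k}$ and hence to $\lambda_{\mathcal{U}}^k(D)$, up to errors summable over the blocks. The natural tool is the weak$^*$ limit $M^{D,k}$: localized averages on $Q$ approach $\frac1{|Q|}\int_Q M^{D,k}$. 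By a further block construction via martingale convergence, one can choose $Q$'s inside sets where $M_n^{D,k}$ is nearly constant, or alternatively spread the blocks so that their averages approach the global mean. Summable errors then yield $\|ADB-\widetilde S\|\le\eta$, where $\widetilde S$ is diagonal with entries $\lambda_{\mathcal{U}}^k(D)$. Controlling this error in norm uses \eqref{eq:74} and the fact that a diagonal operator with small, rapidly decaying coefficients has small norm on each $\varphi_Q^k$, summed with geometric weights.

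\emph{Step 3: passing to $S$.} At this point $\widetilde S=\sum_k\lambda_{\mathcal{U}}^k(D)Q_k$, which may be unbounded and need not equal $S$. I would show that $\lambda_{\mathcal{U}}^k(D)=\lambda_{\mathcal{U}}^{k'}(D)$ whenever $k,k'$ lie in the same $\mathcal{K}_t$. Suppose instead that the values differed within $\mathcal{K}_t$. Since $ADB$ is bounded and $ADB\approx\widetilde S$ on the finite-dimensional pieces, splitting $\mathcal{K}_t$ by value would give
\begin{equation*}
Q_{\{k\in\mathcal{K}_t:\lambda^k=c\}}\;=\;\text{a polynomial in }\widetilde S\text{ composed with }Q_{\mathcal{K}_t},
\end{equation*}
which is uniformly bounded on finite spans. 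This contradicts minimality of the finest bounded partition. Making this precise, by obtaining uniform bounds on all finite spans from the approximate factorization, is the main obstacle I expect, together with the norm estimate for the blocks in Step 1 when $X$ is a general r.i.\ space and $\boldsymbol\sigma$ is Rademacher.
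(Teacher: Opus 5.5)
Your architecture (Gamlen--Gaudet block reductions, then an interpolation argument against minimality of the finest bounded partition) matches the paper's. However, both load-bearing steps have a genuine gap as stated.

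\emph{Steps~1--2.} Blocks for $Q$ that lie inside $Q$ and cover it are incompatible with the nesting behind $\|B\|\le 1^+$. By condition (C2) of \Cref{dfn:compatibility}, the blocks of a child $Q\in\mathcal S(P)$ must lie in $\Gamma_P^Q$, which is spread over all of $\mathcal B_P^{*}$. Even setting this aside, localized averages tend to $|Q|^{-1}\int_Q M^{D,k}$, i.e.\ to the closed martingale $M_n^{D,k}$, not to $\lambda^k_{\mathcal U}(D)$. The random-sign stabilization of the Haar setting is also not available for general $V$. The paper instead uses spread blocks, Egorov's theorem, and $(1-\kappa_P)$-faithful systems mounted on a large set $U$, with the $1^+$ bound supplied by \Cref{thm:canonicalkp}. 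This yields adapted processes converging uniformly at any prescribed rate, but to limits that are only \emph{close to} constants. Hence $|\widehat d_P^k-\lambda^k_{\mathcal U}(D)|$ does not tend to $0$ along branches, and in general $\sum_{P,k}|\widehat d_P^k-\lambda^k_{\mathcal U}(D)|=\infty$. So \Cref{lem:schauder} cannot give $\|ADB-\widetilde S\|\le\eta$. The tool that does control such errors is the variation norm of \Cref{pro:variation-norm}. It applies only to operators whose entries coincide on the classes of a partition with bounded canonical projections, which is exactly what is in question.

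\emph{Step~3.} Consequently there is nothing bounded to interpolate. $\widetilde S$ is not known to be bounded, even uniformly on finite spans, so no polynomial in $\widetilde S$ is either. The missing idea is the Vandermonde technique with \emph{operator} nodes (\Cref{thm:vantrick1}).
\begin{itemize}
\item Localize to a set $U$ on which the limits $M^{D,k}$ coincide within groups $\mathcal J_1,\dots,\mathcal J_m$ and lie within $\delta/4$ of constants that are $\delta$-separated between groups.
\item Run a Gamlen--Gaudet step mounted on $U$ with rate $\eta_i$ such that $\sum_i\#\mathcal{AF}_i\,\eta_i$ is small. An absolutely summable perturbation then makes the entries \emph{exactly} equal within each group.
\item Write $\widehat D=\sum_t D_tQ_{\mathcal J_t}$ with diagonal operators $D_t$ independent of $k$. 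The variation norm for the trivial partition needs only $\Id$ to be bounded, and it shows that each $D_t$ and each $(D_t-D_{t'})^{-1}$ is bounded.
\item Then $Q_{\mathcal J_{t_0}}=\prod_{t\ne t_0}(D_{t_0}-D_t)^{-1}\prod_{t\ne t_0}(\widehat D-D_t)$ is bounded. This contradicts minimality if some $\mathcal K_t$ meets two groups.
\end{itemize}
This argument gives agreement of the \emph{pointwise limits} $M^{\widehat D,k}$ on each $\mathcal K_t$ (\Cref{thm:agreement}), not merely of the means $\lambda^k_{\mathcal U}$. You need that stronger statement to perturb summably to entries exactly constant on each $\mathcal K_t$. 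Only then is the variation norm relative to $\mathcal K_1,\dots,\mathcal K_s$ legitimate, since these $Q_{\mathcal K_t}$ are bounded. It gives $\|ADB-S\|$ small, with \Cref{lem:functional-stability} controlling the drift of $\lambda^k_{\mathcal U}$ along the reductions.
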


The proof proceeds through a systematic, stepwise reduction of the factorization problem using projectional factorization operators
$A_1,B_1,\dots,A_n,B_n$, each induced by a carefully constructed block basis of $(\varphi_P^{k,\mathcal{B}} : P\in\mathcal{A},\ 1\leq k\leq r)$.  In the first reduction step, we obtain
a new diagonal operator $D_1$ by setting $D_1=A_1DB_1$.  We then proceed inductively, defining $D_j=A_jD_{j-1}B_j$ for $2\leq j\leq n$, leading to
$D_n=S$ as in~\eqref{eq:21}.  We refer to the procedure by which this reduction is achieved as \emph{stabilization}.  It is described in detail in
\Cref{sec:stabilization}.

For a general bounded diagonal operator $D$, the associated adapted processes $(A_n^{D,k})_{n=0}^{\infty}$, $1\leq k\leq r$, are bounded but need not possess any
useful form of convergence, such as pointwise convergence.  In previous works, stabilization has been achieved through a variety of methods.  For
example, in his work on the James tree space~\cite{MR709696}, A.~D.~Andrew passes to a subtree on which a given bounded function converges along every
branch.  More recently, in~\cite{MR4430957,MR4816119,MR4894823,MR4884827}, a stochastic selection procedure was introduced that modulates
Gamlen-Gaudet block bases by random signs, balancing the two sides of the dyadic tree at each node and thereby ensuring convergence along every
branch.  This convergence is uniform except, possibly, at points belonging to the dyadic grid.

This modulation of the Haar functions by signs can be viewed as a local rearrangement: the sets on which a Haar function takes the values $+1$ and
$-1$ are swapped.  In the present setting, however, we must account not only for vector-valued $q$-adic martingale difference sequences in place of
scalar-valued Haar functions, but also for the additional subspace restriction imposed by $V$.  Consequently, a randomization procedure based on
rearranging the vector-valued martingale difference sequences would necessarily restrict the class of admissible subspaces $V$.  Achieving
stabilization in full generality therefore requires a fundamentally different approach.

Rather than balancing the nodes of the tree, our approach forces pointwise convergence by splitting off a martingale.  Since the processes
$(A_n^{D,k})_{n=0}^{\infty}$, $1\leq k\leq r$, are bounded in $L^\infty$, we can extract weak$^*$ convergent subsequences with limits $M^{D,k}$,
$1\leq k\leq r$, in $L^\infty$.  Each of these limits induces a closed martingale $(M_n^{D,k})_{n=0}^{\infty}$ converging pointwise to $M^{D,k}$.  We then apply
Egorov's theorem to improve pointwise convergence to uniform convergence, at the cost of losing faithfulness of the resulting block bases.  Details of
the argument are given in \Cref{sec:stab-via-mart}.

Once convergence has been established, the crucial \Cref{thm:agreement} shows that the limits $M^{D,k}$, $1\leq k\leq r$ agree almost everywhere on indices
determined by the finest bounded partition.  This is analogous to the corresponding results for the Capon projection obtained in~\cite{MR4816119} (see
Theorems~2.32, 2.38, and 2.39).  The proof of \Cref{thm:agreement} relies essentially on the Vandermonde technique, which we develop in
\Cref{sec:vandermonde-approach} and apply to the following setting.  Given bounded diagonal operators $\widehat{D}$ and $D_t$, $1\leq t\leq s$, such that
$\widehat{D} = \sum_{t=1}^s D_tQ_{\mathcal{K}_t}$ and $D_{t_0}-D_{t_1}$ is invertible for all $t_0\neq t_1$, we show that all canonical projections
$Q_{\mathcal{K}_t}$, $1\leq t\leq s$ are necessarily bounded.  The final step in the proof of \Cref{mainthm:picewise-constant} is to translate the agreement of the
limits $M^{D,k}$, $1\leq k\leq r$ on the finest bounded partition back to the corresponding diagonal operators.  We refer to the first part of
\Cref{sec:reduct-canon-proj} for details.

Finally, we emphasize that the reduction of an arbitrary bounded diagonal operator $D$ to the operator $S$ given by~\eqref{eq:21} is achieved in such a way that the coefficients in the resulting linear combination of canonical projections are determined a priori by the linear functionals $\lambda_{\mathcal{U}}^{k_t}(D)$.
If the space $Z_V$ in \Cref{mainthm:picewise-constant} has Property~I, then all coefficient functionals $\lambda_{\mathcal{U}}^k(D)$, $1\leq k\leq r$, must agree. This
agreement is the key ingredient in the proof of our second main theorem, stated below.
\begin{mainthm}\label{mainthm:main-prop-I}
  Every Janson-Sobolev space or Janson-Sobolev-Hardy space $Z_V$ with Property~I has the $2^+$-diagonal primary factorization property.
\end{mainthm}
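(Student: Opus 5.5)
Fix the orthonormal basis $\mathcal{B}$ witnessing Property~I. By \Cref{dfn:space-type}, the only bounded canonical projections for $\mathcal{B}$ are $0$ and $\Id_{Z_V}$. Hence the finest bounded partition of $\{1,\dots,r\}$ for $\mathcal{B}$ (\Cref{dfn:finest-bounded-partition}) is the trivial one, $s=1$ and $\mathcal{K}_1=\{1,\dots,r\}$, and $Q_{\mathcal{K}_1}^{\mathcal{B}}=\Id_{Z_V}$.

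Now let $D$ be any bounded diagonal operator with respect to $(\varphi_P^{k,\mathcal{B}})$, and fix a non-principal ultrafilter $\mathcal{U}$ and $k_1=1$. Put $\lambda=\lambda_{\mathcal{U}}^{1}(D)$. \Cref{mainthm:picewise-constant} states that $S=\lambda\,\Id_{Z_V}$ almost projectionally factors through $D$ with constant $1^+$. So for every $\gamma,\eta>0$ there are $A,B$ with $AB=\Id_{Z_V}$, $\|A\|\|B\|\le 1+\gamma$, and $\|ADB-\lambda\Id\|\le\eta$. Because $AB=\Id$, the same operators give
\begin{equation*}
  \|A(\Id-D)B-(1-\lambda)\Id\|\le\eta .
\end{equation*}
This is the point of projectional factorization: one pair of operators handles both $D$ and $\Id-D$.

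Either $|\lambda|\ge 1/2$ or $|1-\lambda|\ge 1/2$. Suppose $|\lambda|\ge1/2$; the other case is symmetric, with $\Id-D$ in place of $D$. Then $\|\lambda^{-1}ADB-\Id\|\le 2\eta<1$ for small $\eta$, so $U:=\lambda^{-1}ADB$ is invertible by a Neumann series, with $\|U^{-1}\|\le (1-2\eta)^{-1}$. Therefore
\begin{equation*}
  \Id=(U^{-1}\lambda^{-1}A)\,D\,B ,
\end{equation*}
and the constant is at most $|\lambda|^{-1}(1-2\eta)^{-1}(1+\gamma)\le 2(1+\gamma)(1-2\eta)^{-1}$. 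Letting $\gamma,\eta\to0$ gives $\Id$ factoring through $D$ or through $\Id-D$ with constant $2^+$. Since $(\varphi_P^{k,\mathcal{B}})$ in the order $<_2$ is a Schauder basis of $Z_V$, this is exactly the $2^+$-diagonal primary factorization property.

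One subtlety in the constant: invertibility must be absorbed without losing more than a factor tending to $1$, which the Neumann estimate provides. Also, the case selection depends on $D$, but the bound $2$ comes only from $\max(|\lambda|,|1-\lambda|)\ge 1/2$, so it is uniform in $D$.

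The only genuinely non-formal ingredient is the reduction of the finest bounded partition to a single block. This follows immediately from Property~I, since every $Q_{\mathcal{K}}^{\mathcal{B}}$ with $\emptyset\neq\mathcal{K}\subsetneq\{1,\dots,r\}$ is unbounded. All the hard analytic work, namely stabilization and the agreement of the coefficients $\lambda_{\mathcal{U}}^k(D)$, is contained in \Cref{mainthm:picewise-constant}.
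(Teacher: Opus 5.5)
Your proposal is correct and follows the paper's proof: the paper also notes that the finest bounded partition is trivial, applies \Cref{mainthm:picewise-constant}, and packages your Neumann-series step as \Cref{cor:factor-1} (with $\gamma = 1$). The one difference is that the paper first invokes \Cref{thm:agreement} to conclude that all $\lambda_{\mathcal{U}}^k(D)$ coincide, whereas you simply fix $k_1 = 1$; this is legitimate because \Cref{mainthm:picewise-constant} allows any choice of $k_t \in \mathcal{K}_t$, and it avoids the convergence hypothesis of \Cref{thm:agreement}, which a general $D$ need not satisfy.
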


\Cref{mainthm:main-prop-I} concerns all Janson-Sobolev and Janson-Sobolev-Hardy spaces equipped with a general rearrangement invariant norm and having
Property~I.  More can be said about the remaining cases, Properties~IIA and IIB, when the underlying rearrangement invariant norm is the $L^1$-norm.
In this case, we can exploit the isomorphic classification of such spaces established in \Cref{sec:hardy-iso}.
\begin{mainthm}\label{mainthm:main-L1}
  Let $Z_V(L^1) = Z_V((\mathcal{F}_j),L^1,\boldsymbol{\sigma};\mathbb{R}^l)$ denote a Janson-Sobolev space or a Janson-Sobolev-Hardy space.  Then
  $Z_V(L^1)$ has the diagonal primary factorization property.  Moreover, if $Z_V(L^1)$ has Property~IIB, then $Z_V(L^1)$ is isomorphic to $H^1$ and
  therefore has the primary factorization property.
\end{mainthm}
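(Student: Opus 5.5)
The plan is to split according to the trichotomy of \Cref{dfn:space-type}, which by \Cref{thm:trichotomy} covers every $Z_V(L^1)$, and to use the isomorphic classification of \Cref{sec:hardy-iso} together with the isomorphic invariance of the diagonal primary factorization property (\Cref{rem:diagonal-primary-factorization-property}).

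\emph{Property~I.} If $Z_V(L^1)$ has Property~I, then \Cref{mainthm:main-prop-I} applies directly and yields the $2^+$-diagonal primary factorization property.

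\emph{Property~IIB.} Suppose $Z_V(L^1)$ has Property~IIB. For Janson-Sobolev spaces, \Cref{sec:hardy-iso} gives $Z_V(L^1)\simeq H^1$. For Janson-Sobolev-Hardy spaces, the same section gives $Z_V(L^1)\simeq H^1$ regardless of the property. By \Cref{rem:H1}, $H^1((\mathcal F_j);\mathbb R^l)$ has the primary factorization property, and this property is an isomorphic invariant. Since $Z_V$ has the Schauder basis $(\varphi_P^k)$, \Cref{rem:factorization-properties} shows that the primary factorization property implies the diagonal one. This settles the ``moreover'' part, and it also settles every Janson-Sobolev-Hardy case.

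\emph{Property~IIA, Janson-Sobolev spaces.} In this case \Cref{sec:hardy-iso} gives $Z_V(L^1)\simeq Z_{\spn\{b\}}((\mathcal F_j),L^1,\boldsymbol\sigma;\mathbb R^l)$. By \Cref{rem:dimension-1}, the one-dimensional space $Z_{\spn\{b\}}$ has Property~I, so \Cref{mainthm:main-prop-I} gives it the diagonal primary factorization property. Isomorphic invariance then transfers the property back to $Z_V(L^1)$.

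If the identification with $Z_{\spn\{b\}}$ instead produces an $\ell^l$-valued or direct-sum version, I would use the following fallback. Here \Cref{rem:space-type} shows that $Q_j f=((w^j)^t f)w^j$, so $Z_V$ splits as a direct sum of $r$ copies of a scalar-type space $Y$ with $\dim V=1$. \Cref{rem:W-auxiliary} gives $Y\simeq \ell^1(Y)$, hence $Z_V\simeq Y^r\simeq Y$. By the previous case, $Y$ has the diagonal primary factorization property, and we again conclude by isomorphic invariance.

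\emph{Main obstacle.} The delicate point is making sure the Property~IIA identification in \Cref{sec:hardy-iso} lands in a space known to have Property~I, rather than merely being a direct sum. The Pe{\l}czy\'nski-type absorption $Y^r\simeq Y$ via \Cref{rem:W-auxiliary} with $p=1$ is the tool I would rely on for this.
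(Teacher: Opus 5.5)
Your proposal is correct and follows the paper's proof essentially step for step: the trichotomy of \Cref{thm:trichotomy}, Property~I via \Cref{mainthm:main-prop-I}, the Janson-Sobolev-Hardy and Property~IIB cases via \Cref{thm:isomorphic-classification-H1} and the primary factorization property of $H^1$, and the Janson-Sobolev Property~IIA case via \Cref{thm:isomorphic-classification-L1}, the fact that the one-dimensional space $Z_{\operatorname{span}\{b^t\}}(L^1)$ has Property~I, and isomorphic invariance. Your fallback is unnecessary, because \Cref{thm:isomorphic-classification-L1} already lands in the scalar-valued space $Z_{\operatorname{span}\{b^t\}}(L^1)$ (its proof is essentially your absorption argument via \Cref{rem:W-auxiliary}); you also correctly make explicit, via \Cref{rem:factorization-properties}, that the primary factorization property yields the diagonal one in the $H^1$ case.
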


In \Cref{sec:an-example-non}, we construct a Janson-Sobolev space $Z_V(L^1)$ with Property~I that is neither primary nor has the primary factorization
property.  This shows that, in general, the diagonal primary factorization property in \Cref{mainthm:main-L1} cannot be upgraded to either the primary
factorization property or primarity.  This result suggests a broader question.
\begin{question}\label{que:diagonal-primary-factorization}
  Which Banach spaces with a Schauder basis have the diagonal primary factorization property?
\end{question}

%

\section{Schauder basis}
\label{sec:schauder-basis}

We establish quantitative estimates for the basis constant of the Schauder basis $(\varphi_P^k : P\in \mathcal{A},\ 1\leq k\leq r)$ for Janson-Sobolev and
Janson-Sobolev-Hardy spaces.

\begin{pro}\label{pro:schauder-Z}
  The system $(\varphi_P^k : P\in \mathcal{A},\ 1\leq k\leq r)$ ordered with respect to the linear order on $\mathcal{A}\times \{1, \dots, r\}$ defined in \Cref{sec:basic-definitions} is a
  Schauder basis in $Z_V(X) = Z_V((\mathcal{F}_j),X,\boldsymbol{\sigma};\mathbb{R}^l)$ with basis constant $\lambda_{Z_V(X)}\leq 1+2\sqrt{q}$.
\end{pro}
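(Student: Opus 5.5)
Since the span of the $\varphi_P^k$ is dense in $Z_V(X)$ by definition, it suffices to bound the norms of the partial sum projections uniformly. Fix $(P_0,k_0)\in\mathcal{A}\times\{1,\dots,r\}$ with $P_0\in\mathcal{AF}_n$, and let $\Pi$ denote the partial sum projection onto $\spn\{\varphi_P^k : (P,k)\leq_2 (P_0,k_0)\}$. We plan to split $\Pi$ into two pieces. The first piece is $\cond(\cdot|\mathcal{G})$, where $\mathcal{G}$ is the $\sigma$-algebra generated by all atoms of $\mathcal{AF}_{n+1}$ that are successors of atoms $P<_1 P_0$ in $\mathcal{AF}_n$, together with $\mathcal{AF}_n$ itself. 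Because the order $<_1$ traverses level $n$ from left to right, this $\sigma$-algebra is generated by finitely many atoms, and it captures exactly the coefficients $a_P^k$ with $P<_1 P_0$. By \Cref{rem:conditional}, this piece has norm at most $1$. The second piece is the single-node operator
\begin{equation*}
  f\mapsto \sum_{k\leq k_0} a_{P_0}^k \varphi_{P_0}^k,
\end{equation*}
and the partial sum operator equals the sum of these two pieces. (When $k_0=r$, the second piece equals $\Theta$-type data at $P_0$ only, which is again a conditional expectation difference.)

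It remains to bound the second piece by $2\sqrt q$. The vector $g:=\Delta_{n+1}f\cdot\cha_{P_0}=\sum_{k} a_{P_0}^k\varphi_{P_0}^k$ is controlled by $f$. Indeed,
\begin{equation*}
  g=\cha_{P_0}\bigl(\cond(f|\mathcal{F}_{n+1})-\cond(f|\mathcal{F}_n)\bigr),
\end{equation*}
and we expect $\|g\|_{Z_V}\leq 2\|f\|_{Z_V}$ by \Cref{rem:conditional} and the contractivity of multiplication by $\cha_{P_0}$ noted before \eqref{eq:206}. The function $g$ is constant on each successor $R\in\mathcal{S}(P_0)$, with value the column $u_{\ord(R)}$ of the matrix $u=\sum_k a_{P_0}^k v^k\in V$. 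Its $Z_V$-norm is therefore $\|x\mapsto \|u_{\ord(R(x))}\|_{\ell^2_l}\cha_{P_0}(x)\|_X$, since only one $\sigma_j$ appears and hence the expectation over signs is trivial. The truncated function $h$ corresponds to $Pu$, where $P$ is the orthogonal projection in $V$ onto $\spn\{v^1,\dots,v^{k_0}\}$ with respect to \eqref{eq:defscalar}.

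The key estimate, and the main obstacle, is comparing $\|h\|_{Z_V}$ with $\|g\|_{Z_V}$ using only rearrangement invariance. Write $c=\max_j\|u_j\|_{\ell^2_l}$. By the lattice property (a) and rearrangement invariance (b), we have
\begin{equation*}
  \|g\|_{Z_V}\geq c\,\|\cha_{R}\|_X = c\,\phi(|P_0|/q),
\end{equation*}
where $R$ is a successor of $P_0$ carrying the maximal column and $\phi$ is the fundamental function. On the other hand, each column of $Pu$ satisfies
\begin{equation*}
  \|(Pu)_j\|\leq \sqrt q\,\|Pu\|_V\leq \sqrt q\,\|u\|_V\leq \sqrt q\, c.
\end{equation*}
Hence
\begin{equation*}
  \|h\|_{Z_V}\leq \sqrt q\, c\,\|\cha_{P_0}\|_X=\sqrt q\,c\,\phi(|P_0|).
\end{equation*}
The difficulty is the ratio $\phi(|P_0|)/\phi(|P_0|/q)$, which could be as large as $q$. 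To avoid it, I would instead use a sharper pointwise comparison: rearrange so that $h$ is dominated by $\sqrt q$ times a function equimeasurable with $g$. Concretely, I would use $\|Pu\|_V\leq\|u\|_V$ together with $\|u\|_V^2=\frac1q\sum_j\|u_j\|^2$, and write $\|(Pu)_j\|$ pointwise. If this fails, I would fall back on interpolating between the $L^1$ and $L^\infty$ bounds through the Calderón–Mityagin property of r.i. spaces: the map $u\mapsto Pu$ on $q$ atoms of equal mass, viewed as an operator $\ell^1_q(\ell^2_l)\to\ell^1_q(\ell^2_l)$ and $\ell^\infty_q\to\ell^\infty_q$, has norm at most $\sqrt q$ in both, hence at most $\sqrt q$ on every r.i. space. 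Combining this with $\|g\|\leq 2\|f\|$ gives $2\sqrt q$, and adding the conditional expectation piece yields $\lambda_{Z_V(X)}\leq 1+2\sqrt q$.
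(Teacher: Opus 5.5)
Your decomposition of the partial sum into $\cond(\cdot|\mathcal{G})$ plus the single-node piece, and the bound $\|g\|_{Z_V}\leq 2\|f\|_{Z_V}$, are essentially the paper's. In fact your $\mathcal{G}$, generated by $\mathcal{AF}_n$ and the successors of the level-$n$ atoms $P<_1P_0$, is the precise form of the paper's $\mathcal{G}$, which should be generated by the successors of the atoms $P<P_0$ rather than by these atoms themselves.

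The difference is in the key estimate $\|h\|_{Z_V}\leq \sqrt q\,\|g\|_{Z_V}$, and your first idea there cannot work. Take $q=3$, $l=1$, $V=\mathbb{R}^{1\times 3}_0$ with $v^1\propto(2,-1,-1)$, $v^2\propto(0,1,-1)$, $k_0=1$ and $u=(1,-1,0)$. Then $Pu=(1,-\tfrac12,-\tfrac12)$, so $h$ is non-zero on all of $P_0$, while $g$ vanishes on one successor of $P_0$. The measure of the support is an equimeasurability invariant, so no function equimeasurable with a multiple of $\|g\|_{\ell^2_l}$ can dominate $\|h\|_{\ell^2_l}$ pointwise. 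Everything therefore rests on your fallback.

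The fallback is correct and takes a genuinely different route. The paper works with the $1$-symmetric norm $\rho(z)=\bigl\|\sum_{R\in\mathcal{S}(P_0)}|z_{\ord(R)}|\cha_R\bigr\|_X$ on $\mathbb{R}^q$. It invokes F.~John's theorem (the John ellipsoid of a $1$-symmetric body is a ball) to get $C\|z\|_{\ell^2_q}\leq\rho(z)\leq C\sqrt q\,\|z\|_{\ell^2_q}$. It then combines this with the $L^2$-contractivity $\|(\|(Pu)_j\|)_j\|_{\ell^2_q}\leq\|(\|u_j\|)_j\|_{\ell^2_q}$.

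You use the same $L^2$-contractivity only to obtain the endpoint bounds $\sqrt q$ on $\ell^1_q(\ell^2_l)$ and $\ell^\infty_q(\ell^2_l)$, and then interpolate. Both endpoint bounds are correct once $P$ is taken as the orthogonal projection on all of $\mathbb{R}^{l\times q}$. Both routes give exactly $\sqrt q$: the paper's is a single appeal to convex geometry, while yours replaces John's theorem by an interpolation principle.

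That principle is the one step you still have to justify, for two reasons. Calder\'on--Mityagin is a statement about operators on scalar functions, whereas your operator mixes the $\ell^2_l$-coordinates of different columns. Also, the r.i.\ spaces of \Cref{dfn:r.i.-spaces} are not assumed to have the Fatou property. Both issues disappear if you check the majorization by hand.

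Set $x_j=\|(Pu)_j\|_{\ell^2_l}$ and $y_j=\|u_j\|_{\ell^2_l}$, let $x^*,y^*$ be their decreasing rearrangements, and put $S_m=\sum_{i\leq m}y_i^*$. For $1\leq m\leq q$,
\begin{equation*}
  \sum_{i\leq m}x_i^*\leq\sqrt m\,\|x\|_{\ell^2_q}\leq\sqrt m\,\|y\|_{\ell^2_q}\leq\sqrt{m S_m^2+(q-m)S_m^2/m}\leq\sqrt q\,S_m,
\end{equation*}
where the third step uses $\sum_{i>m}(y_i^*)^2\leq (q-m)(S_m/m)^2$, and the last is equivalent to $(m-1)(m-q)\leq 0$.

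Thus $x$ is weakly majorized by $\sqrt q\,y$. Hence $x$ is coordinatewise dominated by a convex combination of permutations of $\sqrt q\,y$ (Birkhoff). The lattice property, symmetry and convexity of $\rho$ then give $\rho(x)\leq\sqrt q\,\rho(y)$, that is, $\|h\|_{Z_V}\leq\sqrt q\,\|g\|_{Z_V}$. With this step in place, your argument yields $\lambda_{Z_V(X)}\leq 1+2\sqrt q$.
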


\begin{myproof}[Proof of \Cref{pro:schauder-Z}]
  \begin{proofstep}
    Let $f = \sum_{P\in \mathcal{A}} \sum_{k=1}^r a_P^k \varphi_P^k$ denote a finite linear combination of
    $(\varphi_P^k : P\in \mathcal{A},\ 1\leq k\leq r)$ and let $(P_0,k_0)\in \mathcal{A}\times\{1, \dots, r\}$.  Now define $\pi\colon Z_V(X)\to Z_V(X)$ by
    \begin{equation*}
      \pi(f)
      = \sum_{(P,k)\leq (P_0,k_0)} a_P^k \varphi_P^k.
    \end{equation*}
    We need to show that $\|\pi f\|_{Z_V(X)}\leq (1 + 2\sqrt{q})\cdot \|f\|_{Z_V(X)}$.

    To this end, we define
    \begin{equation*}
      g_1 = \sum_{P< P_0} \sum_{k=1}^r a_P^k \varphi_P^k
      \qquad\text{and}\qquad
      g_2 = \sum_{k=1}^{k_0} a_{P_0}^k \varphi_{P_0}^k,
    \end{equation*}
    and let $\mathcal{G}$ denote the $\sigma$-algebra generated by $\{P\in \mathcal{A} : P < P_0\}$.  Note that by \Cref{rem:conditional}, we have
    $\|g_1\|_{Z_V(X)}\leq \|f\|_{Z_V(X)}$, and hence
    \begin{equation}\label{eq:117}
      \|\pi f\|_{Z_V(X)}
      = \|g_1 + g_2\|_{Z_V(X)}
      \leq \|f\|_{Z_V(X)} + \|g_2\|_{Z_V(X)}.
    \end{equation}
    Below, we will prove that
    \begin{equation}\label{eq:104}
      \|g_2\|_{Z_V(X)}
      = \Bigl\| \sum_{k=1}^{k_0} a_{P_0}^k \varphi_{P_0}^k \Bigr\|_{Z_V(X)}
      \leq \sqrt{q} \Bigl\| \sum_{k=1}^r a_{P_0}^k \varphi_{P_0}^k \Bigr\|_{Z_V(X)}
      =: \sqrt{q} \|G_2\|_{Z_V(X)},
    \end{equation}
    Assuming for the moment that~\eqref{eq:104} is true, we can complete the proof as follows: Let $\mathcal{H}$ denote the $\sigma$-algebra generated by
    $\{P\in \mathcal{A} : P \leq P_0\}$ and note that $G_2 = \cond(f | \mathcal{H}) - \cond(f | \mathcal{G})$.  We conclude the proof appealing to \Cref{rem:conditional} and
    combining~\eqref{eq:117} and \eqref{eq:104} to obtain
    \begin{equation*}
      \|\pi f\|_{Z_V(X)}
      \leq (1 + 2\sqrt{q})\|f\|_{Z_V(X)}.
    \end{equation*}
  \end{proofstep}
  
  \begin{proofstep}
    Here, we prove that~\eqref{eq:104} is true, and thereby we complete the proof of \Cref{pro:schauder-Z}.  Since~\eqref{eq:104} concerns only
    functions supported on a single atom, we observe that by~\eqref{eq:39} for any sequence $(b_k)_{k=1}^r$ of vectors in $\mathbb{R}^l$ and
    $P_0\in\mathcal{AF}_s$, $s\in\mathbb{N}$ we have
    \begin{equation}\label{eq: anysigmanormonA}
      \Bigl\|\sum_{k=1}^{r} b_k \varphi_{P_0}^k\Bigr\|_{Z_V}
      =\Bigl\|
      \bigl\|\sum_{k=1}^r b_k \varphi_{P_0}^k(x)
      \bigr\|_{\ell^2_l}
      \Bigr\|_X,
    \end{equation}
    for any choice of $\boldsymbol{\sigma}$.  There are q-tuples $(u_i)_{i=1}^q$ and $(U_i)_{i=1}^q$ of vectors in $\mathbb{R}^l$ such that
    \begin{equation}\label{eq: characteristicfuncg2G2}
      g_2 = \sum_{R\in\mathcal{S}(P_0)} u_{\operatorname{ord}(R)} \cha_R
      \quad\mbox{and}\quad 
      G_2 = \sum_{R\in\mathcal{S}(P_0)} U_{\operatorname{ord}(R)} \cha_R.
    \end{equation}
    Note that by~\eqref{eq: anysigmanormonA} we have
    \begin{equation}\label{eq: onlymodcounts}
      \|g_2\|_{Z_V}
      = \Bigl\|
      \sum_{R\in\mathcal{S}(P_0)} \|u_{\operatorname{ord}(R)}\|_{\ell^2_l} \cha_R
      \Bigr\|_X
      \quad\mbox{and}\quad
      \|G_2\|_{Z_V}
      = \Bigl\|
      \sum_{R\in\mathcal{S}(P_0)} \|U_{\operatorname{ord}(R)}\|_{\ell^2_l} \cha_R
      \Bigr\|_X.
    \end{equation}

    In view of~\eqref{eq: onlymodcounts}, it is natural to consider the norm $\rho\colon \mathbb{R}^q \rightarrow [0,\infty)$ given by
    \begin{equation*}
      \rho\bigl((z_i)_{i=1}^{q}\bigr)
      =  \Bigl\|
      \sum_{R\in\mathcal{S}(P_0)} |z_{\operatorname{ord}(R)}| \cha_R
      \Bigr\|_X.
    \end{equation*}
    Clearly, by the above definition and~\eqref{eq: onlymodcounts}, we have
    \begin{equation}\label{eq: norminothernorm}
      \|g_2\|_{Z_V}
      = \rho\bigl((\|u_i\|_{\ell^2_l})_{i=1}^q\bigr)
      \quad\mbox{and}\quad
      \|G_2\|_{Z_V}
      = \rho\bigl((\|U_i\|_{\ell^2_l})_{i=1}^q\bigr)
    \end{equation}     

    Since $X$ is a rearrangement invariant space and all the $R\in \mathcal{S}(P_0)$ have the same measure, it follows that $\rho$ is $1$-symmetric, i.e.,
    \begin{equation*}
      \rho\bigl((z_i)_{i=1}^{q}\bigr)
      = \rho\bigl((\varepsilon_i z_{\tau(i)})_{i=1}^{q}\bigr),
    \end{equation*}
    for every permutation $\tau$ of the set $\{1,2,\ldots, q\}$ and every choice of signs $\varepsilon_i\in\{-1,1\}$, $1\leq i\leq q$.
    By~\cite[Proposition~4.8, Exercise~4.24]{AliceBobBanach} the John ellipsoid is a Euclidean ball.  Therefore, by a classical theorem of
    F.~John~\cite{JohnEl} there exists a constant $C>0$ such that
    \begin{equation}\label{eq: johnel}
      C\|(z_i)_{i=1}^{q}\|_{\ell^2_q}
      \leq \rho\bigl((z_i)_{i=1}^{q}\bigr)
      \leq C\sqrt{q}\|(z_i)_{i=1}^{q}\|_{\ell^2_q}.
    \end{equation}
    Recall that $|P_0| = q |R|$ for every $R\in \mathcal{S}(P_0)$.  Hence, by~\eqref{eq: johnel} and~\eqref{eq: characteristicfuncg2G2}, we obtain
    \begin{equation}\label{eq:115}
      \begin{split}
        \rho\bigl((\|u_i\|_{\ell^2_l})_{i=1}^q\bigr)
        &\leq C\sqrt{q}\cdot \|(\|u_i\|_{\ell^2_l})_{i=1}^q\|_{\ell^2_q}
          = C \frac{q}{\sqrt{|P_0|}}\cdot \sqrt{\sum_{i=1}^q\frac{|P_0|}{q}\|u_i\|^2_{\ell^2_l} }\\
        &= C \frac{q}{\sqrt{|P_0|}}\cdot \sqrt{\sum_{R\in\mathcal{S}(P_0)} |R|\, \|u_{\ord(R)}\|^2_{\ell^2_l}}
          = C \frac{q}{\sqrt{|P_0|}}\cdot \|g_2\|_{L^2}.
      \end{split}
    \end{equation}
    The orthogonality of the functions $\varphi^k_{P_0}$, $1\leq k\leq r$ in $L^2$ (see \Cref{sec:basic-definitions}) yields
    \begin{equation*}
      \|g_2\|_{L^2}
      = \Bigl\|\sum_{k=1}^{k_0} a_{P_0}^k \varphi_{P_0}^k\Bigr\|_{L^2}
      \leq \Bigl\|\sum_{k=1}^{r} a_{P_0}^k \varphi_{P_0}^k\Bigr\|_{L^2}
      = \|G_2\|_{L^2}.
    \end{equation*}
    Inserting the above estimate into~\eqref{eq:115} and using~\eqref{eq: characteristicfuncg2G2} gives us
    \begin{equation}\label{eq: estP01}
      \begin{split}
        \rho\bigl((\|u_i\|_{\ell^2_l})_{i=1}^q\bigr)
        &\leq C \frac{q}{\sqrt{|P_0|}}\cdot \|G_2\|_{L^2}
          = C \frac{q}{\sqrt{|P_0|}}\cdot \sqrt{\sum_{R\in\mathcal{S}(P_0)} |R|\, \|U_{\ord(R)}\|^2_{\ell^2_l}}\\
        &= C\sqrt{q} \|(\|U_i\|_{\ell^2_l})_{i=1}^q\|_{\ell^2_q}.
      \end{split}
    \end{equation}
    By~\eqref{eq: norminothernorm}, \eqref{eq: estP01} and \eqref{eq: johnel}, we have
    \begin{align}\label{testxyz}
      \|g_2\|_{Z_V}
      &= \rho\bigl((\|u_i\|_{\ell^2_l})_{i=1}^q\bigr)
        \leq C\sqrt{q} \|(\|U_i\|_{\ell^2_l})_{i=1}^q\|_{\ell^2_q}\\
      &\leq \sqrt{q} \rho\bigl((\|U_i\|_{\ell^2_l})_{i=1}^q\bigr)
        = \sqrt{q} \|G_2\|_{Z_V},
    \end{align}
    which concludes the proof of \Cref{pro:schauder-Z}.\qedhere
  \end{proofstep}
\end{myproof}

\begin{lem}\label{lem:schauder}
  Let $D = (d_P^k : P\in \mathcal{A},\ 1\leq k\leq r)$ denote a diagonal operator with respect to $(\varphi_P^k : P\in \mathcal{A},\ 1\leq k\leq r)$.  Then
  \begin{equation*}
    \|D\colon Z_V\to Z_V\|
    \leq 2 \lambda_{Z_V} \sum_{P\in \mathcal{A}} \sum_{k=1}^{r} |d_P^k|,
  \end{equation*}
  where $\lambda_{Z_V}$ is the basis constant of $(\varphi_P^k : P\in \mathcal{A},\ 1\leq k\leq r)$ in $Z_V$.
\end{lem}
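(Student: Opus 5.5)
The plan is to write $D$ as a sum of rank-one operators and estimate each one with the basis constant. For a finite linear combination $f = \sum_{P,k} a_P^k \varphi_P^k$, linearity gives
\begin{equation*}
  Df = \sum_{P\in \mathcal{A}} \sum_{k=1}^r d_P^k\, a_P^k \varphi_P^k,
  \qquad\text{so}\qquad
  \|Df\|_{Z_V} \leq \sum_{P\in \mathcal{A}} \sum_{k=1}^r |d_P^k|\, \|a_P^k \varphi_P^k\|_{Z_V}
\end{equation*}
by the triangle inequality. If $\sum_{P,k}|d_P^k| < \infty$, the same bound then extends to all of $Z_V$ by density of finite linear combinations; if the sum is infinite, there is nothing to prove.

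The key step is the estimate $\|a_P^k\varphi_P^k\|_{Z_V}\leq 2\lambda_{Z_V}\|f\|_{Z_V}$ for each fixed $(P,k)$. The basis is ordered by $<_2$, as in \Cref{pro:schauder-Z}. Let $\pi_{(P,k)}$ denote the partial sum projection onto the indices $\leq (P,k)$, and let $\pi_{(P,k)^-}$ denote the projection onto the indices strictly smaller than $(P,k)$. If $(P,k)$ is the first element, take $\pi_{(P,k)^-}=0$; otherwise $\pi_{(P,k)^-}$ is the partial sum projection up to the predecessor of $(P,k)$. Then
\begin{equation*}
  a_P^k\varphi_P^k = \pi_{(P,k)}f - \pi_{(P,k)^-}f .
\end{equation*}
By the definition of the basis constant, each of these partial sum projections has norm at most $\lambda_{Z_V}$, so the coordinate projection has norm at most $2\lambda_{Z_V}$. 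Inserting this into the triangle inequality estimate above gives $\|Df\|_{Z_V}\leq 2\lambda_{Z_V}\sum_{P,k}|d_P^k|\,\|f\|_{Z_V}$, which is the claim.

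There is no real obstacle here. The only points needing care are the linear order $<_2$, so that the predecessor is well defined and every index has only finitely many predecessors, and the density argument. For the latter, approximate a general $f$ by its partial sums $\pi_{(P,k)}f$, which converge to $f$ in $Z_V$ because $(\varphi_P^k)$ is a Schauder basis; the operator estimate then extends from finite combinations to all of $Z_V$.
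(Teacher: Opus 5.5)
Your proof is correct and is the same argument as the paper's: apply the triangle inequality to finite sums and bound each coordinate by $\|a_P^k\varphi_P^k\|_{Z_V}\leq 2\lambda_{Z_V}\|f\|_{Z_V}$. You add two details the paper leaves implicit, namely that the coordinate projection is the difference of two consecutive partial sum projections with respect to $<_2$, and the routine density extension.
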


\begin{proof}
  Let $f = \sum_{P\in \mathcal{A}} \sum_{k=1}^r a_P^k \varphi_P^k$ be a finite sum. Then
  \begin{equation*}
    \|Df\|_{Z_V}
    \leq \sum_{P\in \mathcal{A}} \sum_{k=1}^r |d_P^k|\cdot \|a_P^k\varphi_P^k\|_{Z_V}
    \leq \sum_{P\in \mathcal{A}} \sum_{k=1}^r |d_P^k|\cdot 2\lambda_{Z_V} \|f\|_{Z_V}
  \end{equation*}
  as claimed.
\end{proof}

%
\section{A three-way classification for $Z_V$ spaces}
\label{sec:three-way-class}

In this section we show that there are in fact only two possibilities:
\begin{itemize}
\item we can choose a basis $(\varphi_P^k : P\in \mathcal{A},\ 1\leq k\leq r)$ such that the only bounded canonical projection
  $Q_{\mathcal{K}}\colon Z_V\to Z_V$ on this basis is $Q_{\mathcal{K}} = \Id_{Z_V}$, or
\item all canonical projections on all bases are bounded.
\end{itemize}

Throughout this section, we will make use of different bases of $Z_V$ by choosing different bases for $V$.  Recall that for any orthonormal basis
$\mathcal{B}=(v^1,\ldots, v^r)$ of $V$ we denote a corresponding basis of $Z_V$ by $(\varphi_{P}^{k,\mathcal{B}} : P\in\mathcal{A},\ 1\leq k\leq r)$, and the associated family of canonical
projections by $Q_{\mathcal{K}}^{\mathcal{B}}$, $\mathcal{K}\subset\{1, \dots, r\}$ (we refer to~\eqref{eq:119} and \Cref{dfn:Q}).

We are now ready to state and prove the classification result.
\begin{thm}\label{thm:trichotomy}
  Let $Z_V$ be a Janson-Sobolev or Janson-Sobolev-Hardy space.  Then $Z_V$ either has Property~I, Property~IIA or Property~IIB.
\end{thm}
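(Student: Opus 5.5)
Since Property~IIB is defined as ``all canonical projections for all bases are bounded and $Z_V$ is not of Property~IIA'', the statement reduces to the dichotomy announced at the beginning of this section. Either there is an orthonormal basis $\mathcal{B}$ of $V$ for which the only bounded canonical projections are $0$ and $\Id_{Z_V}$ (Property~I), or every $Q^{\mathcal{B}}_{\mathcal{K}}$ is bounded for every orthonormal basis $\mathcal{B}$. In the second case $Z_V$ has Property~IIA or IIB according to whether $V=\{wb^t: w\in W\}$ or not. So I will assume Property~I fails and prove that all canonical projections for all bases are bounded.

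The key object is the set $\mathcal{P}$ of orthogonal projections $\pi\colon V\to V$ (orthogonal with respect to~\eqref{eq:defscalar}) for which the induced operator
\[
Q_\pi\Bigl(\sum_{P}\sum_k a_P^k\varphi_P^{k,\mathcal{B}}\Bigr)=\sum_P\sum_{R\in\mathcal{S}(P)}\Bigl(\pi\bigl(\textstyle\sum_k a_P^k v^k\bigr)\Bigr)_{\ord(R)}\cha_R
\]
is bounded on $Z_V$. This operator does not depend on $\mathcal{B}$, and $Q^{\mathcal{B}}_{\mathcal{K}}=Q_\pi$ with $\pi$ the projection onto $\spn\{v^k:k\in\mathcal{K}\}$. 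More generally, for any linear map $L\colon V\to V$ one gets a generalized Janson operator $T_L$ in the same way.

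The plan rests on invariance under the group $G$ of linear isometries of $V$ which act on $\mathbb{R}^{l\times q}_0$ by permuting columns. A permutation $\tau$ of the columns corresponds to rearranging the successors in every $\mathcal{S}(P)$. This is a measure-preserving rearrangement of $[0,1)$ that preserves the filtration. By rearrangement invariance of $X$ and~\eqref{eq:204}, it induces an isometry of $Z_V$ whenever $V$ is $\tau$-invariant. In the same way, orthogonal maps $O\in O(l)$ acting on rows give isometries, since the pointwise $\ell^2_l$ norm is preserved. If $V$ is not invariant under these maps, I would instead use the following fact: boundedness of $Q_\pi$ is equivalent to a uniform estimate on finite martingales with differences in $V$. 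I would then try to show that the set $\mathcal{A}_{\mathrm{bd}}=\{L\in\mathrm{End}(V): T_L \text{ bounded}\}$ is a unital subalgebra of $\mathrm{End}(V)$. It is closed under products and sums, and it is also closed under adjoints. Adjoint-closedness would come from duality: $Z_V^*$ pairs with $Z_V$ via the $L^2$ pairing of the basis, and $T_L^*=T_{L^*}$.

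Granting that $\mathcal{A}_{\mathrm{bd}}$ is a $*$-subalgebra of $\mathrm{End}(V)$ containing $\Id$, its projections form a lattice. If Property~I fails for every basis, then for every orthonormal basis some nontrivial coordinate projection lies in $\mathcal{A}_{\mathrm{bd}}$. The goal is then to show $\mathcal{A}_{\mathrm{bd}}=\mathrm{End}(V)$, which gives boundedness of all $Q^{\mathcal{B}}_{\mathcal{K}}$ for all $\mathcal{B}$. By the structure theory of real $*$-algebras, $\mathcal{A}_{\mathrm{bd}}$ is a direct sum of full matrix algebras over $\mathbb{R},\mathbb{C},\mathbb{H}$ acting on an orthogonal decomposition $V=\bigoplus V_i$. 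Suppose $\mathcal{A}_{\mathrm{bd}}\neq \mathrm{End}(V)$. I would then produce a single orthonormal basis adapted against this decomposition, for instance one chosen in general position. Then no coordinate projection onto a proper nonempty subset lies in $\mathcal{A}_{\mathrm{bd}}$, so Property~I holds, a contradiction. The case where $\mathcal{A}_{\mathrm{bd}}$ is a nontrivial proper algebra while still containing enough projections must be ruled out carefully.

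The main obstacle is exactly this last step. A generic basis avoids the finitely many nontrivial central projections. However, if $\mathcal{A}_{\mathrm{bd}}$ contains a block $M_n(\mathbb{R})$ with $n\ge 2$ acting with multiplicity, it contains many rank-one projections, and a generic basis vector need not avoid them all. To handle this I would first try to use Janson's \Cref{thm:janson-1}, and the rank-one structure of $V$ in the sense of \eqref{eq:defofvectb}, to show that boundedness of one nontrivial projection propagates: it should force all projections to be bounded. The idea is that an unbounded $Q_\pi$ is witnessed by martingales concentrating along a rank-one direction, so boundedness is a closed, $G$-invariant, all-or-nothing condition. Justifying that $\mathcal{A}_{\mathrm{bd}}$ is $*$-closed for general r.i.\ $X$, via the Köthe dual, is the second technical point I expect to need checking.
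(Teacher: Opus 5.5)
Your reformulation is sound. The set $\mathcal{A}_{\mathrm{bd}}$ of $L\in\mathrm{End}(V)$ with $T_L$ bounded is closed under sums and compositions, and $Q^{\mathcal{B}}_{\mathcal{K}}=T_{\pi}$ for the orthogonal projection $\pi$ onto $\spn\{v^k:k\in\mathcal{K}\}$. So the theorem amounts to showing that if Property~I fails, then every such $\pi$, for every basis, lies in $\mathcal{A}_{\mathrm{bd}}$.

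However, the proposal stops exactly at the step that carries the content, and the route you suggest for it cannot work. Your claim that boundedness of one nontrivial projection propagates and forces all projections to be bounded is false. In \Cref{thm:non-primary-janson-sobolev} the projections $Q^{\mathcal{B}}_{\{1,2\}}$ and $Q^{\mathcal{B}}_{3}$ are bounded. Yet $Z_V(L^1)\simeq L^1\oplus H^1$ has neither Property~IIA nor Property~IIB, so some canonical projection is unbounded. Besides, Janson's \Cref{thm:janson-1} is an $L^1$ statement, whereas the trichotomy is asserted for every rearrangement invariant $X$. The $*$-closedness is also unjustified: $T_L^*=T_{L^*}$ only yields boundedness of $T_{L^*}$ on $Z_V^*$, not on $Z_V$.

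The missing idea is a genericity argument. It needs neither $*$-closedness nor Wedderburn theory, only that $\mathcal{A}_{\mathrm{bd}}$ is a linear subspace. Your worry about blocks containing many projections is beside the point, because only finitely many index sets $\mathcal{K}$ occur.

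\begin{itemize}
\item Fix $\mathcal{K}$ with $1\le|\mathcal{K}|=k\le r-1$. Let $Z_{\mathcal{K}}$ be the set of orthonormal bases $(v^1,\dots,v^r)$ with $\sum_{j\in\mathcal{K}}\langle v^j,\cdot\rangle v^j\in\mathcal{A}_{\mathrm{bd}}$. It is the zero set in $O(r)$ of a polynomial, namely the squared distance to $\mathcal{A}_{\mathrm{bd}}$.
\item If Property~I fails, then $O(r)=\bigcup_{\mathcal{K}}Z_{\mathcal{K}}$, so some $Z_{\mathcal{K}}$ has interior.
\item By real analyticity on each component of $O(r)$, together with invariance under changing the sign of one $v^j$, this gives $Z_{\mathcal{K}}=O(r)$. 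Hence all rank-$k$ orthogonal projections lie in $\mathcal{A}_{\mathrm{bd}}$.
\item For $1\le k\le r-1$ these span all symmetric maps. Differences give $\langle u,\cdot\rangle u-\langle w,\cdot\rangle w$ for orthonormal $u,w$, and rotating $u,w$ in their plane gives all traceless symmetric maps.
\item So every orthogonal projection of $V$, and hence every $Q^{\mathcal{B}}_{\mathcal{K}}$ for every $\mathcal{B}$, is bounded.
\end{itemize}

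With this step supplied, your approach becomes a valid alternative to the paper's. The paper instead constructs a Property~I basis explicitly. It repeatedly rotates two basis vectors from different blocks of the finest bounded partition by $45^\circ$. It then uses compositions of canonical projections for the old and new bases to show the two blocks merge (Cases A--C, including $T=Q^{\mathcal{B}_0}_m$ in Subcase~B2). The genericity route is shorter and shows that a dense open set of bases witnesses Property~I; the paper's route is constructive.
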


\begin{myproof}
  First, we assume that the space has neither Property~IIA nor Property~IIB, i.e., there exists an orthonormal basis
  $\mathcal{B}_0=\{v^{1,0},\ldots, v^{r,0}\}$ of $V$ such that $Q_1^{\mathcal{B}_0}$ is unbounded.  Let $\mathcal{K}_1^0,\ldots \mathcal{K}^0_{d_0}$ be the finest bounded partition of
  $\{1,\ldots,r\}$ for the basis $\mathcal{B}_0$ (see \Cref{dfn:finest-bounded-partition}).  For simplicity of notation we assume that
  $1\in\mathcal{K}^0_{1}$.  Since $Q_1^{\mathcal{B}_0}$ is unbounded
  \begin{equation*}
    m
    := \#\mathcal{K}_1^0
    \geq 2.
  \end{equation*}
  By relabeling we can assume that $\mathcal{K}_1^0=\{1,\ldots,m\}$ and $m+1\in\mathcal{K}_2^0$.  We now define a new basis $\mathcal{B}_1=(v^{1,1},\ldots, v^{r,1})$ given by the formula:
  \begin{equation}\label{eq:75}
    v^{k,1}
    =\left\{
      \begin{array}{cl} v^{k,0}&\quad\mbox{for }k\in\{1,\ldots, r\}\setminus\{m,m+1\},\\
        \frac{v^{m,0}+v^{m+1,0}}{\sqrt{2}}&\quad\mbox{for }k=m+1,\\
        \frac{v^{m,0}-v^{m+1,0}}{\sqrt{2}}&\quad\mbox{for }k=m.
      \end{array}
    \right.
  \end{equation}
  Let $\mathcal{K}_1^1,\ldots \mathcal{K}^1_{d_1}$ be the finest bounded partition of $\{1,\ldots,r\}$ for the basis $\mathcal{B}_1$.  For simplicity of notation we again assume that
  $1\in\mathcal{K}_1^1$.  We will prove that $d_1=d_0-1$ and $\mathcal{K}_1^1=\mathcal{K}_1^0\cup\mathcal{K}_2^0$.  First we observe that
  \begin{equation*}
    \varphi_{P}^{k,\mathcal{B}_0}
    = \varphi_{P}^{k,\mathcal{B}_1},
    \qquad k\notin\{m,m+1\},
  \end{equation*}
  and therefore, we obtain
  \begin{equation}\label{eq:eqlQproj}
    Q_k^{\mathcal{B}_0}
    = Q_k^{\mathcal{B}_1},
    \qquad k\notin\{m,m+1\}.
  \end{equation}

  We now examine the behavior of $Q_k^{\mathcal{B}_0}$ and $Q_k^{\mathcal{B}_1}$ for $k\in \mathcal{K}_j^0$, $j > 2$.  Let $j > 2$, and observe that since
  $\{m,m+1\}\cap \mathcal{K}_j^0=\emptyset$, we have for any $1\leq i\leq d_1$ that
  \begin{equation}\label{eq:intersoldgen}
    \left(\mathcal{K}_i^1\setminus\{m,m+1\}\right)\cap \mathcal{K}_j^0=\mathcal{K}_i^1\cap \mathcal{K}_j^0
  \end{equation} 
  and by~\eqref{eq:75}
  \begin{equation}\label{eq:orthooldm0}
    Q_{\mathcal{K}_j^0}^{\mathcal{B}_0} Q_{m}^{\mathcal{B}_1}=Q_{\mathcal{K}_j^0}^{\mathcal{B}_0} Q_{m+1}^{\mathcal{B}_1}=0.
  \end{equation}
  Recall that the $Q_k^{\mathcal{B}_0}$, $1\leq k\leq r$ are orthogonal projections.  Thus, for any $1\leq i\leq d_1$ and $j>2$ we obtain by~\eqref{eq:orthooldm0},
  \eqref{eq:eqlQproj} and~\eqref{eq:intersoldgen} that
  \begin{equation}\label{eq:othepartitioneqb0}
    \begin{split}
      Q_{\mathcal{K}_j^0}^{\mathcal{B}_0} Q_{\mathcal{K}_i^1}^{\mathcal{B}_1}
      &= Q_{\mathcal{K}_j^0}^{\mathcal{B}_0}\sum_{l\in \mathcal{K}_i^1} Q_{l}^{\mathcal{B}_1}
        = \sum_{l\in \mathcal{K}_i^1} Q_{\mathcal{K}_j^0}^{\mathcal{B}_0} Q_{l}^{\mathcal{B}_1}
        = \sum_{\substack{l\in \mathcal{K}_i^1\\l\notin \{m,m+1\}}} Q_{\mathcal{K}_j^0}^{\mathcal{B}_0} Q_{l}^{\mathcal{B}_1}\\
      &= \sum_{\substack{l\in \mathcal{K}_i^1\\l\notin \{m,m+1\}}} Q_{\mathcal{K}_j^0}^{\mathcal{B}_0} Q_{l}^{\mathcal{B}_0}
      = \sum_{\substack{l\in \mathcal{K}_i^1\cap \mathcal{K}_j^0\\l\notin \{m,m+1\}}} Q_{l}^{\mathcal{B}_0}
      = \sum_{l\in \mathcal{K}_i^1\cap \mathcal{K}_j^0 } Q_{l}^{\mathcal{B}_0}\\
      &= Q_{\mathcal{K}_i^1\cap \mathcal{K}_j^0}^{\mathcal{B}_0}.
    \end{split}
  \end{equation}  
  In particular, $Q_{\mathcal{K}_i^1\cap \mathcal{K}_j^0}^{\mathcal{B}_0}$ is bounded for $1\leq i\leq d_1$, $j>2$.

  Fix $j > 2$, and assume that
  \begin{equation}\label{eq:80}
    \mathcal{K}_i^1\cap \mathcal{K}_j^0\neq\emptyset
    \quad \text{for some $1\leq i\leq d_1$}.
  \end{equation}
  Since $ \mathcal{K}_1^0, \ldots, \mathcal{K}_{d_0}^0$ is the finest bounded partition of $\{1,\ldots,r\}$ for the basis $\mathcal{B}_0$ we have
  \begin{equation}\label{eq:81}
    \mathcal{K}_i^1\cap \mathcal{K}_j^0
    = \mathcal{K}_j^0.
  \end{equation}   
  On the other hand, \eqref{eq:othepartitioneqb0}, \eqref{eq:intersoldgen} and~\eqref{eq:eqlQproj} yield
  \begin{equation*}
    Q_{\mathcal{K}_j^0}^{\mathcal{B}_0} Q_{\mathcal{K}_i^1}^{\mathcal{B}_1}
    = Q_{\mathcal{K}_i^1\cap \mathcal{K}_j^0}^{\mathcal{B}_0}
    = \sum_{l\in \mathcal{K}_i^1\cap \mathcal{K}_j^0 } Q_{l}^{\mathcal{B}_0}
    = \sum_{l\in \mathcal{K}_i^1\cap \mathcal{K}_j^0 } Q_{l}^{\mathcal{B}_1}
    = Q_{\mathcal{K}_i^1\cap \mathcal{K}_j^0}^{\mathcal{B}_1}.
  \end{equation*}
  Hence, the operator $Q_{\mathcal{K}_i^1\cap \mathcal{K}_j^0}^{\mathcal{B}_1}$ is also bounded.  However, by~\eqref{eq:80} and since
  $\mathcal{K}_1^1, \ldots, \mathcal{K}_{d_1}^1$ is the finest bounded partition of $\{1,\ldots,r\}$ for the basis $\mathcal{B}_1$, it follows that
  \begin{equation}\label{eq:86}
    \mathcal{K}_i^1\cap \mathcal{K}_j^0
    = \mathcal{K}_i^1.
  \end{equation}
  Combining~\eqref{eq:80}, \eqref{eq:81} and~\eqref{eq:86}, we proved that whenever $\mathcal{K}_i^1\cap \mathcal{K}_j^0\neq \emptyset$ for some
  $1\leq i\leq d_1$ and $2 < j \leq d_0$, we have
  \begin{equation}\label{eq:stabbigind}
    \mathcal{K}_i^1
    = \mathcal{K}_j^0.
  \end{equation}
  Since $\mathcal{K}_i^1$, $1\leq i\leq d_1$ and $\mathcal{K}_j^0$, $1\leq j\leq d_0$ are both partitions, we obtain that for each $2 < j\leq d_0$,
  there exists a unique $1\leq i(j)\leq d_1$ such that $\mathcal{K}_{i(j)}^1 = \mathcal{K}_j^0$.  This leaves
  $\mathcal{K}_1^0\cup \mathcal{K}_2^0\neq \emptyset$ uncovered by $\mathcal{K}_{i(j)}^1$, $2 < j\leq d_0$, and since $\mathcal{K}_i^1$,
  $1\leq i\leq d_1$ is a partition, we must have $d_1\geq d_0-1$.

  We will distinguish the following three cases: $d_1 > d_0$, $d_1 = d_0$ and $d_1 = d_0 - 1$.

  \begin{proofcase}[Case A $(d_1 > d_0)$]
    In this case define $J = \{1, \dots, r\}\setminus\{i(j) : 2 < j\leq d_0\}$ and note that
    \begin{equation}\label{eq:91}
      \#J \geq 3
      \qquad\text{as well as}\qquad
      \mathcal{K}_1^0\cup \mathcal{K}_2^0
      = \bigcup_{j\in J}\mathcal{K}_j^1.
    \end{equation}
    Recall that $m\in \mathcal{K}_1^0$ and $m+1\in \mathcal{K}_2^0$, hence, by~\eqref{eq:91} there exists a $j\in J$ such that
    \begin{equation}\label{eq:92}
      \mathcal{K}_j^1 \cap\{m,m+1\}
      = \emptyset,
    \end{equation}
    and the intersection of this $\mathcal{K}_j^1$ with at least one of the sets $\mathcal{K}_1^0$, $\mathcal{K}_2^0$ is non-empty.  Without loss of
    generality we assume that $\mathcal{K}_1^0\cap\mathcal{K}_j^1 \neq \emptyset$.  Using~\eqref{eq:92}, we repeat the argument
    from~\eqref{eq:othepartitioneqb0} and obtain
    \begin{equation*}
      Q_{\mathcal{K}_1^0}^{\mathcal{B}_0} Q_{\mathcal{K}_j^1}^{\mathcal{B}_1}=Q_{\mathcal{K}_j^1\cap \mathcal{K}_1^0}^{\mathcal{B}_0}.
    \end{equation*}
    We get a contradiction with the fact that $\mathcal{K}_1^0,\ldots,\mathcal{K}_{d_0}^0$ is the finest bounded partition.
  \end{proofcase}

  \begin{proofcase}[Case B $(d_1 = d_0)$]\label{test}
    If $d_1=d_0$ by \eqref{eq:stabbigind} and relabeling we may assume that
    \begin{equation}\label{eq:subsum12}
      \mathcal{K}_j^1
      = \mathcal{K}_j^0,\ 2 < j\leq d_0,
      \qquad
      \mathcal{K}_1^1\cup \mathcal{K}_2^1
      =\mathcal{K}_1^0\cup \mathcal{K}_2^0.
    \end{equation}
    
    Now we consider two subcases:
    \begin{description}
    \item[Subcase B1] There exists $s\in\{1,2\}$ such that $\{m,m+1\}\subset\mathcal{K}_s^1$.
    \item[Subcase B2] There exists $s\in\{1,2\}$ such that $m\in\mathcal{K}_s^1$ and $m+1\notin\mathcal{K}_s^1$.
    \end{description}

    \paragraph{\underline{ad Subcase B1}}\hfill\\
    First, we observe that by \Cref{dfn:Q} and~\eqref{eq:75}
    \begin{equation*}
      \left(Q_{m}^{\mathcal{B}_1}+Q_{m+1}^{\mathcal{B}_1}\right)Q_{m}^{\mathcal{B}_0} =Q_{m}^{\mathcal{B}_0}.
    \end{equation*} 
    Hence, exploiting the orthogonality of the canonical projections yields
    \begin{equation}\label{eq:idenm0}
      Q_{\mathcal{K}_s^1}^{\mathcal{B}_1}Q_{m}^{\mathcal{B}_0}=Q_{m}^{\mathcal{B}_0}.
    \end{equation}
    By~\eqref{eq:idenm0} and~\eqref{eq:eqlQproj}, we obtain
    \begin{equation}\label{eq:othepartitioneqm0m1}
      \begin{split}
        Q_{\mathcal{K}_s^1}^{\mathcal{B}_1} Q_{\mathcal{K}_1^0}^{\mathcal{B}_0}
        &= Q_{\mathcal{K}_s^1}^{\mathcal{B}_1} \sum_{l\in \mathcal{K}_1^0} Q_{l}^{\mathcal{B}_0}
          = \sum_{l\in \mathcal{K}_1^0} Q_{\mathcal{K}_s^1}^{\mathcal{B}_1} Q_{l}^{\mathcal{B}_0}
          = Q_{m}^{\mathcal{B}_0}+\sum_{l\in \mathcal{K}_1^0\setminus\{m\}} Q_{\mathcal{K}_s^1}^{\mathcal{B}_1} Q_{l}^{\mathcal{B}_0}\\
        &= Q_{m}^{\mathcal{B}_0}+\sum_{l\in \mathcal{K}_1^0\setminus\{m\}} Q_{\mathcal{K}_s^1}^{\mathcal{B}_1} Q_{l}^{\mathcal{B}_1}
          = Q_{m}^{\mathcal{B}_0} + \sum_{\substack{l\in \mathcal{K}_1^0\cap \mathcal{K}_s^1\\l\neq m }}  Q_{l}^{\mathcal{B}_1}\\
        &= Q_{m}^{\mathcal{B}_0} + \sum_{\substack{l\in \mathcal{K}_1^0\cap \mathcal{K}_s^1\\l\neq m }} Q_{l}^{\mathcal{B}_0}
        = \sum_{l\in \mathcal{K}_1^0\cap \mathcal{K}_s^1}  Q_{l}^{\mathcal{B}_0}
        = Q_{\mathcal{K}_s^1\cap \mathcal{K}_1^0}^{\mathcal{B}_0}.
      \end{split}
    \end{equation} 
    Since $m\in\mathcal{K}_s^1\cap \mathcal{K}_1^0$ and $ \mathcal{K}_1^0, \ldots, \mathcal{K}_{d_0}^0$ is the finest bounded partition of
    $\{1,\ldots,r\}$ for the basis $\mathcal{B}_0$
    \begin{equation}\label{eq:98}
      \mathcal{K}_s^1\cap \mathcal{K}_1^0
      = \mathcal{K}_1^0.
    \end{equation}
    Similarly we get
    \begin{equation}\label{eq:101}
      \mathcal{K}_s^1\cap \mathcal{K}_2^0=\mathcal{K}_2^0.
    \end{equation}
    Recall that by $\eqref{eq:subsum12}$ we have $\mathcal{K}_s^1 \subset \mathcal{K}_1^0\cup \mathcal{K}_2^0$.  Hence, by~\eqref{eq:98}
    and~\eqref{eq:101}
    \begin{equation*}
      \mathcal{K}_s^1 \cap\left(\mathcal{K}_1^0\cup \mathcal{K}_2^0\right)=\left(\mathcal{K}_s^1\cap\mathcal{K}_1^0\right) \cup\left(\mathcal{K}_s^1 \cap \mathcal{K}_2^0\right)=\mathcal{K}_1^0\cup \mathcal{K}_2^0.
    \end{equation*}
    We arrive at a contradiction since $\mathcal{K}_{s'}^1=\emptyset$, where $s'\in\{1,2\}\setminus\{s\}$.

    \paragraph{\underline{ad Subcase B2}}\hfill\\
    If $s=1$, we define
    \begin{equation*}
      T
      = 4 Q_{\mathcal{K}_1^0}^{\mathcal{B}_0} Q_{\mathcal{K}_1^1}^{\mathcal{B}_1}Q_{\mathcal{K}_1^0}^{\mathcal{B}_0}Q_{\mathcal{K}_2^1}^{\mathcal{B}_1}Q_{\mathcal{K}_1^0}^{\mathcal{B}_0},
    \end{equation*}
    and if $s=2$, then
    \begin{equation*}
      T
      = 4 Q_{\mathcal{K}_1^0}^{\mathcal{B}_0} Q_{\mathcal{K}_2^1}^{\mathcal{B}_1}Q_{\mathcal{K}_1^0}^{\mathcal{B}_0}Q_{\mathcal{K}_1^1}^{\mathcal{B}_1}Q_{\mathcal{K}_1^0}^{\mathcal{B}_0}.
    \end{equation*}
    In both cases $T$ is clearly a bounded operator, and we will show that $T=Q_{m}^{\mathcal{B}_0}$.  In order to do so, we check the values of $T$
    on the Schauder basis $(\varphi_{P}^{j,\mathcal{B}_0})$.  For a fixed $P\in \mathcal{A}$, we will make case distinctions for $j$ according to
    \Cref{fig:cases}.
    \begin{table}[H]
      \begin{center}
        \begin{tabular}{|c|c|c|c|c|}
          \hhline{~|-|-|-|-|}
          \multicolumn{1}{l|}{} & Case 1 & Case 2 & Case 3 & Case 4\\
          \hline
          $s=1$ & $j\in\{1,\ldots,r\}\setminus\mathcal{K}_1^0$ & $j\in (\mathcal{K}_1^0\cap \mathcal{K}_2^1)$ & $j\in \mathcal{K}_1^0\cap \mathcal{K}_1^1\setminus\{m\}$ & $j=m$\\
          \hline
          $s=2$ & $j\in\{1,\ldots,r\}\setminus\mathcal{K}_1^0$ & $j\in (\mathcal{K}_1^0\cap \mathcal{K}_1^1)$ & $j\in \mathcal{K}_1^0\cap \mathcal{K}_2^1\setminus\{m\}$ & $j=m$\\
          \hline
        \end{tabular}
      \end{center}
      \caption{Cases}
      \label{fig:cases}
    \end{table}
    Since for both values of $s$ the arguments are analogous, we will only present the proof for $s=1$ below.
    \begin{enumerate}[{Case} 1.]
    \item Let $j\in\{1,\ldots,r\}\setminus\mathcal{K}_1^0$.  By \Cref{dfn:Q} we have
      $Q_{\mathcal{K}_1^0}^{\mathcal{B}_0}\varphi_{P}^{j,\mathcal{B}_0} = 0$, and hence
      $T\varphi_{P}^{j,\mathcal{B}_0} = 0 = Q_{m}^{\mathcal{B}_0}\varphi_{P}^{j,\mathcal{B}_0}$.

    \item Let $j\in (\mathcal{K}_1^0\cap \mathcal{K}_2^1)$.  Since we assumed $s=1$ and by the restriction of Subcase~B2, we have
      $m+1\notin \mathcal{K}_1^0$ and $m\notin \mathcal{K}_2^1$.  Consequently, $j\notin \{m,m+1\}$, and thus~\eqref{eq:75} yields
      $\varphi_P^{j,\mathcal{B}_0} = \varphi_P^{j,\mathcal{B}_1}$.  Referring to \Cref{dfn:Q}, we obtain
      \begin{equation*}
        Q_{\mathcal{K}_1^0}^{\mathcal{B}_0}Q_{\mathcal{K}_2^1}^{\mathcal{B}_1}Q_{\mathcal{K}_1^0}^{\mathcal{B}_0}\varphi_{P}^{j,\mathcal{B}_0}
        = \varphi_{P}^{j,\mathcal{B}_0}
      \end{equation*}
      as well as
      \begin{equation*}
        T \varphi_{P}^{j,\mathcal{B}_0}
        = 4 Q_{\mathcal{K}_1^0}^{\mathcal{B}_0} Q_{\mathcal{K}_1^1}^{\mathcal{B}_1}\varphi_{P}^{j,\mathcal{B}_0}
        = 4 Q_{\mathcal{K}_1^0}^{\mathcal{B}_0} Q_{\mathcal{K}_1^1}^{\mathcal{B}_1}\varphi_{P}^{j,\mathcal{B}_1}
        = 0
        = Q_{m}^{\mathcal{B}_0}\varphi_{P}^{j,\mathcal{B}_0}.
      \end{equation*}
      
    \item Let $j\in \mathcal{K}_1^0\cap \mathcal{K}_1^1\setminus\{m\}$.  Since we assumed $s=1$ and by the restriction of Subcase~B2, we have
      $m+1\notin \mathcal{K}_1^0$.  Consequently, $j\notin \{m,m+1\}$, and thus~\eqref{eq:75} yields
      $\varphi_P^{j,\mathcal{B}_0} = \varphi_P^{j,\mathcal{B}_1}$ and furthermore
      \begin{equation*}
        Q_{\mathcal{K}_2^1}^{\mathcal{B}_1}Q_{\mathcal{K}_1^0}^{\mathcal{B}_0}\varphi_{P}^{j,\mathcal{B}_0}
        = Q_{\mathcal{K}_2^1}^{\mathcal{B}_1}\varphi_{P}^{j,\mathcal{B}_1}
        = 0.
      \end{equation*}
      Therefore $T\varphi_{P}^{j,\mathcal{B}_0}=0=Q_{m}^{\mathcal{B}_0}\varphi_{P}^{j,\mathcal{B}_0}$.
      
    \item Let $j=m$.  In the calculation below, we will use the following identities:
      \begin{align*}
        \varphi_{P}^{m,\mathcal{B}_0}
        &= \frac{1}{\sqrt{2}} (\varphi_{P}^{m,\mathcal{B}_1} + \varphi_{P}^{m+1,\mathcal{B}_1}),
        & \varphi_{P}^{m+1,\mathcal{B}_0}
        &= \frac{1}{\sqrt{2}} (\varphi_{P}^{m+1,\mathcal{B}_1} - \varphi_{P}^{m,\mathcal{B}_1}),\\
        \varphi_{P}^{m,\mathcal{B}_1}
        &= \frac{1}{\sqrt{2}} (\varphi_{P}^{m,\mathcal{B}_0} - \varphi_{P}^{m+1,\mathcal{B}_0}),
        & \varphi_{P}^{m+1,\mathcal{B}_1}
        &= \frac{1}{\sqrt{2}} (\varphi_{P}^{m+1,\mathcal{B}_0} + \varphi_{P}^{m,\mathcal{B}_0}).
      \end{align*}
      Recall that by assumption $m\in \mathcal{K}_1^0\cap \mathcal{K}_1^1$, and observe that by the above identities as well as \Cref{dfn:Q} we obtain
      \begin{equation*}
        \begin{split}
          T\varphi_{P}^{m,\mathcal{B}_0}
          &= 4 Q_{\mathcal{K}_1^0}^{\mathcal{B}_0} Q_{\mathcal{K}_1^1}^{\mathcal{B}_1}Q_{\mathcal{K}_1^0}^{\mathcal{B}_0}Q_{\mathcal{K}_2^1}^{\mathcal{B}_1}Q_{\mathcal{K}_1^0}^{\mathcal{B}_0}\varphi_{P}^{m,\mathcal{B}_0}
            = 4 Q_{\mathcal{K}_1^0}^{\mathcal{B}_0} Q_{\mathcal{K}_1^1}^{\mathcal{B}_1}Q_{\mathcal{K}_1^0}^{\mathcal{B}_0}Q_{\mathcal{K}_2^1}^{\mathcal{B}_1}\varphi_{P}^{m,\mathcal{B}_0}\\
          &= \frac{4}{\sqrt{2}} Q_{\mathcal{K}_1^0}^{\mathcal{B}_0} Q_{\mathcal{K}_1^1}^{\mathcal{B}_1}Q_{\mathcal{K}_1^0}^{\mathcal{B}_0}\varphi_{P}^{m+1,\mathcal{B}_1}
            = 2 Q_{\mathcal{K}_1^0}^{\mathcal{B}_0} Q_{\mathcal{K}_1^1}^{\mathcal{B}_1}\varphi_{P}^{m,\mathcal{B}_0}
            = \sqrt{2} Q_{\mathcal{K}_1^0}^{\mathcal{B}_0} \varphi_{P}^{m,\mathcal{B}_1}\\
          &= \varphi_{P}^{m,\mathcal{B}_0}
            = Q_{m}^{\mathcal{B}_0}\varphi_{P}^{m,\mathcal{B}_0}.
        \end{split}
      \end{equation*}
    \end{enumerate}
    We have shown that $T=Q_{m}^{\mathcal{B}_0}$ but $T$ is a continuous operator.  This stands in contradiction with the fact that
    $\mathcal{K}_1^0, \ldots, \mathcal{K}_{d_0}^0$ is the finest bounded partition of $\{1,\ldots,r\}$ for the basis $\mathcal{B}_0$ and that
    $\{1,m\}\subset \mathcal{K}_1^0$.
  \end{proofcase}

  \begin{proofcase}[Case C $(d_1 = d_0 - 1)$]
    If $d_1 = d_0-1$, then by~\eqref{eq:stabbigind} there is only one set in the finest partition that could contain elements from $\mathcal{K}_1^0$
    and $\mathcal{K}_2^0$.  Since we assumed that $1\in \mathcal{K}_1^1$, we therefore have
    \begin{equation*}
      \mathcal{K}_1^1=\mathcal{K}_1^0\cup \mathcal{K}_2^0.
    \end{equation*}
  \end{proofcase}

  \begin{proofcase}[Summary of the cases]
    Reviewing our cases, we find that the only possible case is Case C.  Consequently, the finest bounded partition of $\{1,2,\ldots,r\}$ for $\mathcal{B}_1$ is
    \begin{equation*}
      \mathcal{K}_1^0\cup \mathcal{K}_2^0 ,\mathcal{K}_3^0, \ldots, \mathcal{K}_{d_0}^0
    \end{equation*} 
    Repeating the construction inductively gives us a sequence of bases $(\mathcal{B}_j)_{j=0}^{d_0-1}$ such that $\mathcal{B}_j$ has a corresponding finest bounded
    partition
    \begin{equation*}
      \bigcup_{k=1}^{j+1} \mathcal{K}_k^0 ,\mathcal{K}_{j+2}^0, \ldots, \mathcal{K}_{d_0}^0.
    \end{equation*} 
    In particular $\mathcal{B}_{d_0-1}$ is a basis for which the finest bounded partition of $\{1,\ldots,r\}$ is just $\{1,\ldots,r\}$.  We conclude the proof by putting
    $\mathcal{B} = \mathcal{B}_{d_0-1}$.
  \end{proofcase}
\end{myproof}

%

\section{Isomorphic structure of $Z_V(L^1)$ spaces}
\label{sec:hardy-iso}

Earlier, we defined three properties of Janson-Sobolev spaces in terms of the continuity of projections (see \ref{dfn:space-type}).  In this section,
we give an isomorphic description of the Property~IIA and Property~IIB spaces for the $L^1$ norm.  In the final subsection, we construct a
Janson-Sobolev space which is isomorphic to the non-primary space $L^1\oplus H^1$.

\subsection{Property~IIA and subspaces of scalar valued $L^1$}
\label{sec:proj-fact}

The following theorem shows that Janson-Sobolev spaces satisfying Property~IIA are isomorphic to a subspace of $L^1$.  In the special case $q=4$ and
$b=(1,1,-1,-1)$, \Cref{thm:isomorphic-classification-L1} identifies $Z_V(L^1)$ with the subspace of $L^1$ spanned by the even-level Haar functions.
Whether this subspace is itself isomorphic to $L^1$ is a long-standing open problem posed by G.~Schechtman.  We refer to \cite[Section~3]{MR2224004}
and~\cite[Problem~6.8]{MR4098600} for further discussion.

\begin{thm}\label{thm:isomorphic-classification-L1}
  If $Z_V(L^1) = Z_V((\mathcal{F}_j),L^1,\boldsymbol{\sigma};\mathbb{R}^l)$ is a Janson-Sobolev space with Property~IIA and $b$ is as in~\eqref{eq:defofvectb} of
  \Cref{dfn:space-type}, then $Z_V(L^1)$ is isomorphic to $Z_{\operatorname{span}\{b^t\}}(L^1)$.
\end{thm}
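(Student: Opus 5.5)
The structure of $V$ reduces everything to the scalar case. By \Cref{rem:space-type} we may choose an orthonormal basis $\mathcal{B} = (v^1,\dots,v^r)$ of $V$ with $v^k = w^k b^t$, where $(w^k)_{k=1}^r$ is orthonormal in $\mathbb{R}^l$ and $b$ is rescaled so that $b^t\in\mathbb{R}^{1\times q}_0$ has norm one. By \eqref{eq:68}, every basis function has the form
\begin{equation*}
  \varphi_P^{k,\mathcal{B}}(x) = \psi_P(x)\, w^k,
  \qquad \psi_P := \sum_{R\in\mathcal{S}(P)} b_{\ord(R)}\cha_R .
\end{equation*}
The functions $(\psi_P : P\in\mathcal{A})$ are exactly the basis functions of the scalar space $Z_{\operatorname{span}\{b^t\}}(L^1)$. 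So a finite sum $f = \sum_{k}\sum_P a_P^k\varphi_P^k$ can be written as $f(x) = \sum_{k=1}^r g_k(x)\,w^k$ with $g_k = \sum_P a_P^k \psi_P$.

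Since $\boldsymbol{\sigma}$ is constant and the $w^k$ are orthonormal, \eqref{eq:39} gives
\begin{equation*}
  \|f\|_{Z_V(L^1)} = \Bigl\| \Bigl(\sum_{k=1}^r |g_k|^2\Bigr)^{1/2}\Bigr\|_{L^1},
  \qquad
  \|g_k\|_{Z_{\operatorname{span}\{b^t\}}(L^1)} = \|g_k\|_{L^1}.
\end{equation*}
Pointwise equivalence of the $\ell^2_r$ and $\ell^1_r$ norms then yields
\begin{equation*}
  r^{-1/2}\sum_{k=1}^r \|g_k\|_{L^1} \le \|f\|_{Z_V(L^1)} \le \sum_{k=1}^r\|g_k\|_{L^1}.
\end{equation*}
Hence $f\mapsto (g_1,\dots,g_r)$ extends to an isomorphism $Z_V(L^1)\simeq \bigl(\bigoplus_{k=1}^r Z_{\operatorname{span}\{b^t\}}(L^1)\bigr)_{\ell^1}$. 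Here one uses that the completion of the finite sums is preserved, and that each coordinate map is the bounded canonical projection $Q_k^{\mathcal{B}}$ of \eqref{eq:210}.

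It remains to show $Y := Z_{\operatorname{span}\{b^t\}}(L^1)$ is isomorphic to its own finite $\ell^1$-sum $Y\oplus_1 Y$. We follow \Cref{rem:W-auxiliary} with $p=1$:
\begin{itemize}
  \item Take disjoint atoms $P_1,\dots,P_r\in\mathcal{AF}_n$. The space spanned by $\psi_P$ with $P\subset P_j$ for some $j$ is the image of $\mathrm{Pr} f = \cha_{\bigcup P_j}(f - \cond(f|\mathcal{G}))$, which is bounded by $2$ via \Cref{rem:conditional}.
  \item By disjointness and scaling of $L^1$ norms, this image is isometric to $\bigoplus_{1}^{r} Y$.
  \item Choosing the $P_j$ to exhaust a level $\mathcal{AF}_n$, the complement is finite dimensional, namely $\spn\{\psi_P : P\in\mathcal{AF}_j,\ j<n\}$. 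So $Y \simeq \bigl(\bigoplus_1^{q^n} Y\bigr)_{\ell^1}\oplus F$ with $\dim F<\infty$.
  \item Since $Y$ contains complemented copies of $\ell^1$ (for instance by \Cref{rem:W-auxiliary}, $Y\simeq \ell^1(Y)$), the finite-dimensional summand $F$ is absorbed.
\end{itemize}
Thus $Y\simeq \ell^1(Y)$, which already gives $Y\simeq \bigl(\bigoplus_1^r Y\bigr)_{\ell^1}$ directly, as in that remark.

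Combining the two steps proves the theorem. The main obstacle is only bookkeeping: checking that the identification of $g_k$ with coefficient sequences in $Y$ is compatible with the completions, and that the Pełczyński argument of \Cref{rem:W-auxiliary} applies verbatim to the scalar space $Y$. Both follow from the bounds above and the contractivity of conditional expectations.
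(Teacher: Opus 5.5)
Your proposal is correct and follows the paper's proof. In both, $Z_V(L^1)$ is split along an orthonormal basis of rank-one matrices $w^k b^t$ into $r$ copies of the scalar space $Z_{\operatorname{span}\{b^t\}}(L^1)$, and the finite direct sum is then absorbed using $Y\simeq\ell^1(Y)$ from \Cref{rem:W-auxiliary}. The only difference is how the splitting is obtained: you use the explicit pointwise $\ell^2_r$--$\ell^1_r$ comparison, which is valid because $\boldsymbol{\sigma}$ is constant and the $w^k$ are orthonormal, whereas the paper invokes the boundedness of the canonical projections from Property~IIA and identifies each summand with the scalar space by projecting onto a nonzero coordinate of $w_j$. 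Your bulleted detour through a finite-dimensional complement is unnecessary.
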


\begin{proof}
  Let $\mathcal{B}=\{\nu_1, \nu_2, \ldots, \nu_r\}$ be an orthonormal basis of $V$. Since $Z_V(L^1)$ has Property~IIA, we have
  \begin{equation}\label{eq:iso1stp}
    Z_V(L^1)\simeq Z_{\operatorname{span}\{\nu_1\}}(L^1)\oplus Z_{\operatorname{span}\{\nu_2\}}(L^1)\oplus \cdots\oplus Z_{\operatorname{span}\{\nu_r\}}(L^1),
  \end{equation}
  and observe that $\nu_j = w_j b^{t}$ with $w_j\neq 0$.  If $f\in Z_{\operatorname{span}\{\nu_j\}}(L^1)$, then by~\eqref{eq:119}
  \begin{equation}\label{eq:207}
    f
    = \sum_{P\in \mathcal{A}} a_P \varphi_P^{1,\{\nu_j\}}
    = \sum_{P\in \mathcal{A}} a_P w_j \varphi_P^{1,\{b^t\}},
  \end{equation}
  hence, $f\in Z_{\operatorname{span}\{\nu_j\}}(L^1)$ if and only if there exists $g\in Z_{\operatorname{span}\{b^t\}}(L^1)$ such that $f = w_j g$.
  Projecting the vector valued function $f$ onto a non-zero coordinate of $w_j$ establishes that $Z_{\operatorname{span}\{\nu_j\}}(L^1)$ and
  $Z_{\operatorname{span}\{b^t\}}(L^1)$ are isomorphic.

  By~\eqref{eq:iso1stp} and~\eqref{eq:121} in \Cref{rem:W-auxiliary}, we obtain
  \begin{equation*}
    Z_V(L^1)
    \simeq Z_{\operatorname{span}\{b^t\}}(L^1)\oplus Z_{\operatorname{span}\{b^t\}}(L^1)\oplus \cdots\oplus Z_{\operatorname{span}\{b^t\}}(L^1)
    \simeq Z_{\operatorname{span}\{b^t\}}(L^1).
  \end{equation*}
\end{proof}

\subsection{Janson-Sobolev-Hardy spaces with $L^1$-norm do not have Property~I}
\label{sec:janson-sobolev-hardy}

In this subsection, we show that Janson-Sobolev-Hardy spaces exhibit a dichotomy with respect to Property~IIA and Property~IIB. In particular, we
prove that every such space must fall into exactly one of these two categories. The argument relies on viewing Janson-Sobolev-Hardy spaces as
subspaces of $H^1((\mathcal{F}_j);\mathbb{R}^l)$ and establishing the boundedness of the canonical projections via a duality argument with
$\mathrm{BMO}$.

\begin{thm}\label{thm:janson-sobolev-hardy-property-I}
  Every Janson-Sobolev-Hardy space $Z_V(L^1) = Z_V((\mathcal{F}_j),L^1,\boldsymbol{\sigma};\mathbb{R}^l)$ either has Property~IIA or Property~IIB.
\end{thm}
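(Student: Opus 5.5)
By \Cref{dfn:space-type}, Property~IIA and Property~IIB split the spaces in which all canonical projections $Q_{\mathcal{K}}^{\mathcal{B}}$, for every orthonormal basis $\mathcal{B}$ of $V$, are bounded. So it suffices to show that for the Janson-Sobolev-Hardy space $Z_V(L^1)$ (with $\boldsymbol{\sigma}$ independent Rademacher functions) every $Q_k^{\mathcal{B}}$ is bounded. Boundedness of all $Q_{\mathcal{K}}^{\mathcal{B}}$ then follows by summing over $k\in\mathcal{K}$.

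First, by Khintchine--Kahane in $\ell^2_l$ and \Cref{rem:norm}, the norm of $Z_V(L^1)$ is equivalent to the square-function norm $\|(\sum_j\|\Delta_j f\|_{\ell^2_l}^2)^{1/2}\|_{L^1}$. This identifies $Z_V(L^1)$ isomorphically with the closed subspace of $H^1((\mathcal{F}_j);\mathbb{R}^l)$ spanned by $\varphi_P^{k,\mathcal{B}}$, i.e.\ the martingales whose differences on each $P$ take their values (as an $l\times q$ column pattern over $\mathcal{S}(P)$) in $V$. Next I will extend $Q_k^{\mathcal{B}}$ to an operator $\widetilde{Q}_k$ on all of $H^1((\mathcal{F}_j);\mathbb{R}^l)$, acting atom by atom. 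Write $\Delta_{n+1}f$ on $P\in\mathcal{AF}_n$ as a matrix $u_P\in\mathbb{R}^{l\times q}_0$ with columns indexed by $\ord(R)$, $R\in\mathcal{S}(P)$. Define $\widetilde{Q}_k$ by replacing each $u_P$ with its orthogonal projection $\langle u_P,v^k\rangle v^k$ with respect to the scalar product \eqref{eq:defscalar}. On $Z_V$ this agrees with $Q_k^{\mathcal{B}}$, because $\mathcal{B}$ is orthonormal. Moreover, $\widetilde{Q}_k$ is a finite linear combination of operators of the form $f\mapsto \sum_P \langle u_P, e\rangle\, v^k$. After splitting into coordinates of $\mathbb{R}^l$ (\Cref{rem:H1}), these are compositions of a coordinate projection, a Janson operator $T_L$ with $L\colon\mathbb{R}^q_0\to\mathbb{R}^q_0$ a rank-one map, and an embedding into another coordinate.

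By \Cref{thm:janson-0}, each Janson operator is bounded on $H^1$. Coordinate projections and embeddings are trivially bounded, so $\widetilde{Q}_k$ is bounded on $H^1((\mathcal{F}_j);\mathbb{R}^l)$. Restricting to $Z_V$ and using the isomorphism of the first step then shows that $Q_k^{\mathcal{B}}$ is bounded on $Z_V(L^1)$. If I want to follow the duality route the section announces, I can instead check boundedness of the adjoint of $\widetilde{Q}_k$ on $\mathrm{BMO}$ via \eqref{eq:93}. The adjoint is again atomwise an orthogonal projection in $\mathbb{R}^{l\times q}_0$, hence pointwise a contraction of the conditional square function in \eqref{eq:93}. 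This gives boundedness on $\mathrm{BMO}$, and $H^1$ boundedness follows by duality.

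The main obstacle is the first step: making precise that the Rademacher-averaged norm with $\ell^2_l$ values is equivalent to the genuine vector-valued $H^1$ norm. In particular, the completions must agree, so that $Z_V$ sits as a closed subspace of $H^1$ and $\widetilde{Q}_k$ restricts correctly. I would handle this with Kahane's inequality applied pointwise, followed by Fubini. This holds with absolute constants, independent of $V$ and $q$.
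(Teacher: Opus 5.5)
Your proposal is correct. The reduction (all $Q_k^{\mathcal{B}}$ bounded for every orthonormal $\mathcal{B}$ suffices) is the paper's, and so is your extension $\widetilde{Q}_k$: it is the canonical projection $Q_k^{\tilde{\mathcal{B}}}$ on $H^1((\mathcal{F}_j);\mathbb{R}^l)$ for an orthonormal basis $\tilde{\mathcal{B}}$ of $\mathbb{R}^{l\times q}_0$ extending $\mathcal{B}$. The paper proves its boundedness in \Cref{lem:canonical-projection-H1} by exactly the argument you list as your alternative. The adjoint has the same form and, being an atomwise orthogonal projection, is a contraction for the norm \eqref{eq:93}; duality then finishes.

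Your primary route is different. Write $\rho^k_i\in\mathbb{R}^q_0$ for the $i$-th row of $v^k$. Then the $p$-th coordinate of $\widetilde{Q}_kf$ is $\sum_{i=1}^l T_{L_{ip}}f_i$, where $L_{ip}w=\frac1q\langle w,\rho^k_i\rangle\rho^k_p$ are rank-one maps, and Janson's Theorem~2 finishes. This is valid and short, but:
\begin{itemize}
\item it outsources the analytic content to a deeper external theorem, whose proof is itself a $\mathrm{BMO}$-type argument;
\item the coordinate splitting introduces constants that depend on $l$.
\end{itemize}
The paper's duality argument is self-contained. It shows $Q_k^{\tilde{\mathcal{B}}}$ is a $\mathrm{BMO}$ contraction, so the only constants involved are those of the $H^1$--$\mathrm{BMO}$ duality.

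The step you flag as the main obstacle is immediate in this paper. $H^1((\mathcal{F}_j);\mathbb{R}^l)$ is defined with the same Rademacher-averaged norm, so by \Cref{rem:norm} $Z_V(L^1)$ is isometrically the closed span of the $\varphi_P^{k,\mathcal{B}}$ inside it. Kahane's inequality is needed only to pass to the square-function form in which Janson's theorem and the duality are usually stated.
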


From the definition of Janson-Sobolev-Hardy spaces (see \Cref{dfn:W-auxiliary}), we can infer that they are subspaces of
$H^1((\mathcal{F}_j);\mathbb{R}^l)$.  Thus, the proof of \Cref{thm:janson-sobolev-hardy-property-I} follows from the lemma below.

Let $\mathcal{\tilde{B}}$ be a basis of $\mathbb{R}^{l\times q}_0$.  Hence, the corresponding basis
$(\varphi^{k,\tilde{\mathcal{B}}}_P : P\in \mathcal{A},\ 1\leq k\leq l(q-1))$ is a basis of $H^1((\mathcal{F}_j);\mathbb{R}^l)$.
\begin{lem}\label{lem:canonical-projection-H1}
  The canonical projections $Q^{\mathcal{\tilde{B}}}_k\colon H^1((\mathcal{F}_j);\mathbb{R}^l)\rightarrow H^1((\mathcal{F}_j);\mathbb{R}^l)$,
  $1\leq k\leq r$ are all continuous.
\end{lem}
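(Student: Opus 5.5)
We plan to prove the continuity of $Q_k^{\tilde{\mathcal B}}$ on $H^1((\mathcal F_j);\mathbb R^l)$ by duality with $\mathrm{BMO}((\mathcal F_j);\mathbb R^l)$, using the norm~\eqref{eq:93}. The structural observation is that $Q_k^{\tilde{\mathcal B}}$ acts atom by atom through one fixed linear map. Indeed, for every $P\in\mathcal A$ it sends the martingale difference on $P$, a function of the form $\sum_{R\in\mathcal S(P)} u_{\ord(R)}\cha_R$ with $u=(u_1,\dots,u_q)\in\mathbb R_0^{l\times q}$, to $\sum_{R\in\mathcal S(P)} (\pi_k u)_{\ord(R)}\cha_R$. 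Here $\pi_k\colon\mathbb R^{l\times q}_0\to\mathbb R^{l\times q}_0$ is the projection onto $\spn\{v^k\}$ along the remaining basis vectors; it is the orthogonal projection if $\tilde{\mathcal B}$ is orthonormal for~\eqref{eq:defscalar}, and merely a bounded linear map otherwise, which suffices. Thus $Q_k^{\tilde{\mathcal B}}$ is a vector-valued Janson operator in the sense of \Cref{dfn:janson-operator}, with $L=\pi_k$ acting on $\mathbb R^{l\times q}_0$ instead of $\mathbb R^q_0$.

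We then reduce to the scalar case. Writing $u=(u_{ij})$ row by row, $\pi_k$ is a finite sum of maps of the form $u\mapsto e_{i'}\,(L_{ii'}u_{i\cdot})$, where $L_{ii'}\colon\mathbb R^q_0\to\mathbb R^q_0$ are the blocks of $\pi_k$ and $e_{i'}$ is the unit vector in $\mathbb R^l$. Correspondingly, $Q_k^{\tilde{\mathcal B}}f=\sum_{i,i'} e_{i'}\,T_{L_{ii'}}(f_i)$, where $f_i$ is the $i$-th coordinate of $f$. By \Cref{rem:H1}, $\|f\|_{H^1(\mathbb R^l)}$ is equivalent to $\sum_i\|f_i\|_{H^1(\mathbb R)}$. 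Hence boundedness of $Q_k^{\tilde{\mathcal B}}$ follows from \Cref{thm:janson-0}, applied to the finitely many scalar Janson operators $T_{L_{ii'}}$ on $H^1((\mathcal F_j);\mathbb R)$; the constant depends only on $q$, $l$ and $\tilde{\mathcal B}$.

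If one prefers a self-contained duality argument instead of citing \Cref{thm:janson-0}, the plan is as follows. Take $g\in\mathrm{BMO}$ and consider the formal adjoint $Q_k^{*}g$, which is again atomwise given by $\pi_k^{*}$. For $P\in\mathcal{AF}_j$, the quantity $\cond(\|Q_k^*g-\cond(Q_k^*g|\mathcal F_j)\|^2_{\ell^2_l}|\mathcal F_j)$ on $P$ equals $|P|^{-1}\sum_{P'\subseteq P}|P'|\,\|\pi_k^*(\text{difference of }g\text{ on }P')\|^2$, by orthogonality of the martingale differences. Since $\pi_k^*$ is a linear map of a finite-dimensional space, this is at most $\|\pi_k\|^2$ times the same expression for $g$, using that the $L^2$-norm of the difference on $P'$ equals the $\|\cdot\|_V$-norm of its coefficient matrix up to the factor $|P'|$. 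Thus $Q_k^*$ is bounded on $\mathrm{BMO}$ in the norm~\eqref{eq:93}. We then pass back to $H^1$ by the $H^1$–$\mathrm{BMO}$ duality, testing on finite sums, which are dense, and checking $\langle Q_kf,g\rangle=\langle f,Q_k^*g\rangle$ there.

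The main obstacle is that last duality step, namely justifying that a bounded adjoint on $\mathrm{BMO}$ (the dual) yields a bounded operator on $H^1$ (the predual). This requires that the pairing on finite sums identifies $\mathrm{BMO}$ isomorphically as the dual of $H^1$, with norming in~\eqref{eq:93}; here we use the vector-valued Fefferman duality from~\cite{MR3617459}. The computation itself is elementary, because the conditional square function is exactly a weighted sum of squares of atomwise differences. For this reason we regard the route through \Cref{thm:janson-0} as the primary, shortest proof.
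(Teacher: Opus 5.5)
Your proof is correct, but your primary route differs from the paper's. You split $\pi_k$ into row blocks $L_{i'i}\colon\mathbb{R}^q_0\to\mathbb{R}^q_0$, which is legitimate since $\mathbb{R}^{l\times q}_0$ is the direct sum of its rows. You then write $Q_k^{\tilde{\mathcal B}}f=\sum_{i,i'}e_{i'}T_{L_{i'i}}(f_i)$ on finite sums, and combine \Cref{thm:janson-0} for these scalar Janson operators with the coordinatewise splitting of \Cref{rem:H1}.

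The paper does not invoke Janson's theorem. Its proof is essentially your ``self-contained'' alternative. By the orthogonality~\eqref{eq:124}, the norm~\eqref{eq:93} becomes $\sup_j\|\sum_{i>j}\cond(\|\Delta_i f\|_{\ell^2_l}^2|\mathcal F_j)\|_{L^\infty}$. Moreover $\Delta_iQ_k^{\tilde{\mathcal B}}=Q_k^{\tilde{\mathcal B}}\Delta_i$, and $Q_k^{\tilde{\mathcal B}}$ acts on each atom as an orthogonal projection and has the same form as its adjoint. Hence it is a contraction on $\mathrm{BMO}$, and so bounded on $H^1$ by duality.

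Your main route buys brevity and the observation that $Q_k^{\tilde{\mathcal B}}$ is a finite combination of classical scalar Janson operators. The cost is that all the work sits inside the black box \Cref{thm:janson-0}, and the constant is no longer explicit. The paper's route is self-contained modulo the standard $H^1$--$\mathrm{BMO}$ duality. It also gives a clean $\mathrm{BMO}$ bound: norm at most $1$ for orthonormal $\tilde{\mathcal B}$, and $\|\pi_k\|$ in your general-basis version.

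The duality step you flag as the main obstacle is treated in the paper simply as standard, which is appropriate.
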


\begin{proof}
  Fix $1\leq k\leq r$, and note that the dual operator of $Q_k^{\tilde{\mathcal{B}}}$ has the same form as $Q_k^{\tilde{\mathcal{B}}}$ itself.  We
  will now prove the boundedness of $Q_k^{\tilde{\mathcal{B}}}$ in the dual space $\mathrm{BMO}$ of $H^1$.  To this end, let $f\in \mathrm{BMO}$.
  First, note that
  \begin{equation}\label{eq:124}
    \cond \bigl((\Delta_{i_0} g)^t\cdot (\Delta_{i_1} g) \big| \mathcal{F}_j\bigr)
    = 0,
    \qquad j \leq i_0 < i_1,
  \end{equation}
  for all $g\in \mathrm{BMO}$.  Hence, we obtain
  \begin{align}
    \|Q_k^{\tilde{\mathcal{B}}} f\|_{\mathrm{BMO}}^2
    &= \sup_j \bigl\| \cond\bigl( \|Q^{\mathcal{\tilde{B}}}_k f - \cond( Q^{\mathcal{\tilde{B}}}_k f | \mathcal{F}_j)\|_{\ell^2_l}^2 \big| \mathcal{F}_j \bigr) \bigr\|_{L^{\infty}}\\
    &= \sup_j \bigl\| \sum_{i=j+1}^{\infty} \cond\bigl( \|\Delta_i Q^{\mathcal{\tilde{B}}}_k f\|_{\ell^2_l}^2 \big| \mathcal{F}_j \bigr) \bigr\|_{L^{\infty}}.
  \end{align}
  Finally, since $\Delta_i Q^{\mathcal{\tilde{B}}}_k f = Q^{\mathcal{\tilde{B}}}_k \Delta_i f$ and
  $\cond( \|Q^{\mathcal{\tilde{B}}}_k \Delta_i f\|_{\ell^2_l} | \mathcal{F}_j ) \leq \cond( \|\Delta_i f\|_{\ell^2_l} | \mathcal{F}_j)$ pointwise
  (since $Q^{\mathcal{\tilde{B}}}_k$ is an orthogonal projection on $L^2(P)$ for each $P\in \mathcal{AF}_j$), we obtain
  \begin{align}
    \|Q_k^{\tilde{\mathcal{B}}} f\|_{\mathrm{BMO}}^2
    &\leq \sup_j \bigl\| \sum_{i=j+1}^{\infty} \cond\bigl( \|\Delta_i f\|_{\ell^2_l}^2 \big| \mathcal{F}_j \bigr) \bigr\|_{L^{\infty}}
      = \|f\|_{\mathrm{BMO}}^2.
  \end{align}
  The latter equality follows again by exploiting~\eqref{eq:124}.  So we have proved that
  $Q^{\mathcal{\tilde{B}}}_k\colon \mathrm{BMO} \rightarrow \mathrm{BMO}$ is bounded, and thus, by duality
  $Q^{\mathcal{\tilde{B}}}_k\colon H^1\rightarrow H^1$ is bounded as well.
\end{proof}

Note that \Cref{lem:canonical-projection-H1} implies that every Janson-Sobolev-Hardy space $Z_V$ is isomorphic to a complemented subspace of $H^1$.
To see this, extend a given basis $\mathcal{B}$ for $V$ to a basis $\tilde{\mathcal{B}}$ of $\mathbb{R}^{l\times q}_0$, and observe that the natural
projection determined by the basis $\mathcal{B}$ is bounded by the above lemma.

\subsection{Property~IIB and the scalar valued Hardy space $H^1$}
\label{sec:property-iib-scalar}

We show that every Janson-Sobolev space with Property~IIB and every Janson-Sobolev-Hardy space is isomorphic to $H^1$.  This requires a more detailed
analysis using results from S.~Janson~\cite{sJ77}.

\begin{thm}\label{thm:isomorphic-classification-H1}
  Let $Z_V(L^1) = Z_V((\mathcal{F}_j),L^1,\boldsymbol{\sigma};\mathbb{R}^l)$ denote either a
  \begin{itemize}
  \item Janson-Sobolev-Hardy space, or a
  \item Janson-Sobolev space with Property~IIB.
  \end{itemize}
  Then $Z_V(L^1)$ is isomorphic to $H^1$.
\end{thm}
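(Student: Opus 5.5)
We will show that $Z_V(L^1)$ embeds as a complemented subspace of $H^1$ and that, conversely, $H^1$ embeds complementably into $Z_V(L^1)$. Pe{\l}czy\'nski's decomposition method then gives the isomorphism, because $H^1$ is isomorphic to $\ell^1(H^1)$ and $Z_V(L^1)$ is isomorphic to $\ell^1(Z_V(L^1))$ by \Cref{rem:W-auxiliary}.

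\emph{Janson-Sobolev-Hardy case.} Here $Z_V(L^1)$ is, by definition, the closed subspace of $H^1((\mathcal{F}_j);\mathbb{R}^l)$ spanned by the $\varphi_P^{k,\mathcal{B}}$. Extend $\mathcal{B}$ to an orthonormal basis $\tilde{\mathcal{B}}$ of $\mathbb{R}_0^{l\times q}$. By \Cref{lem:canonical-projection-H1}, the projection $Q^{\tilde{\mathcal{B}}}_{\{1,\dots,r\}}$ is bounded on $H^1$, so $Z_V(L^1)$ is complemented in $H^1$. For the reverse embedding, fix a unit vector $v^1$ of $V$. Then $Z_{\spn\{v^1\}}(L^1)$ is $1$-complemented in $Z_V(L^1)$, using \Cref{lem:canonical-projection-H1} again via $Q_1$. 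It remains to see that $Z_{\spn\{v^1\}}$ with the square function norm contains a complemented copy of scalar $H^1$. Choose a nonzero row of $v^1$, say the row vector $u\in\mathbb{R}^q_0$; the coordinate projection then reduces this to showing that the martingale Hardy space generated by the one-dimensional pattern $u$ on the $q$-adic tree contains complemented $H^1$. This is a Gamlen--Gaudet/Jones type statement. We would realise dyadic Haar functions as block bases: each Haar function is replaced by a sum of $\pm\varphi_P$ over a collection of atoms at a fixed level whose union covers the positive and negative parts of the Haar support approximately in measure. Complementation would then follow from the usual conditional-expectation argument in $H^1$ (or by duality with $\mathrm{BMO}$), as in \cite{MR0879418}.

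\emph{Janson-Sobolev case with Property~IIB.} The norm is now $\|\,\|f\|_{\ell^2_l}\|_{L^1}$, i.e.\ the vector-valued $L^1$ norm. The key step is to use \Cref{thm:janson-1} to get an equivalence of this norm with the $H^1$ norm on $Z_V$. By Property~IIB, every $Q_k^{\mathcal{B}}$ is bounded, so $Z_V\simeq\bigoplus_k Z_{\spn\{v^k\}}$, and we need to understand the one-dimensional pieces. For $f=\sum a_P\varphi_P^{k}$, the rows of $v^k$, say $v^k_{i\cdot}\in\mathbb{R}^q_0$, give scalar martingales $f_i$, and boundedness of $f$ in $L^1(\mathbb{R}^l)$ means all the $f_i$ are bounded in $L^1$. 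Any two rows are proportional to one another only if $v^k$ has rank one, and here there is a subtlety. Property~IIB only says that $V$ is not of the form $\{wb^t\}$; individual basis vectors may still have rank one. To handle this, we would use the freedom of choosing the orthonormal basis (all canonical projections are bounded for every basis). Choose a basis containing a matrix of rank at least two, which is possible because $V$ is not of IIA form; the relevant rank-one subspaces can then be rotated away. For such $v^k$ there are two non-proportional rows $a,c$. Since $f_c=T_Lf_a$ for an operator $L$ on $\mathbb{R}^q_0$ with $La=c$, \Cref{thm:janson-1} applies, with the identity and $L$ having no common real eigenvector provided $L$ is chosen suitably, e.g.\ a rotation in the plane of $a,c$. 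It yields $\|f\|_{H^1}\lesssim\|f\|_{L^1}$, hence $Z_{\spn\{v^k\}}(L^1)\simeq Z_{\spn\{v^k\}}$ with the Hardy norm, which reduces to the first case. The rank-one pieces $v^j=w b_j^t$ would then be handled by combining each with a rank-$\geq 2$ element through the rotation trick of \Cref{thm:trichotomy}.

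\textbf{Main obstacle.} The main obstacle is the last step: ensuring a basis in which every $v^k$, or every block of a bounded decomposition, has two non-proportional rows. We also have to extract a quantitative norm equivalence from \Cref{thm:janson-1}, which is only qualitative; for this we would use the closed graph theorem on the Banach space of $L^1$-bounded martingales.
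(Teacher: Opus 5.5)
\textbf{Main gap: $H^1$ inside the one-pattern space.} Your Hardy case rests on the claim that $H^1$ embeds complementably into the one-pattern space $X_u=\overline{\spn}\{\varphi_P^{1,\{u\}} : P\in\mathcal{A}\}\subset H^1$. The construction you propose for this does not work.

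A sum $\sum_{P\in\mathcal{C}}\pm\varphi_P^{1,\{u\}}$ over atoms of one level takes the values $\pm u_1,\dots,\pm u_q$ on their successors, so it is not distributed like a Haar function. For $u=(1,1,-2)$, for instance, its modulus is $1$ on two thirds of the support and $2$ on the remaining third. More generally, Gamlen--Gaudet blocks $\psi_P=\sum_{R\in\mathcal{B}_P}\varphi_R$ of the $u$-system are distributed like the $u$-system itself. That is exactly why \Cref{thm:canonical} produces copies of $Z_V$, never of the Haar system.

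The missing idea is to use Janson's boundedness theorem \Cref{thm:janson-0} to change the pattern. If $\mu,\nu\in\mathbb{R}^q_0\setminus\{0\}$ and $A\mu=\nu$, $B\nu=\mu$, then $T_A,T_B$ are bounded on $H^1$ and restrict to mutually inverse isomorphisms between $X_\mu$ and $X_\nu$. Take an orthonormal basis $\nu_1,\dots,\nu_{q-1}$ of $\mathbb{R}^q_0$. Then \Cref{lem:canonical-projection-H1} (with $l=1$) and \Cref{rem:W-auxiliary} give $H^1(\mathbb{R})=\bigoplus_{s=1}^{q-1}X_{\nu_s}\simeq\bigoplus_{s=1}^{q-1}X_\mu\simeq X_\mu$. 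So each one-dimensional piece is isomorphic to $H^1$ directly, and Pe{\l}czy\'nski's method is not needed.

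The same theorem is also what you tacitly use when you say the coordinate projection $f\mapsto e_p^tf$ reduces $Z_{\spn\{v^1\}}$ to $X_u$. For this map to be an isomorphism in the square-function norm you need $e_s^tf=T_{A_s}(e_p^tf)$ with $T_{A_s}$ bounded on $H^1$.

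\textbf{Property~IIB case.} Two further points need repair.

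First, your claim that $V$ contains a matrix of rank at least two ``because $V$ is not of IIA form'' is false. The space $V=\{w_0b^t : b\in B\}$ with $\dim B\geq 2$ is not of that form, yet it consists of rank-one matrices. What you need is \Cref{pro:norank}: under Property~IIB, $V$ contains no rank-one matrix at all. The paper proves this by combining Janson's example of some $g\in Z_{\spn\{b^t\}}(L^1)\setminus H^1$ with \Cref{thm:janson-1} and the boundedness of a suitable canonical projection $Q_1^{\mathcal{B}}$. Once this is known, every orthonormal basis works and your ``main obstacle'' disappears.

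Second, the family $\{\Id,L\}$ with $L$ a rotation in the plane of $a$ and $c$ has common real eigenvectors as soon as $q\geq 4$, namely all vectors orthogonal to that plane. For even $q$, no single operator on the odd-dimensional space $\mathbb{R}^q_0$ is free of real eigenvectors. The fix uses that, for $g\in X_\mu$, $T_Ag$ depends only on $A\mu$, so you may use several operators and choose their values off $\mu$ to destroy eigenvectors. For example, take $\mu=e_p^tv^k$, let $A_0$ be diagonal with distinct eigenvalues in an orthonormal basis $\{\mu,\nu_2,\dots,\nu_{q-1}\}$, and set $A_s\mu=e_s^tv^k$, $A_s\nu_\tau=\mu$ for $1\leq s\leq l$. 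Since $v^k$ has a row not parallel to $\mu$, this family has no common real eigenvector.

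Your closed-graph argument for the quantitative estimate is fine.
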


As was discussed in the previous subsection, Janson-Sobolev-Hardy spaces are isomorphic to subspaces of $H^1$.  However, this is not immediate for
Janson-Sobolev spaces with Property~IIB and requires the following proposition.

\begin{pro}\label{pro:norank}
  Let $Z_V(L^1) = Z_V((\mathcal{F}_j),L^1,\boldsymbol{\sigma};\mathbb{R}^l)$ denote a Janson-Sobolev space with Property~IIB.  Then $V$ contains no
  rank-one matrices.
\end{pro}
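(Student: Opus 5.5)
We argue by contradiction. Assume $Z_V(L^1)$ is a Janson-Sobolev space with Property~IIB, and that $V$ contains a rank-one matrix $v^1 = w b^t$ with $\|v^1\|_V = 1$, where $w\in\mathbb{R}^l$ and $b\in\mathbb{R}^q_0$ are non-zero. The set $\{w' b^t : w'\in\mathbb{R}^l\}\cap V$ is a subspace containing $v^1$. By Property~IIB it is not all of $V$. So there is $v\in V$ outside it, and subtracting its component along $v^1$ gives a unit vector $v^2\perp v^1$ that is still not of the form $w'b^t$. Hence some row $c$ of $v^2$ (an element of $\mathbb{R}^q_0$) is not parallel to $b$. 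Complete $v^1,v^2$ to an orthonormal basis, and also form the rotated basis $\mathcal{B}'$ in which $v^1,v^2$ are replaced by $u^{\pm} = (v^1\pm v^2)/\sqrt2$.

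Property~IIB says every canonical projection for every orthonormal basis is bounded. In particular $Q_{u^+}^{\mathcal{B}'}$ is bounded. On $\varphi_P^{1}$ (built from $v^1$) we have $Q_{u^+}^{\mathcal{B}'}\varphi_P^1 = \tfrac12(\varphi_P^{1}+\varphi_P^{2})$. Applying $2Q_{u^+}^{\mathcal{B}'}-\Id$ to $f=\sum_P a_P\varphi_P^1$ therefore shows that the map
\begin{equation*}
T\colon \sum_P a_P\varphi_P^{1}\mapsto \sum_P a_P\varphi_P^{2}
\end{equation*}
is bounded on the subspace spanned by the $\varphi_P^1$. Projecting onto a non-zero coordinate of $w$ and onto the row of $v^2$ equal to $c$ (as in the proof of \Cref{thm:isomorphic-classification-L1}), we reduce to scalar martingales $g=\sum_P a_P\varphi_P^{\{b\}}$. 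For these we obtain
\begin{equation*}
\|T_L g\|_{L^1}\lesssim \|g\|_{L^1},
\end{equation*}
where $T_L$ is the Janson operator of any linear map $L\colon\mathbb{R}^q_0\to\mathbb{R}^q_0$ with $Lb=c$. Only the value $Lb$ matters here, because $g$ has differences only in direction $b$.

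Next we use Janson's \Cref{thm:janson-1}. Choose two maps $L_1,L_2$ with $L_1b=L_2b=c$ that have no common real eigenvector. Since $c$ is not parallel to $b$, $b$ is not an eigenvector of either map. On a complement of $\operatorname{span}\{b\}$ we are free to define $L_1,L_2$ so that no other common eigenvector exists; for instance, make them act by rotations or by maps with incompatible eigenbases, and handle $q=2$, where $\mathbb{R}^q_0$ is one-dimensional, separately. Then $T_{L_1}g=T_{L_2}g=T_Lg$. \Cref{thm:janson-1}, combined with a closed-graph argument that makes it quantitative, gives
\begin{equation*}
\|g\|_{H^1}\lesssim \|g\|_{L^1}+\|T_Lg\|_{L^1}\lesssim\|g\|_{L^1}
\end{equation*}
on the span of the $\varphi_P^{\{b\}}$. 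So on this span the $L^1$ and $H^1$ norms are equivalent.

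Finally we contradict this equivalence with a Riesz-product martingale in direction $b$. Fix a branch, say the atoms $P_n$ where $b_{\mathrm{ord}}$ is largest, and set
\begin{equation*}
g_N=\prod_{n<N}\bigl(1+\delta\,\varphi_{P_n}^{\{b\}}/\|b\|_\infty\bigr)-1,
\end{equation*}
with the factors placed along nested atoms so that the product stays non-negative. This $g_N$ lies in the span, and $\|g_N\|_{L^1}\le 2$. Its square function, however, is at least $c\,\delta\sqrt N$ on a set carrying a fixed proportion of the mass (equivalently, its maximal function forces $\|g_N\|_{H^1}\gtrsim N$). Hence $\|g_N\|_{H^1}\to\infty$, which is the desired contradiction.

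The main obstacle is making Janson's theorem quantitative on this subspace and choosing $L_1,L_2$ with no common real eigenvector while keeping $Lb=c$ fixed. The construction of a Riesz product with bounded $L^1$ norm but unbounded $H^1$ norm is classical.
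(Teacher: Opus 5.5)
Most of your argument is sound. The rotation $u^{\pm}=(v^1\pm v^2)/\sqrt2$ does make $2Q_{u^+}^{\mathcal{B}'}-\Id$ act as the transfer map $\varphi_P^{1}\mapsto\varphi_P^{2}$; the paper gets the same effect by choosing $v$ with $\langle v,u\rangle\neq0$. Suitable $L_1,L_2$ exist: for $L_1$, take a planar rotation on $\operatorname{span}\{b,c\}$ plus distinct real eigenvalues on a complement, and let $L_2$ send those eigenvectors to $b$. The case $q=2$ cannot occur at all. Your closed-graph upgrade of \Cref{thm:janson-1} to $\|g\|_{H^1}\lesssim\|g\|_{L^1}$ on the span of the $\varphi_P^{\{b\}}$ is also correct.

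The genuine gap is the final counterexample. The product you write down lives on a single branch, and it is bounded in $H^1$. Take $P_{n+1}\in\mathcal{S}(P_n)$, $\delta\le1$, and $\theta=1+\delta\max_j b_j/\|b\|_\infty\le 2$. Then the partial product equals $\theta^n$ on $P_n$, so $|\Delta_{n+1}g_N|\le\delta\theta^n\cha_{P_n}$ and
\begin{equation*}
\|g_N\|_{H^1}\le\sum_{n<N}\delta\,\theta^n|P_n|=\delta\sum_{n<N}(\theta/q)^n\le 3\delta
\end{equation*}
for $q\ge3$, which is the only case that matters. The mass leaks off the branch geometrically instead of concentrating. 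Only for $b$ proportional to a permutation of $(q-1,-1,\dots,-1)$, with the factor vanishing on the other children, would you recover the extremal example $q^N\cha_{P_N}$. So your claimed lower bounds $\gtrsim\sqrt N$ and $\gtrsim N$ fail for this $g_N$, and no contradiction is reached.

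You can close the argument in one of two ways.
\begin{enumerate}
\item Cite Janson's Lemma~6, as the paper does. It directly supplies some $g\in Z_{\operatorname{span}\{b^t\}}(L^1)\setminus H^1$, so the qualitative \Cref{thm:janson-1} suffices and the closed-graph step becomes unnecessary.
\item Use the full-level Riesz product $F_N=\prod_{n<N}(1+\delta h_n)$, where $h_n=\sum_{P\in\mathcal{AF}_n}\varphi_P^{\{b\}}$ and $0<\delta<1/\|b\|_\infty$.
\end{enumerate}
For the second option, $F_n$ is constant on atoms of $\mathcal{F}_n$, with value $F_n(P)$ on $P$. Hence $F_N-1=\sum_{n<N}\sum_{P\in\mathcal{AF}_n}\delta F_n(P)\,\varphi_P^{\{b\}}$ still lies in the span. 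Moreover $F_N\ge0$ and $\int F_N=1$, so $\|F_N-1\|_{L^1}\le2$. The $h_n$ are i.i.d.\ and non-constant, so strict concavity of $\log$ and the strong law of large numbers give $F_N\to0$ almost everywhere. Therefore $(F_N)$ is not uniformly integrable, and $\|\sup_{n\le N}F_n\|_{L^1}\to\infty$. Davis' inequality then yields $\|F_N-1\|_{H^1}\to\infty$, contradicting your norm equivalence.

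Your remark that the square function is large on a set carrying a fixed proportion of the mass does not by itself bound the Lebesgue $L^1$ norm from below. The uniform-integrability argument avoids that issue.
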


\begin{proof}[Proof of~\Cref{pro:norank}]
  If $r=1$, then $V$ is spanned by just one matrix $v$, and since $Z_V$ has Property~IIB, it does not have Property~IIA.  Thus, $v$ is not a rank-one
  matrix.  Assume that $r>1$ and that $V$ contains a rank-one matrix $u$, i.e., there exist $w\in\mathbb{R}^l$ and $b\in\mathbb{R}^q_0$ such that
  \begin{equation}\label{eq: uasrankone}
    u
    = wb^t.
  \end{equation}
  Since $r>1$ and $Z_V$ has Property~IIB, there exists a matrix $\tilde{v}\in V$ such that
  \begin{equation*}
    \tilde{v}
    \notin \{ xb^t: x\in\mathbb{R}^l \}.
  \end{equation*}
  Note that either $\langle \tilde{v}, u \rangle \neq 0$ or $\langle \tilde{v}+u, u \rangle\neq 0$.  If the former is true, we simply put
  $v = \tilde{v}/\|\tilde{v}\|_V$, and if not, we define $v = (\tilde{v} + u)/\|\tilde{v} + u\|_V$.  We therefore record the following facts:
  \begin{equation}\label{eq:118}
    v\notin\{ xb^t: x\in\mathbb{R}^l \}
    \quad\mbox{and}\quad
    \langle v, u\rangle
    \neq 0.
  \end{equation}
  Take an orthonormal basis $\mathcal{B}:=\{v,\beta_2,\ldots,\beta_r\}$ of $V$.  Then
  \begin{equation}\label{eq: uinbasisv}
    u
    = c_1 v + \sum_{j=2}^{r} c_j \beta_j,
  \end{equation}
  where $c_1,\ldots,c_r\in\mathbb{R}$, and by~\eqref{eq:118} we have $c_1\neq 0$.  As outlined in \Cref{sec:basic-definitions}, the choice of the
  orthonormal basis $\mathcal{B}$ gives rise to the system $(\varphi_P^{k,\mathcal{B}} : P\in \mathcal{A},\ 1\leq k\leq r)$, which by
  \Cref{pro:schauder-Z} forms a Schauder basis for $Z_V(L^1)$.  The associated canonical projections $Q_k^{\mathcal{B}}\colon Z_V(L^1) \to Z_V(L^1) $,
  $1\leq k\leq r$, which are defined in \Cref{dfn:Q}, are bounded since we assumed that $Z_V(L^1)$ has Property~IIB.  Recall that
  $Q_1^{\mathcal{B}}\colon Z_V(L^1) \rightarrow Z_V(L^1)$ given by the formula
  \begin{equation*}
    Q_1^{\mathcal{B}}\Bigl(
    \sum_{P\in \mathcal{A}}\sum_{k=1}^r a_P^k \varphi^{k,\mathcal{B}}_P
    \Bigr)
    = \sum_{P\in \mathcal{A}} a_P^1 \varphi^{1,\mathcal{B}}_P.
  \end{equation*} 
  Observe that by~\eqref{eq:119} and~\eqref{eq: uinbasisv}, we have
  \begin{equation}\label{eq:125}
    Q_1^{\mathcal{B}} \sum_{P\in \mathcal{A}} a_P \varphi_P^{1,\{u\}}
    = c_1\sum_{P\in \mathcal{A}} a_P \varphi^{1,\mathcal{B}}_P.
  \end{equation}  
  Since $u$ is a rank-one matrix the construction from S.~Janson’s paper~\cite[Lemma~6]{sJ77} gives us a function
  $g \in Z_{\operatorname{span}\{b^t\}}(L^1)$ which is not in $H^1((\mathcal{F}_j);\mathbb{R})$.  Recall that $w = (w_i)_{i=1}^l$ and $b$ are defined
  in \eqref{eq: uasrankone}, and extend $\{b\}$ to a basis $\mathcal{D} = \{b,\alpha_2,...,\alpha_{q-1}\}$ of $\mathbb{R}^q_0$.  Let $(e_i)_{i=1}^l$
  denote the standard unit vector basis of $\mathbb{R}^l$.  Now, for each $1\leq i\leq l$, define the operators
  $A_i, A_{i+l}\colon \mathbb{R}^q_0\to \mathbb{R}^q_0$ by the formulas
  \begin{align}
    A_i b
    &= w_i b,
    &A_i \alpha_s
    &= (s + |w_i|) \alpha_s,
      \label{eq:122}\\
    A_{l+i} b
    &= c_1 (e_i^t v)^t,
    &A_{l+i} \alpha_s
    &= b,
      \label{eq:123}
  \end{align}
  for all $2\leq s\leq q-1$, where $v$ is as in~\eqref{eq:118}.  Thus we record that the operators $A_1, \ldots, A_{2l}$ have no common real eigenvector.
  By~\eqref{eq:119}, \Cref{dfn:janson-operator}, \eqref{eq:122}, \eqref{eq:123}, \eqref{eq: uinbasisv}, the family of corresponding Janson operators
  $T_{A_1}, T_{A_2}, \ldots, T_{A_{2l}}$ satisfies
  \begin{equation}\label{eq:126}
    T_{A_i} \varphi_P^{1,\{b^t\}}
    = w_i \varphi_P^{1,\{b^t\}}
    \qquad\text{and}\qquad
    T_{A_{l+i}} \varphi_P^{1,\{b^t\}}
    = c_1 e_i^t \varphi_P^{1,\mathcal{B}},
  \end{equation}
  for all $1\leq i\leq l$.

  Recall that since $g \in Z_{\operatorname{span}\{b^t\}}(L^1)$, we have $g = \sum_{P\in \mathcal{A}} a_P \varphi_P^{1,\{b^t\}}$.  Now put $f = w g$
  pointwise, and note that by~\eqref{eq:119} and~\eqref{eq: uasrankone} we have
  \begin{equation}\label{eq:108}
    f = \sum_{P\in \mathcal{A}} a_P w \varphi_P^{1,\{b^t\}} = \sum_{P\in \mathcal{A}} a_P \varphi_P^{1,\{u\}},
  \end{equation}
  hence $f\in Z_{\operatorname{span}\{u\}}(L^1)\subset Z_V(L^1)$.  Observe that by~\eqref{eq:126} and~\eqref{eq:125}, we obtain
  \begin{equation}\label{eq:127}
    \begin{aligned}
      T_{A_i} g
      &= \sum_{P\in \mathcal{A}} a_P T_{A_i} \varphi_P^{1,\{b^t\}}
        = \sum_{P\in \mathcal{A}} a_P w_i \varphi_P^{1,\{b^t\}}
        = w_i g
        = e_i^t f,\\
      T_{A_{l+i}} g
      &= \sum_{P\in \mathcal{A}} a_P T_{A_{l+i}} \varphi_P^{1,\{b^t\}}
        = e_i^tc_1\sum_{P\in \mathcal{A}} a_P \varphi_P^{1,\mathcal{B}}
        = e_i^t Q_1^{\mathcal{B}} f,
    \end{aligned}
  \end{equation}
  for all $1\leq i\leq l$.

  Since $Z_V(L^1)$ has Property~IIB, $Q_1^{\mathcal{B}}\colon Z_V(L^1) \to Z_V(L^1)$ is bounded; hence if $f\in Z_V(L^1)$, so is $Q_1^{\mathcal{B}}f$.
  Thus, by~\eqref{eq:127}, we obtain $T_{A_i}g \in L^1$ for $i\in \{1,\ldots, 2l\}$.  By S.~Janson's theorem~\cite[Theorem 3]{sJ77}
  (see~\Cref{thm:janson-1}), we obtain $g\in H^1((\mathcal{F}_j);\mathbb{R})$, which contradicts the construction of $g$ (we refer to the paragraph
  below~\eqref{eq:125}).
\end{proof}

For the $L^1$ norm, we will establish that Janson-Sobolev spaces with Property~IIB, Janson-Sobolev-Hardy spaces and $H^1$ are all isomorphic.
\begin{proof}[Proof of \Cref{thm:isomorphic-classification-H1}]
  In this proof, we abbreviate $Z_V((\mathcal{F}_j),L^1,\boldsymbol{\sigma};\mathbb{R}^l)$ by $Z_V(\boldsymbol{\sigma})$.

  First, we claim that for any choice of orthonormal basis $\mathcal{B}$ of $V$, all the projections
  $Q_k^{\mathcal{B}}\colon Z_V(\boldsymbol{\sigma}) \to Z_V(\boldsymbol{\sigma})$, $1\leq k\leq r$ are bounded.  In case of Janson-Sobolev-Hardy
  spaces, inspecting the definition of their norm reveals that it is a subspace of $H^1((\mathcal{F}_j);\mathbb{R}^l)$.  Therefore, the claim follows
  from \Cref{lem:canonical-projection-H1}.  If $Z_V(\boldsymbol{\sigma})$ is a Janson-Sobolev space with Property~IIB, the boundedness of the
  projections follows from \Cref{dfn:space-type}.  Therefore in both cases we have
  \begin{equation*}
    Z_V(\boldsymbol{\sigma}) \simeq \oplus_{k=1}^r Z_{V_k}(\boldsymbol{\sigma}),
  \end{equation*}
  for a fixed orthonormal basis $\mathcal{B}=\{v^1,\ldots, v^r\}$ of $V$ and $V_k = \operatorname{span}\{v^k\}$.  Since
  $H^1\simeq \bigoplus_{k=1}^{r} H^1$ it is enough to prove that $Z_{V_k}(\boldsymbol{\sigma}) \simeq H^1$.

  Let $1\leq k\leq r$ be fixed.  Since the matrix $v^k$ is non-zero, there exists $1\leq p\leq l$ such that $\mu = e_p^t v^k\neq 0$.  As before, $e_p$ denotes the
  $p$-th standard unit vector in $\mathbb{R}^l$.  We extend $\{\mu^t\}$ to an orthonormal basis $\{\mu^t, \nu_2, \ldots,\nu_{q-1}\}$ of
  $\mathbb{R}^{q}_0$, and will now define the linear operators $A_s\colon \mathbb{R}^q_0 \rightarrow \mathbb{R}^q_0$, $1\leq s\leq l$.  The operator $A_0\colon \mathbb{R}^q_0 \rightarrow \mathbb{R}^q_0$ is given by
  \begin{equation}
    A_0(\mu^t)
    = \mu^t
    \quad\mbox{ and }\quad
    A_0(\nu_\tau)
    = \tau \nu_{\tau} \mbox{ for } 2\leq \tau\leq q-1,
  \end{equation}
  and for $1\leq s\leq l$, we define $A_s\colon \mathbb{R}^q_0 \rightarrow \mathbb{R}^q_0$ by the formula
  \begin{equation}
    A_s(\mu^t)
    = (e_s^t v^k)^t
    \quad\mbox{ and }\quad
    A_s(\nu_\tau)
    = \mu^t \mbox{ for } 2\leq \tau\leq q-1.
  \end{equation}
  Given $f\in Z_{V_k}(\boldsymbol{\sigma})$, and putting $g = e_p^t f \in L^1$, observe that by similar reasoning as in~\eqref{eq:127}, we obtain
  \begin{equation}\label{eq:129}
    T_{A_0} g = g
    \qquad\text{and}\qquad
    T_{A_s}g = e_s^t f \in L^1,\quad 1\leq s\leq l.
  \end{equation}
  The linear operators $A_s$, $0\leq s\leq l$, have no common real eigenvector, therefore S.~Janson's theorem~\cite[Theorem~3]{sJ77}
  (see~\Cref{thm:janson-1}) yields $g\in H^1$.  Moreover, inspecting its proof reveals that $\|g\|_{H^1}\lesssim \sum_{s=0}^l \|T_{A_s} g\|_{L^1}$.  Hence,
  by~\eqref{eq:129} we obtain $\|g\|_{H^1}\lesssim \|f\|_{L^1}$.  Since the Janson operators $T_{A_s}\colon H^1\to H^1$, $0\leq s\leq l$ are
  bounded~\cite[Theorem~2]{sJ77} (see~\Cref{thm:janson-0}), the identities~\eqref{eq:129} yield
  \begin{equation*}
    \|f\|_{L^1}
    \lesssim \sum_{s=0}^l \|T_{A_s} g\|_{L^1}
    \leq \sum_{s=0}^l \|T_{A_s} g\|_{H^1}
    \lesssim \|g\|_{H^1}.
  \end{equation*}
  To summarize, we proved
  \begin{equation}\label{eq:130}
    \|g\|_{H^1} \simeq \|f\|_{L^1}.
  \end{equation}
  In the case that $Z_{V_k}(\boldsymbol{\sigma})$ is a Janson-Sobolev-Hardy space, a simplified version of the above reasoning gives us
  $\|g\|_{H^1} \simeq \|f\|_{H^1}$ as well.
  
  We now define the space $X_k\subset H^1$ by
  \begin{equation*}
    X_k
    = \overline{\operatorname{span}\{ \varphi_P^{1,\{\mu^t\}}: P\in\mathcal{A} \}}^{H^1}.
  \end{equation*}
  In either case, we define $S\colon Z_{V_k}(\boldsymbol{\sigma}) \rightarrow X_k$ by $Sf = e_p^t f$, and observe that by~\eqref{eq:129} we have
  \begin{equation*}
    S^{-1}g
    = (T_{A_s} g)_{s=1}^l.
  \end{equation*}
  By~\eqref{eq:130} and the remark below it, $S$ is an isomorphism.
  
  So in both cases we will prove that $X_k\simeq H^1$.  Given an independent sequence of Rademacher functions $\boldsymbol{\rho}$, note that
  $X_k = Z_{\operatorname{span}\{\mu^t\}}((\mathcal{F}_j),L^1,\boldsymbol{\rho};\mathbb{R})$, i.e., we interpret $X_k$ as a Janson-Sobolev-Hardy space.  By
  \Cref{rem:W-auxiliary}, we have that
  \begin{equation}\label{eq:131}
    X_k
    \simeq \bigoplus_{s=1}^{q-1} X_k.
  \end{equation}
  We know that for any linear transformation $A\colon \mathbb{R}^q_0 \rightarrow \mathbb{R}^q_0$, the corresponding Janson operator $T_A$ acts boundedly from
  $H^1$ to $H^1$.  We take an orthonormal basis $\widetilde{\mathcal{B}} = \{\nu_1, \ldots, \nu_{q-1}\}$ of $\mathbb{R}^{1\times q}_0$ and for
  $s\in\{1,2,\ldots,q-1\}$ we take linear transformations $A_s\colon \mathbb{R}^q_0 \rightarrow \mathbb{R}^q_0$ and $B_s\colon \mathbb{R}^q_0 \rightarrow \mathbb{R}^q_0$ such that
  \begin{equation*}
    A_s(\mu^t)
    = \nu_s^t
    \qquad \mbox{ and }\qquad
    B_s(\nu_s^t)
    = \mu^t,
  \end{equation*}
  and we put
  \begin{equation*}
    Y_s
    = \overline{\operatorname{span}\{ \varphi_P^{1,\{\nu_s\}}: P\in\mathcal{A} \}}^{H^1}
    = \overline{\operatorname{span}\{ \varphi_P^{s,\widetilde{\mathcal{B}}}: P\in\mathcal{A} \}}^{H^1}
    = Q_s^{\widetilde{\mathcal{B}}}(H^1).
  \end{equation*}
  By the above we see that $T_{A_s}\colon X_k \rightarrow Y_s$ is an isomorphism and $T_{A_s}^{-1}=T_{B_s}$.  Hence, \eqref{eq:131} yields
  \begin{equation*}
    X_k
    \simeq \bigoplus_{s=1}^{q-1} Y_s
    \simeq H^1((\mathcal{F}_j);\mathbb{R})
  \end{equation*}
  where the last isomorphism follows from the fact $\widetilde{\mathcal{B}}$ is a basis of $\mathbb{R}^{1\times q}_0$.
\end{proof}

\subsection{An example of a non-primary Janson-Sobolev space}
\label{sec:an-example-non}

In this section, we explicitly construct a Janson-Sobolev space that is neither primary nor possesses the primary factorization property.

\begin{thm}\label{thm:non-primary-janson-sobolev}
  Let $(\mathcal{F}_j)_{j=0}^{\infty}$ denote the $3$-adic filtration.  Then there exists a subspace $V$ of $\mathbb{R}_0^{3\times 3}$ such that the
  associated Janson-Sobolev space $Z_V(L^1) = Z_V((\mathcal{F}_j),L^1,(\sigma_j);\mathbb{R}^3)$, with $\sigma_j = 1$ for all $j\in \mathbb{N}_0$, is
  neither primary nor has the primary factorization property.  Moreover, $Z_V(L^1)$ satisfies neither Property~IIA nor Property~IIB, and therefore has
  Property~I.
\end{thm}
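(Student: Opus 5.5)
The plan is to choose $V\subset\mathbb{R}_0^{3\times 3}$ so that $Z_V(L^1)$ is isomorphic to $L^1\oplus H^1$, which the introduction announces. This space is known to be non-primary: $L^1$ is not isomorphic to $H^1$, $H^1$ does not embed complementably into $L^1$ (it is not isomorphic to a complemented subspace of $L^1$ because $H^1$ is not an $\mathcal{L}^1$-space), and $L^1$ does not embed into $H^1$ by Maurey's result. Hence neither summand is isomorphic to the whole space. Since $Z_V(L^1)$ has the usual self-similarity (\Cref{rem:W-auxiliary}: $Z_V(L^1)\simeq \ell^1(Z_V(L^1))$), the primary factorization property would imply primarity via Pe{\l}czy\'nski's decomposition technique. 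So non-primarity already excludes the primary factorization property.

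For the construction I take $q=l=3$ and $V=\operatorname{span}\{u,v\}$, with $u=e_1b^t$ a rank-one matrix and $v$ a matrix whose rows span a subspace of $\mathbb{R}^3_0$ allowing Janson's theorem to apply. Concretely, put $b=(1,-1,0)$ and let $v$ have rows $0$, $(1,0,-1)$, $(0,1,-1)$, up to normalisation and orthogonality with $u$. Working with $\varphi^{k}$ for the basis $(u,v)$, the first coordinate of $f\in Z_V$ picks up both $u$ and $v$ contributions, while coordinates $2$ and $3$ only see $v$. The goal is a decomposition $Z_V(L^1)=Z_{\operatorname{span}\{u\}}\oplus Z_{\operatorname{span}\{v\}}$ fails to be bounded, yet an isomorphism to $L^1\oplus H^1$ exists after a different splitting.

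A cleaner alternative is to choose $v$ with $e_1^tv=0$. Then the coordinates decouple: $f=(f_1,f_2,f_3)$, where $f_1$ ranges over $Z_{\operatorname{span}\{b^t\}}(L^1)$ (a martingale with differences in direction $b$, generically isomorphic to $L^1$ when $b$ is chosen so that this subspace is isomorphic to $L^1$ — for $q=3$, $b=(1,-1,0)$ gives a Haar-type system that is isomorphic to $L^1$ via a Gamlen–Gaudet argument). The pair $(f_2,f_3)$ ranges over $Z_{\operatorname{span}\{v'\}}$ with $v'$ having no rank-one restriction, which by the argument of \Cref{thm:isomorphic-classification-H1} (Janson's \Cref{thm:janson-1}) is isomorphic to $H^1$. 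But this makes $Q_u$ bounded, giving Property~IIB-type boundedness, contradicting the required Property~I.

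So the real choice must couple the coordinates, e.g. $v$ with first row nonzero. Then one shows directly that $Q_{\{1\}}^{\mathcal B}$ is unbounded, using Janson's $L^1$ function not in $H^1$ (\cite[Lemma~6]{sJ77}) as in \Cref{pro:norank}. This rules out IIA/IIB, and \Cref{thm:trichotomy} then gives Property~I. The isomorphism $Z_V(L^1)\simeq L^1\oplus H^1$ is obtained by a non-canonical splitting $f\mapsto(f_1,\;(f_2,f_3))$: the map $f\mapsto (f_2,f_3)$ has range isomorphic to $H^1$ by Janson's theorem, and its kernel is isomorphic to $L^1$. Complementation comes from a Janson-operator lift $H^1\to Z_V$, bounded by \Cref{thm:janson-0}.

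The main obstacle is exactly this last step: proving that the quotient/kernel splitting is bounded in the $L^1$ norm, and in particular constructing a bounded right inverse of $f\mapsto(f_2,f_3)$ into $Z_V(L^1)$.
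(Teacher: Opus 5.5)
Your reason for discarding the decoupled construction rests on a misreading of \Cref{dfn:space-type}. Property~I only requires \emph{some} orthonormal basis of $V$ whose finest bounded partition is trivial. Property~IIB, by contrast, requires every canonical projection to be bounded for \emph{every} orthonormal basis. A bounded $Q_u$ for the particular basis $(u,v)$ therefore contradicts nothing. The paper's own example has $Q_{\{1,2\}}^{\mathcal B}$ and $Q_3^{\mathcal B}$ bounded, and it obtains Property~I from \Cref{thm:trichotomy} after excluding two cases:
\begin{itemize}
\item IIA, because $V$ contains a matrix of rank two;
\item IIB, because $Z_V(L^1)\not\simeq H^1$ (via \Cref{thm:isomorphic-classification-H1}); \Cref{pro:norank} would also do, since $V$ contains rank-one matrices.
\end{itemize}
Your coupled variant also does not behave as you claim. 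Rows $2$ and $3$ of $v$ form a $2\times 3$ matrix of rank two, so Janson's theorem, used as in the proof of \Cref{thm:isomorphic-classification-H1}, gives $\|Q_v^{\mathcal B}f\|_{Z_V}\lesssim\|f_2\|_{L^1}+\|f_3\|_{L^1}\le 2\|f\|_{Z_V}$; this is exactly your Janson-operator lift. Hence $Q_{\{1\}}^{\mathcal B}=\Id_{Z_V}-Q_v^{\mathcal B}$ is bounded, not unbounded, so your ``main obstacle'' disappears. Coupling buys nothing: you again get $Z_V(L^1)\simeq Z_{\operatorname{span}\{b^t\}}(L^1)\oplus H^1$.

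The substantive gap, which remains even after this correction, is the $L^1$-summand. With a single rank-one matrix $u=e_1b^t$, $b=(1,-1,0)$, that summand is $Z_{\operatorname{span}\{b^t\}}(L^1)$. This is the closed span in $L^1$ of the functions $\cha_{R_1}-\cha_{R_2}$, where $R_1,R_2$ are the first two successors of $P\in\mathcal A$. A Gamlen--Gaudet type argument can at best show that $L^1$ is isomorphic to a complemented subspace of it. The reverse direction (complementation in $L^1$, or an isomorphism with $L^1$) does not follow; the question is of the same nature as Schechtman's open problem for $q=4$, $b=(1,1,-1,-1)$, recalled in \Cref{sec:proj-fact}. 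Without it, your argument that the space is not isomorphic to its first summand (absence of complemented reflexive subspaces in $L^1$) no longer applies.

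The missing idea is to put the entire row $\{e_1c^t : c\in\mathbb{R}^3_0\}$ into $V$, e.g.\ both $e_1(1,-1,0)$ and $e_1(-1,-1,2)$, so that $\dim V=3$. Then $f\mapsto e_1^tf$ maps the corresponding canonical-projection range isometrically onto mean-zero $L^1$, and the projection itself is the contraction $f\mapsto(e_1^tf)e_1$. Add a non-rank-one matrix supported on the other two rows. The paper does this with the roles of the rows permuted: the $L^1$ part sits in the third row, and $v^3$ has rows $(1,0,-1)$, $(1,-2,1)$, $0$, so that $Z_{\operatorname{span}\{v^3\}}(L^1)\simeq H^1$. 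This gives $Z_V(L^1)\simeq L^1\oplus H^1$, after which your non-primarity and self-similarity arguments go through. One small correction there: the non-embeddability of $L^1$ into $H^1$ is Pe{\l}czy\'nski's theorem for spaces with an unconditional basis, not a result of Maurey.
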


\begin{proof}
  To this end, we define
  \begin{equation}\label{eq:H1L1basis}
    v^1
    = \begin{pmatrix} 
      0 & 0 & 0  \\ 
      0 & 0 & 0  \\ 
      1 & -1 & 0  
    \end{pmatrix},
    \qquad
    v^2
    = \begin{pmatrix} 
      0 & 0 & 0  \\ 
      0 & 0 & 0  \\ 
      -1 & -1 & 2  
    \end{pmatrix},
    \qquad
    v^3
    = \begin{pmatrix} 
      1 & 0 & -1  \\ 
      1 & -2 & 1  \\ 
      0 & 0 & 0  
    \end{pmatrix} 
  \end{equation}
  and put
  \begin{equation}\label{eq:213}
    \mathcal{B}
    = \{v^1, v^2, v^3\},
    \qquad
    V
    = \operatorname{span} \mathcal{B}.
  \end{equation}
  Clearly, the above matrices are orthogonal with respect to the scalar product of $V$.  We have a corresponding Schauder basis of $Z_V(L^1)$ denoted
  by $\varphi^{1,\mathcal{B}}_P, \varphi^{2,\mathcal{B}}_P, \varphi^{3,\mathcal{B}}_P$ for $P \in \mathcal{A}$.  Let
  \begin{equation*}
    f
    = \sum_{P \in \mathcal{A}} \sum_{k=1}^{3} a^k_P \varphi_P^{k,\mathcal{B}} \in Z_V(L^1).
  \end{equation*}
  Observe that
  \begin{equation*}
    Q_{\{1,2\}}^{\mathcal{B}} f
    = \sum_{P \in \mathcal{A}} \sum_{k=1}^{2} a^k_P \varphi_P^{k,\mathcal{B}}
    =  e_3^t f  e_3 \in Z_V(L^1),
  \end{equation*}
  and thus, we have the estimate
  \begin{equation}
    \|Q_{\{1,2\}}^{\mathcal{B}} f \|_{Z_V}
    \leq \|f \|_{Z_V}.
  \end{equation}
  Therefore, $Q_{\{1,2\}}^{\mathcal{B}}\colon Z_V(L^1) \rightarrow Z_V(L^1)$ and $Q_{3}^{\mathcal{B}}\colon Z_V(L^1) \rightarrow Z_V(L^1)$ are both
  bounded.  Note that $\operatorname{span}\{ (1, -1, 0), (-1, -1, 2)\} = \mathbb{R}^3_0$.  Hence, the operator
  $T\colon Q_{\{1,2\}}^{\mathcal{B}}(Z_V(L^1)) \rightarrow L^1$ given by
  \begin{equation}\label{eq:defexampleoper}
    T f
    =  e_3^t f 
  \end{equation}
  is an isometric isomorphism.  Now we examine the image of $Q_3^{\mathcal{B}}$.  Observe that
  $Q_{3}^{\mathcal{B}}(Z_V(L^1)) = Z_{\widetilde V}(L^1)$, where $\widetilde V = \operatorname{span} \{ v^3\}$.  Since
  $\operatorname{dim} \widetilde V = 1$, all canonical projections are bounded.  Moreover, $v^3$ is not a rank-one matrix, and thus, the space
  $Z_{\widetilde V}(L^1)$ has Property~IIB (see \Cref{dfn:space-type}).  By \Cref{thm:isomorphic-classification-H1}, $Z_{\widetilde V}(L^1)$ is
  isomorphic to $H^1$.

  So far, we proved that the space $Z_V(L^1)$ is isomorphic to $L^1\oplus H^1$.  Observe that, on the one hand, $L^1\oplus H^1$ is not isomorphic to
  $L^1$ since $H^1$ contains a complemented copy of $\ell^2$ (e.g., the Rademacher system), but $L^1$ cannot contain any infinite-dimensional
  complemented reflexive subspaces (see~\cite{MR2020,MR4094}; see also~\cite[Proposition~5.7.1]{MR3526021}).  On the other hand, since $H^1$ has an
  unconditional basis, $L^1$ cannot embed into it (\cite{MR131154}; see also~\cite[Theorem~6.3.3]{MR3526021}), and thus $L^1\oplus H^1$ cannot be
  isomorphic to $H^1$.  To summarize, we showed that $L^1\oplus H^1$ is neither isomorphic to $L^1$ nor $H^1$, i.e., $L^1\oplus H^1$ is not primary.  By
  Pe{\l}czy\'nski's decomposition technique (see~\cite{MR126145}; see also~\cite{MR3526021}) and since $L^1\oplus H^1$ is isomorphic to its $\ell^1$ direct
  sum, $L^1\oplus H^1$ also cannot possess the primary factorization property.

  Since $v^3$ does not have rank~$1$, $Z_V(L^1)$ cannot possess Property~IIA.  In the above paragraph, we have already shown that $Z_V(L^1)$ is not
  isomorphic to $H^1$, thus by \Cref{thm:isomorphic-classification-H1}, $Z_V(L^1)$ cannot have Property~IIB.  By \Cref{thm:trichotomy}, $Z_V(L^1)$
  must have Property~I.
\end{proof}

%
\section{The variation norm}
\label{sec:branch-vari-norm}

In~\cite{MR2975943}, E.~M.~Semenov and S.~N.~Uksusov characterized the boundedness of Haar multipliers (i.e., diagonal operators with respect to the
Haar basis), via a branch variation norm.  Subsequently, adaptations of this norm for the Haar system were explored: for the bi-parameter setting, we
refer to~\cite{MR4430957,MR4816119}, while in the one-parameter case, related adaptations appeared in~\cite{MR4894823,MR4884827}.

In this section, we extend the dyadic approach to the general setting of $q$-adic filtrations and vector valued diagonal operators.  In this context,
the canonical projections $Q_{\mathcal{K}}$ play a central role and naturally appear in our formulation of the branch variation norm.

\begin{dfn}\label{dfn:variation-norm}
  Let $Z_V$ denote a Janson-Sobolev space or a Janson-Sobolev-Hardy space.  Suppose that $\mathcal{K}_1, \dots, \mathcal{K}_s$ is a partition of
  $\{1, \dots, r\}$ and let $k_t\in \mathcal{K}_t$ for all $1\leq t\leq s$.  For any sequence of scalars
  $D = (d_P^k : P\in \mathcal{A},\ 1\leq k\leq r)$, we define the following \emph{variation norm $\|\cdot\|_{\mathcal{K}_1, \dots, \mathcal{K}_s}$ (with respect to the partition
    $\mathcal{K}_1, \dots, \mathcal{K}_s$)} by
  \begin{equation}\label{eq:pro:variation-norm:1}
    \|D\|_{\mathcal{K}_1, \dots, \mathcal{K}_s}
    := \sum_{t=1}^{s} \|Q_{\mathcal{K}_t}\colon Z_V\to Z_V\|\cdot
    \Bigl(
    |d_{[0,1)}^{k_t}| + \sum_{j=1}^{\infty} \max_{P\in \mathcal{AF}_j} |d_{\pi(P)}^{k_t} - d_P^{k_t}|
    \Bigr).
  \end{equation}
\end{dfn}
We remark that if all the $Q_{\mathcal{K}_t}$, $1\leq t\leq s$ are bounded on $Z_V$, and if $d_P^k = d_P^{k_t}$ for all $P\in \mathcal{A}$,
$k\in \mathcal{K}_t$ and $1\leq t\leq s$, then $\|\cdot\|_{\mathcal{K}_1, \dots, \mathcal{K}_s}$ is a norm on the space of such sequences.

\begin{figure}
  \centering
  \begin{tikzpicture}
    \node[anchor=south west,inner sep=0] (img) {\includegraphics[width=.8\textwidth]{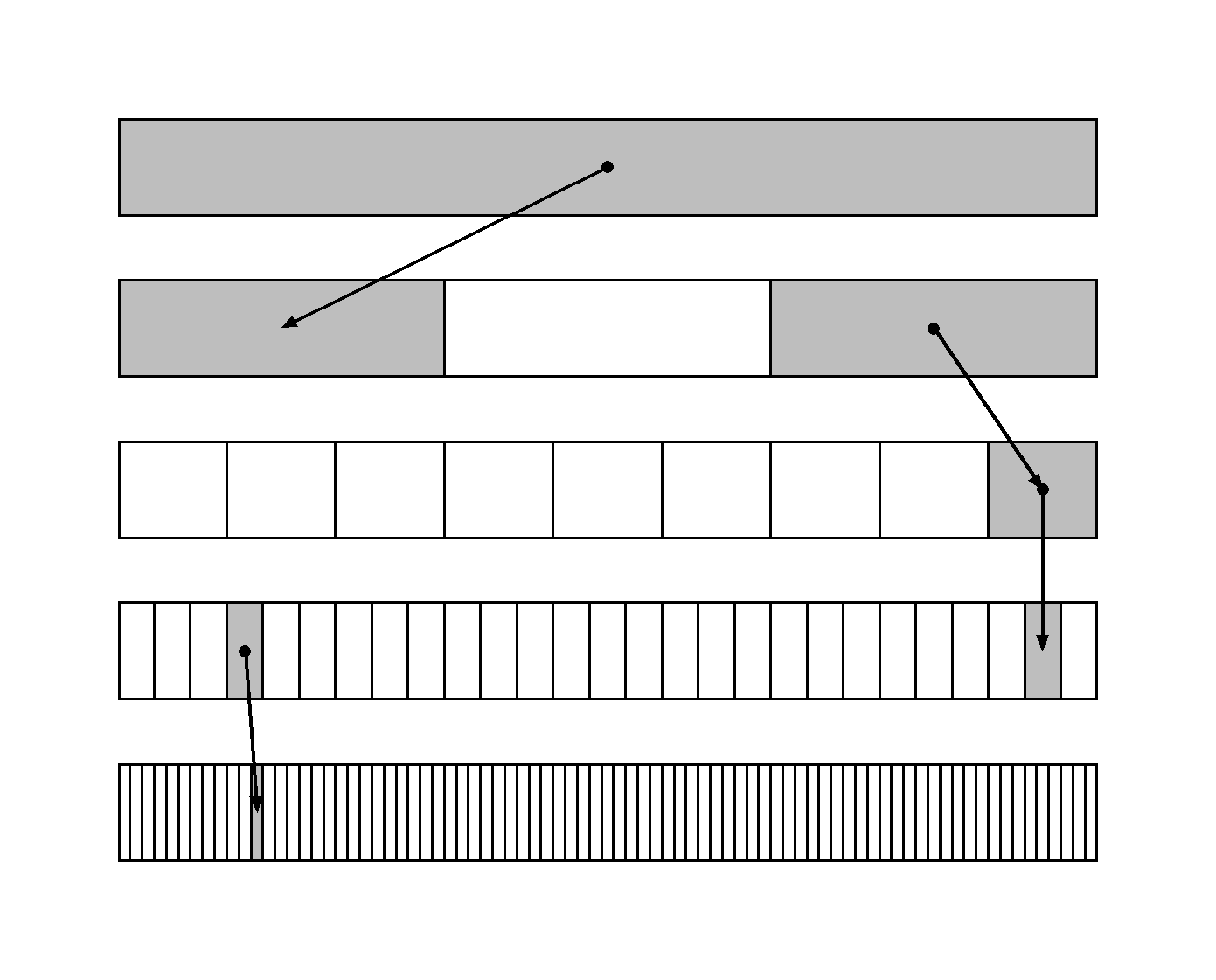}};
    \begin{scope}[x={(img.south east)},y={(img.north west)}]
      \node at (0.02,0.83) {{\small $\mathcal{AF}_0$}};%
      \node at (0.02,0.66) {{\small $\mathcal{AF}_1$}};%
      \node at (0.02,0.495) {{\small $\mathcal{AF}_2$}};%
      \node at (0.02,0.33) {{\small $\mathcal{AF}_3$}};%
      \node at (0.02,0.16) {{\small $\mathcal{AF}_4$}};%
    \end{scope}
  \end{tikzpicture}
  \caption{Let $D=(d_P^k : P\in\mathcal A,\ 1\le k\le r)$ be a diagonal operator on the triadic grid.  For each level $j\in \mathbb{N}$, we choose an atom
    $P\in\mathcal{AF}_j$ maximizing the quantity $|d_{\pi(P)}^{k_t}-d_P^{k_t}|$, as illustrated by the arrows between the gray intervals.  The main part
    to the variation norm of $D$ is obtained by summing these maximal differences.}
\end{figure}

\begin{pro}\label{pro:variation-norm}
  Suppose $\mathcal{K}_1, \dots, \mathcal{K}_s$ is a partition of $\{1, \dots, r\}$ such that all the $Q_{\mathcal{K}_t}$, $1\leq t\leq s$ are bounded, and let
  $k_t\in \mathcal{K}_t$ for all $1\leq t\leq s$.  Let $D = (d_P^k : P\in \mathcal{A},\ 1\leq k\leq r)$ denote any sequence of scalars with
  $d_P^k = d_P^{k_t}$ for all $P\in \mathcal{A}$, $k\in \mathcal{K}_t$ and $1\leq t\leq s$.  Then we have
  \begin{equation}\label{eq:pro:variation-norm:2}
    \|D\colon Z_V\to Z_V\|
    \leq 2\cdot \|D\|_{\mathcal{K}_1, \dots, \mathcal{K}_s}.
  \end{equation}

  If we additionally assume
  \begin{equation}\label{eq:pro:variation-norm:3}
    d_P^{k_t}\neq 0
    \quad\text{and}\quad
    0 < \delta \leq |d_{[0,1)}^{k_t}|,
    \qquad P\in \mathcal{A},\ 1\leq t\leq s,
  \end{equation}
  and that for some $0 < \eta < 1$ we have
  \begin{equation}\label{eq:pro:variation-norm:4}
    |d_{[0,1)}^{k_t} - d_P^{k_t}|
    \leq \eta |d_{[0,1)}^{k_t}|,
    \qquad P\in \mathcal{A},\ 1\leq t\leq s,
  \end{equation}
  then
  \begin{equation}\label{eq:pro:variation-norm:5}
    \|D^{-1}\|_{\mathcal{K}_1, \dots, \mathcal{K}_s}
    \leq \frac{1}{(1-\eta)^2 \delta^2}\cdot \|D\|_{\mathcal{K}_1, \dots, \mathcal{K}_s}.
  \end{equation}
\end{pro}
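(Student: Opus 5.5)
The plan is to write $D$ as a sum over the partition, $D = \sum_{t=1}^s D^{(t)} Q_{\mathcal K_t}$, where $D^{(t)}$ is the diagonal operator with all entries in column $k$ equal to $d_P^{k_t}$ (i.e.\ $D^{(t)}$ acts as the scalar $d_P^{k_t}$ on every $\varphi_P^k$). This works because $d_P^k = d_P^{k_t}$ for $k\in\mathcal K_t$, and it gives $\|D\|\le\sum_t \|D^{(t)}\|\,\|Q_{\mathcal K_t}\|$. It therefore suffices to show that a diagonal operator $E=(e_P)$ acting identically on all $k$ satisfies
\[
\|E\colon Z_V\to Z_V\| \leq 2\Bigl(|e_{[0,1)}| + \sum_{j\ge1}\max_{P\in\mathcal{AF}_j}|e_{\pi(P)}-e_P|\Bigr).
\]

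For this I telescope along branches. Define $F_j$ as the diagonal operator with value $e_{\pi(P)}-e_P$ on all $P$ contained in atoms of generation $j$ and $0$ elsewhere. More precisely, for $P\in\mathcal{AF}_n$ let $e_P = e_{[0,1)} - \sum_{j=1}^n (e_{\pi^{(n-j)}(P)\text{'s ancestor at level } j-1} - e_{\text{ancestor at level } j})$. Each increment $\Delta_j$ then defines an operator
\[
G_j f = \sum_{R\in\mathcal{AF}_j} c_R\, \Theta_R f,\qquad c_R=e_{\pi(R)}-e_R .
\]
Because the $\Theta_R$, $R\in\mathcal{AF}_j$, have disjoint supports and $\Theta_R f=\charfun_R(f-\cond(f|\mathcal F_j))$, we get $|G_j f|\le \max_R|c_R|\,|f-\cond(f|\mathcal F_j)|$ pointwise in the square-function sense. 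By lattice property~(a) of $X$ and the contraction of \Cref{rem:conditional}, this gives $\|G_j\|\le 2\max_R|c_R|$. Finally $E = e_{[0,1)}\Id - \sum_j G_j$, and since $e_{[0,1)}\Id$ has norm $|e_{[0,1)}|\le 2|e_{[0,1)}|$, the first inequality \eqref{eq:pro:variation-norm:2} follows. One needs some care that the pointwise domination survives the $\cond_{\boldsymbol\sigma}$ and the $\ell^2_l$ norm. Multiplying the $j$-th and higher differences by a function constant on $\mathcal F_j$ atoms commutes with $\sigma_i$ and is dominated pointwise, so this should be fine.

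For \eqref{eq:pro:variation-norm:5}, apply the definition to $D^{-1}=(1/d_P^k)$, which still satisfies the column constancy. The root term gives $1/|d^{k_t}_{[0,1)}|\le |d^{k_t}_{[0,1)}|/\delta^2$. For the increments,
\[
\Bigl|\frac1{d_{\pi(P)}}-\frac1{d_P}\Bigr| = \frac{|d_{\pi(P)}-d_P|}{|d_{\pi(P)}||d_P|}\le\frac{|d_{\pi(P)}-d_P|}{(1-\eta)^2|d_{[0,1)}|^2}\le \frac{|d_{\pi(P)}-d_P|}{(1-\eta)^2\delta^2},
\]
using $|d_P|\ge(1-\eta)|d_{[0,1)}|$ from \eqref{eq:pro:variation-norm:4}. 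Summing and multiplying by $\|Q_{\mathcal K_t}\|$ yields the claim; the root term bound also holds since $(1-\eta)^2\le1$.

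The main obstacle is the first part, namely justifying rigorously that multiplying the tail $f-\cond(f|\mathcal F_j)$ by an $\mathcal F_j$-measurable bounded function is bounded on $Z_V$ by the sup of that function. The convergence of $\sum_j G_j$ in operator norm is immediate from the finiteness of the variation norm.
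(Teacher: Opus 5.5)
Your proposal is correct and is essentially the paper's proof. You telescope $d_P^{k_t}$ along the ancestors of $P$ and bound each level by $\max_{R\in\mathcal{AF}_j}|d_{\pi(R)}^{k_t}-d_R^{k_t}|\cdot\|f-\cond(f|\mathcal F_j)\|_{Z_V}$, using the disjointly supported $\Theta_R$ and \Cref{rem:conditional}; for the inverse you use $|d_P^{k_t}|\ge(1-\eta)|d_{[0,1)}^{k_t}|$ and $\delta\le|d_{[0,1)}^{k_t}|$. The differences are cosmetic: you first split $D=\sum_t D^{(t)}Q_{\mathcal K_t}$ and estimate a scalar diagonal operator on all of $Z_V$, whereas the paper applies $\Theta_R$ directly to $Q_{\mathcal K_t}f$, and your explicit check that the scalar $c_{R(x)}$ passes through $\cond_{\boldsymbol\sigma}$ and $\|\cdot\|_{\ell^2_l}$ spells out what the paper calls ``the properties of the norm of $Z_V$''.
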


\begin{proof}
  Let $f = \sum_{P,k} a_P^k \varphi_P^k$ denote a finite sum and observe that by \Cref{dfn:substriction}
  \begin{align*}
    \|D f\|_{Z_V}
    &= \bigl\| \sum_{t=1}^s \sum_{k\in \mathcal{K}_t} \sum_{P\in \mathcal{A}}
      \bigl( d_{[0,1)}^{k_t} - \sum_{\substack{R\in \mathcal{A}:\\P\subset R\subsetneq [0,1)}} (d_{\pi(R)}^{k_t} - d_R^{k_t}) \bigr)
    a_P^k \varphi_P^k
    \bigr\|_{Z_V}\\
    &\leq \sum_{t=1}^s |d_{[0,1)}^{k_t}|
      \bigl\|
      \sum_{k\in \mathcal{K}_t} \sum_{P\in \mathcal{A}} a_P^k \varphi_P^k
      \bigr\|_{Z_V}\\
    &\qquad + \sum_{t=1}^s \sum_{j=1}^{\infty} 
      \bigl\| \sum_{R\in \mathcal{AF}_j} (d_{\pi(R)}^{k_t} - d_R^{k_t})\cdot \Theta_RQ_{\mathcal{K}_t} f \bigr\|_{Z_V}.
  \end{align*}
  Note that since the support of $\Theta_RQ_{\mathcal{K}_t} f$ is contained in $R$, the properties of the norm of $Z_V$ give us
  \begin{align*}
    \bigl\| \sum_{R\in \mathcal{AF}_j} (d_{\pi(R)}^{k_t} - d_R^{k_t})\cdot \Theta_RQ_{\mathcal{K}_t} f \bigr\|_{Z_V}
    &\leq \max_{R\in \mathcal{AF}_j} |d_{\pi(R)}^{k_t} - d_R^{k_t}|\cdot \bigl\| \sum_{R\in \mathcal{AF}_j} \Theta_RQ_{\mathcal{K}_t} f \bigr\|_{Z_V}\\
    &= \max_{R\in \mathcal{AF}_j} |d_{\pi(R)}^{k_t} - d_R^{k_t}|\cdot \| Q_{\mathcal{K}_t} f - \cond(Q_{\mathcal{K}_t} f | \mathcal{F}_j) \|_{Z_V}.
  \end{align*}
  By \Cref{rem:conditional}, the above estimates yield~\eqref{eq:pro:variation-norm:2}.

  To prove the additional claim, observe that by~\eqref{eq:pro:variation-norm:1}
  \begin{equation}\label{eq:32}
    \|D^{-1}\|_{\mathcal{K}_1, \dots, \mathcal{K}_s}
    = \sum_{t=1}^{s} \|Q_{\mathcal{K}_t}\colon Z_V\to Z_V\|\cdot
    \bigl(
    \bigl|\frac{1}{d_{[0,1)}^{k_t}}\bigr| + \sum_{j=1}^{\infty} \max_{P\in \mathcal{AF}_j} \bigl|\frac{1}{d_{\pi(P)}^{k_t}} - \frac{1}{d_P^{k_t}}\bigr|
    \bigr).
  \end{equation}
  Using~\eqref{eq:pro:variation-norm:4}, we note that
  \begin{equation*}
    \bigl|\frac{1}{d_{\pi(P)}^{k_t}} - \frac{1}{d_P^{k_t}}\bigr|
    = \frac{|d_P^{k_t} - d_{\pi(P)}^{k_t}|}{|d_{\pi(P)}^{k_t} d_P^{k_t}|}
    \leq \frac{|d_P^{k_t} - d_{\pi(P)}^{k_t}|}{(1 - \eta)^2|d_{[0,1)}^{k_t}|^2}.
  \end{equation*}
  Inserting the latter estimate into~\eqref{eq:32} and exploiting~\eqref{eq:pro:variation-norm:3} gives us~\eqref{eq:pro:variation-norm:5}.
\end{proof}

%
\section{The Vandermonde technique}
\label{sec:vandermonde-approach}

In this section, we establish a key ingredient for the stabilization procedure presented in \Cref{sec:stabilization}.  The underlying technique may be
viewed as a generalization of the classical analysis of the Vandermonde matrix.

\begin{thm}\label{thm:vantrick1}
  Let $X$ be a topological vector space and assume $Y$ is a topological space.  Let $S_i\colon X \rightarrow X$, $1\leq i\leq d$, be such that
  \begin{enumerate}[(i)]
  \item\label{enu:th:vantrick1:i} each $S_i$, $1\leq i\leq d$ is a continuous linear operator,
  \item\label{enu:th:vantrick1:ii} $\{S_i : 1\leq i\leq d\}$ is a family of mutually commuting operators, i.e.,
    \begin{equation}\label{eq:commssij}
      S_i S_j
      = S_j S_i,
      \qquad 1\leq i,j\leq d.
    \end{equation}
  \end{enumerate}
  Now let $g_k\colon Y \rightarrow X$ denote a function for $1\leq k\leq d$ and define $T_j\colon Y\rightarrow X$ by
  \begin{equation}\label{eq:77}
    T_j
    = \sum_{k=1}^d S^{j}_k g_k,
    \qquad 0\leq j\leq d.
  \end{equation}
  Suppose that
  \begin{enumerate}[(i)]\setcounter{enumi}{2}
  \item\label{enu:th:vantrick1:iii} the functions $T_j$, $0\leq j\leq d$ are all continuous, and
  \item\label{enu:th:vantrick1:iv} there exists $1\leq k_0\leq d$ such that $(S_{k_0}-S_j)^{-1}\colon X \rightarrow X$ is a continuous linear operator for every
    $j\in \{1, \dots, d\}\setminus\{k_0\}$.
  \end{enumerate}
  Then $g_{k_0}\colon Y \rightarrow X$ is a continuous function.
\end{thm}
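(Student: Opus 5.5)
The plan is to use the polynomial identity behind the Vandermonde inverse. Set
\begin{equation*}
  p(z) = \prod_{j\neq k_0}(z - z_j) = \sum_{i=0}^{d-1} c_i z^i ,
\end{equation*}
a polynomial in commuting variables. Because the $S_i$ commute by~\eqref{enu:th:vantrick1:ii}, we may substitute them into $p$ and define the continuous linear operator
\begin{equation*}
  \Pi_k := \prod_{j\neq k_0}(S_k - S_j),\qquad 1\leq k\leq d.
\end{equation*}
The order of the factors does not matter. For $k\neq k_0$ the product contains the factor $S_k - S_k = 0$, so $\Pi_k = 0$. For $k = k_0$ we get $\Pi_{k_0}=\prod_{j\neq k_0}(S_{k_0}-S_j)$, which is invertible with continuous inverse by~\eqref{enu:th:vantrick1:iv}: a finite product of commuting invertible continuous operators has continuous inverse, namely the product of the inverses.

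Next, expand $\Pi_k$ as a polynomial in $S_k$ whose coefficients are polynomials in the other operators $S_j$, $j\neq k_0$. Concretely,
\begin{equation*}
  \prod_{j\neq k_0}(S_k - S_j) = \sum_{i=0}^{d-1} S_k^{i}\, e_i ,
\end{equation*}
where each $e_i$ is, up to sign, an elementary symmetric polynomial in $\{S_j : j\neq k_0\}$. These $e_i$ are continuous linear operators that do not depend on $k$. This independence from $k$ is the key point, and it is exactly where commutativity is needed. Then
\begin{equation*}
  \sum_{i=0}^{d-1} e_i T_i = \sum_{i=0}^{d-1} e_i\sum_{k=1}^d S_k^i g_k = \sum_{k=1}^d \Bigl(\sum_{i=0}^{d-1} S_k^i e_i\Bigr) g_k = \sum_{k=1}^d \Pi_k g_k = \Pi_{k_0} g_{k_0}.
\end{equation*}
Commutativity is used again here, to move $e_i$ past $S_k^i$. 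Linearity lets us interchange the finite sums.

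Consequently $g_{k_0} = \Pi_{k_0}^{-1}\sum_{i=0}^{d-1} e_i T_i$. Each $T_i$ is continuous from $Y$ to $X$ by~\eqref{enu:th:vantrick1:iii}, and each $e_i$ and $\Pi_{k_0}^{-1}$ is a continuous linear map on $X$. Addition is continuous in the topological vector space $X$. Hence $g_{k_0}$ is continuous as a composition and finite sum of continuous maps.

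Only $T_0,\dots,T_{d-1}$ are actually needed. The main point requiring care is verifying that the coefficient operators $e_i$ are independent of $k$ and commute with the $S_k$. Both follow from writing the product formally in a commutative polynomial ring and then evaluating at the commuting family $(S_1,\dots,S_d)$, which is a ring homomorphism into the operator algebra.
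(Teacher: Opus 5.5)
Your proof is correct and is essentially the paper's argument. The paper forms $L_{k_0}(R)=\prod_{j\neq k_0}(S_{k_0}-S_j)^{-1}\prod_{j\neq k_0}(R-S_j)$ with the inverse built in, expands it as $\sum_j N_jR^j$ using commutativity, and reads off $g_{k_0}=\sum_j N_jT_j$; you expand only the numerator and apply $\Pi_{k_0}^{-1}$ at the end, which is the same computation, and your remark that only $T_0,\dots,T_{d-1}$ are needed is also accurate.
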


\begin{proof}
  Suppose that the continuous linear operator $R\colon X \rightarrow X$ commutes with each $S_i$, $1\leq i\leq d$, and define $L_{k_0}(R)\colon X\rightarrow X$ by
  \begin{equation*}
    L_{k_0}(R)
    = \prod_{j:j\neq k_0}(S_{k_0}- S_j)^{-1} \prod_{j:j\neq k_0} (R-S_j).
  \end{equation*}
  Note that
  \begin{equation}\label{eq:LSj}
    L_{k_0}(S_j)
    =
    \begin{cases}
      \Id_X &\text{if $j=k_0$},\\
      0 &\text{if $j\neq k_0$}.   
    \end{cases}
  \end{equation}
  By~\eqref{eq:commssij} and the linearity of $R$ and $S_i$, $1\leq i\leq d$, there exist continuous linear operators $N_j\colon X \rightarrow X$, $0\leq j\leq d$ such that
  \begin{equation}\label{eq:polforL}
    L_{k_0}(R)
    = \sum_{j=0}^{d} N_j R^j.
  \end{equation}
  Observe that by the linearity of the operators $N_j$, $0\leq j\leq d$, \eqref{eq:polforL} and~\eqref{eq:LSj}, we obtain
  \begin{equation*}
    \begin{split}
      \sum_{j=0}^{d} N_j T_j
      &= \sum_{j=0}^d N_j \sum_{k=1}^d S^{j}_k g_k
        = \sum_{j=0}^d\sum_{k=1}^d N_j S^{j}_k g_k
        = \sum_{k=1}^d \left(\sum_{j=0}^d N_j S_k^j\right) g_k\\
      &= \sum_{k=1}^d L_{k_0}(S_k) g_k= g_{k_0}.
    \end{split}
  \end{equation*}
  Clearly $\sum_{j=0}^{d} N_j T_j$ is continuous, and thus $g_{k_0}$ is as well.
\end{proof}

%
\section{The Gamlen-Gaudet construction}
\label{sec:gaml-gaud-constr}

In this section, we establish the conditions under which the Gamlen-Gaudet construction will be applied.  These conditions are subsequently used in
\Cref{sec:fact-oper} to construct bounded embedding and projection operators, which are fundamental to the stabilization procedure developed in
\Cref{sec:stabilization}.

Historically, in the one-parameter setting, the compatibility conditions required for the Gamlen-Gaudet construction have typically been tailored to a
specific Banach space and the particular application under consideration, and consequently vary across the literature (see,
e.g.,~\cite{MR0328575,MR0527010,MR0557491,MR0586594,MR0879418,MR0796906,MR3819715}).  As in~\cite{MR4894823}, the compatibility conditions introduced
below are designed to apply simultaneously to a broad class of Banach spaces.

\begin{dfn}\label{dfn:compatibility}
  Suppose that $0\leq \kappa_P< 1$, $P\in \mathcal{A}$.  A sequence of collections $(\mathcal{B}_P : P\in \mathcal{A})$ is called
  \emph{$(1-\kappa_P)$-faithful} if the following three conditions are satisfied:
  \begin{enumerate}[(C1)]
  \item\label{enu:dfn:compatibility:i} For each $P\in \mathcal{A}$, the non-empty collection $\mathcal{B}_P$ consists of finitely many pairwise disjoint sets in $\mathcal{A}$.
  \item\label{enu:dfn:compatibility:ii} For all $P_0\in \mathcal{A}$, $P_1\in \mathcal{S}(P_0)$, $Q\subset P_1$, $S\in \mathcal{B}_Q$ there exists
    $R_0\in \mathcal{B}_{P_0}$ and $R_1\in \mathcal{S}(R_0)$ with $\ord(R_1) = \ord(P_1)$ such that $S\subset R_1$.
  \item\label{enu:dfn:compatibility:iii} For all $P,Q\in \mathcal{A}$ with $Q\subset P$ and for every $R\in \mathcal{B}_P$, we have
    \begin{equation}\label{eq:33}
      (1 - \kappa_P) \frac{|Q|}{|P|}
      \leq \frac{|R\cap \mathcal{B}_Q^{*}|}{|R|}
      \leq \frac{|Q|}{|P|}.
    \end{equation}

  \end{enumerate}

  Let $(n_i)_{i=0}^{\infty}$ denote an increasing sequence in $\mathbb{N}$.  A $(1-\kappa_P)$-faithful sequence $(\mathcal{B}_P : P\in \mathcal{A})$ which additionally satisfies
  \begin{enumerate}[(C1)]
    \setcounter{enumi}{3}
  \item\label{enu:dfn:compatibility:iv} $\mathcal{B}_P\subset \mathcal{AF}_{n_i}$ for all $P\in \mathcal{AF}_i$ and $i\in \mathbb{N}_0$
  \end{enumerate}
  is called \emph{$(1-\kappa_P)$-faithful with respect to $(n_i)_{i=0}^{\infty}$}.

  Let $U$ denote a set of positive measure.  A $(1-\kappa_P)$-faithful sequence $(\mathcal{B}_P : P\in \mathcal{A})$ (with respect to
  $(n_i)_{i=0}^{\infty}$) which additionally satisfies
  \begin{enumerate}[(C1)]
    \setcounter{enumi}{4}
  \item\label{enu:dfn:compatibility:v} $|U\setminus \mathcal{B}_{[0,1)}^{*}| \leq \kappa_{[0,1)} |\mathcal{B}_{[0,1)}^{*}|$ as well as
    $|R\cap U| \geq (1 - \kappa_P) |R|$ for every $P\in \mathcal{A}$ and $R\in \mathcal{B}_P$
    
  \end{enumerate}
  is called a \emph{$(1-\kappa_P)$-faithful sequence (with respect to $(n_i)_{i=0}^{\infty}$) mounted on $U$}.

  Whenever $\kappa_P = 0$, $P\in \mathcal{A}$, we replace ``$(1-\kappa_P)$-faithful'' simply by ``faithful''.
\end{dfn}

\begin{rem}\label{rem:faithful-mount}
  If a faithful sequence of collections $(\mathcal{B}_P : P\in \mathcal{A})$ is mounted on $U$, then $\mathcal{B}_{[0,1)}^{*} = U$.
\end{rem}

\begin{rem}\label{rem:block-basis}
  First we observe that given $P,Q\in \mathcal{A}$, \textrefp[C]{enu:dfn:compatibility:ii} gives us $\mathcal{B}_Q^{*}\subset \mathcal{B}_P^{*}$ if
  and only if $Q\subset P$.  Secondly, note that multiplying~\eqref{eq:33} by $|R|$, summing over all $R\in \mathcal{B}_P$ and using the first
  observation yields
  \begin{equation}\label{eq:34}
    (1 - \kappa_P) \frac{|Q|}{|P|} |\mathcal{B}_P^{*}|
    \leq |\mathcal{B}_Q^{*}|
    \leq \frac{|Q|}{|P|} |\mathcal{B}_P^{*}|.
  \end{equation}
  Hence, for a faithful sequence of collections, we obtain $|\mathcal{B}_Q^{*}| = \frac{|Q|}{|P|} |\mathcal{B}_P^{*}|$.
\end{rem}

\begin{rem}\label{rem:C2}
  Given $P_0\in \mathcal{A}$, $P_1\in \mathcal{S}(P_0)$ we define the sets $\Gamma_{P_0}^{P_1}$ by
  \begin{equation}\label{eq:135}
    \Gamma_{P_0}^{P_1}
    := \bigcup \bigl\{ S\in \mathcal{S}(R) : R\in \mathcal{B}_{P_0},\ \ord(S) = \ord(P_1) \bigr\},
  \end{equation}
  and note that
  \begin{equation}\label{eq:134}
    \cha_{\mathcal{B}_{P_0}^{*}}
    = \sum_{P_1\in \mathcal{S}(P_0)} \cha_{\Gamma_{P_0}^{P_1}}
    \qquad\text{and}\qquad
    |\mathcal{B}_{P_0}^{*}| = q\cdot |\Gamma_{P_0}^{P_1}|.
  \end{equation}

  Moreover, \textrefp[C]{enu:dfn:compatibility:ii} gives us
  \begin{equation}\label{eq:128}
    \mathcal{B}_Q^{*}
    \subset \Gamma_{P_0}^{P_1}\subset \mathcal{B}_{P_0}^{*}
    \quad\text{if and only if}\quad
    Q\subset P_1,
  \end{equation}
  and thus
  \begin{equation}\label{eq:137}
    \bigcup_{\substack{Q\in \mathcal{AF}_N\\Q\subset P_0}} \mathcal{B}_Q^{*}
    \subset \mathcal{B}_{P_0}^{*}.
  \end{equation}

  In case we have a faithful system, \Cref{rem:block-basis} gives us $|\Gamma_{P_0}^{P_1}| = |\mathcal{B}_{P_1}^{*}|$, and thus~\eqref{eq:128}
  and~\eqref{eq:134} together with \textrefp[C]{enu:dfn:compatibility:i} yield
  \begin{equation}\label{eq:136}
    \mathcal{B}_{P_1}^{*}
    = \Gamma_{P_0}^{P_1},
  \end{equation}
  and consequently, by iterating~\eqref{eq:134} and~\eqref{eq:136}, we obtain
  \begin{equation}\label{eq:139}
    \sum_{\substack{Q\in \mathcal{AF}_N\\Q\subset P_0}} \cha_{\mathcal{B}_Q^{*}}
    = \cha_{\mathcal{B}_{P_0}^{*}}
  \end{equation}
  whenever $P_0$ is $\mathcal{F}_N$-measurable.
\end{rem}

\begin{figure}
  \centering
  \begin{tikzpicture}
    \node[anchor=south west,inner sep=0] (img) {\includegraphics[width=.8\textwidth]{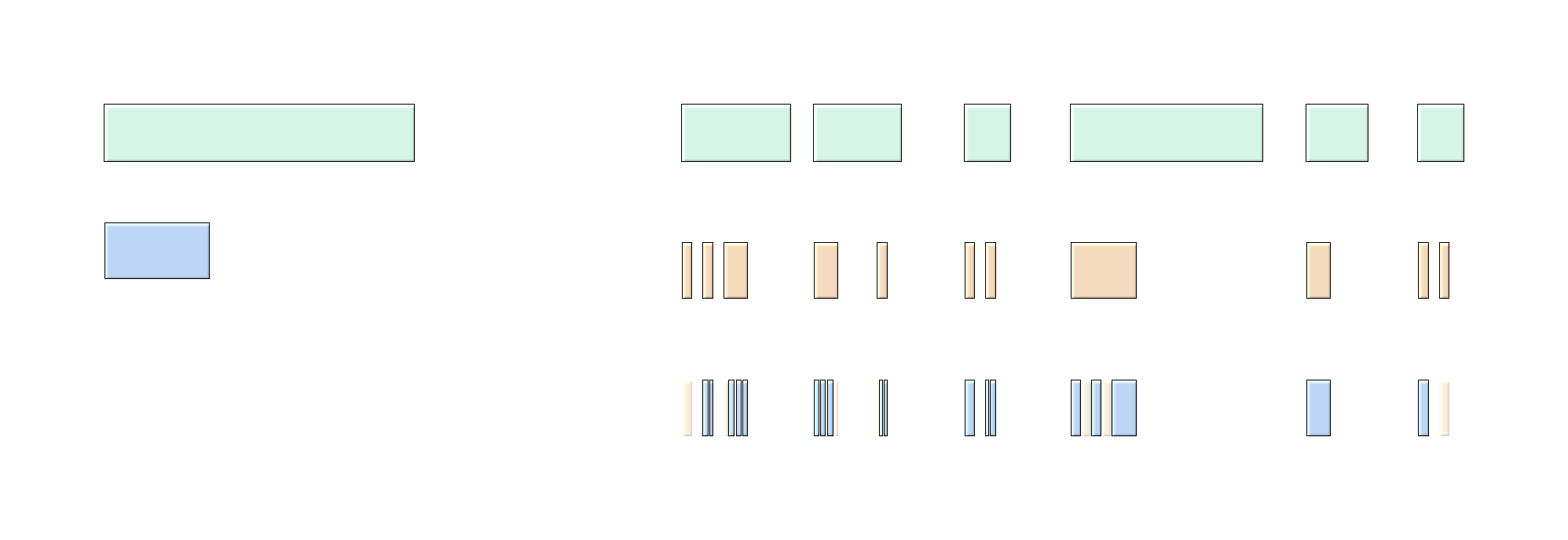}};
    \begin{scope}[x={(img.south east)},y={(img.north west)}]
      \node at (0.02,0.75) {{\small $P_0$}};%
      \node at (0.02,0.53) {{\small $P_1$}};%
      \node at (0.38,0.75) {{\small $\mathcal{B}_{P_0}$}};%
      \node at (0.38,0.5) {{\small $\Gamma_{P_0}^{P_1}$}};%
      \node at (0.38,0.23) {{\small $\mathcal{B}_{P_1}$}};%
    \end{scope}
  \end{tikzpicture}
  \caption{This figure illustrates the relationship between the collections $\mathcal{B}_{P_0}$ and $\mathcal{B}_{P_1}$, $P_0\in \mathcal{A}$,
    $P_1\in \mathcal{S}(P_0)$.  The point set $\Gamma_{P_0}^{P_1}$ is defined as the union of the successors of the intervals in $\mathcal{B}_{P_0}$ that have the same order as
    $P_1$.  The portions of $\Gamma_{P_0}^{P_1}$ that are not covered by $\mathcal{B}_{P_1}$ are highlighted in a very light orange.}
  \label{fig:gamlen-gaudet}
\end{figure}

\begin{rem}\label{rem:Gamma}
  In the next section, we will associate to the collection $\mathcal{B}_P$, $P\in \mathcal{A}$ the functions $\psi_P^k$, $1\leq k\leq r$, which in turn determine our factorization
  operators $A$ and $B$ (see~\eqref{dfn:canonical} and~\eqref{eq:221}).

  The sets $\Gamma_{P_0}^{P_1}$, $P_1\in\mathcal{S}(P_0)$, defined in~\eqref{eq:135}, each have density $|P_1|/|P_0|$ in every
  $R\in\mathcal{B}_{P_0}$.  Moreover, they form a partition of the support of $\psi_{P_0}^k$ such that $\psi_{P_0}^k$ is constant on each of them.  See
  \Cref{fig:gamlen-gaudet} for a depiction.
\end{rem}

\begin{rem}\label{rem:faithful-extension}
  We note that any $(1 - \kappa_P)$-faithful collection $(\mathcal{B}_P : P\in \mathcal{A})$ can be extended to a faithful collection
  $(\hat{\mathcal{B}}_P : P\in \mathcal{A})$ mounted on $\mathcal{B}_{[0,1)}^{*}$ such that $\mathcal{B}_P \subset \hat{\mathcal{B}}_P$ for any
  $P\in\mathcal{A}$.  We call $(\hat{\mathcal{B}}_P : P\in \mathcal{A})$ a \emph{faithful extension} of $(\mathcal{B}_P : P\in \mathcal{A})$.
\end{rem}

\begin{lem}\label{lem:faithful-extension}
  Let $(\mathcal{B}_P : P\in \mathcal{A})$ denote a $(1 - \kappa_P)$-faithful collection and let $(\hat{\mathcal{B}}_P : P\in \mathcal{A})$ be its faithful extension.  Given
  $N\in \mathbb{N}_0$, we define $\mathcal{G}_N$ as the $\sigma$-algebra generated by
  \begin{equation}\label{eq:144}
    \mathcal{C}_N
    := \bigcup_{j=0}^N \bigcup_{P\in\mathcal{AF}_j} \bigcup_{R\in \mathcal{B}_{P}} \mathcal{S}(R).
  \end{equation}
  Then for any $R\in \bigcup_{j=0}^{N} \bigcup_{P\in \mathcal{AF}_j} \hat{\mathcal{B}}_P\setminus \mathcal{B}_P$ there exists an atom $A$ of
  $\mathcal{G}_N$ such that $R\subset A$.
\end{lem}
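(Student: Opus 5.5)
The proof reduces the claim to a non-splitting condition on $R$ and then verifies that condition by inspecting how the faithful extension is built. The collection $\mathcal{C}_N$ in~\eqref{eq:144} is a finite subset of $\mathcal{A}$, and any two elements of $\mathcal{A}$ are either nested or disjoint. Hence the atoms of $\mathcal{G}_N$ are of two kinds: the sets $C\setminus\bigcup\{C'\in\mathcal{C}_N : C'\subsetneq C\}$ with $C\in\mathcal{C}_N$, and the complement $[0,1)\setminus\bigcup\mathcal{C}_N$. So $R\in\mathcal{A}$ lies inside a single atom of $\mathcal{G}_N$ as soon as, for every $C\in\mathcal{C}_N$, either $C\cap R=\emptyset$ or $R\subset C$. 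Since $R$ and $C$ are again nested or disjoint, it suffices to prove one claim: \emph{no $C\in\mathcal{C}_N$ satisfies $C\subsetneq R$.}

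To check this I will make the faithful extension of \Cref{rem:faithful-extension} explicit; I do not expect to need more than this inductive description. Put $\hat{\mathcal{B}}_{[0,1)}=\mathcal{B}_{[0,1)}$. Given $\hat{\mathcal{B}}_{P_0}$ and $P_1\in\mathcal{S}(P_0)$, let $\hat\Gamma_{P_0}^{P_1}$ be the set~\eqref{eq:135} formed with $\hat{\mathcal{B}}_{P_0}$. Define $\hat{\mathcal{B}}_{P_1}$ as $\mathcal{B}_{P_1}$ together with a finite family of atoms tiling $\hat\Gamma_{P_0}^{P_1}\setminus\mathcal{B}_{P_1}^{*}$. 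This gap is a finite union of atoms, by \textrefp[C]{enu:dfn:compatibility:i} and~\eqref{eq:128}. The new atoms are chosen at a common level $m_{P_1}$, which I take strictly larger than the levels of all atoms occurring in $\mathcal{B}_Q$ for every $Q$ of level at most that of $P_1$. Faithfulness then follows from~\eqref{eq:136}. Thus every $R\in\hat{\mathcal{B}}_{P_1}\setminus\mathcal{B}_{P_1}$ satisfies two conditions: (a) $R\subset\hat\Gamma_{P_0}^{P_1}$ and $R\cap\mathcal{B}_{P_1}^{*}=\emptyset$; (b) $R$ is strictly smaller than every atom of $\mathcal{B}_Q$ with $Q$ of level $\le N$.

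Now suppose $C\in\mathcal{S}(R')$ with $R'\in\mathcal{B}_Q$, where $Q\in\mathcal{AF}_j$ and $j\le N$, and suppose $C\subsetneq R$. By (b) we have $|R|<|R'|$, so $|R|\le|C|$, contradicting $C\subsetneq R$. The claim follows, and with it the lemma. Should the size-ordering in (b) not be available from the given construction, I would instead argue by cases on the position of $Q$ relative to $P_1$, with $R\in\hat{\mathcal{B}}_{P_1}\setminus\mathcal{B}_{P_1}$.
\begin{itemize}
\item If $Q\subset P_1$, then by \Cref{rem:block-basis} $C\subset\mathcal{B}_Q^{*}\subset\mathcal{B}_{P_1}^{*}$, which is disjoint from $R$ by (a).
\item If $Q$ and $P_1$ are disjoint, then~\eqref{eq:128} applied at their first common ancestor separates $\hat{\mathcal{B}}_Q^{*}$ from $\hat{\mathcal{B}}_{P_1}^{*}$, so $C\cap R=\emptyset$.
\item If $Q\supsetneq P_1$, a descendant-chain argument via \textrefp[C]{enu:dfn:compatibility:ii} places $R$ inside a single successor of some element of $\hat{\mathcal{B}}_Q$.
\end{itemize}

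The main obstacle is this last case, $Q$ a strict ancestor of $P_1$. There one must rule out that a successor $C$ of some $R'\in\mathcal{B}_Q$ is strictly contained in a gap atom $R$. This is exactly where the level choice $m_{P_1}$ in the construction of the faithful extension has to be used; the other cases follow from~\eqref{eq:128} and \Cref{rem:block-basis}.
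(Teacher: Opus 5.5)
Your reduction is fine. Since $\mathcal{C}_N$ is a finite family of atoms, any two of which are nested or disjoint, $R$ lies in a single atom of $\mathcal{G}_N$ as soon as no $C\in\mathcal{C}_N$ satisfies $C\subsetneq R$. The gap is in how you rule this out.

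\textbf{Property (b) does not follow from your construction.} You compare $m_{P_1}$ only with the levels of atoms of $\mathcal{B}_Q$ for $Q$ of level at most that of $P_1$. Property (b), however, asserts the comparison for all $Q$ of level at most $N$. No fixed extension can satisfy (b) for every $N$: by \textrefp[C]{enu:dfn:compatibility:ii} the atoms of $\mathcal{B}_Q$ shrink to measure zero as the level of $Q$ grows, while $R$ is fixed before $N$ is chosen. So your standalone size argument fails for $Q$ deeper than $P_1$, and those $Q$ are covered only by your first two fallback cases.

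Your fallback cases $Q\subset P_1$ and $Q\cap P_1=\emptyset$, together with the size argument in the case $Q\supsetneq P_1$ (where your level choice does apply), would prove the lemma for your particular extension. You would still have to check \textrefp[C]{enu:dfn:compatibility:ii} and \textrefp[C]{enu:dfn:compatibility:iii} for that extension directly. More importantly, the lemma and its use in the proof of \Cref{thm:canonicalkp} concern an arbitrary faithful extension in the sense of \Cref{rem:faithful-extension}, so an argument tied to a hand-built extension proves a weaker statement.

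\textbf{The missing idea is in the case you call the main obstacle, $Q\supsetneq P_1$, and it needs no level bookkeeping.} Apply \textrefp[C]{enu:dfn:compatibility:ii} to the faithful family $(\hat{\mathcal{B}}_P : P\in\mathcal{A})$, with $P_0=Q$ and the successor of $Q$ that contains $P_1$. This gives $R_0\in\hat{\mathcal{B}}_Q$ and $R_1\in\mathcal{S}(R_0)$ with $R\subset R_1$. Now suppose $C\in\mathcal{S}(R')$ with $R'\in\mathcal{B}_Q\subset\hat{\mathcal{B}}_Q$ meets $R$.
\begin{enumerate}
\item Then $R'\cap R_0\neq\emptyset$, so $R'=R_0$ by \textrefp[C]{enu:dfn:compatibility:i} applied to $\hat{\mathcal{B}}_Q$.
\item Hence $C$ and $R_1$ are intersecting successors of the same atom, so $C=R_1\supset R$.
\end{enumerate}
Thus $C\subsetneq R$ is impossible. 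Combine this with your two correct cases. If $Q\subset P_1$, then $C\subset\mathcal{B}_Q^{*}\subset\mathcal{B}_{P_1}^{*}$, which is disjoint from $R$. If $Q$ and $P_1$ are incomparable, they are separated via \eqref{eq:128}. Together these show that every $C\in\mathcal{C}_N$ either contains $R$ or misses it, for every faithful extension. This is exactly the paper's proof.
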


\begin{proof}
  Let $S\in \mathcal{C}_N$ and suppose that $S\cap R \neq \emptyset$.  Then clearly, there exists a $Q\in \mathcal{AF}_j$, $0\leq j\leq N$ and
  $R'\in \mathcal{B}_Q$ such that $S\in \mathcal{S}(R')$.  In particular, $\mathcal{B}_Q^{*}\cap \mathcal{B}_P^{*} \neq \emptyset$, and thus, by
  \Cref{rem:block-basis} we have that either $Q\subset P$ or $P\subsetneq Q$.  If $Q\subset P$, we use \Cref{rem:block-basis} again and obtain that
  $\hat{\mathcal{B}}_Q^{*}\subset \hat{\mathcal{B}}_P^{*}$.  Since $S\subset \mathcal{B}_Q^{*}$ and
  $R\subset \hat{\mathcal{B}}_{P}^{*}\setminus \mathcal{B}_P^{*}$, this leads to the contradiction
  \begin{equation}\label{eq:147}
    \emptyset
    \neq S\cap R \subset \mathcal{B}_Q^{*}\cap \bigl( \hat{\mathcal{B}}_{P}^{*}\setminus \mathcal{B}_P^{*} \bigr)
    \subset \mathcal{B}_P^{*}\cap \bigl( \hat{\mathcal{B}}_{P}^{*}\setminus \mathcal{B}_P^{*} \bigr)
    = \emptyset.
  \end{equation}
  Thus, we can assume that $P\subsetneq Q$.  By~\textrefp[C]{enu:dfn:compatibility:i} and~\textrefp[C]{enu:dfn:compatibility:ii}, there exists a
  unique $R''\in \hat{\mathcal{B}}_Q$ such that $R\subsetneq R''$.  Moreover, appealing to~\textrefp[C]{enu:dfn:compatibility:i} again, we find that
  $R'' = R'$.  Combining the latter facts and using the nestedness of $\mathcal{A}$ yields $R\subset S$.  To summarize, for every $S\in \mathcal{C}_N$
  either $R\subset S$ or $R\subset [0,1)\setminus S$, and hence
  \begin{equation}\label{eq:148}
    R\subset
    \Bigl(
    \bigcap_{\substack{S\in \mathcal{C}_N\\S\cap R\neq \emptyset}} S
    \cap \bigcap_{\substack{S\in \mathcal{C}_N\\S\cap R= \emptyset}} [0,1)\setminus S
    \Bigr)
    =: A.
  \end{equation}
  The claim follows from the well known fact that $A$ is an atom of $\mathcal{G}_N$.
\end{proof}

\begin{lem}\label{lem:block-basis}
  Let $U$ denote a set of positive measure, suppose that $0\leq \kappa_P < 1$, $P\in \mathcal{A}$, and let $(n_i)_{i=0}^{\infty}$ denote any
  increasing sequence in $\mathbb{N}$.  Then there exists a subsequence $(m_i)_{i=0}^{\infty}$ of $(n_i)_{i=0}^{\infty}$ and a sequence of collections
  $(\mathcal{B}_P : P\in \mathcal{A})$ which is a $(1-\kappa_P)$-faithful sequence with respect to $(m_i)_{i=0}^{\infty}$ mounted on $U$.
\end{lem}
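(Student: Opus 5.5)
We build the collections $\mathcal{B}_P$ recursively along the order $<_1$ on $\mathcal{A}$, level by level. At each step we choose the next index $m_i$ of the subsequence large enough that the required densities can be realized by unions of atoms of $\mathcal{AF}_{m_i}$. The tool is the Lebesgue density theorem combined with the martingale convergence $\cond(\cha_E|\mathcal{F}_n)\to \cha_E$ a.e.: given any measurable $E$ of positive measure and $\varepsilon>0$, for $n$ large the atoms $R\in\mathcal{AF}_n$ with $|R\cap E|\geq(1-\varepsilon)|R|$ cover all of $E$ except a set of measure at most $\varepsilon|E|$.

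\emph{Level $0$.} Apply the density argument to $E=U$ with $\varepsilon\leq\kappa_{[0,1)}/(1+\kappa_{[0,1)})$. Pick $m_0$ in $(n_i)$ and let $\mathcal{B}_{[0,1)}$ be the atoms $R\in\mathcal{AF}_{m_0}$ with $|R\cap U|\geq(1-\kappa_{[0,1)})|R|$. This gives \textrefp[C]{enu:dfn:compatibility:v} at the root, and \textrefp[C]{enu:dfn:compatibility:i} is immediate.

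\emph{Inductive step.} Suppose $\mathcal{B}_{P_0}\subset\mathcal{AF}_{m_{i}}$ has been built for all $P_0\in\mathcal{AF}_i$. For $P_1\in\mathcal{S}(P_0)$ consider the set $\Gamma_{P_0}^{P_1}$ from \eqref{eq:135}. It is a union of atoms of $\mathcal{AF}_{m_i+1}$ and has density exactly $1/q=|P_1|/|P_0|$ in each $R\in\mathcal{B}_{P_0}$. Apply the density argument to $E=\Gamma_{P_0}^{P_1}\cap U$, with $\varepsilon$ chosen small in terms of $\kappa_{P_1}$ and $\kappa_P$ for the ancestors $P$ of $P_1$. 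Choose $m_{i+1}>m_i$ in $(n_i)$ large enough to work simultaneously for all of the finitely many $P_1\in\mathcal{AF}_{i+1}$. Then let $\mathcal{B}_{P_1}$ consist of the atoms $S\in\mathcal{AF}_{m_{i+1}}$ with $S\subset\Gamma_{P_0}^{P_1}$ and $|S\cap U|\geq(1-\kappa_{P_1})|S|$. Since $\Gamma_{P_0}^{P_1}$ is $\mathcal{F}_{m_i+1}$-measurable, every atom of $\mathcal{AF}_{m_{i+1}}$ lies either inside or outside it. Hence \textrefp[C]{enu:dfn:compatibility:ii} holds for the immediate successor. For deeper $Q\subset P_1$ it follows by transitivity, because $\mathcal{B}_Q^*\subset\Gamma_{P_1}^{\cdot}\subset\mathcal{B}_{P_1}^*\subset\Gamma_{P_0}^{P_1}$. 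Conditions \textrefp[C]{enu:dfn:compatibility:iv} and the second half of \textrefp[C]{enu:dfn:compatibility:v} hold by construction.

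\emph{Condition \textrefp[C]{enu:dfn:compatibility:iii}.} This is the main obstacle, because the two-sided density estimate must hold in every $R\in\mathcal{B}_P$ for all descendants $Q$, not just children. The upper bound is automatic: $\mathcal{B}_Q^*\cap R\subset\Gamma$-sets whose relative density multiplies exactly to $|Q|/|P|$. For the lower bound, measure is lost at each generation between $P$ and $Q$, and these losses must be controlled uniformly in every block $R\in\mathcal{B}_P$. The density argument only guarantees small loss globally, not inside each $R$.

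\emph{Plan for the lower bound.} Apply the density argument separately inside each of the finitely many blocks $R'\in\mathcal{B}_{P_0}$, that is, to $E=\Gamma_{P_0}^{P_1}\cap U\cap R'$, with relative loss $\varepsilon_{i+1}$ in every block. This is possible because there are finitely many blocks and the condition on $m_{i+1}$ is only that it be large. Also require $U$-density close to $1$ inside the chosen blocks, which controls the contribution of $U\cap\Gamma$ relative to $\Gamma$. Then, inside any $R\in\mathcal{B}_P$, the proportion of $\Gamma$ that is retained at each generation is at least $1-\varepsilon_j$. Choose $\varepsilon_j$ so that, for every $P$ on level $i$, $\prod_{j>i}(1-\varepsilon_j)\geq 1-\kappa_P$, for instance with $\sum_{j>i}\varepsilon_j\leq\min_{P\in\mathcal{AF}_{\leq i}}\kappa_P$ arranged recursively. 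This yields the lower bound in \eqref{eq:33}. Finally, the root condition $|U\setminus\mathcal{B}_{[0,1)}^*|\leq\kappa_{[0,1)}|\mathcal{B}_{[0,1)}^*|$ follows from the choice of $\varepsilon$ at level $0$.
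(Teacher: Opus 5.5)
Your argument is correct. It follows the paper's overall scheme: blocks at level $m_{i+1}$ sit inside the sets $\Gamma_{P_0}^{P_1}$, the upper bound in \eqref{eq:33} is automatic, and the lower bound comes from multiplying one-generation estimates. Where you differ is in how the one-generation lower bound is obtained.

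\textbf{The paper's route.} The paper uses Egorov's theorem to produce nested sets $V_i\subset U$ with $\cond(\cha_{V_i}|\mathcal{F}_{m_i})\geq 1-\kappa_i$ on $V_i$. It takes as level-$i$ blocks all atoms of $\mathcal{AF}_{m_i}$ meeting both $V_i$ and $\mathcal{B}_{i-1}^*$, and then argues by pigeonhole. Inside each $R\in\mathcal{B}_P$ the retained mass $|R\cap V_i\cap\mathcal{B}_{i+1}^*|$ is at least $(1-\kappa_i)|R|$, because $V_i\subset V_{i+1}$. Each child's share is at most $\frac{|Q|}{|P|}|R|$. So the choice $\kappa_i\leq\tilde\kappa_P\frac{|Q|}{|P|}$ forces every share to be at least $(1-\tilde\kappa_P)\frac{|Q|}{|P|}|R|$.

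\textbf{Your route.} You re-apply convergence in measure of $\cond(\cha_E|\mathcal{F}_n)$ at every stage to the finitely many sets $E=\Gamma_{P_0}^{P_1}\cap U\cap R'$, $R'\in\mathcal{B}_{P_0}$. This controls each child in each block directly. It removes the pigeonhole step and the need for a family of sets with uniform conditional density. In particular, you never need the passage from $\cha_U$ to $\cha_V$ in \eqref{eq:14}, which does not follow from uniform convergence of $\cond(\cha_U|\mathcal{F}_{n_i})$ on $V$ alone. In exchange, the paper's version is driven by one global object and one error parameter per level. Yours tracks finitely many sets per stage, which costs nothing since $m_{i+1}$ only has to be large.

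\textbf{Points to make explicit in a write-up.}
\begin{enumerate}
\item The block criterion must be $|S\cap U|\geq(1-\theta_j)|S|$ with $\theta_j$ well below $1/q$, not $1-\kappa_P$ as in your level-$0$ step and first draft. The covering parameter at stage $j$ must not exceed $\theta_j$, so that the well-covered atoms are admitted as blocks. This gives $|\Gamma_{P_0}^{P_1}\cap U\cap R'|\geq(1-q\theta_i)\frac{|R'|}{q}$, and the loss $q\theta_i$ must be charged to $\varepsilon_{i+1}$. Without this, $\Gamma_{P_0}^{P_1}\cap U\cap R'$ can be null (e.g.\ when $\kappa_{P_0}\geq 1/q$), and the lower bound in \eqref{eq:33} fails in that block. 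This is the role of $\kappa_i<1/q$ and \eqref{eq:9} in the paper.
\item Your parameter choices, like the paper's (through $\tilde\kappa_P>0$ in \eqref{eq:51}), require all $\kappa_P>0$. This is unavoidable: for instance, $\kappa_{[0,1)}=0$ forces $U$ to coincide a.e.\ with a finite union of atoms, which is the faithful case of \Cref{rem:block-basis:3}.
\end{enumerate}
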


\begin{proof}[Proof of \Cref{lem:block-basis}]
  Choose any numbers $\kappa_i, \widetilde{\kappa}_P > 0$ such that
  \begin{equation}\label{eq:49}
    \kappa_0
    \leq |U|/2,
    \qquad
    \kappa_i
    < 1/q,
    \qquad
    \kappa_i
    \leq \widetilde{\kappa}_P \frac{|Q|}{|P|},
  \end{equation}
  for all $P\in \mathcal{AF}_i$, $Q\in \mathcal{S}(P)$, $i\in \mathbb{N}_0$, and
  \begin{equation}\label{eq:51}
    \sum_{\substack{P'\in \mathcal{A}:\\Q\subsetneq P'\subset P}} \widetilde{\kappa}_{P'}
    \leq \kappa_P,
    \qquad P,Q\in \mathcal{A},\ Q\subset P.
  \end{equation}

  Note that by the martingale convergence theorem $\cond(\cha_U | \mathcal{F}_{n_i}) \to \cha_U$ almost everywhere.  By Egorov's theorem, for every
  $\kappa > 0$ there exists a measurable subset $V$ of $U$ with $|U\setminus V|\leq \kappa$ such that this convergence is uniform on $V$, i.e.,
  \begin{equation*}
    \lim_i \sup_{x\in V} \bigl|\cond(\cha_U | \mathcal{F}_{n_i})(x) - 1 \bigr|
    = 0.
  \end{equation*}
  Hence, switching from $\cha_U$ to $\cha_V$, we obtain
  \begin{equation}\label{eq:14}
    \limsup_i \sup_{x\in V} \bigl|\cond(\cha_V | \mathcal{F}_{n_i})(x) - 1 \bigr|
    \leq \kappa.
  \end{equation}
  Using~\eqref{eq:14}, we will now inductively choose a subsequence $(m_i)_{i=0}^{\infty}$ of $(n_i)_{i=0}^{\infty}$ with $m_0 > 1$ and an increasing sequence
  $(V_i)_{i=0}^{\infty}$ of subsets of $U$ such that
  \begin{align}
    \cond(\cha_{V_0} | \mathcal{F}_{m_0-1})(x)
    &\geq 1-\kappa_0,
    && x\in V_0,\label{eq:11:a}\\
    \cond(\cha_{V_i} | \mathcal{F}_{m_i})(x)
    &\geq 1-\kappa_i,
    && x\in V_i,\ i\in \mathbb{N}_0,\label{eq:11:b}
       \intertext{as well as}
       |U\setminus V_0|
    &\leq \kappa_0,
    &&|U\setminus V_i|
       \leq \kappa_i |R|,\label{eq:11:c}
  \end{align}
  for all $R\in \mathcal{F}_{m_{i-1}}$, $i\in \mathbb{N}$.  We now define the collections $(\mathcal{B}_i)_{i=0}^{\infty}$ by
  \begin{align}
    \mathcal{B}_0
    &= \{ R\in \mathcal{AF}_{m_0} : R\cap V_0\neq \emptyset \},\label{eq:40:a}\\
    \mathcal{B}_i
    &= \{ R\in \mathcal{AF}_{m_i} : R\cap V_i\neq \emptyset,\ R\cap \mathcal{B}_{i-1}^{*}\neq \emptyset \},
      \qquad i\in \mathbb{N}.\label{eq:40:b}
  \end{align}
  Note that, by~\eqref{eq:11:a}, \eqref{eq:11:b} and~\eqref{eq:11:c}, each of the collections $\mathcal{B}_i$, $i\in \mathbb{N}_0$ has the following properties:
  \begin{enumerate}[(a)]
  \item\label{enu:proof:pro:stab-3:a} for every $S\in \mathcal{B}_{i+1}$ there exists a $R\in \mathcal{B}_i$ such that $S\subsetneq R$, and
  \item\label{enu:proof:pro:stab-3:b} $|R\cap V_i| \geq (1-\kappa_i)|R|$ for all $R\in \mathcal{B}_i$.
  \end{enumerate}
  Observe that the collections $(\mathcal{B}_i)_{i=0}^{\infty}$ are non-empty; in fact, by~\eqref{eq:49}, $V_0$ has positive measure, and since
  $V_i \supset V_0$ for all $i \in \mathbb{N}_0$, we obtain from~\eqref{eq:40:a} and~\eqref{eq:40:b} that
  \begin{equation*}
    \mathcal{B}_i
    \supset \{ R\in \mathcal{AF}_{m_i} : R\cap V_0\neq \emptyset \}
    \neq \emptyset.
  \end{equation*}
  Suppose that if $R\in \mathcal{A}$ and $|R\cap U|\geq (1 - \alpha) |R|$ for $\alpha > 0$.  Then for every $S\in \mathcal{S}(R)$ we have
  \begin{equation}\label{eq:9}
    |S\cap U|
    = |S| - |S\setminus U|
    \geq |S| - |R\setminus U|
    \geq |S| - \alpha|R|
    \geq (1 - q \alpha) |S|.
  \end{equation}

  Next, we will use the collections $(\mathcal{B}_i)_{i=0}^{\infty}$ to inductively define $(\mathcal{B}_P : P\in \mathcal{A})$.  For the initial step, we simply put
  \begin{equation}\label{eq:42}
    \mathcal{B}_{[0,1)}
    = \mathcal{B}_0.
  \end{equation}
  Note that by~\eqref{eq:49}, \eqref{eq:11:a} and~\eqref{eq:9}, the collection $\mathcal{B}_{[0,1)}$ is non-empty.
  
  Now assume we have already constructed $\mathcal{B}_P$ for all $P\in \bigcup_{i=0}^{j-1} \mathcal{AF}_i$ and they are non-empty.  We then fix
  $P\in \mathcal{AF}_{j-1}$, $Q\in \mathcal{S}(P)$, define the point set
  \begin{equation}\label{eq:43}
    \Gamma_P^Q
    = \bigcup \bigl\{ S\in \mathcal{S}(R) : R\in \mathcal{B}_P,\ \ord(S) = \ord(Q) \bigr\},
  \end{equation}
  and then complete the inductive step by putting
  \begin{equation}\label{eq:44}
    \mathcal{B}_Q
    = \{R\in \mathcal{B}_j : R\subset \Gamma_P^Q\}.
  \end{equation}
  Observe that, in view of~\eqref{eq:49}, \eqref{eq:11:b}, and~\eqref{eq:9}, each of the collections $\mathcal{B}_Q$, $Q\in \mathcal{AF}_j$ is non-empty.
  Moreover, from~\eqref{eq:42} and iterating~\eqref{eq:43} and~\eqref{eq:44}, we obtain
  \begin{equation}\label{eq:10}
    \mathcal{B}_Q^{*}
    \subset \mathcal{B}_P^{*},
    \qquad P,Q\in \mathcal{A},\ Q\subset P.
  \end{equation}

  We will now proceed to verify~\textrefp[C]{enu:dfn:compatibility:i}--\textrefp[C]{enu:dfn:compatibility:v}.  By~\eqref{eq:40:a}, \eqref{eq:40:b} and
  since each $\mathcal{F}_{m_i}$, $i\in \mathbb{N}_0$ is finite, \textrefp[C]{enu:dfn:compatibility:i} is satisfied.
  Condition~\textrefp[C]{enu:dfn:compatibility:ii} follows by successive application of Property~\eqref{enu:proof:pro:stab-3:a}.  The
  conditions~\textrefp[C]{enu:dfn:compatibility:iv} and~\textrefp[C]{enu:dfn:compatibility:v} are an immediate consequence of~\eqref{eq:40:a}
  and~\eqref{eq:40:b}, Property~\eqref{enu:proof:pro:stab-3:b}, \eqref{eq:42}, \eqref{eq:44}, \eqref{eq:49}, \eqref{eq:51} and the fact that
  $V_i\subset U$.  The remainder of the proof is devoted to establishing condition~\textrefp[C]{enu:dfn:compatibility:iii}.

  We fix $P\in \mathcal{AF}_i$ and $R\in \mathcal{B}_P$.  Suppose, to the contrary, that there exists a $Q_0\in \mathcal{S}(P)$ such that
  \begin{equation}\label{eq:45}
    \frac{|R\cap \mathcal{B}_{Q_0}^{*}|}{|R|} < (1 - \widetilde{\kappa}_P) \frac{|Q_0|}{|P|}.
  \end{equation}
  Then by Property~\eqref{enu:proof:pro:stab-3:b}, \eqref{eq:40:a}, \eqref{eq:40:b} as well as the fact that $V_i\subset V_{i+1}$, we obtain
  \begin{equation}\label{eq:46}
    \begin{aligned}
      (1 - \kappa_i) |R|
      &\leq |R\cap V_i|
        = |R\cap V_i\cap \mathcal{B}_{i+1}^{*}| + |R\cap  V_i\setminus (R\cap \mathcal{B}_{i+1}^{*})|\\
      &\leq |R\cap V_i\cap \mathcal{B}_{i+1}^{*}| + |R\cap  V_i\setminus (R\cap V_{i+1})|
        = |R\cap V_i\cap \mathcal{B}_{i+1}^{*}|.
    \end{aligned}
  \end{equation}

  Examining Property~\eqref{enu:proof:pro:stab-3:a}, \eqref{eq:43} and~\eqref{eq:44}, we find that
  $\mathcal{B}_{i+1} = \bigcup_{Q\in \mathcal{AF}_{i+1}} \mathcal{B}_Q$.  Similarly, we obtain that $\mathcal{B}_Q^{*}\subset \mathcal{B}_P^{*}$ if
  and only if $Q\subset P$, and hence,
  \begin{equation}\label{eq:47}
    R\cap V_i\cap \mathcal{B}_{i+1}^{*}
    = \bigcup_{Q\in \mathcal{AF}_{i+1}} R\cap V_i\cap \mathcal{B}_Q^{*}
    = \bigcup_{Q\in \mathcal{S}(P)} R\cap V_i\cap \mathcal{B}_Q^{*}.
  \end{equation}
  Observe that $\mathcal{B}_Q^{*} \subset \Gamma_P^Q$ for all $Q \in \mathcal{S}(P)$, and that the sets $\Gamma_P^Q$, $Q \in \mathcal{S}(P)$, are
  pairwise disjoint.  Consequently, the sets $\mathcal{B}_Q^{*}$, $Q \in \mathcal{S}(P)$, are pairwise disjoint as well.  Using~\eqref{eq:44}
  and~\eqref{eq:43}, we obtain that for each $Q\in \mathcal{S}(P)$
  \begin{equation}\label{eq:48}
    |R\cap \mathcal{B}_Q^{*}|
    \leq |R\cap \Gamma_P^Q|
    = \sum_{\substack{S\in \mathcal{S}(R)\\\ord(S) = \ord(Q)}} |S|
    = \frac{|Q|}{|P|}\cdot |R|.
  \end{equation}
  Before we proceed with the proof, we note that iterating~\eqref{eq:48} establishes the upper estimate in~\eqref{eq:51}.

  Inserting~\eqref{eq:47} into~\eqref{eq:46} and using~\eqref{eq:45} and~\eqref{eq:48} yields
  \begin{align*}
    (1 - \kappa_i) |R|
    &\leq |R\cap V_i\cap \mathcal{B}_{Q_0}^{*}| + \sum_{Q\in \mathcal{S}(P)\setminus\{Q_0\}} |R\cap V_i\cap \mathcal{B}_Q^{*}|\\
    &< (1 - \widetilde{\kappa}_P) \frac{|Q_0|}{|P|}\cdot |R| + \sum_{Q\in \mathcal{S}(P)\setminus\{Q_0\}} \frac{|Q|}{|P|}\cdot |R|\\
    &= \Bigl( 1 - \widetilde{\kappa}_P \frac{|Q_0|}{|P|} \Bigr)\cdot |R|,
  \end{align*}
  which contradicts~\eqref{eq:49}.  We record that we have established
  \begin{equation}\label{eq:50}
    \frac{|R\cap \mathcal{B}_Q^{*}|}{|R|} \geq (1 - \widetilde{\kappa}_P) \frac{|Q|}{|P|},
    \qquad P\in \mathcal{A},\ Q\in \mathcal{S}(P),\ R\in \mathcal{B}_P.
  \end{equation}
  Finally, iterating~\eqref{eq:50} gives us
  \begin{equation}\label{eq:8}
    \frac{|R\cap \mathcal{B}_Q^{*}|}{|R|} \geq \Bigl( 1 - \sum_{\substack{P'\in \mathcal{A}:\\Q\subsetneq P'\subset P}} \widetilde{\kappa}_{P'} \Bigr) \frac{|Q|}{|P|},
    \qquad P,Q\in \mathcal{A},\ Q\subset P,\ R\in \mathcal{B}_P.
  \end{equation}
  Combining the above with~\eqref{eq:51}, we conclude the proof.
\end{proof}

Reviewing the proof of \Cref{lem:block-basis}, we make the following observations.
\begin{rem}\label{rem:block-basis:2}
  Inspecting~\eqref{eq:44} and~\eqref{eq:43} reveals that for faithful collections with respect to $(m_j)_{j=0}^{\infty}$, we have
  $\mathcal{B}_Q^{*} = \Gamma_P^Q$ and that $\Gamma_P^Q\in \mathcal{F}_{m_{j-1}+1}$ for $P\in \mathcal{AF}_{j-1}$, $Q\in \mathcal{S}(P)$ and
  $j\in \mathbb{N}$.  Consequently, we obtain for $i\leq j$ and $P\in \mathcal{AF}_i$ that
  \begin{equation*}
    \mathcal{B}_P^{*}
    = \bigcup_{\substack{Q\in \mathcal{AF}_j\\Q\subset P}} \mathcal{B}_Q^{*}.
  \end{equation*}
  Similarly, by~\eqref{eq:40:a}, $\mathcal{B}_{[0,1)}^{*}\in \mathcal{F}_{m_0}$.  To summarize, $\mathcal{B}_P^{*}\in \mathcal{F}_{m_{j-1}+1}$ if
  $P\in \mathcal{AF}_j$, $j\in \mathbb{N}$, and (up to a set of measure zero)
  \begin{equation*}
    U
    = \bigcup_{P\in \mathcal{AF}_{0}} \mathcal{B}_P^{*}\in \mathcal{F}_{m_0}.
  \end{equation*}
\end{rem}

\begin{rem}\label{rem:block-basis:3}
  Suppose that $U$ is a finite union of atoms in $\mathcal{A}$.  Then the proof of \Cref{lem:block-basis} produces a faithful sequence of collections
  $(\mathcal{B}_P : P\in \mathcal{A})$.  This can be demonstrated by first noting that in this case, we can pick $V = U$ in~\eqref{eq:14} and
  $\kappa = 0$.  Consequently, this leads to the choices $V_i = U$ and $\kappa_i = 0$, $i\in \mathbb{N}_0$ in~\eqref{eq:11:a}, \eqref{eq:11:b} and~\eqref{eq:11:c}, which
  ultimately leads to~\eqref{eq:8}.  Finally, by inspecting~\eqref{eq:51}, we confirm the faithfulness of $(\mathcal{B}_P : P\in \mathcal{A})$.

  Additionally, we observe that if $U$ is $\mathcal{F}_j$-measurable for some $j\in \mathbb{N}_0$, then by~\eqref{eq:40:a}, $\mathcal{B}_0^{*}$ is also $\mathcal{F}_j$-measurable.
\end{rem}

%

\section{Factorization operators}
\label{sec:fact-oper}

Here we use the Gamlen-Gaudet construction under the conditions established in \Cref{sec:gaml-gaud-constr} to construct the fundamental factorization
operators $A$ and $B$, which are bounded on Janson-Sobolev and Janson-Sobolev-Hardy spaces under these assumptions.  These operators are essential for
the stabilization procedure developed in \Cref{sec:stabilization}.

\begin{dfn}\label{dfn:canonical}
  Given a sequence of finite, pairwise disjoint collections $(\mathcal{B}_P : P\in \mathcal{A})$ with $\mathcal{B}_P\subset \mathcal{A}$, $P\in \mathcal{A}$, we define the block basis functions
  \begin{equation}\label{eq:17}
    \psi_P^k
    = \sum_{R\in \mathcal{B}_P} \varphi_R^k,
    \qquad P\in \mathcal{A},\ 1\leq k\leq r.
  \end{equation}
  Associated to such a block basis are the \emph{canonical factorization operators} $A,B\colon Z_V\to Z_V$ formally defined by
  \begin{equation}\label{eq:221}
    A f
    = \sum_{P\in \mathcal{A}} \sum_{k=1}^{r} \frac{\langle \psi_P^k, f \rangle}{|\mathcal{B}_P^{*}|} \varphi_P^k
    \qquad\text{and}\qquad
    B f
    = \sum_{P\in \mathcal{A}} \sum_{k=1}^{r} \frac{\langle \varphi_P^k, f \rangle}{|P|} \psi_P^k.
  \end{equation}
  To distinguish, faithful sequences $(\mathcal{B}_P : P\in \mathcal{A})$ will be denoted by $(\hat{\mathcal{B}}_P : P\in \mathcal{A})$ and the associated canonical factorization operators
  $A$ and $B$ will be denoted by $\hat{A}$ and $\hat{B}$.
\end{dfn}

First we will study the properties of the canonical factorization operators in the case of faithful sequences.
\begin{thm}\label{thm:canonical}
  Let $U\subset [0,1)$ be a set of positive measure, and suppose that $(\hat{\mathcal{B}}_P : P\in \mathcal{A})$ is a faithful sequence mounted on
  $U$.  Then $\hat{A}, \hat{B}\colon Z_V\to Z_V$ are well defined bounded operators and we have
  \begin{equation}\label{eq:212}
    \hat{A}\hat{B}
    = \Id_{Z_V},
    \qquad
    \|\hat{A}\|
    \leq \gamma_X^u(|\hat{\mathcal{B}}_{[0,1)}^{*}|),
    \qquad
    \|\hat{B}\|
    \leq \gamma_X^d(|\hat{\mathcal{B}}_{[0,1)}^{*}|),
  \end{equation}
  where $\gamma_X^u$ and $\gamma_X^d$ are defined in \Cref{dfn:scaling}.
\end{thm}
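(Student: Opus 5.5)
We first compute the basic algebra of the block basis and then prove the norm estimates by equimeasurability. By \textrefp[C]{enu:dfn:compatibility:i}, each $\psi_P^k$ is a sum of $\varphi_R^k$ over pairwise disjoint $R$. Hence the $L^2$-orthogonality of the $\varphi_R^k$ gives $\langle \psi_P^k,\psi_{P'}^{k'}\rangle = \delta_{(P,k),(P',k')}|\hat{\mathcal{B}}_P^{*}|\cdot\|v^{k}\|^2$-type normalization. Here we use the scalar product~\eqref{eq:defscalar}, so that $\langle\varphi_P^k,\varphi_P^k\rangle=|P|$, and disjointness across different $P$ follows from \Cref{rem:block-basis} and \Cref{rem:C2}: either $\hat{\mathcal{B}}_Q^{*}\subset\Gamma_{P}^{P_1}$ lies in a single level set of $\psi_P^k$, or the supports are disjoint. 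From this, $\hat{A}\hat{B}\varphi_P^k=\varphi_P^k$ on finite sums, so $\hat{A}\hat{B}=\Id$ once boundedness is known.

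The core step is a distributional identity. Fix a finite sum $f=\sum a_P^k\varphi_P^k$ and set $g=\hat{B}f=\sum a_P^k\psi_P^k$. For each level $j$, the martingale difference $\Delta_{j}$-block of $f$ is $\sum_{P\in\mathcal{AF}_j}\sum_k a_P^k\varphi_P^k$. This function takes the vector value $\sum_k a_P^k v^k_{\ord(P_1)}$ on $P_1\in\mathcal S(P)$. Correspondingly, the block $\sum_{P\in\mathcal{AF}_j}\sum_k a_P^k\psi_P^k$ takes the same value on $\Gamma_P^{P_1}=\hat{\mathcal{B}}_{P_1}^{*}$, by~\eqref{eq:136} using faithfulness. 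Iterating, consider the vector $\bigl(\sum_j\sigma_j\,\text{(level-$j$ block)}(x)\bigr)$ as a function of $x$, for each fixed realization of the signs. The map $x\mapsto$ (the sequence of atoms containing $x$) transported through $P\mapsto\hat{\mathcal{B}}_P^{*}$ shows that $g$'s square-function/integrand restricted to $U=\hat{\mathcal{B}}_{[0,1)}^{*}$ has the same joint distribution (w.r.t.\ normalized measure $|\cdot|/|U|$) as $f$'s integrand on $[0,1)$. Here we use $|\hat{\mathcal{B}}_Q^{*}|=|Q|\,|U|$ from \Cref{rem:block-basis} and the nested partition~\eqref{eq:139}. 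The integrand of $g$ vanishes off $U$. Taking $\cond_{\boldsymbol\sigma}$ pointwise preserves this, so $x\mapsto\cond_{\boldsymbol\sigma}\|\cdots\|_{\ell^2_l}$ for $g$ is a rescaling, by factor $s=|U|$, of the one for $f$, in the sense of \Cref{dfn:scaling}. Hence $\|\hat{B}f\|_{Z_V}\le\gamma^d_X(s)\|f\|_{Z_V}$, noting that the definition of $\gamma^d$ bounds $\|$scaled$\|$ by $\gamma^d\cdot$ (the appropriate normalization). One must check the direction of the convention carefully; the $u$/$d$ roles match \eqref{eq:212}. Thus $\hat{B}$ extends to a bounded operator, and in fact it is an isomorphic embedding onto $\overline{\spn}\{\psi_P^k\}$.

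For $\hat{A}$, we write $\hat{A}=\hat{B}^{-1}\circ \Pi$, where $\Pi f=\sum\frac{\langle\psi_P^k,f\rangle}{|\hat{\mathcal B}_P^*|}\psi_P^k$ is the orthogonal-type projection onto the block basis. The inverse estimate from the previous paragraph gives $\|\hat{B}^{-1}h\|\le\gamma^u_X(s)\|h\|$ for $h$ in the block span. It remains to show that $\Pi$ is a contraction on $Z_V$. Here we use that $\Pi$ is a conditional expectation. Let $\mathcal G$ be the $\sigma$-algebra generated by the sets $\hat{\mathcal{B}}_Q^{*}$, $Q\in\mathcal A$. For $f$ in $Z_V$ supported appropriately, $\Pi f$ is obtained from $\cha_U f$ by averaging each martingale block over the structure of $\mathcal{G}$. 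More precisely, $\Pi f$ has level-$j$ block equal to $\cond(\text{$f$'s blocks at levels } [n_j, n_{j+1}) \mid \mathcal G)$ restricted appropriately. Jensen's inequality for $\cond_{\boldsymbol\sigma}\|\cdot\|_{\ell^2_l}$ combined with the rearrangement invariance (conditional expectations are contractions on r.i.\ spaces) then yields $\|\Pi f\|\le\|f\|$, as in \Cref{rem:conditional}.

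The main obstacle is this last identification of $\Pi$ with a conditional expectation compatible with the martingale structure. Only the components of $f$ along the $\varphi_R^k$, $R\in\hat{\mathcal B}_P$, must survive, and they must be grouped correctly level by level, including in the Rademacher variant where signs must be regrouped. If the direct identification fails, I would instead prove the bound for $\hat{A}$ by duality: $\hat{A}$ is the formal adjoint of $\hat{B}$ up to the normalizations. I would then prove the $\hat{B}$-type estimate on the Köthe dual side, using the symmetric roles of $\gamma^u,\gamma^d$.
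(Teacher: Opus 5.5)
Your architecture is the paper's. For $\hat B$ you use the equimeasurability of the integrands of $\hat Bf$ and $f$ up to the factor $|U|$. For $\hat A$ you write $\hat A=\hat B^{-1}\circ\Pi$ with $\Pi=\hat B\hat A$ and aim for a Jensen-type contraction of $\Pi$.

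In the $\hat B$ part you should make explicit the point the paper makes in \eqref{eq:28}. The $Z_V$-norm attaches $\sigma_n$ to the $(\mathcal F_n)$-level $n$, and at $x\in U$ the $j$-th block of $\hat Bf$ sits at the level $t(j,x)$ of the atom of $\hat{\mathcal B}_{P_j(x)}$ containing $x$. Since $j\mapsto t(j,x)$ is strictly increasing, $(\sigma_{t(j,x)})_j$ has the law of $(\sigma_j)_j$.

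The genuine gap is the step you call the main obstacle, which is the whole content of the $\hat A$ estimate. The paper fills it by computing $\cond(\varphi_R^k|\mathcal G)$ atom by atom. For $R\in\hat{\mathcal B}_{P_0}$ this equals $\frac{|R|}{|\hat{\mathcal B}_{P_0}^*|}\hat\psi_{P_0}^k$, because every $\hat{\mathcal B}_Q^*$ with $Q\subset P_1\in\mathcal S(P_0)$ has the same density $|Q|/|P_0|$ in every $R\in\hat{\mathcal B}_{P_0}$. For atoms outside $\bigcup_P\hat{\mathcal B}_P$ it is supposed to vanish. Your description (``blocks at levels $[n_j,n_{j+1})$'') presupposes \textrefp[C]{enu:dfn:compatibility:iv}, which is not a hypothesis here.

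More seriously, ``$\Pi$ is a contraction'' is false in the stated generality, so neither your Jensen step nor the duality fallback can succeed. The vanishing fails for atoms meeting both $U$ and $[0,1)\setminus U$, e.g.\ $R=[0,1)$ when $U\neq[0,1)$. For finite sums one gets $\Pi f=\cond(f|\mathcal G)-\cond(f|\mathcal G_U)$, where $\mathcal G_U$ is the $\sigma$-algebra generated by $U$. The paper's telescoping identity \eqref{eq:charainmarbas} starts from $\cha_{[0,1)}$ where it should start from $\cha_U$, so it tacitly assumes $U=[0,1)$.

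For constant $\boldsymbol\sigma$ this breaks even the claimed bound on $\hat A$:
\begin{itemize}
\item Take $q=2$, $l=1$, $V=\spn\{(1,-1)\}$. Then $\varphi_P$ is the $L^\infty$-normalized Haar function and $Z_V(L^p)=L^p_0$.
\item Let $U=[0,\tfrac12)$ and $\hat{\mathcal B}_P=\{\tfrac12P\}$, where $\tfrac12P=\{x/2:x\in P\}$. This sequence is faithful and mounted on $U$, and $\hat Bf(x)=f(2x)\cha_U(x)$.
\item For $g=\cha_{[0,1/8)}-\cha_{[1/8,1/2)}+\tfrac12\cha_{[1/2,1)}$ one computes $\hat Ag=\tfrac12\varphi_{[0,1)}+\varphi_{[0,1/2)}=\tfrac32\cha_{[0,1/4)}-\tfrac12\cha_{[1/4,1)}$.
\item Hence $\|\hat Ag\|_{L^\infty}=\tfrac32>1=\gamma^u_{L^\infty}(\tfrac12)\|g\|_{L^\infty}$.
\item Likewise $\|\hat Ag\|_{L^4}^4=\tfrac{21}{16}>\tfrac{17}{16}=\gamma^u_{L^4}(\tfrac12)^4\|g\|_{L^4}^4$.
\end{itemize}

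What can be proved is narrower.
\begin{itemize}
\item For $U=[0,1)$ one has $\Pi=\cond(\cdot|\mathcal G)$ on finite sums. In the constant-sign case, Jensen applied to $f$ itself then gives $\|\Pi f\|\le\|f\|$.
\item In the Rademacher case your block-wise Jensen idea works for every $U$. Write $\Pi f=\sum_j\cond(F_j|\mathcal G)$, where $F_j$ collects the terms of $f$ on the atoms of $\bigcup_{P\in\mathcal{AF}_j}\hat{\mathcal B}_P$, so $F_j(y)=\Delta_{n(j,y)}f(y)$ with distinct levels $n(j,y)$. After re-indexing the signs, Jensen followed by averaging out the unused Rademachers gives
$\cond_{\boldsymbol\sigma}\bigl\|\sum_j\sigma_j\cond(F_j|\mathcal G)\bigr\|_{\ell^2_l}\le\cond\bigl(\cond_{\boldsymbol\sigma}\|\sum_n\sigma_n\Delta_nf\|_{\ell^2_l}\,\big|\,\mathcal G\bigr)$.
\item This block-wise argument is what actually justifies the contraction in the Hardy case. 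The paper instead cites \Cref{rem:conditional}, which only covers $\sigma$-algebras generated by atoms, not by unions of atoms such as the $\hat{\mathcal B}_P^*$.
\end{itemize}
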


\begin{myproof}
  Before we begin with the proof, recall that by \Cref{rem:faithful-mount}, any faithful sequence $(\hat{\mathcal{B}}_P : P\in \mathcal{A})$ mounted on
  $U$ forces $\hat{\mathcal{B}}_{[0,1)}^{*}= U$.
  
  We fix a function $f=\sum_{j=0}^{\infty} \sum_{P\in \mathcal{AF}_j} \sum_{k=1}^r a_P^k \varphi_P^k$ with only finitely many non-zero coefficients $a_P^k$.
  \begin{proofcase}[Estimates for $\hat{B}$]
    By~\eqref{eq:221} and~\eqref{eq:17}, we obtain
    \begin{equation}\label{eq:imageofB}
      \hat{B}f
      = \sum_{j=0}^{\infty}  \sum_{P\in \mathcal{AF}_j} \sum_{k=1}^r a_P^k \hat{\psi}_P^k
      = \sum_{j=0}^{\infty}  \sum_{P\in \mathcal{AF}_j} \sum_{k=1}^r a_P^k \sum_{Q\in \hat{\mathcal{B}}_P} \varphi_Q^k.
    \end{equation}
    Since $(\hat{\mathcal{B}}_P : P\in \mathcal{A})$ is faithful, for every point $x\in U$ there exists a sequence of sets $(P_j(x))_{j=1}^{\infty}$ such that for
    $j\in\mathbb{N}_0$ we have $P_j(x)\in\mathcal{AF}_j$ and there exists a unique $Q_j(x)\in \hat{\mathcal{B}}_{P_j(x)}$ satisfying $x\in Q_j(x)$ and
    $Q_j(x)\in \mathcal{AF}_{t(j,x)}$ for some $t(j,x)\in \mathbb{N}_0$.  Observe that for fixed $x\in [0,1)$, we have the following identity:
    \begin{equation}\label{eq:pointwiseestimatebfrad}
      \hat{B} f(x)
      = \sum_{j=0}^{\infty}  \Delta_j \hat{B}f (x)
      = \sum_{j=0}^{\infty}  \Delta_{t(j,x)} \hat{B}f (x)
      = \sum_{j=0}^{\infty} \sum_{k=1}^r a_{P_j(x)}^k  \varphi_{Q_j(x)}^k(x),
    \end{equation}
    where $\Delta_j$ is defined in~\eqref{eq:24}.  Note that $\hat{B} f(x) = 0$ for all $x\in [0,1)\setminus U$.  By the definition of the
    norm~\eqref{eq:39} of $Z_V$, the above identity gives us
    \begin{equation}\label{eq:25}
      \begin{aligned}
        \|\hat{B}f\|_{Z_V}
        &= \bigl\|
          x\mapsto \cond_{\boldsymbol{\sigma}} \bigl\|
          \sum_{j=0}^{\infty} \sigma_{t(j,x)} \Delta_{t(j,x)} \hat{B}f (x)
          \bigr\|_{\ell^2_l}
          \bigr\|_X\\
        &= \bigl\|
          x\mapsto \cond_{\boldsymbol{\sigma}} \bigl\|
          \sum_{j=0}^{\infty} \sigma_{t(j,x)} \sum_{P\in \mathcal{AF}_j} \sum_{k=1}^r a_P^k \varphi_P^k(x)
          \bigr\|_{\ell^2_l}
          \bigr\|_X\\
        & = \bigl\|
          x\mapsto \cond_{\boldsymbol{\sigma}} \bigl\|
          \sum_{j=0}^{\infty} \sigma_{t(j,x)}  \sum_{k=1}^r a_{P_j(x)}^k \varphi_{Q_j(x)}^k(x)
          \bigr\|_{\ell^2_l}
          \bigr\|_X.
      \end{aligned}
    \end{equation}
    Since for fixed $x\in U$, the random variables
    \begin{equation}
      \label{eq:28}
      \sum_{j=0}^{\infty} \sigma_{t(j,x)} \sum_{k=1}^r a_{P_j(x)}^k \varphi_{Q_j(x)}^k(x)
      \quad\text{and}\quad
      \sum_{j=0}^{\infty} \sigma_j \sum_{k=1}^r a_{P_j(x)}^k \varphi_{Q_j(x)}^k(x)
    \end{equation}
    have the same distribution, we obtain by~\eqref{eq:25}
    \begin{equation}\label{eq:26}
      \|\hat{B}f\|_{Z_V}
      = \Bigl\|
      x\mapsto \cond_{\boldsymbol{\sigma}} \Bigl\|
      \sum_{j=0}^{\infty} \sigma_j  \sum_{k=1}^r a_{P_j(x)}^k \varphi_{Q_j(x)}^k(x)
      \Bigr\|_{\ell^2_l}
      \Bigr\|_X.
    \end{equation}
    Recalling the definition of $P_j(x)$, $Q_j(x)$, using~\eqref{eq:17} and substitution of these expressions into~\eqref{eq:26} yields
    \begin{equation}\label{eq:31}
      \begin{aligned}
        \|\hat{B}f\|_{Z_V}
        &= \Bigl\|
          x\mapsto \cond_{\boldsymbol{\sigma}} \Bigl\|
          \sum_{j=0}^{\infty} \sigma_{j}  \sum_{P\in \mathcal{AF}_j} \sum_{k=1}^r a_P^k \sum_{Q\in \hat{\mathcal{B}}_P} \varphi_Q^k(x)
          \Bigr\|_{\ell^2_l}
          \Bigr\|_X\\
        &= \Bigl\|
          x\mapsto \cond_{\boldsymbol{\sigma}} \Bigl\|
          \sum_{j=0}^{\infty} \sigma_{j}  \sum_{P\in \mathcal{AF}_j}\sum_{k=1}^r a_{P}^k  \hat{\psi}_{P}^k(x)
          \Bigr\|_{\ell^2_l}
          \Bigr\|_X.
      \end{aligned}
    \end{equation}
    To conclude this part of the proof, first note that since $(\hat{\mathcal{B}}_P : P\in \mathcal{A})$ is faithful, the functions
    \begin{equation}\label{eq:30}
      F
      := \cond_{\boldsymbol{\sigma}} \Bigl\|
      \sum_{j=0}^{\infty} \sigma_{j}  \sum_{P\in \mathcal{AF}_j}\sum_{k=1}^r a_{P}^k  \hat{\psi}_{P}^k
      \Bigr\|_{\ell^2_l}
      \quad\text{and}\quad
      G
      := \cond_{\boldsymbol{\sigma}} \Bigl\|
      \sum_{j=0}^{\infty} \sigma_{j}  \sum_{P\in \mathcal{AF}_j}\sum_{k=1}^r a_{P}^k  \varphi_{P}^k
      \Bigr\|_{\ell^2_l}
    \end{equation}
    satisfy
    \begin{equation}\label{eq:59}
      |\{ |F| > \alpha \}|
      = |\hat{\mathcal{B}}_{[0,1)}^{*}|\cdot |\{ |G| > \alpha \}|,
      \qquad \alpha > 0.
    \end{equation}

    Secondly, exploiting the properties of rearrangement invariant spaces (see \Cref{dfn:r.i.-spaces}) and the definition of the norm~\eqref{eq:39} of
    $Z_V$, \eqref{eq:31} and \Cref{dfn:scaling}, we obtain
    \begin{equation}\label{eq:Bisiso}
      \begin{aligned}
        \|\hat{B}f\|_{Z_V}
        &= \|F\|_X
          \leq \gamma_X^d(|\hat{\mathcal{B}}_{[0,1)}^{*}|)\cdot \|G\|_X
          = \gamma_X^d(|\hat{\mathcal{B}}_{[0,1)}^{*}|)\cdot \|f\|_{Z_V},\\
        \|\hat{B}f\|_{Z_V}
        &= \|F\|_X
          \geq \bigl(\gamma_X^u(|\hat{\mathcal{B}}_{[0,1)}^{*}|)\bigr)^{-1}\cdot \|G\|_X
          \geq \bigl(\gamma_X^u(|\hat{\mathcal{B}}_{[0,1)}^{*}|)\bigr)^{-1}\cdot \|f\|_{Z_V}.
      \end{aligned}
    \end{equation}
  \end{proofcase}

  \begin{proofcase}[Estimates for $\hat{A}$]
    We define the auxiliary functions $(\Phi^{R}_{R_0} : R_0\in\mathcal{AF},\ R\in \mathcal{S}(R_0))$ given by
    \begin{equation}\label{eq:56}
      \Phi^{R}_{R_0}
      = q \cha_R- \cha_{R_0}
    \end{equation}
    and we have a corresponding block family of functions
    \begin{equation}\label{eq:57}
      \tilde{\Phi}^{P}_{P_0}
      = \sum_{R_0\in \hat{\mathcal{B}}_{P_0}} \Phi^{R}_{R_0},
    \end{equation}
    for any $P_0\in\mathcal{AF}$, $P\in \mathcal{S}(P_0)$, where $R\in \mathcal{S}(R_0)$ is such that $\ord(R_0) = \ord(P)$.  Appealing to the
    definition of $\Gamma_{P_0}^P$ in \Cref{rem:C2}, and then exploiting the faithfulness of the sequence $(\hat{\mathcal{B}}_P : P\in \mathcal{A})$
    (see~\eqref{eq:136}), we obtain
    \begin{equation}
      \frac{\tilde{\Phi}^{P}_{P_0}}{|P_0|}
      = q\frac{\cha_{\Gamma_{P_0}^P}}{|P_0|} - \frac{\cha_{\hat{\mathcal{B}}_{P_0}^*}}{|P_0|}
      = q\frac{\cha_{\hat{\mathcal{B}}_P^*}}{|P_0|} - \frac{\cha_{\hat{\mathcal{B}}_{P_0}^*}}{|P_0|}
      = \frac{\cha_{\hat{\mathcal{B}}_P^*}}{|P|} - \frac{\cha_{\hat{\mathcal{B}}_{P_0}^*}}{|P_0|},
    \end{equation}
    and consequently
    \begin{equation}\label{eq:charainmarbas}
      \frac{\cha_{\hat{\mathcal{B}}_P^*}}{|P|}
      = \cha_{[0,1)} + \sum_{\substack{Q\in \mathcal{A}\\P\subseteq Q\neq [0,1)}} \frac{\tilde{\Phi}^Q_{\pi(Q)}}{|\pi(Q)|}.
    \end{equation}

    We now choose $N \in\mathbb{N}$ so large that for all $R\in \bigcup_{j\geq N} \bigcup_{P\in \mathcal{AF}_j} \hat{\mathcal{B}}_P$, we have
    $a_R^k = 0$ for every $k\in\{1,\ldots,r\}$.  Hence, we have $\langle\hat{\psi}_P^k , f\rangle=0$ for all $P\in \cup_{j\geq N} \mathcal{AF}_j$ and
    $k\in\{1,\ldots,r\}$.  Let $\mathcal{G}$ denote the $\sigma$-algebra generated by the sets $\hat{\mathcal{B}}_{P}^{*}$, $P\in\mathcal{AF}_N$.

    First we calculate $\cond(\varphi_{R_0}^k|\mathcal{G})$ for $R_0\in\mathcal{AF}$ and $1\leq k\leq r$.  Let $P\in \mathcal{AF}$,
    $P_0 = \pi(P)$ and observe that
    \begin{equation}\label{eq:innerPHI}
      \langle\tilde{\Phi}^{P}_{P_0}, \varphi^k_{R_0}\rangle
      = \left\{\begin{array}{cl}
        v^k_{\mathrm{ord}(P)}|R_0| & R_0\in \hat{\mathcal{B}}_{P_0},\\
        0 & \mbox{otherwise.}
      \end{array}\right.
  \end{equation}
  Indeed, if $R_0\in \hat{\mathcal{B}}_{P_0}$ and $R\in \mathcal{S}({R_0})$ with $\ord(R) = \ord(P)$, then by~\eqref{eq:56}, \eqref{eq:57}
  and~\eqref{eq:119} we obtain
  \begin{equation}\label{eq:58}
    \langle\tilde{\Phi}^{P}_{P_0}, \varphi^k_{R_0}\rangle
    = \langle\Phi^R_{R_0}, \varphi^k_{R_0}\rangle
    = \int_{R_0} (q\cha_R - \cha_{R_0}) \sum_{S\in \mathcal{S}({R_0})} v_{\mathrm{ord}(S)}^k \cha_S.
  \end{equation}
  Recall that $V$ is a matrix space with mean zero rows, the above identity implies~\eqref{eq:innerPHI} if $R_0\in \hat{\mathcal{B}}_{P_0}$.  On the
  other hand, if $R_0\notin \hat{\mathcal{B}}_{P_0}$ and $R_0\cap S \neq \emptyset$ for some $S\in \hat{\mathcal{B}}_{P_0}$, then either
  $R_0\subsetneq S$ or $S\subsetneq R_0$.  In the former case, we exploit that $\varphi_{R_0}^k$ has mean zero, and in the latter case, we refer
  to~\eqref{eq:56} and~\eqref{eq:57} and notice that $\tilde{\Phi}^{P}_{P_0}$ has mean zero on each $R\in \mathcal{S}(R_0)$.

  By~\eqref{eq:charainmarbas} and~\eqref{eq:innerPHI}, we obtain
  \begin{equation}\label{eq:214}
    \langle \cha_{\hat{\mathcal{B}}_{P}^*}, \varphi^{k}_{R_0} \rangle
    = \sum_{\substack{Q\in \mathcal{AF}\\P\subseteq Q\neq [0,1) }} \frac{|P|}{|\pi(Q)|}\langle \tilde{\Phi}^{Q}_{\pi(Q)}, \varphi^{k}_{R_0} \rangle
    = 0
  \end{equation}
  for all $R_0 \in \mathcal{AF}\setminus \Bigl(\bigcup_{j < N} \bigcup_{P\in \mathcal{AF}_j} \hat{\mathcal{B}}_P\Bigr)$ and $1\leq k\leq r$, and thus
  \begin{equation}\label{eq:condoutBP}
    \cond(\varphi^k_{R_0}|\mathcal{G})
    = 0.
  \end{equation}

  Let $P_0\in \bigcup_{j< N} \mathcal{AF}_j$, $P\in\mathcal{S}(P_0)$ and $R\in \hat{\mathcal{B}}_{P_0}$ and suppose that $Q\in \mathcal{AF}_N$ is such
  that $Q\subset P$.  By~\eqref{eq:charainmarbas}, exploiting that $\varphi^{k}_{R}$ has mean zero, and~\eqref{eq:innerPHI} the following identity
  holds
  \begin{equation}\label{eq:testoncharBP}
    \langle \cha_{\hat{\mathcal{B}}_Q^*}, \varphi^{k}_{R} \rangle
    = \sum_{\substack{S\in \mathcal{AF} \\ Q\subseteq S\neq [0,1) }} \frac{|Q|}{|\pi(S)|}\langle \tilde{\Phi}^{S}_{\pi(S)}, \varphi^{k}_{R} \rangle
    = \frac{|Q|}{|P_0|} v_{\mathrm{ord}(P)}^k|R|.
  \end{equation}
  Recall that $\mathcal{G}$ was defined as the $\sigma$-algebra generated by the sets $\hat{\mathcal{B}}_P^*$, $P\in \mathcal{AF}_{N}$.
  Using~\eqref{eq:testoncharBP} and~\eqref{eq:128} in \Cref{rem:C2} yields
  \begin{equation}\label{eq:conintBP}
    \begin{aligned}
      \cond(\varphi_{R}^k|\mathcal{G})
      &= \sum_{\substack{Q\in\mathcal{AF}_N\\Q\subset P_0}} \frac{\langle \cha_{\hat{\mathcal{B}}_{Q}^*}, \varphi^{k}_{R} \rangle}{|Q|} \cha_{\hat{\mathcal{B}}_{Q}^*}  
      = \sum_{P\in \mathcal{S}(P_0)} \sum_{\substack{Q\in\mathcal{AF}_N\\Q\subset P}} \frac{\langle \cha_{\hat{\mathcal{B}}_{Q}^*}, \varphi^{k}_R \rangle}{|Q|} \cha_{\hat{\mathcal{B}}_{Q}^*}\\
      &= \sum_{P\in \mathcal{S}(P_0)} \sum_{\substack{Q\in\mathcal{AF}_N\\Q\subset P}} \frac{|R|}{|P_0|} v_{\mathrm{ord}(P)}^k \cha_{\hat{\mathcal{B}}_{Q}^*}.
    \end{aligned}
  \end{equation}
  Combining the above identity with~\eqref{eq:139} and~\eqref{eq:136} gives us
  \begin{equation}\label{eq:140}
    \cond(\varphi_{R}^k|\mathcal{G})
    = \sum_{P\in \mathcal{S}(P_0)} \frac{|R|}{|P_0|} v_{\mathrm{ord}(P)}^k \cha_{\hat{\mathcal{B}}_{P}^*}
    = \sum_{P\in \mathcal{S}(P_0)} \frac{|R|}{|P_0|} v_{\mathrm{ord}(P)}^k \cha_{\Gamma_{P_0}^P}.
  \end{equation}
  Note that by~\eqref{eq:135} and \textrefp[C]{enu:dfn:compatibility:i} we have
  \begin{equation}\label{eq:141}
    \cha_{\Gamma_{P_0}^P}
    = \sum_{S_0\in \hat{\mathcal{B}}_{P_0}} \sum_{\substack{S\in \mathcal{S}(S_0)\\\ord(S) = \ord(P)}} \cha_S,
  \end{equation}
  and thus
  \begin{align}
    \cond(\varphi_{R}^k|\mathcal{G})
    &= \frac{|R|}{|P_0|} \sum_{P\in \mathcal{S}(P_0)} \sum_{S_0\in \hat{\mathcal{B}}_{P_0}} \sum_{\substack{S\in \mathcal{S}(S_0)\\\ord(S) = \ord(P)}} v_{\mathrm{ord}(S)}^k \cha_S\\
    &= \frac{|R|}{|P_0|} \sum_{S_0\in \hat{\mathcal{B}}_{P_0}} \sum_{P\in \mathcal{S}(P_0)} \sum_{\substack{S\in \mathcal{S}(S_0)\\\ord(S) = \ord(P)}} v_{\mathrm{ord}(S)}^k \cha_S\\
    &= \frac{|R|}{|P_0|} \sum_{S_0\in \hat{\mathcal{B}}_{P_0}} \sum_{S\in \mathcal{S}(S_0)} v_{\mathrm{ord}(S)}^k \cha_S.
  \end{align}
  We reassemble the function $\varphi_{S_0}^k$ (see~\eqref{eq:119}) and then $\hat{\psi}^k_{P_0}$ (see~\eqref{eq:17}) and note that
  \begin{equation}\label{eq:217}
    \cond(\varphi_{R}^k|\mathcal{G})
    = \frac{|R|}{|P_0|}  \sum_{S_0\in \hat{\mathcal{B}}_{P_0}} \varphi_{S_0}^k
    = \frac{|R|}{|P_0|} \hat{\psi}^k_{P_0}.
  \end{equation}
  The above identity~\eqref{eq:217}, together with the orthogonality of the system $(\varphi_P^k : P\in \mathcal{A},\ 1\leq k\leq r)$
  and~\eqref{eq:221} yields
  \begin{equation}\label{eq:142}
    \cond(\varphi_{R}^k|\mathcal{G})
    = \frac{\langle\varphi_R^k, \varphi_R^k\rangle}{|P_0|}\hat{\psi}^k_{P_0}
    = \frac{\langle\varphi_R^k, \hat{\psi}^k_{P_0}\rangle}{|P_0|}\hat{\psi}^k_{P_0},
  \end{equation}
  for all $R\in \hat{\mathcal{B}}_{P_0}$, $1\leq k\leq r$.  Combining~\eqref{eq:142} with~\eqref{eq:condoutBP} gives us
  \begin{equation}\label{eq:143}
    \cond(\varphi_{R}^k|\mathcal{G})
    = \hat{B}\hat{A} \varphi_{R}^k,
    \qquad R\in \hat{\mathcal{B}}_{P_0},\ 1\leq k\leq r.
  \end{equation}
  Since $P_0\in \bigcup_{j< N} \mathcal{AF}_j$ was arbitrary and by our choice of $N$, the above identity yields
  $\cond(f | \mathcal{G}) = \hat{B}\hat{A} f$.  We conclude the proof by appealing to \Cref{rem:conditional} and~\eqref{eq:Bisiso}
  \begin{equation*}
    \|\hat{A}f\|_{Z_V}
    \leq \gamma_X^u(|\hat{\mathcal{B}}_{[0,1)}^{*}|)\cdot \|\hat{B}\hat{A} f\|_{Z_V}
    = \gamma_X^u(|\hat{\mathcal{B}}_{[0,1)}^{*}|)\cdot \|E(f|\mathcal{G})\|_{Z_V}
    \leq \gamma_X^u(|\hat{\mathcal{B}}_{[0,1)}^{*}|)\cdot \|f\|_{Z_V}.
  \end{equation*}
\end{proofcase}
\end{myproof}

\begin{thm}\label{thm:canonicalkp}
  Let $U\subset [0,1)$ be a set of positive measure, and suppose that $(\mathcal{B}_P : P\in \mathcal{A})$ is a $(1 - \kappa_P)$-faithful sequence
  mounted on $U$. Then $A, B\colon Z_V\to Z_V$ are well defined bounded operators and we have
  \begin{equation}\label{eq:215}
    AB
    = \Id_{Z_V},
    \qquad
    \|A\|
    \leq \Bigl(1+ \frac{2\sum_{P\in\mathcal{A}}\kappa_P}{1-\kappa_{[0,1)}} \Bigr)\cdot \gamma_X^u(|\mathcal{B}_{[0,1)}^{*}|),
    \qquad
    \|B\|
    \leq \gamma_X^d(|\mathcal{B}_{[0,1)}^{*}|),
  \end{equation}
  where $\gamma_X^u$ and $\gamma_X^d$ are defined in \Cref{dfn:scaling}.
\end{thm}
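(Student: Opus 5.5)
We will compare $A,B$ with the canonical factorization operators $\hat{A},\hat{B}$ of a faithful extension $(\hat{\mathcal{B}}_P : P\in\mathcal{A})$ of $(\mathcal{B}_P)$, as in \Cref{rem:faithful-extension}, mounted on $\mathcal{B}_{[0,1)}^{*}$. \Cref{thm:canonical} then gives $\hat{A}\hat{B}=\Id_{Z_V}$, $\|\hat{A}\|\le \gamma_X^u(|\mathcal{B}_{[0,1)}^{*}|)$ and $\|\hat{B}\|\le\gamma_X^d(|\mathcal{B}_{[0,1)}^{*}|)$.

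\textbf{Step 1: the operator $B$.} For $B$ we rerun the pointwise argument from the $\hat{B}$-case of \Cref{thm:canonical}. The collections are still pairwise disjoint and nested by \textrefp[C]{enu:dfn:compatibility:i}--\textrefp[C]{enu:dfn:compatibility:ii}. So for each $x$ the function $Bf(x)$ picks at most one atom $Q_j(x)\in\mathcal{B}_{P_j(x)}$ per level $j$, and the random-sign distribution argument goes through unchanged. We have $\psi_P^k=\cha_{\mathcal{B}_P^*}\hat{\psi}_P^k$, since $\mathcal{B}_P\subset\hat{\mathcal{B}}_P$ and the collections are disjoint. Hence $|Bf|$ is dominated pointwise by the integrand $F$ associated with $\hat{B}f$, after truncating the martingale sum at the first level where $x$ leaves $\bigcup\mathcal{B}_P^*$. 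Truncating a square-function/Rademacher sum at a stopping time is a contraction by Jensen, in the spirit of \Cref{rem:conditional}. Together with the lattice property of $X$ this gives $\|B\|\le\gamma_X^d(|\mathcal{B}_{[0,1)}^*|)$. If this domination is too delicate, the fallback is a direct distributional comparison of $F$ with $G$ as in \eqref{eq:59}, using the upper density bound in \eqref{eq:33}.

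\textbf{Step 2: the identity $AB=\Id_{Z_V}$.} Compute $AB\varphi_P^k=\frac{1}{|P|}A\psi_P^k$. By orthogonality, $\langle\psi_Q^j,\psi_P^k\rangle=0$ unless $(Q,j)=(P,k)$, and $\langle\psi_P^k,\psi_P^k\rangle=\sum_{R\in\mathcal{B}_P}\|\varphi_R^k\|_{L^2}^2$. Since $\|\varphi_R^k\|_{L^2}^2=|R|$ for an orthonormal basis, this sum equals $|\mathcal{B}_P^*|$. Hence $A\psi_P^k=\frac{|\mathcal{B}_P^*|}{|\mathcal{B}_P^*|}\cdot|P|\,\varphi_P^k/|P|$, i.e.\ $AB\varphi_P^k=\varphi_P^k$. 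We will double-check the normalization in \eqref{eq:221} against the faithful case.

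\textbf{Step 3: the operator $A$ (main obstacle).} For $f$ with finitely many nonzero coefficients, write
\[
Af=\sum_{P,k}\frac{\langle\psi_P^k,f\rangle}{|\mathcal{B}_P^*|}\varphi_P^k,
\qquad
\hat{A}f=\sum_{P,k}\frac{\langle\hat{\psi}_P^k,f\rangle}{|\hat{\mathcal{B}}_P^*|}\varphi_P^k .
\]
We first reduce to $f=\cond(f\,|\,\mathcal{G}_N)$ for the $\sigma$-algebra of \Cref{lem:faithful-extension}. This costs nothing, because $A$ only tests against $\varphi_R^k$ with $R\in\mathcal{B}_P$, and those are $\mathcal{G}_N$-measurable, while \Cref{rem:conditional} controls the projection. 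By \Cref{lem:faithful-extension}, every $R\in\hat{\mathcal{B}}_P\setminus\mathcal{B}_P$ lies inside an atom of $\mathcal{G}_N$, so $\langle\varphi_R^k,f\rangle=0$. Therefore $\langle\psi_P^k,f\rangle=\langle\hat{\psi}_P^k,f\rangle$, and
\[
Af-\hat{A}f=\sum_{P,k}\Bigl(\frac{|\hat{\mathcal{B}}_P^*|}{|\mathcal{B}_P^*|}-1\Bigr)\frac{\langle\hat{\psi}_P^k,f\rangle}{|\hat{\mathcal{B}}_P^*|}\varphi_P^k .
\]
By \eqref{eq:34}, $|\mathcal{B}_P^*|\ge(1-\kappa_P)|P|\,|\mathcal{B}_{[0,1)}^*|$ and $|\hat{\mathcal{B}}_P^*|=|P|\,|\mathcal{B}_{[0,1)}^*|$. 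So each multiplier is at most $\kappa_P/(1-\kappa_P)\le \kappa_P/(1-\kappa_{[0,1)})$. Here we use $\kappa_P\le\kappa_{[0,1)}$, which can be arranged from \eqref{eq:51}; otherwise we carry the factor $\sup_P(1-\kappa_P)^{-1}$.

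Each single term $\frac{\langle\hat{\psi}_P^k,f\rangle}{|\hat{\mathcal{B}}_P^*|}\varphi_P^k$ is a rank-one coordinate of $\hat{A}f$. It is bounded in norm by $2\lambda$-type constants times $\|\hat{A}f\|$, as in \Cref{lem:schauder}. The desired bound needs the constant $2$, not $2\lambda_{Z_V}$, so we estimate it instead through $\Theta_P$ and conditional expectations. Concretely, the coordinate over all $k$ equals $\cond(\hat{A}f\,|\,\mathcal{F}_{j+1})\cha_P-\cond(\hat{A}f\,|\,\mathcal{F}_j)\cha_P$, whose norm is at most $2\|\hat{A}f\|$ by \Cref{rem:conditional}. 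Summing over $P$ yields
\[
\|Af\|\le\Bigl(1+\frac{2\sum_P\kappa_P}{1-\kappa_{[0,1)}}\Bigr)\|\hat{A}f\| .
\]
Combined with \Cref{thm:canonical}, this gives \eqref{eq:215}. The delicate point, which we expect to be the main obstacle, is grouping the $k$-coordinates so that the factor $2$ is honest.
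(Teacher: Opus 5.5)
Your reduction in Step 3 is the right one and matches the paper's. With $g=\cond(f\,|\,\mathcal{G}_N)$ you get $Af=Ag$ and $Ag-\hat{A}g=D\hat{A}g$ for the scalar diagonal operator $D=(d_P)$, $d_P=|\hat{\mathcal{B}}_P^*|/|\mathcal{B}_P^*|-1$. The gap is in how you estimate $D$.

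\textbf{The bound on $d_P$ is wrong.} The inequality $|\mathcal{B}_P^*|\ge(1-\kappa_P)|P|\,|\mathcal{B}_{[0,1)}^*|$ is not what \eqref{eq:34} gives. In \textrefp[C]{enu:dfn:compatibility:iii} the loss parameter is indexed by the \emph{larger} atom, so comparing $P$ with $[0,1)$ yields only
\begin{equation*}
0\le d_P\le\kappa_{[0,1)}/(1-\kappa_{[0,1)}),
\end{equation*}
and the $d_P$ do not decay along branches. For instance, suppose mass is lost only in passing from $[0,1)$ to its successors. Then one may take $\kappa_P=0$ for all $P\neq[0,1)$, yet $d_P=\kappa_{[0,1)}/(1-\kappa_{[0,1)})$ for every $P\neq[0,1)$. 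Your atom-by-atom triangle inequality costs $2|d_P|\,\|\hat{A}f\|$ per atom, so it gives a bound of order $\sum_P|d_P|$. In this example that sum is infinite; for $f$ living up to level $N$ it grows like $\#\mathcal{AF}_N$, not like $\sum_P\kappa_P$.

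\textbf{What the $\kappa$'s actually control.} They control the increments, not the values. From $|\hat{\mathcal{B}}_{\pi(P)}^*|=q|\hat{\mathcal{B}}_P^*|$ and \eqref{eq:34} for the pair $(\pi(P),P)$ you get $|d_{\pi(P)}-d_P|\le\kappa_{\pi(P)}/(1-\kappa_{[0,1)})$, and $d_{[0,1)}=0$.

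\textbf{The missing ingredient} is the branch variation norm, \Cref{pro:variation-norm} for the trivial partition. Write $d_P=-\sum_{P\subset R\subsetneq[0,1)}(d_{\pi(R)}-d_R)$ and treat all atoms of one level at once:
\begin{equation*}
\Bigl\|\sum_{R\in\mathcal{AF}_j}(d_{\pi(R)}-d_R)\,\Theta_R h\Bigr\|_{Z_V}\le\max_{R\in\mathcal{AF}_j}|d_{\pi(R)}-d_R|\cdot\|h-\cond(h\,|\,\mathcal{F}_j)\|_{Z_V}.
\end{equation*}
Each level then costs a maximum instead of a sum, which gives $\|D\|\le 2\sum_P\kappa_P/(1-\kappa_{[0,1)})$. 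The honest factor $2$ comes from this level-wise grouping together with \Cref{rem:conditional}. Grouping the $k$-coordinates was never the obstacle.

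\textbf{Two smaller points.}

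In Step 1, the pointwise domination fails for Janson--Sobolev spaces ($\sigma_j\equiv1$). There the integrand is $\|h(x)\|_{\ell^2_l}$, and a truncated vector-valued sum can exceed the full sum. The fallback via \eqref{eq:33} also fails: without faithfulness, the integrands of $Bf$ and $f$ are no longer equimeasurable up to scaling. What is true is a norm contraction. By \Cref{lem:faithful-extension}, $\cond(\hat{\psi}_P^k\,|\,\mathcal{G}_N)=\psi_P^k$ for $P$ of level at most $N$. Hence $Bf=\cond(\hat{B}f\,|\,\mathcal{G}_N)$, and \Cref{rem:conditional} gives $\|B\|\le\|\hat{B}\|$. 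This is the paper's argument, and it uses the same lemma as your Step 3.

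In Step 2, there is a stray factor. From $\langle\varphi_P^k,\varphi_P^k\rangle=|P|$ you get $B\varphi_P^k=\psi_P^k$, with no extra $1/|P|$. From $\langle\psi_P^k,\psi_P^k\rangle=|\mathcal{B}_P^*|$ you get $A\psi_P^k=\varphi_P^k$. So your conclusion $AB=\Id_{Z_V}$ stands.
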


\begin{myproof}
  Let $(\hat{\mathcal{B}}_P : P\in \mathcal{A})$ denote the faithful extension of $(\mathcal{B}_P : P\in \mathcal{A})$ (we refer to
  \Cref{rem:faithful-extension}), and recall that the sequences of functions $(\psi_P^k : P\in \mathcal{A},\ 1\leq k\leq r)$ and
  $(\hat{\psi}_P^k : P\in \mathcal{A},\ 1\leq k\leq r)$ are given by
  \begin{equation}
    \psi_P^k
    = \sum_{R\in \mathcal{B}_P} \varphi_R^k
    \qquad\text{and}\qquad
    \hat{\psi}_P^k
    = \sum_{R\in \hat{\mathcal{B}}_P} \varphi_R^k,
    \qquad P\in \mathcal{A},\ 1\leq k\leq r,
  \end{equation}
  and the associated canonical factorization operators $A$, $B$ and $\hat{A}$, $\hat{B}$ are defined according to~\eqref{eq:221}.

  Given $N\in \mathbb{N}_0$, \Cref{lem:faithful-extension} shows that the $\sigma$-algebra $\mathcal{G}_N$ generated by
  \begin{equation}\label{eq:208}
    \mathcal{C}_N
    := \bigcup_{j=0}^N \bigcup_{P\in\mathcal{AF}_j} \bigcup_{R\in \mathcal{B}_{P}} \mathcal{S}(R).
  \end{equation}
  has the property that for any $R\in \bigcup_{j=0}^{N} \bigcup_{P\in \mathcal{AF}_j} \hat{\mathcal{B}}_P\setminus \mathcal{B}_P$ there exists an atom
  $A$ of $\mathcal{G}_N$ such that $R\subset A$, and consequently
  \begin{equation}\label{eq:146}
    \cond(\varphi_R^k | \mathcal{G}_N)
    = 0.
  \end{equation}
  Moreover, note that for all $R'\in \bigcup_{j=0}^N \bigcup_{P\in \mathcal{AF}_j} \mathcal{B}_P$ the functions $\varphi_{R'}^k$, $1\leq k\leq r$ are
  all $\sigma(\mathcal{S}(R'))$-measurable, and hence
  \begin{equation}\label{eq:145}
    \cond(\hat{\psi}_P^k | \mathcal{G}_N)
    = \psi_P^k,
    \qquad P\in \bigcup_{j=0}^N \mathcal{AF}_j,\ 1\leq k\leq r.
  \end{equation}

  Consequently, we obtain
  \begin{equation}
    Bf
    = \cond(\hat{B}f|\mathcal{G}_N),
    \qquad f = \sum_{j=0}^N \sum_{P\in \mathcal{AF}_j} \sum_{k=1}^r a_P^k \varphi_P^k.
  \end{equation}
  Note that by \Cref{rem:faithful-extension,rem:faithful-mount}, we have $\mathcal{B}_{[0,1)}^{*} = \hat{\mathcal{B}}_{[0,1)}^{*}$.  Since conditional
  expectations are contractions on $Z_V$ (see \Cref{rem:conditional}) and using \Cref{thm:canonical}, we obtain
  \begin{equation}
    \|B\|
    \leq \|\hat{B}\|
    \leq \gamma_X^d(|\hat{\mathcal{B}}_{[0,1)}^{*}|)
    = \gamma_X^d(|\mathcal{B}_{[0,1)}^{*}|).
  \end{equation}

  We will now estimate $A$.  First, we observe that by~\eqref{eq:145}, we obtain
  \begin{equation}\label{eq:149}
    \langle \psi_P^k, f \rangle
    = \langle \cond(\hat{\psi}_P^k | \mathcal{G}_N), f \rangle
    = \langle \hat{\psi}_P^k, \cond(f | \mathcal{G}_N) \rangle
  \end{equation}
  for all $f\in Z_V$ and $P\in \bigcup_{j=0}^{N} \mathcal{AF}_j$, and thus
  \begin{equation}\label{eq:120}
    \cond\bigl( \hat{A} \cond(f | \mathcal{G}_N) \big| \mathcal{F}_N \bigr)
    = \sum_{j=0}^{N} \sum_{P\in \mathcal{AF}_j} \sum_{k=1}^{r} \frac{\langle \psi_P^k, f \rangle}{|\hat{\mathcal{B}}_P^{*}|} \varphi_P^k.
  \end{equation}
  Second, putting $d_P = \frac{|\hat{\mathcal{B}}^*_P|}{|\mathcal{B}_P^*|} - 1$ for all $P\in \bigcup_{j=0}^{N} \mathcal{AF}_j$ and $d_P = 0$ otherwise and
  $D = (d_P : P\in \mathcal{A})$, and appealing to~\eqref{eq:145} gives us
  \begin{align*}
    \cond\bigl( Af -  \hat{A} \cond(f | \mathcal{G}_N) \big| \mathcal{F}_N \bigr)
    &= \sum_{j=0}^{N} \sum_{P\in \mathcal{AF}_j} \sum_{k=1}^{r}
      \Bigl(\frac{\langle \psi_P^k, f \rangle}{|\mathcal{B}_P^{*}|} - \frac{\langle \psi_P^k, f \rangle}{|\hat{\mathcal{B}}_P^{*}|}\Bigr) \varphi_P^k\\
    &= \sum_{P\in \mathcal{A}} \sum_{k=1}^{r} d_P \frac{\langle \psi_P^k, f \rangle}{|\hat{\mathcal{B}}_P^{*}|} \varphi_P^k
      = \sum_{P\in \mathcal{A}} \sum_{k=1}^{r} d_P \frac{\langle \cond(\hat{\psi}_P^k | \mathcal{G}_N), f \rangle}{|\hat{\mathcal{B}}_P^{*}|} \varphi_P^k\\
    &= D \hat{A} \cond(f | \mathcal{G}_N).
  \end{align*}
  Third, using \Cref{rem:block-basis} gives us
  \begin{align*}\label{eq:150}
    |d_{\pi(P)} - d_P|
    &= \biggl| \frac{|\hat{\mathcal{B}}_{\pi(P)}^*|}{|\mathcal{B}_{\pi(P)}^*|} - \frac{|\hat{\mathcal{B}}_P^*|}{|\mathcal{B}_P^*|} \biggr|
      = \frac{|\hat{\mathcal{B}}_{P}^*|}{|\mathcal{B}_{P}^*|}\cdot \biggl| \frac{|\pi(P)|}{|\mathcal{B}_{\pi(P)}^*| } \cdot \frac{|\mathcal{B}_{P}^*|}{ |P|} - 1 \biggr|\\
    &\le \frac{|\hat{\mathcal{B}}_{P}^*|}{|\mathcal{B}_{P}^*|}\cdot \kappa_{\pi(P)}=\frac{|\mathcal{B}_{[0,1)}^*|}{|\mathcal{B}_{P}^*|}\cdot \frac{|\hat{\mathcal{B}}_{P}^*|}{|\hat{\mathcal{B}}_{[0,1)}^*|}\cdot \kappa_{\pi(P)}\\
    &= \frac{|P|}{|\mathcal{B}_{P}^*|}   \cdot \frac{|\mathcal{B}_{[0,1)}^*|}{|[0,1)|}\cdot\kappa_{\pi(P)}    \le \frac{\kappa_{\pi(P)}}{1-\kappa_{[0,1)}}.
  \end{align*}
  Using \Cref{pro:variation-norm} on $D = (d_P : P\in \mathcal{A})$ and observing that $d_{[0,1)} = 0$ yields the following estimate
  \begin{align*}
    \| Af - \hat{A} \cond(f|\mathcal{G}_N) \|_{Z_V}	
    &\le \|D\|\cdot \|\hat{A}\|\cdot \|\cond(f|\mathcal{G}_N)\|_{Z_V} \leq \|D\| \cdot \|\hat{A}\|\cdot \|f\|_{Z_V}\\
    &\leq \Bigl(\sum_{P\in\mathcal{A}}\frac{2\kappa_{P}}{1-\kappa_{[0,1)}}\Bigr)\|\hat{A}\|\|f\|. 
  \end{align*}
  By \Cref{thm:canonical} and using that $\mathcal{B}_{[0,1)}^{*} = \hat{\mathcal{B}}_{[0,1)}^{*}$ again, we get
  \begin{align*}
    \|Af\|_{Z_V}
    &\leq \Bigl(1+ \sum_{P\in\mathcal{A}} \frac{2\kappa_{P}}{1-\kappa_{[0,1)}}\Bigr) \|\hat{A}\|\\
    &\le \Bigl(1+ \sum_{P\in\mathcal{A}} \frac{2\kappa_{P}}{1-\kappa_{[0,1)}}\Bigr)\cdot \gamma_X^u(|\mathcal{B}_{[0,1)}^{*}|).
      \qedhere
  \end{align*}
\end{myproof}

\begin{lem}\label{lem:functional-stability}
  Let $Z_V = Z_V((\mathcal{F}_j),X,\boldsymbol{\sigma};\mathbb{R}^l)$ denote a Janson-Sobolev space or a Janson-Sobolev-Hardy space, and let
  $\mathcal{B}$ denote an orthonormal basis of $V$.  Suppose that $D = (d_P^k : P\in \mathcal{A},\ 1\leq k\leq r)\colon Z_V\to Z_V$ is a bounded diagonal operator with respect to
  $(\varphi_P^{k,\mathcal{B}} : P\in \mathcal{A},\ 1\leq k\leq r)$, and let $\mathcal{U}$ denote any non-principal ultrafilter on $\mathbb{N}_0$.  Then there exists an increasing sequence
  $(\tilde{n}_i)_{i=0}^{\infty}$ in $\mathbb{N}_0$ such that for any subsequence $(n_i)_{i=0}^{\infty}$ of $(\tilde{n}_i)_{i=0}^{\infty}$, for all
  $0\leq \kappa_P < 1$, $P\in \mathcal{A}$ with $0 \leq \kappa_{[0,1)} \leq \kappa< \tfrac{1}{2}$ and any $(1 - \kappa_P)$-faithful system
  $(\mathcal{B}_P : P\in \mathcal{A})$ with respect to $(n_i)_{i=0}^{\infty}$ mounted on a measurable set $U$ with $|U| \geq 1 - \kappa$, we have
  \begin{equation}\label{eq:67}
    |\lambda_{\mathcal{U}}^k(D) - \lambda_{\mathcal{U}}^k(ADB)|
    \leq \frac{6\kappa}{1-2\kappa} \|D\|,
    \qquad 1\leq k\leq r,
  \end{equation}
  where $A$, $B$ are the canonical factorization operators with respect to $(\mathcal{B}_P : P\in \mathcal{A})$ given by~\eqref{eq:221}.
\end{lem}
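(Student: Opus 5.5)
First I will compute the diagonal entries of $ADB$. By \eqref{eq:221}, $B\varphi_P^k=\psi_P^k=\sum_{R\in\mathcal{B}_P}\varphi_R^k$. Then $DB\varphi_P^k=\sum_{R\in\mathcal{B}_P}d_R^k\varphi_R^k$, and orthogonality gives
\begin{equation*}
  ADB\varphi_P^k=\frac{1}{|\mathcal{B}_P^{*}|}\sum_{R\in\mathcal{B}_P} d_R^k\,\|\varphi_R^k\|_{L^2}^2\,\varphi_P^k .
\end{equation*}
Each $v^k$ is normalized for \eqref{eq:defscalar}, so $\|\varphi_R^k\|_{L^2}^2=|R|$. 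Hence $ADB$ is diagonal with entries
\begin{equation*}
  \tilde d_P^k=\frac{1}{|\mathcal{B}_P^{*}|}\sum_{R\in\mathcal{B}_P}|R|\,d_R^k,
\end{equation*}
the $\mathcal{B}_P$-weighted average of $D$. Therefore $\int A_i^{ADB,k}=\sum_{P\in\mathcal{AF}_i}|P|\,\tilde d_P^k$. Since $\mathcal{B}_P\subset\mathcal{AF}_{n_i}$ by \textrefp[C]{enu:dfn:compatibility:iv}, this must be compared with $\int A_{n_i}^{D,k}=\sum_{R\in\mathcal{AF}_{n_i}}|R|\,d_R^k$.

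Next I choose $(\tilde n_i)$ by \Cref{lem:functionals-classical-limit}, so that $\int A_{\tilde n_i}^{D,k}\to\lambda_{\mathcal{U}}^k(D)$ for all $k$. The same then holds along any subsequence $(n_i)$. The difficulty is that $\lambda_{\mathcal{U}}^k(ADB)$ is an ultrafilter limit over $i$, not over $n_i$. I will handle this by showing the pointwise-in-$i$ estimate
\begin{equation*}
  \Bigl|\int A_i^{ADB,k}-\int A_{n_i}^{D,k}\Bigr|\le \frac{6\kappa}{1-2\kappa}\|D\| + o(1).
\end{equation*}
Since the right-hand side converges, the ultrafilter limit of $\int A_i^{ADB,k}$ is within $\frac{6\kappa}{1-2\kappa}\|D\|$ of $\lambda_{\mathcal{U}}^k(D)$. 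Here I use that $\lim_{\mathcal{U}}$ of a sequence $a_i$ with $|a_i-b_i|\le c$ and $b_i\to b$ satisfies $|\lim_{\mathcal{U}}a_i-b|\le c$.

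For this comparison I fix $i$ and write $\mathcal{B}_i:=\bigcup_{P\in\mathcal{AF}_i}\mathcal{B}_P$, whose union is contained in $U$ and pairwise disjoint. I split
\begin{equation*}
  \sum_{R\in\mathcal{AF}_{n_i}}|R|\,d_R^k = \sum_{P\in\mathcal{AF}_i}\sum_{R\in\mathcal{B}_P}|R|\,d_R^k + \sum_{R\notin\mathcal{B}_i}|R|\,d_R^k .
\end{equation*}
The second sum is at most $\|D\|\,(1-|\mathcal{B}_i^{*}|)$ by \eqref{eq:74}. For the first sum, each $P$-block equals $|\mathcal{B}_P^{*}|\,\tilde d_P^k$, so it differs from $|P|\,\tilde d_P^k$ by at most $\|D\|\,\bigl||\mathcal{B}_P^{*}|-|P|\bigr|$. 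Everything therefore reduces to measure estimates: $|\mathcal{B}_i^{*}|$ must be close to $1$, and $|\mathcal{B}_P^{*}|$ close to $|P|$, with total deviation of order $\kappa$.

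This measure step is the main obstacle. By \eqref{eq:34} with $P=[0,1)$ and the first condition in \textrefp[C]{enu:dfn:compatibility:v},
\begin{equation*}
  (1-\kappa)|P|\,|\mathcal{B}_{[0,1)}^{*}| \le |\mathcal{B}_P^{*}| \le |P|\,|\mathcal{B}_{[0,1)}^{*}|,
  \qquad |\mathcal{B}_{[0,1)}^{*}|\ge \frac{|U|}{1+\kappa}\ge\frac{1-\kappa}{1+\kappa}.
\end{equation*}
Summing over $P\in\mathcal{AF}_i$ gives $\sum_P\bigl||\mathcal{B}_P^{*}|-|P|\bigr|\le 1-(1-\kappa)|\mathcal{B}^{*}_{[0,1)}|$, which is $O(\kappa)$, and likewise for $1-|\mathcal{B}_i^{*}|$. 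I expect the loose bound $\frac{6\kappa}{1-2\kappa}$ to absorb these with room to spare, using $\frac{1-\kappa}{1+\kappa}\ge 1-2\kappa$ and $\kappa<\tfrac12$. If the constants do not close, I would normalize by $|\mathcal{B}_{[0,1)}^{*}|$ and track a factor $(1-2\kappa)^{-1}$ through the estimate. In this plan the $o(1)$ term comes only from replacing $\int A_{n_i}^{D,k}$ by its limit.
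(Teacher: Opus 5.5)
Your proposal is correct and follows essentially the same route as the paper: choose $(\tilde n_i)$ via \Cref{lem:functionals-classical-limit}, identify the entries of $ADB$ as $\mathcal{B}_P$-weighted averages of $D$, split $\int A_{n_i}^{D,k}$ into the part off $\mathcal{B}_i^{*}$ and the block part, and control everything through \eqref{eq:34} and \textrefp[C]{enu:dfn:compatibility:v}; your total-mass bound $\sum_{P\in\mathcal{AF}_i}\bigl||\mathcal{B}_P^{*}|-|P|\bigr| = 1-|\mathcal{B}_i^{*}|$ for the block term, in place of the paper's $\sup_P\bigl|1-|P|/|\mathcal{B}_P^{*}|\bigr|$, closes the arithmetic and even gives the sharper constant $\frac{6\kappa}{1+\kappa}$. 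One harmless slip: $\mathcal{B}_i^{*}$ need not be contained in $U$, since \textrefp[C]{enu:dfn:compatibility:v} only gives $|R\cap U|\geq(1-\kappa_P)|R|$, but your argument never uses this.
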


\begin{proof}
  By \Cref{lem:functionals-classical-limit}, we can find a sequence $(\tilde{n}_i)_{i=0}^{\infty}$ such that
  \begin{equation}\label{eq:170}
    \lambda_{\mathcal{U}}^k(D)
    = \lim_{i\to \infty} \sum_{P\in \mathcal{AF}_{\tilde{n}_i}} |P| d_P^k,
    \qquad 1\leq k\leq r.
  \end{equation}
  Let $(n_i)_{i=0}^{\infty}$, $(\kappa_P)$, $(\mathcal{B}_P : P\in \mathcal{A})$ and $U$ be as specified by our hypothesis.

  Now we put $\widetilde{D} = (\widetilde{d}_P^k : P\in \mathcal{A},\ 1\leq k\leq r) = ADB$, where $A$, $B$ are given by~\eqref{eq:221}, and note that
  \begin{equation}\label{eq:171}
    \widetilde{d}_P^k
    = \frac{\langle \psi_P^k, D \psi_P^k \rangle}{|\mathcal{B}_P^{*}|}
    = \sum_{R\in \mathcal{B}_P} \frac{|R|}{|\mathcal{B}_P^{*}|} d_R^k.
  \end{equation}
  Recall that the ultrafilter limit agrees with the classical limit if it exists, thus~\eqref{eq:170} and~\eqref{eq:171} yield
  \begin{equation}
    |\lambda_{\mathcal{U}}^k(D) - \lambda_{\mathcal{U}}^k(\widetilde{D})|
    = \lim_{i\to \mathcal{U}} \Bigl|
    \sum_{R\in \mathcal{AF}_{n_i}} |R| d_R^k
    - \sum_{P\in \mathcal{AF}_i} |P| \sum_{R\in \mathcal{B}_P} \frac{|R|}{|\mathcal{B}_P^{*}|} d_R^k
    \Bigr|.
  \end{equation}
  Now we put $\mathcal{B}_i = \bigcup_{P\in \mathcal{AF}_i} \mathcal{B}_P$, recall that $|d_R^k|\leq \|D\|$ and observe
  \begin{equation}\label{eq:178}
    \begin{aligned}
      |\lambda_{\mathcal{U}}^k(D) - \lambda_{\mathcal{U}}^k(\widetilde{D})|
      =& \lim_{i\to \mathcal{U}} \Bigl|
         \sum_{R\in \mathcal{AF}_{n_i}\setminus \mathcal{B}_i} |R| d_R^k
         + \sum_{P\in \mathcal{AF}_i} \sum_{R\in \mathcal{B}_P}
         \Bigl(|R| d_R^k - |P| \frac{|R|}{|\mathcal{B}_P^{*}|} d_R^k\Bigr)
         \Bigr|\\
      \leq& \lim_{i\to \mathcal{U}} \sum_{R\in \mathcal{AF}_{n_i}\setminus \mathcal{B}_i} |R|\cdot |d_R^k|
            + \sum_{P\in \mathcal{AF}_i} \sum_{R\in \mathcal{B}_P}
            |R|\cdot |d_R^k|\cdot \bigl| 1 -  \frac{|P|}{|\mathcal{B}_P^{*}|} \bigr|\\
      \leq& \|D\|\cdot \lim_{i\to \mathcal{U}} \Bigl(
            1 - |\mathcal{B}_i^{*}| + \sup_{P\in \mathcal{A}} \bigl| 1 -  \frac{|P|}{|\mathcal{B}_P^{*}|} \bigr|
            \Bigr).
    \end{aligned}
  \end{equation}
  Putting $P=[0,1)$ and $Q = P$ in~\eqref{eq:34} gives us
  \begin{equation}\label{eq:174}
    \bigl| 1 -  \frac{|P|}{|\mathcal{B}_P^{*}|} \bigr|
    \leq (1 - \kappa_{[0,1)})^{-1} |\mathcal{B}_{[0,1)}^{*}|^{-1} - 1,
  \end{equation}
  as well as
  \begin{equation}\label{eq:175}
    |\mathcal{B}_i^{*}|
    = \sum_{P\in \mathcal{AF}_i} |\mathcal{B}_P^{*}|
    \geq \sum_{P\in \mathcal{AF}_i} |P|\cdot (1 - \kappa_{[0,1)})\cdot |\mathcal{B}_{[0,1)}^{*}|
    = (1 - \kappa_{[0,1)})\cdot |\mathcal{B}_{[0,1)}^{*}|.
  \end{equation}
  Exploiting \textrefp[C]{enu:dfn:compatibility:v} yields
  \begin{equation}\label{eq:176}
    1 - \kappa
    \leq |U|
    = |U\cap \mathcal{B}_{[0,1)}^{*}| + |U\setminus \mathcal{B}_{[0,1)}^{*}|
    \leq |\mathcal{B}_{[0,1)}^{*}| + \kappa_{[0,1)}\cdot|\mathcal{B}_{[0,1)}^{*}|,
  \end{equation}
  and we record the resulting estimate
  \begin{equation}\label{eq:177}
    |\mathcal{B}_{[0,1)}^{*}|
    \geq \frac{1 - \kappa}{1 + \kappa_{[0,1)}}.
  \end{equation}
  Combining~\eqref{eq:175} with~\eqref{eq:177} and recalling that $\kappa_{[0,1)}\leq \kappa$ gives us
  \begin{equation}\label{eq:216}
    |\mathcal{B}_i^{*}|
    \geq \frac{(1 - \kappa_{[0,1)})(1 - \kappa)}{1 + \kappa_{[0,1)}}
    \geq \frac{(1 - \kappa)^2}{1 + \kappa}
    \geq \frac{1 - 2\kappa}{1 + \kappa}
    = 1 - \frac{3\kappa}{1 + \kappa},
  \end{equation}
  and inserting~\eqref{eq:177} into~\eqref{eq:174} yields
  \begin{equation}\label{eq:179}
    \begin{aligned}
      \bigl| 1 -  \frac{|P|}{|\mathcal{B}_P^{*}|} \bigr|
      &\leq \frac{1 + \kappa_{[0,1)}}{(1 - \kappa_{[0,1)})(1 - \kappa)} - 1
        \leq \frac{1 + \kappa_{[0,1)}}{1 - \kappa_{[0,1)} - \kappa} - 1\\
      &\leq \frac{3\kappa}{1 - \kappa_{[0,1)} - \kappa}
        \leq \frac{3\kappa}{1 - 2\kappa}.
    \end{aligned}
  \end{equation}

  To summarize, by combining~\eqref{eq:178}, \eqref{eq:216} and~\eqref{eq:179}, we obtain
  \begin{equation}
    |\lambda_{\mathcal{U}}^k(D) - \lambda_{\mathcal{U}}^k(\widetilde{D})|
    \leq \|D\|\cdot \Bigl(\frac{3\kappa}{1 + \kappa} + \frac{3\kappa}{1 - 2\kappa}\Bigr)
    \leq \|D\|\cdot \frac{6\kappa}{1 -2\kappa}
  \end{equation}
  as claimed.

\end{proof}

%

\section{Stabilization}
\label{sec:stabilization}

The stochastic stabilization method introduced in~\cite{MR4430957} and subsequently employed in~\cite{MR4816119,MR4894823,MR4884827} is based on
randomized Haar bases.  It selects a randomly chosen group action compatible with a standard Gamlen-Gaudet construction step in order to stabilize a
given diagonal operator.  More precisely, Haar functions are modulated by random signs at predetermined positions.  It appears that a similar approach
would be feasible in our setting, provided that sufficient degrees of freedom are available in a compatible group action on $V$, such as translation
invariance in~\cite[Definition~7.1]{MR4865002}.  However, this would impose substantial restrictions on the subspace $V$.  We therefore develop, in
this section, a novel stabilization procedure based on martingale methods, which applies to all Janson-Sobolev and Janson-Sobolev-Hardy spaces.

\subsection{Stabilization via martingale diagonal operators}
\label{sec:stab-via-mart}

Starting from a generic bounded diagonal operator, we use factorization to successively improve the convergence properties of the associated adapted
processes.  The first step produces a closed martingale, which in particular converges pointwise almost everywhere.  In a subsequent step we upgrade
this to uniform convergence everywhere with arbitrarily fast rate of convergence.

\begin{pro}\label{pro:stab-2}
  Let $D = (d_P^k : P\in \mathcal{A},\ 1\leq k\leq r)$ denote a bounded diagonal operator, and let $\mathcal{U}$ denote any non-principal ultrafilter on
  $\mathbb{N}_0$.  Then there exists another bounded diagonal operator $\widehat{D} = (\widehat{d}_P^k : P\in \mathcal{A},\ 1\leq k\leq r)$ which almost projectionally factors
  through $D$ with constant~$1$, and has the additional property that, for every $1\leq k\leq r$, the associated adapted process
  $(A_n^{\widehat{D},k})_{n=0}^{\infty}$ is a closed martingale. In particular, it converges almost everywhere and in $L^1$.  Moreover, we have
  \begin{equation}\label{eq:180}
    \lambda_{\mathcal{U}}^k(\widehat{D})
    = \lambda_{\mathcal{U}}^k(D),
    \qquad 1\leq k\leq r.
  \end{equation}
\end{pro}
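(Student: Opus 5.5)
The natural candidate is $\widehat{D}=\widetilde{D}$ from~\eqref{eq:4}, i.e.\ $\widehat{d}_P^k = M_n^{D,k}(x)$ for $x\in P\in\mathcal{AF}_n$. Its adapted processes are the closed martingales $M_n^{D,k}=\cond(M^{D,k}|\mathcal{F}_n)$, so they converge almost everywhere and in $L^1$. It remains to show two things: $\widetilde{D}$ almost projectionally factors through $D$ with constant $1$, and the functionals are preserved. The key mechanism is a faithful Gamlen-Gaudet system (\Cref{rem:block-basis:3}, with $U=[0,1)$, so $\kappa_P=0$ and $|\hat{\mathcal{B}}^*_{[0,1)}|=1$) with respect to a rapidly increasing subsequence of the weak$^*$-convergent sequence $(n_j)$ from~\eqref{eq:2}. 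By \Cref{thm:canonical}, $\hat{A}\hat{B}=\Id$ and $\|\hat{A}\|,\|\hat{B}\|\le \gamma_X^{u}(1),\gamma_X^d(1)=1$. By~\eqref{eq:171}, $\hat{A}D\hat{B}$ is diagonal with entries
\begin{equation*}
  \frac{1}{|\hat{\mathcal{B}}_P^*|}\int_{\hat{\mathcal{B}}_P^*} A_{m_i}^{D,k},
  \qquad P\in\mathcal{AF}_i,
\end{equation*}
where $\hat{\mathcal{B}}_P\subset\mathcal{AF}_{m_i}$.

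I would construct the blocks inductively. Given $\hat{\mathcal{B}}_Q$ for all $Q\in\mathcal{AF}_{i-1}$, the sets $\Gamma_Q^P$ in~\eqref{eq:43} are $\mathcal{F}_{m_{i-1}+1}$-measurable (\Cref{rem:block-basis:2}). Weak$^*$ convergence of $A^{D,k}_{n_j}$ tested against the finitely many indicators $\cha_{\Gamma_Q^P}$ lets us choose $m_i$ in $(n_j)$ so large that
\begin{equation*}
  \Bigl|\frac{1}{|\Gamma_Q^P|}\int_{\Gamma_Q^P} A_{m_i}^{D,k} - \frac{1}{|\Gamma_Q^P|}\int_{\Gamma_Q^P} M^{D,k}\Bigr| \le \varepsilon_i
\end{equation*}
for all $k$ and all $P\in\mathcal{AF}_i$. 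Faithfulness gives $\hat{\mathcal{B}}_P^*=\Gamma_Q^P$ and $|\hat{\mathcal{B}}_P^*|=|P|$ (\Cref{rem:block-basis}). This only nearly matches $\widetilde{d}_P^k=|P|^{-1}\int_P M^{D,k}$, because $\Gamma^P_Q$ is a different set from $P$. To fix this, the mean of $M^{D,k}$ over $\hat{\mathcal{B}}_P^*$ should be compared with $\cond(M^{D,k}|\mathcal{F}_i)$ on $P$.

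The cleanest route is to note that, by faithfulness, the map $P\mapsto\hat{\mathcal{B}}_P^*$ is a measure-preserving isomorphism of the atom trees. So it suffices to prove the result for the diagonal operator $D'=\hat{A}D\hat{B}$ itself, whose martingale limit is $M'$ with $\cond(M'|\mathcal{F}_i)$ on $P$ equal to the average of the limit over $\hat{\mathcal{B}}_P^*$. I would therefore take $\widehat{D}$ to be the martingale operator associated with this transported limit; equivalently, I would redefine $M^{D,k}$ along the chosen blocks. The remaining error is a diagonal operator with entries bounded by $\varepsilon_i$ on level $i$. Here I would rather use the variation norm of \Cref{pro:variation-norm} than the crude bound of \Cref{lem:schauder}, since \Cref{lem:schauder} sums over all atoms. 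The error entries $e_P^k$ satisfy $|e_{\pi(P)}^k-e_P^k|\le 2\varepsilon_{i-1}+2\varepsilon_i$ uniformly in $P$, but the boundedness of $Q_{\mathcal{K}}$ is needed there. So it is better to apply \Cref{pro:variation-norm} with the trivial partition $\{1,\dots,r\}$, handling each $k$ via $\Theta$-type decompositions, or to apply \Cref{lem:schauder} level-wise as in that proof. Choosing $\sum_i\varepsilon_i$ small makes the error smaller than any $\eta$.

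Finally, $\lambda_{\mathcal{U}}^k(\widehat{D})=\lim_n\int M_n^{D,k}=\int M^{D,k}$ by \Cref{rem:functionals}. It remains to see that $\int M^{D,k}=\lambda_{\mathcal{U}}^k(D)$, which holds if the weak$^*$ subsequence in~\eqref{eq:2} is chosen inside the sequence from \Cref{lem:functionals-classical-limit}. This is a choice I would make at the outset: first take $(\tilde n_i)$ realizing $\lambda_{\mathcal{U}}^k(D)$, then extract the weak$^*$ convergent subsequence. The main obstacle is the error estimate: controlling the norm of the difference diagonal operator in $Z_V$ without assuming boundedness of any canonical projection. I would try to bound it coordinatewise in $k$ with a level-by-level telescoping argument, using that $|e_P^k|\le\varepsilon_i$ on level $i$.
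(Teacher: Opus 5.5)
Your final construction is the paper's, reached after a wrong first candidate. The one real estimate is left open, and the routes you suggest for it would fail.

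\textbf{The operator and the functionals.} The opening candidate, $\widetilde{D}$ of~\eqref{eq:4}, is not what the block argument produces. Each $\hat{\mathcal{B}}_P^*$ is spread across all of $\hat{\mathcal{B}}_{\pi(P)}^*$ at scale $q^{-m_{i-1}}$, not concentrated on $P$. The correct choice, which you reach after the detour through $D'$, is $\widehat{D}=\hat{A}\widetilde{D}\hat{B}$, with entries $\widehat{d}_P^k=|P|^{-1}\int_{\hat{\mathcal{B}}_P^*}M^{D,k}$. This is exactly the paper's $\widehat{D}=A\widetilde{D}B$.

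No transport argument is needed for the martingale property. For $P\in\mathcal{AF}_i$, the sets $\hat{\mathcal{B}}_Q^*$, $Q\in\mathcal{S}(P)$, partition $\hat{\mathcal{B}}_P^*$ and satisfy $|\hat{\mathcal{B}}_Q^*|=|Q|$. Hence $(A_i^{\widehat{D},k})$ is a martingale, and it is bounded by $\|D\|$.

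Your functional check was done for the wrong operator, but it transfers. By~\eqref{eq:139}, $\int A_i^{\widehat{D},k}=\int M^{D,k}$ for every $i$. Extracting the weak$^*$ subsequence inside the sequence of \Cref{lem:functionals-classical-limit} then gives $\int M^{D,k}=\lambda^k_{\mathcal{U}}(D)$. This is more direct than the paper's appeal to \Cref{lem:functional-stability}. Your inductive choice of $m_i$ is also a clean substitute for the paper's two-step selection. As in the paper, this $\widehat{D}$ depends on the block system, hence on $\eta$; only that weaker form is used later.

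\textbf{The gap: the norm bound for $\hat{A}D\hat{B}-\widehat{D}$.} You leave this as the ``main obstacle'', and the alternatives you propose do not work:
\begin{itemize}
\item \Cref{pro:variation-norm} with the trivial partition requires entries independent of $k$, and the error entries $e_P^k$ are not.
\item A coordinatewise-in-$k$ estimate needs bounded $Q_k$, which fails, e.g., under Property~I.
\end{itemize}
The crude bound of \Cref{lem:schauder}, which you set aside, is what works; the paper uses it in~\eqref{eq:19}. The tolerance $\varepsilon_i$ at level $i$ is fixed before $m_i$ is chosen, and only the finitely many $\mathcal{F}_{m_{i-1}+1}$-measurable sets $\hat{\mathcal{B}}_P^*$, $P\in\mathcal{AF}_i$, are tested. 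So you may take $\varepsilon_i\le\eta\,(2q)^{-i}/(4\lambda_{Z_V}r)$, which gives
\[
\|\hat{A}D\hat{B}-\widehat{D}\|
\le 2\lambda_{Z_V}\sum_{i}\sum_{P\in\mathcal{AF}_i}\sum_{k=1}^r|e_P^k|
\le 2\lambda_{Z_V}r\sum_i q^i\varepsilon_i
\le\eta .
\]
Making $\sum_i\varepsilon_i$ small, as you wrote, is not enough; you need $\sum_i q^i\varepsilon_i$ small, and arranging this costs nothing.
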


\begin{proof}
  Let $\eta > 0$ and pick a sequence of positive numbers $(\eta_P : P\in \mathcal{A})$ such that
  $\sum_{P\in \mathcal{A}} \eta_P \leq \eta/(2\lambda_{Z_V})$, where $\lambda_{Z_V}$ is the basis constant of $(\varphi_P^k : P\in \mathcal{A},\ 1\leq k\leq r)$.

  By \Cref{lem:functional-stability}, there exists an increasing sequence $(\tilde{n}_i)_{i=0}^{\infty}$ in $\mathbb{N}_0$ such that for any subsequence
  $(n_i)_{i=0}^{\infty}$ of $(\tilde{n}_i)_{i=0}^{\infty}$ and any faithful system $(\mathcal{B}_P : P\in \mathcal{A})$ with respect to
  $(n_i)_{i=0}^{\infty}$ mounted on $[0,1)$, we have $\lambda_{\mathcal{U}}^k(D) = \lambda_{\mathcal{U}}^k(ADB)$, $1\leq k\leq r$, where $A$, $B$ are
  the canonical factorization operators with respect to $(\mathcal{B}_P : P\in \mathcal{A})$ given by~\eqref{eq:221}.  Moreover, note that by
  \Cref{thm:canonical} and \Cref{dfn:scaling}, we have $\|A\| = \|B\| = 1$.

  As discussed in~\eqref{eq:1} to~\eqref{eq:4}, there exists a subsequence $(n_j)_{j=0}^{\infty}$ of $(\tilde{n}_j)_{j=0}^{\infty}$ such that each of
  the processes $(A_{n_j}^{D,k})_{j=0}^{\infty}$, $1\leq k\leq r$ converges in the weak$^{*}$ topology of $L^{\infty}$.  We denote these limits by
  $M^{D,k}$, $1\leq k\leq r$, and define the diagonal operator $\widetilde{D} = (\widetilde{d}_P^k : P\in \mathcal{A},\ 1\leq k\leq r)$ by
  \begin{equation}\label{eq:35}
    \widetilde{d}_P^k
    := M_n^{D,k}(x)
    = \cond(M^{D,k}|\mathcal{F}_n)(x),
  \end{equation}
  for all $1\leq k\leq r$ and whenever $x\in P\in \mathcal{AF}_n$ for $n\in \mathbb{N}_0$.

  Next, we define $\eta_j = \min_{P\in \mathcal{AF}_j} \eta_P > 0$, $j\in \mathbb{N}_0$ and let $1\leq k\leq r$.  Since both
  $(A_{n_j}^{D,k})_{j=0}^{\infty}$ and $(M_{n_j}^{D,k})_{j=0}^{\infty}$ converge to $M^{D,k}$ as $j\to \infty$ (see also~\eqref{eq:3}
  and~\eqref{eq:4}), we obtain
  \begin{equation*}
    \lim_j \Bigl| \frac{1}{|\Gamma|} \int_{\Gamma} A_{n_j}^{D,k} - \frac{1}{|\Gamma|} \int_{\Gamma} M_{n_j}^{D,k} \Bigr|
    = 0,
    \qquad \Gamma\in \bigcup_{j=0}^{\infty} \mathcal{F}_j.
  \end{equation*}
  Thus, since the $\sigma$-algebras $\mathcal{F}_j$, $j\in \mathbb{N}_0$ are finite, we can extract a subsequence $(m_j)_{j=0}^{\infty}$ of
  $(n_j)_{j=0}^{\infty}$ with the following properties:
  \begin{enumerate}[(a)]
  \item\label{item:1} $\bigl| \int_{\Gamma} A_{m_0}^{D,k} - \int_{\Gamma} M_{m_0}^{D,k} \bigr| \leq \eta_0 |\Gamma|$ for all
    $\Gamma \in \mathcal{F}_0$.
  \item\label{item:2} $\bigl| \int_{\Gamma} A_{m_j}^{D,k} - \int_{\Gamma} M_{m_j}^{D,k} \bigr| \leq \eta_j |\Gamma|$ for all
    $\Gamma \in \mathcal{F}_{m_{j-1}+1}$ and $j\in \mathbb{N}$.
  \end{enumerate}
  
  Utilizing \Cref{lem:block-basis} for $U=[0,1)$ and $(m_j)_{j=0}^{\infty}$, we obtain a sequence of collections $(\mathcal{B}_P : P\in \mathcal{A})$
  which is faithful with respect to a subsequence of $(m_j)_{j=0}^{\infty}$.  For ease of notation, we denote this subsequence also by
  $(m_j)_{j=0}^{\infty}$.  According to~\eqref{eq:17}, $(\mathcal{B}_P : P\in \mathcal{A})$ defines the block basis
  $(\psi_P^k : P\in \mathcal{A},\ 1\leq k\leq r)$ of $(\varphi_P^k : P\in \mathcal{A},\ 1\leq k\leq r)$, which in turn determines the associated
  factorization operators $A,B\colon Z_V\to Z_V$ (see~\eqref{eq:221}).
  
  Since $(\mathcal{B}_P : P\in \mathcal{A})$ is faithful, \Cref{thm:canonical}, together with $\gamma_X^u(1) = \gamma_X^d(1) = 1$ (see
  \Cref{dfn:scaling}), yields
  \begin{equation}\label{eq:7}
    AB = \Id_{Z_V}
    \quad\text{and}\quad
    \|A\| = \|B\| = 1.
  \end{equation}

  Let $P\in \mathcal{AF}_j$ for some $j\in \mathbb{N}_0$ and $1\leq k\leq r$ be fixed and observe that combining~\eqref{eq:35} with
  \Cref{dfn:canonical} yields
  \begin{equation}\label{eq:52}
    A(D - \widetilde{D})B \varphi_P^k
    = \frac{1}{|\mathcal{B}_P^{*}|}\langle \psi_P^k, (D - \widetilde{D}) \psi_P^k \rangle \varphi_P^k
    = \Bigl( \sum_{Q\in \mathcal{B}_P} \frac{|Q|}{|\mathcal{B}_P^{*}|} (d_Q^k - \widetilde{d}_Q^k) \Bigr) \varphi_P^k.
  \end{equation}
  In particular, we obtain that $A(D - \widetilde{D})B$ is a diagonal operator with respect to $(\varphi_P^k : P\in \mathcal{A},\ 1\leq k\leq r)$.
  Since the sequence of collections $(\mathcal{B}_P : P\in \mathcal{A})$ is faithful with respect to $(m_j)_{j=0}^{\infty}$,
  Condition~\textrefp[C]{enu:dfn:compatibility:iv} gives us $\mathcal{B}_P\subset \mathcal{AF}_{m_j}$ and \Cref{rem:block-basis:2} ensures that
  $\mathcal{B}_P^{*}\in \mathcal{F}_{m_{j-1}+1}$ whenever $P\in \mathcal{AF}_j$, $j\in \mathbb{N}$.  Moreover, by \Cref{rem:block-basis:3} we have
  that $\mathcal{B}_P^{*}\in \mathcal{F}_0$, since $U=[0,1)$ is $\mathcal{F}_0$-measurable.  Recall that we fixed $P\in \mathcal{AF}_j$. Hence,
  by~\eqref{eq:1}, \eqref{eq:35} and using~\eqref{item:1} and~\eqref{item:2} with $\Gamma = \mathcal{B}_P^{*}$, we obtain
  \begin{equation*}
    \Bigl|\sum_{Q\in \mathcal{B}_P} \frac{|Q|}{|\mathcal{B}_P^{*}|} (d_Q^k - \widetilde{d}_Q^k)\Bigr|
    = \frac{1}{|\mathcal{B}_P^{*}|} \Bigl| \int_{\mathcal{B}_P^{*}} A_{m_j}^{D,k} - M_{m_j}^{D,k} \Bigr|
    \leq \eta_j
    \leq \eta_P.
  \end{equation*}
  Recall that we defined $\eta_j = \min_{P\in \mathcal{AF}_j} \eta_P$.  Hence, applying \Cref{lem:schauder} to the diagonal operator
  $A(D - \widetilde{D})B$ (see~\eqref{eq:52}) yields
  \begin{equation}\label{eq:19}
    \|A(D - \widetilde{D})B\|
    \leq 2\lambda_{Z_V} \sum_{P\in \mathcal{A}} \sum_{k=1}^r \eta_P
    \leq \eta.
  \end{equation}
  We define $\widehat{D} = A\widetilde{D}B$ and record the estimate
  \begin{equation}\label{eq:20}
    \|ADB - \widehat{D}\|
    \leq \eta,
  \end{equation}
  which is simply~\eqref{eq:19}.  Since $\|A\| = \|B\| = 1$ and $\eta>0$ can be made arbitrarily small, \eqref{eq:20} shows that $\widehat{D}$ almost
  projectionally factors through $D$ with constant $1$, as claimed.

  Our final task is to demonstrate that $(A_j^{\widehat{D},k})_{j=0}^{\infty}$ forms a closed martingale for each $1\leq k\leq r$.  First, note that
  for fixed $1\leq k\leq r$ we obtain from \Cref{dfn:canonical} and~\eqref{eq:35} that
  \begin{equation}\label{eq:12}
    A_j^{\widehat{D},k}(x)
    = \widehat{d}_P^{k}
    = \sum_{R\in \mathcal{B}_P} \frac{|R|}{|P|} \widetilde{d}_R^k
    = \frac{1}{|\mathcal{B}_P^{*}|} \int_{\mathcal{B}_P^{*}} M_{m_j}^{D,k}
    = \frac{1}{|\mathcal{B}_P^{*}|} \int_{\mathcal{B}_P^{*}} M^{D,k},
  \end{equation}
  for all $x\in P\in \mathcal{AF}_j$, $j\in \mathbb{N}_0$.  The last equality follows from the fact that
  $M_{m_j}^{D,k} = \cond(M^{D,k}|\mathcal{F}_{m_j})$ and $\mathcal{B}_P^{*}$ is $\mathcal{F}_{m_j}$-measurable whenever $P\in \mathcal{AF}_j$,
  $j\in \mathbb{N}_0$.  Secondly, note that the faithfulness of $(\mathcal{B}_P : P\in \mathcal{A})$ (i.e., $\kappa_P = 0$, $P\in \mathcal{A}$),
  together with~\eqref{eq:34} yields $\frac{|\mathcal{B}_Q^{*}|}{|\mathcal{B}_P^{*}|} = \frac{|Q|}{|P|}$, for all $P,Q\in \mathcal{A}$ with
  $Q\subset P$.  Hence, by the above identity~\eqref{eq:12} and \Cref{rem:block-basis:2} we observe that
  \begin{align*}
    \cond (A_{j+1}^{\widehat{D},k} | \mathcal{F}_j)(x)
    &= \frac{1}{|P|} \sum_{\substack{Q\in \mathcal{F}_{j+1}\\Q\subset P}} \frac{|Q|}{|\mathcal{B}_Q^{*}|} \int_{\mathcal{B}_Q^{*}} M^{D,k}
    = \frac{1}{|\mathcal{B}_P^{*}|} \sum_{\substack{Q\in \mathcal{F}_{j+1}\\Q\subset P}} \int_{\mathcal{B}_Q^{*}} M^{D,k}\\
    &= \frac{1}{|\mathcal{B}_P^{*}|} \int_{\mathcal{B}_P^{*}} M^{D,k}
      = A_j^{\widehat{D},k}(x),
  \end{align*}
  for all $x\in P\in \mathcal{AF}_j$, $j\in \mathbb{N}_0$, which demonstrates that $(A_j^{\widehat{D},k})_{j=0}^{\infty}$ is a martingale.  Since
  $\|M^{D,k}\|_{L^{\infty}}\leq \|D\| < \infty$ (see~\eqref{eq:74}, \eqref{eq:1} and~\eqref{eq:2}), it follows from~\eqref{eq:12} that
  $\sup_j \|A_j^{\widehat{D},k}\|_{L^{\infty}}\leq \|M^{D,k}\|_{L^{\infty}}$, and hence, $(A_j^{\widehat{D},k})_{j=0}^{\infty}$ is a closed martingale
  by the martingale convergence theorem.
\end{proof}

We will now upgrade pointwise convergence to arbitrarily fast uniform convergence.
\begin{pro}\label{pro:stab-3}
  Suppose that $D = (d_P^k : P\in \mathcal{A},\ 1\leq k\leq r)$ is a bounded diagonal operator such that $(A_n^{D,k})_{n=0}^{\infty}$ converges almost
  everywhere for all $1\leq k\leq r$.  Let $\mathcal{U}$ denote any non-principal ultrafilter on $\mathbb{N}_0$, let $0 < \kappa < \tfrac{1}{2}$ and
  denote by $(\eta_n)_{n=0}^{\infty}$ a sequence of positive numbers with $\eta_n\to 0$.

  Then there exists a bounded diagonal operator $\widehat{D} = (\widehat{d}_P^k : P\in \mathcal{A},\ 1\leq k\leq r)$ which projectionally factors
  through $D$ with constant~$1^+$ and so that
  \begin{equation}\label{eq:pro:stab-3:1}
    \bigl| A_i^{\widehat{D},k}(x) - A_j^{\widehat{D},k}(x) \bigr|
    \leq \eta_i + \eta_j,
    \qquad x\in [0,1),\ 1\leq k\leq r,\ i,j\in \mathbb{N}_0.
  \end{equation}
  In particular, $(A_n^{\widehat{D},k})_{n=0}^{\infty}$ converges uniformly on $[0,1)$ for all $1\leq k\leq r$.  Moreover, we have
  \begin{equation}\label{eq:181}
    |\lambda_{\mathcal{U}}^k(D) - \lambda_{\mathcal{U}}^k(\widehat{D})|
    \leq \frac{6\kappa}{1-2\kappa} \|D\|,
    \qquad 1\leq k\leq r.
  \end{equation}
\end{pro}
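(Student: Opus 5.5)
The plan is to combine Egorov's theorem with the Gamlen--Gaudet construction of \Cref{lem:block-basis}, mounted on a large set where all the processes converge uniformly. We then control the factorization constants with \Cref{thm:canonicalkp} and the functionals with \Cref{lem:functional-stability}.

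\emph{Step 1 (uniform convergence on a large set).} For each $k$ the limit $M^k=\lim_n A_n^{D,k}$ exists almost everywhere. By Egorov's theorem there is a measurable $U$ with $|U|\geq 1-\kappa$ on which $A_n^{D,k}\to M^k$ uniformly for all $1\leq k\leq r$ simultaneously. Hence we can choose an increasing sequence $(n_i)_{i=0}^{\infty}$ such that
\begin{equation*}
\sup_{x\in U}|A_n^{D,k}(x)-M^k(x)|\leq \eta_i/2 \quad \text{for all } n\geq n_i .
\end{equation*}
We first pass to the sequence $(\tilde n_i)$ supplied by \Cref{lem:functional-stability} and take $(n_i)$ to be a subsequence of it. Then every further subsequence is still admissible for \eqref{eq:67}.

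\emph{Step 2 (block basis).} Fix $\gamma>0$ and choose $\kappa_P$ with $\kappa_{[0,1)}\leq\kappa$ and $\sum_P\kappa_P$ so small that the constant $1+\frac{2\sum_P\kappa_P}{1-\kappa_{[0,1)}}$ in \eqref{eq:215} is at most $1+\gamma$. Apply \Cref{lem:block-basis} to $U$ and $(n_i)$ to obtain a $(1-\kappa_P)$-faithful sequence $(\mathcal{B}_P)$ with respect to a subsequence $(m_i)$, mounted on $U$. Let $A,B$ be the canonical factorization operators of \Cref{dfn:canonical}. By \Cref{thm:canonicalkp} we have $AB=\Id_{Z_V}$ and $\|A\|\|B\|\leq(1+\gamma)\gamma_X^u(s)\gamma_X^d(s)$ with $s=|\mathcal{B}_{[0,1)}^*|$.

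\emph{Step 3 (the main obstacle: the product $\gamma_X^u(s)\gamma_X^d(s)$).} This product need not be close to $1$ unless $s$ is close to $1$, and $s\geq\frac{1-\kappa}{1+\kappa}$ by \eqref{eq:177}. First I would check whether, by \Cref{rem:scaling}, $\gamma_X^d(s)\leq 1$ gives $\|B\|\leq 1$ and $\gamma_X^u(s)\to 1$. Since $\kappa$ is fixed, this does not give $1^+$ directly. The remedy is to shrink the Egorov exceptional set: we are free to take $|U|\geq 1-\kappa'$ with $\kappa'\leq\kappa$ arbitrarily small. Then $\gamma_X^u(s)\gamma_X^d(s)\leq\gamma_X^u(s)\to1$ as $s\to1$, and \eqref{eq:181} holds a fortiori since $\frac{6\kappa'}{1-2\kappa'}\leq\frac{6\kappa}{1-2\kappa}$. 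The dependence of $\widehat D$ on $\gamma$ is acceptable, as in \Cref{dfn:factorization-modes}. Put $\widehat D=ADB$. By \eqref{eq:171} it is diagonal, with
\begin{equation*}
\widehat d_P^k=\sum_{R\in\mathcal{B}_P}\frac{|R|}{|\mathcal{B}_P^*|}\,d_R^k .
\end{equation*}

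\emph{Step 4 (verifying \eqref{eq:pro:stab-3:1}).} For $P\in\mathcal{AF}_i$, every $R\in\mathcal{B}_P$ lies in $\mathcal{AF}_{m_i}$, and by (C5) it satisfies $|R\cap U|\geq(1-\kappa_P)|R|$, so $R$ meets $U$. Since $d_R^k=A_{m_i}^{D,k}$ on $R$, for any $y\in R\cap U$ we get $|d_R^k-M^k(y)|\leq\eta_i/2$.

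The difficulty is that $M^k$ is not constant on $\mathcal{B}_P^*$. For $x\in P\in\mathcal{AF}_i$ and $Q\in\mathcal{AF}_j$ with $x\in Q$ (say $j\geq i$, so $Q\subset P$), I would compare $\widehat d_P^k$ and $\widehat d_Q^k$ through pairs $R\in\mathcal{B}_P$, $S\in\mathcal{B}_Q$ with $S\subset R$, which exist by (C2). Pick a point $y\in S\cap U\subset R\cap U$. Then
\begin{equation*}
|d_R^k-d_S^k|\leq|d_R^k-M^k(y)|+|M^k(y)-d_S^k|\leq\eta_i/2+\eta_j/2 .
\end{equation*}

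Averaging requires more care, because $\widehat d_P^k$ averages over all $R\in\mathcal{B}_P$, not only over those nested over $\mathcal{B}_Q$. I would strengthen Step 1 to get the needed control. Use Egorov together with the martingale-type oscillation of $A_{n}^{D,k}$ on atoms: choose $n_i$ so large that on every atom $R\in\mathcal{AF}_{n}$, $n\geq n_i$, meeting $U$, the value $d_R^k$ is within $\eta_i/2$ of $M^k(y)$ for all $y\in R\cap U$. In addition, refine $U$ to a finite union of atoms, so that $M^k$ varies by at most $\eta_i$ across $U$-points of a common $\mathcal{B}_P$.

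This last refinement, making $M^k$ nearly constant on each $\mathcal{B}_P^*\cap U$, is the step I expect to be hardest. I would try to obtain it by applying Lusin's theorem to $M^k$ before Egorov and choosing $\mathcal{B}_P$ inside small atoms. Finally, \eqref{eq:181} follows from \Cref{lem:functional-stability}.
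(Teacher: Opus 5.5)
The gap is Step 4, and the remedy you propose there cannot work. Steps 1--3 are sound. The pairwise bound $|d_R^k-d_S^k|\le(\eta_i+\eta_j)/2$ for $R\in\mathcal{B}_P$, $S\in\mathcal{B}_Q$, $S\subset R$ is correct, since (C5) gives $S\cap U\neq\emptyset$. Your Step 3 point is also right: with $\kappa$ fixed, $\gamma_X^u(s)\gamma_X^d(s)$ need not be close to $1$, so the Egorov exceptional set must be shrunk to some $\kappa'\le\kappa$. The paper's choice of $\delta$ in \eqref{eq:152} passes over exactly this.

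You cannot make $M^k$ nearly constant on $\mathcal{B}_P^*\cap U$. For $P=[0,1)$ this set has measure close to $1$, and $M^k$ is an arbitrary $L^\infty$ function. ``Choosing $\mathcal{B}_P$ inside small atoms'' contradicts (C3) and (C5), which by design spread each $\mathcal{B}_Q^*$ over every $R\in\mathcal{B}_P$. Replacing $U$ by a finite union of atoms would in general destroy the uniform convergence your pairwise estimate needs. Your stated worry is also misplaced: by (C3), every $R\in\mathcal{B}_P$ satisfies $|R\cap\mathcal{B}_Q^*|\ge(1-\kappa_P)\frac{|Q|}{|P|}|R|>0$, so every $R$ does contain elements of $\mathcal{B}_Q$. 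The real issue is a mismatch of weights.

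The missing idea is to control that mismatch with (C3) together with \eqref{eq:34}, which give
\[
(1-\kappa_P)\le\frac{|\mathcal{B}_P^*|}{|\mathcal{B}_Q^*|}\cdot\frac{|\mathcal{B}_Q^*\cap R|}{|R|}\le(1-\kappa_P)^{-1},\qquad R\in\mathcal{B}_P .
\]
Group $\widehat d_Q^k=\sum_{S\in\mathcal{B}_Q}\frac{|S|}{|\mathcal{B}_Q^*|}d_S^k$ according to the unique $R\in\mathcal{B}_P$ containing $S$, using (C1) and (C2). Then $\widehat d_P^k-\widehat d_Q^k$ equals
\[
\sum_{R\in\mathcal{B}_P}\frac{|R|}{|\mathcal{B}_P^*|}\,d_R^k\Bigl(1-\frac{|\mathcal{B}_P^*|}{|\mathcal{B}_Q^*|}\frac{|\mathcal{B}_Q^*\cap R|}{|R|}\Bigr)
+\sum_{R\in\mathcal{B}_P}\sum_{\substack{S\in\mathcal{B}_Q\\ S\subset R}}\frac{|S|}{|\mathcal{B}_Q^*|}\,(d_R^k-d_S^k).
\]
The first term is at most $\frac{\kappa_P}{1-\kappa_P}\|D\|$. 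The second is at most your pairwise bound. No regularity of $M^k$ is needed.

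To reach \eqref{eq:pro:stab-3:1}, you must also choose $\kappa_P$ depending on the level, for instance $\frac{\kappa_P}{1-\kappa_P}\|D\|\le\eta_i/2$ for $P\in\mathcal{AF}_i$. Your Step 2 chooses $\kappa_P$ only for the factorization constant. With both choices, a single block basis already gives $|A_i^{\widehat D,k}-A_j^{\widehat D,k}|\le\eta_i+\eta_j$.

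The paper instead uses only $\kappa_P\to0$ to show that the processes of $ADB$ are uniformly Cauchy on $[0,1)$. It then passes to a fast subsequence and applies a second, faithful block basis on $[0,1)$ along it, which changes neither the factorization constant nor the functionals $\lambda_{\mathcal{U}}^k$.
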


\begin{myproof}
  Let $\delta > 0$ and $0\leq \kappa_P < 1$, $P\in \mathcal{A}$ such that
  \begin{equation}\label{eq:152}
    \Bigl(1+ \frac{2\sum_{P\in\mathcal{A}}\kappa_P}{1-\kappa_{[0,1)}} \Bigr)\cdot \gamma_X^u(s) \gamma_X^d(s)
    \leq 1 + \delta,
    \qquad\text{for all $s\geq 1-\kappa-\kappa_{[0,1)}$.}
  \end{equation}
  Such a choice is possible since $\lim_{s\to 1} \gamma_X^u(s) \gamma_X^d(s) = 1$ (see \Cref{rem:scaling}).  By Egorov's theorem, we can find a
  measurable set $U\subset [0,1)$ with $|U| > 1 - \kappa$ such that $(A_n^{D,k})_{n=0}^{\infty}$ converges uniformly on $U$ for all $1\leq k\leq r$.

  According to \Cref{lem:functional-stability}, we select an increasing sequence $(\tilde{n}_i)_{i=0}^{\infty}$ in $\mathbb{N}_0$ such that for any
  subsequence $(n_i)_{i=0}^{\infty}$ of $(\tilde{n}_i)_{i=0}^{\infty}$ and any $(1 - \kappa_P)$-faithful system $(\mathcal{B}_P : P\in \mathcal{A})$
  with respect to $(n_i)_{i=0}^{\infty}$ mounted on $U$, we have
  \begin{equation}\label{eq:182}
    |\lambda_{\mathcal{U}}^k(D) - \lambda_{\mathcal{U}}^k(ADB)|
    \leq \frac{6\kappa}{1-2\kappa} \|D\|,
    \qquad 1\leq k\leq r,
  \end{equation}
  where $A$, $B$ are the canonical factorization operators with respect to $(\mathcal{B}_P : P\in \mathcal{A})$ given by~\eqref{eq:221}.  In the
  following, every chosen sequence will be a subsequence of $(\tilde{n}_i)_{i=0}^{\infty}$.

  Using \Cref{lem:block-basis} with our $U$ and $(\kappa_P : P\in \mathcal{A})$, we can find a sequence of collections
  $(\mathcal{B}_P : P\in \mathcal{A})$ which is $(1 - \kappa_P)$-faithful.  Since $|U| > 1 - \kappa$ and by
  \Cref{dfn:compatibility}~\textrefp[C]{enu:dfn:compatibility:v}, we obtain
  \begin{equation}\label{eq:13}
    |\mathcal{B}_{[0,1)}^{*}|
    \geq 1 - \kappa - \kappa_{[0,1)}.
  \end{equation}

  Next, we define the associated block basis $(\psi_P^k : P\in \mathcal{A},\ 1\leq k\leq r)$ and the canonical factorization operators
  $A,B\colon Z_V\to Z_V$ according to \Cref{dfn:canonical}.  Using \Cref{thm:canonicalkp}, we obtain
  \begin{equation}\label{eq:151}
    AB
    = \Id_{Z_V},
    \quad
    \|A\|
    \leq \Bigl(1+ \frac{2\sum_{P\in\mathcal{A}}\kappa_P}{1-\kappa_{[0,1)}} \Bigr)\cdot \gamma_X^u(|\mathcal{B}_{[0,1)}^{*}|),
    \quad
    \|B\|
    \leq \gamma_X^d(|\mathcal{B}_{[0,1)}^{*}|).
  \end{equation}

  Now, define the diagonal operator $\widetilde{D} = (\widetilde{d}_P^k : P\in \mathcal{A},\ 1\leq k\leq r)$ as the linear operator
  $\widetilde{D}\colon Z_V\to Z_V$ given by $\widetilde{D} = ADB$, i.e.,
  \begin{equation}\label{eq:29}
    \widetilde{D} \varphi_P^k
    = \widetilde{d}_P^k \varphi_P^k
    = \frac{\langle \psi_P^k, D\psi_P^k \rangle}{|\mathcal{B}_P^{*}|} \varphi_P^k,
    \qquad P\in \mathcal{A},\ 1\leq k\leq r.
  \end{equation}
  Thus, by~\eqref{eq:151}, \eqref{eq:13} and~\eqref{eq:152}, $\widetilde{D}$ projectionally factors through $D$ with constant $1+\delta$, and
  by~\eqref{eq:182}, we also have
  \begin{equation}\label{eq:183}
    |\lambda_{\mathcal{U}}^k(D) - \lambda_{\mathcal{U}}^k(\widetilde{D})|
    \leq \frac{6\kappa}{1-2\kappa} \|D\|,
    \qquad 1\leq k\leq r,
  \end{equation}

  Next, we show that $(A_n^{\widetilde{D},k})_{n=0}^{\infty}$ converges uniformly on $[0,1)$ for all $1\leq k\leq r$.  First recall that we chose $U$
  such that $(A_n^{D,k})_{n=0}^{\infty}$ converges uniformly on $U$ for all $1\leq k\leq r$.  Thus, given $\varepsilon > 0$ we can find
  $m_0\in \mathbb{N}$ so that for all $m,n\in \mathbb{N}$ with $n > m \geq m_0$ we have $|A_m^{D,k}(x) - A_n^{D,k}(x)| \leq \varepsilon$ whenever
  $x\in U$, i.e., by~\eqref{eq:1}:
  \begin{equation}\label{eq:5}
    | d_R^k - d_S^k | \leq \varepsilon,
    \qquad R\in \mathcal{AF}_m,\ S\in \mathcal{AF}_n,\ U\cap S\cap R\neq \emptyset.
  \end{equation}
  By \Cref{dfn:compatibility}~\textrefp[C]{enu:dfn:compatibility:v}, the above inequality~\eqref{eq:5}, and since $\kappa_P\to 0$ as $|P| \to 0$
  together with~\textrefp[C]{enu:dfn:compatibility:iv}, we can find an index $i_0$ so that for all $j > i \geq i_0$ we have
  \begin{equation}\label{eq:37}
    | d_R^k - d_S^k | \leq \varepsilon
    \qquad\text{as well as}\qquad
    \frac{\kappa_P}{1-\kappa_P} \|D\|
    \leq \varepsilon,
  \end{equation}
  whenever $P\in \mathcal{AF}_i$, $Q\in \mathcal{AF}_j$, $R\in \mathcal{B}_P$, $S\in \mathcal{B}_Q$ and $S\subset R$.
  
  Now let $x\in [0,1)$ be fixed.  Then there exist unique $P\in \mathcal{AF}_i$ and $Q\in \mathcal{AF}_j$ such that $x\in Q\subset P$.  Again, by the
  definition of $A_n^{\widetilde{D},k}$ and $\widetilde{D}$ (see~\eqref{eq:1} and~\eqref{eq:29}) as well as exploiting the orthogonality of the system
  $(\varphi_R^k)$, we obtain
  \begin{align*}
    |A_i^{\widetilde{D},k}(x) - A_j^{\widetilde{D},k}(x)|
    &= |d_P^{\widetilde{D},k} - d_Q^{\widetilde{D},k}|
      = \Bigl| \frac{\langle \psi_P^k, D\psi_P^k \rangle}{|\mathcal{B}_P^{*}|} - \frac{\langle \psi_Q^k, D\psi_Q^k \rangle}{|\mathcal{B}_Q^{*}|} \Bigr|\\
    &= \Bigl| \sum_{R\in \mathcal{B}_P} \frac{|R|}{|\mathcal{B}_P^{*}|} d_R^k - \sum_{S\in \mathcal{B}_Q} \frac{|S|}{|\mathcal{B}_Q^{*}|} d_S^k \Bigr|.
  \end{align*}
  We use Property~\textrefp[C]{enu:dfn:compatibility:ii}, to decompose the second summation as follows:
  \begin{equation}\label{eq:27}
    |A_i^{\widetilde{D},k}(x) - A_j^{\widetilde{D},k}(x)|
    = \Bigl|
    \sum_{R\in \mathcal{B}_P} \frac{|R|}{|\mathcal{B}_P^{*}|} \Bigl(
    d_R^k - \sum_{\substack{S\in \mathcal{B}_Q\\S\subset R}} \frac{|\mathcal{B}_P^{*}|}{|R|} \frac{|S|}{|\mathcal{B}_Q^{*}|} d_S^k
    \Bigr)
    \Bigr|.
  \end{equation}
  Replacing $d_S^k$ in the inner sum by $d_R^k$ yields
  \begin{equation}\label{eq:18}
    \begin{aligned}
      |A_i^{\widetilde{D},k}(x) - A_j^{\widetilde{D},k}(x)|
      &\leq \Bigl|
        \sum_{R\in \mathcal{B}_P} \frac{|R|}{|\mathcal{B}_P^{*}|} \Bigl(
        d_R^k - \sum_{\substack{S\in \mathcal{B}_Q\\S\subset R}} \frac{|\mathcal{B}_P^{*}|}{|R|} \frac{|S|}{|\mathcal{B}_Q^{*}|} d_R^k
      \Bigr)
      \Bigr|\\
      &\qquad + \Bigl|
        \sum_{R\in \mathcal{B}_P} \Bigl(
        \sum_{\substack{S\in \mathcal{B}_Q\\S\subset R}} \frac{|S|}{|\mathcal{B}_Q^{*}|} (d_S^k - d_R^k)
      \Bigr)
      \Bigr|
      =: \alpha + \beta.
    \end{aligned}
  \end{equation}

  We will now separately estimate $\alpha$ and $\beta$.  Exploiting~\eqref{eq:37}, property \textrefp[C]{enu:dfn:compatibility:i} of our construction and the
  fact that $\mathcal{B}_Q^{*}\subset \mathcal{B}_P^{*}$ (see \Cref{rem:block-basis}) yields
  \begin{equation}\label{eq:54}
    \beta
    \leq \sum_{R\in \mathcal{B}_P} \sum_{\substack{S\in \mathcal{B}_Q\\S\subset R}} \frac{|S|}{|\mathcal{B}_Q^{*}|} |d_S^k - d_R^k|
    \leq \varepsilon\cdot \sum_{R\in \mathcal{B}_P} \frac{|\mathcal{B}_Q^{*}\cap R|}{|\mathcal{B}_Q^{*}|}
    = \varepsilon.
  \end{equation}
  For the other term, we proceed as follows.  Since $|d_R^{k}|\leq \|D\|$, we obtain
  \begin{equation}\label{eq:38}
    \begin{aligned}
      \alpha
      &\leq \|D\|\cdot \sum_{R\in \mathcal{B}_P} \frac{|R|}{|\mathcal{B}_P^{*}|}\cdot \Bigl|
        1 - \sum_{\substack{S\in \mathcal{B}_Q\\S\subset R}} \frac{|\mathcal{B}_P^{*}|}{|R|} \frac{|S|}{|\mathcal{B}_Q^{*}|} 
      \Bigr|\\
      &= \|D\|\cdot \sum_{R\in \mathcal{B}_P} \frac{|R|}{|\mathcal{B}_P^{*}|}\cdot \Bigl|
        1 - \frac{|\mathcal{B}_P^{*}|}{|\mathcal{B}_Q^{*}|} \frac{|\mathcal{B}_Q^{*}\cap R|}{|R|}
        \Bigr|.
    \end{aligned}
  \end{equation}
  Recall that $Q\subset P$ and $R\in \mathcal{B}_P$. Thus, by~\textrefp[C]{enu:dfn:compatibility:iii} and~\eqref{eq:34} we obtain
  \begin{equation*}
    (1 - \kappa_P)
    \leq \frac{|\mathcal{B}_P^{*}|}{|\mathcal{B}_Q^{*}|} \frac{|\mathcal{B}_Q^{*}\cap R|}{|R|}
    \leq (1 - \kappa_P)^{-1},
  \end{equation*}
  Consequently,
  \begin{equation*}
    \Bigl|
    1 - \frac{|\mathcal{B}_P^{*}|}{|\mathcal{B}_Q^{*}|} \frac{|\mathcal{B}_Q^{*}\cap R|}{|R|}
    \Bigr|
    \leq \frac{\kappa_P}{1 - \kappa_P}.
  \end{equation*}
  Inserting the latter estimate into~\eqref{eq:38}, applying~\textrefp[C]{enu:dfn:compatibility:i} and subsequently using~\eqref{eq:37} yields
  \begin{equation*}
    \alpha
    \leq \sum_{R\in \mathcal{B}_P} \frac{|R|}{|\mathcal{B}_P^{*}|} \frac{\kappa_P}{1 - \kappa_P} \|D\|
    \leq \frac{\kappa_P}{1 - \kappa_P} \|D\|
    \leq \varepsilon.
  \end{equation*}

  Combining~\eqref{eq:18}, \eqref{eq:54} and the above estimate gives us
  \begin{equation}\label{eq:36}
    |A_i^{\widetilde{D},k}(x) - A_j^{\widetilde{D},k}(x)|
    \leq 2\varepsilon.
  \end{equation}
  To summarize, we proved that $(A_n^{\widetilde{D},k})_{n=0}^{\infty}$ is a Cauchy sequence with respect to uniform norm in $[0,1)$ for all
  $1\leq k\leq r$.

  By passing to a suitable subsequence $(A_{n_i}^{\widetilde{D},k})_{i=0}^{\infty}$ of $(A_n^{\widetilde{D},k})_{n=0}^{\infty}$ we can assume
  that~\eqref{eq:pro:stab-3:1} holds for $(A_{n_i}^{\widetilde{D},k})_{i=0}^{\infty}$.  Applying \Cref{lem:block-basis} once more, this time to
  $U = [0,1)$ and $(n_i)_{i=0}^{\infty}$, we obtain a faithful sequence of collections $(\mathcal{B}_P' : P\in \mathcal{A})$ (see
  \Cref{rem:block-basis:3}), which determines a new pair of canonical factorization operators $A',B'\colon Z_V\to Z_V$ according to
  \Cref{dfn:canonical}.  If we put $\widehat{D} = A'\widetilde{D}B'$, then by the above calculation, \eqref{eq:pro:stab-3:1} holds for
  $(A_n^{\widehat{D},k})_{n=0}^{\infty}$.  Clearly $\gamma_X^u(1) = \gamma_X^d(1) = 1$, and since $\widehat{D} = A'ADBB'$ projectionally factors
  through $D$, \Cref{thm:canonical} and \Cref{thm:canonicalkp} give us that the factorization constant is $1+\delta$.  Since $\delta$ can be made
  arbitrarily small, we have proved that $\widehat{D}$ projectionally factors through $D$ with constant $1^+$.  Finally, note that by
  \Cref{lem:functional-stability}, we also have $\lambda_{\mathcal{U}}^k(\widehat{D}) = \lambda_{\mathcal{U}}^k(\widetilde{D})$, $1\leq k\leq r$,
  which combined with~\eqref{eq:183} gives us~\eqref{eq:181}.
\end{myproof}

\subsection{Pointwise multipliers align with the finest bounded partition}
\label{sec:pointw-mult-conf}

At this stage of the stabilization procedure, the associated adapted processes converge almost everywhere to a pointwise multiplier.
\Cref{thm:agreement} below asserts that these pointwise multipliers must be aligned with the finest bounded partition associated with the underlying
Janson-Sobolev or Janson-Sobolev-Hardy space.  This observation will be used in the final step of the stabilization procedure in
\Cref{sec:reduct-canon-proj}, where we will reduce any multiplier to a linear combination of canonical projections corresponding to the finest bounded
partition.

\begin{lem}\label{lem:agreement}
  Let $D = (d_P^k : P\in \mathcal{A},\ 1\leq k\leq r)$ denote a bounded diagonal operator such that $(A_n^{D,k})_{n=0}^{\infty}$ converges almost
  everywhere to $M^{D,k}$ for all $1\leq k\leq r$.  Suppose there exist $1\leq k_0\neq k_1 \leq r$ such that the set
  \begin{equation}\label{eq:15}
    S:= \{ M^{D,k_0} \neq M^{D,k_1} \}
    \quad\text{has positive measure}.
  \end{equation}
  Then there exists a partition of $\{1, \dots, r\}$ into $\mathcal{K}\ni k_0$ and $\mathcal{L}\ni k_1$ such that
  \begin{equation*}
    Q_{\mathcal{K}} = \sum_{k\in \mathcal{K}} Q_k
    \qquad\text{and}\qquad
    Q_{\mathcal{L}} = \sum_{k\in \mathcal{L}} Q_k  
  \end{equation*}
  are both bounded operators on $Z_V$.
\end{lem}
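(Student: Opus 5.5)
The strategy is to move to a diagonal operator $D'$, factored through $D$, whose adapted processes converge uniformly and very fast, and whose limits are exactly equal or uniformly separated from index to index. Then I write the powers of $D'$ in Vandermonde form and apply \Cref{thm:vantrick1} to extract boundedness of one canonical projection.

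\emph{Step 1 (localization).} Since $S$ has positive measure, I pick a measurable $U$ with $|U|>0$ and a number $\delta>0$ with the following property: for every pair $k,j$, either $M^{D,k}=M^{D,j}$ a.e.\ on $U$, or $|M^{D,k}-M^{D,j}|\geq\delta$ on $U$. Such a $U$ can be found by intersecting over the finitely many pairs and using density points. By Egorov's theorem I may also assume that $A_n^{D,k}\to M^{D,k}$ uniformly on $U$ for all $k$, and that $U\cap S\neq\emptyset$ up to null sets. The constant in this lemma is irrelevant, since only boundedness matters. So I run the construction of \Cref{pro:stab-3} with the $(1-\kappa_P)$-faithful system of \Cref{lem:block-basis} mounted on $U$, and the factorization operators of \Cref{thm:canonicalkp}; these are bounded because $\gamma_X^u,\gamma_X^d<\infty$. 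I follow this with a faithful re-blocking along a sparse subsequence. The result is $D'=A'ADBB'$ with the following property: for $P\in\mathcal{AF}_n$, the coefficients $d'^k_P$ lie within $\eta_n$ of the corresponding constant-on-$\mathcal{B}_P$ averages of $M^{D,k}$ over $U$. I choose $\eta_n\leq \varepsilon q^{-2n}$. Consequently, $|d'^k_P-d'^{j}_P|\leq 2\eta_n$ whenever $M^{D,k}=M^{D,j}$ on $U$, and $|d'^k_P-d'^{j}_P|\geq\delta/2$ otherwise. Since we only need boundedness, it is harmless that $D'$ is merely a factor of $D$: the claim concerns $Q_{\mathcal K}$ itself and not $D$.

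\emph{Step 2 (grouping and error term).} Let $\mathcal{K}_1,\dots,\mathcal{K}_s$ be the equivalence classes of the relation ``$M^{D,k}=M^{D,j}$ on $U$''. Let $\mathcal{K}:=\mathcal{K}_1\ni k_0$; then $k_1\notin\mathcal{K}$, because $U$ meets $S$. Fix representatives $k_t\in\mathcal{K}_t$, and let $S_t$ be the diagonal operator having the entries $d'^{k_t}_P$ in \emph{every} index $k$. By the uniform fast convergence, $S_t$ has finite variation norm with respect to the trivial partition $\{1,\dots,r\}$, for which $Q=\Id$ is bounded. Hence $S_t$ is bounded by \Cref{pro:variation-norm}, and the operators $S_t$ commute. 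Now write
\begin{equation*}
  (D')^j=\sum_{k}\bigl(\text{entries }(d'^k_P)^j\text{ on index }k\bigr)=\sum_{t=1}^s S_t^jQ_{\mathcal{K}_t}+E_j .
\end{equation*}
Here $E_j$ is the diagonal operator whose entries are $(d'^k_P)^j-(d'^{k_t}_P)^j$ in index $k\in\mathcal{K}_t$. These entries are bounded by $2j\|D\|^{j-1}\eta_n$ for $P\in\mathcal{AF}_n$, so $\sum_{P,k}|e_P^k|\lesssim\sum_n q^n q^{-2n}<\infty$. By \Cref{lem:schauder}, each $E_j$, $0\leq j\leq s$, is bounded. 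Thus $T_j:=(D')^j-E_j=\sum_t S_t^jQ_{\mathcal{K}_t}$ is continuous when defined on the finite linear combinations $Y$ (with the $Z_V$-norm), with values in $X=Z_V$; here $g_t=Q_{\mathcal{K}_t}|_Y$.

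\emph{Step 3 (Vandermonde).} For $t\neq1$ the entries of $S_1-S_t$ satisfy $|d'^{k_1}_P-d'^{k_t}_P|\geq\delta/2$ and have finite variation. By the second part of \Cref{pro:variation-norm}, after normalizing so that the root coefficient dominates the oscillation (possible after discarding finitely many levels, i.e.\ by one more faithful re-blocking below a fixed level), $(S_1-S_t)^{-1}$ is a bounded diagonal operator. \Cref{thm:vantrick1} now yields that $g_1=Q_{\mathcal{K}}$ is continuous on $Y$, hence extends to a bounded operator. Setting $\mathcal{L}=\{1,\dots,r\}\setminus\mathcal{K}\ni k_1$, we obtain that $Q_{\mathcal{L}}=\Id-Q_{\mathcal{K}}$ is bounded as well.

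The main obstacle is Step 1 together with the error control in Step 2. The construction must give, simultaneously, genuine separation by $\delta$ between different classes and a summable discrepancy (of order $q^{-2n}$) within a class, even though the factorization operators are built from non-faithful systems mounted on $U$. If the rate cannot be forced directly from \Cref{pro:stab-3}, I would first make the within-class processes uniformly Cauchy with rate $\eta_n$, and then pass to a very sparse faithful re-blocking $(n_i)$. This shrinks the discrepancy at level $i$ to at most $2\eta_{n_i}$, while the number of atoms stays $q^i$, which yields the needed summability.
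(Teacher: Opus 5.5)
Your overall architecture matches the paper's. You localize to $U$, block with a $(1-\kappa_P)$-faithful system mounted on $U$, re-block faithfully for fast uniform convergence, bound the $S_t$ via the variation norm, and apply \Cref{thm:vantrick1}. Your error operators $E_j$ are only a different bookkeeping for the paper's passage from $\widetilde{D}$ to $\widehat{D}$.

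\textbf{The gap is in Step 1.} From ``either $M^{D,k}=M^{D,j}$ a.e.\ on $U$ or $|M^{D,k}-M^{D,j}|\geq\delta$ on $U$'' you conclude $|d'^k_P-d'^j_P|\geq\delta/2$ in the second case, and this does not follow.
\begin{itemize}
\item The coefficients of $ADB$, and then of $A'ADBB'$, are convex averages of $A^{D,k}_{m}$, hence essentially of $M^{D,k}$, over the sets $\mathcal{B}_P^{*}$.
\item By (C3) of \Cref{dfn:compatibility}, these sets are spread over all of $U$: $\mathcal{B}_P^{*}$ meets every $R\in\mathcal{B}_{[0,1)}$ in relative measure about $|P|$. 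In particular, the root coefficient is approximately $|U|^{-1}\int_U M^{D,k}$.
\item A lower bound on $|M^{D,k}-M^{D,j}|$ does not survive such averaging. Suppose $M^{D,k}-M^{D,j}$ equals $+\delta$ on half of $U$ and $-\delta$ on the other half, say constant on the atoms of $\mathcal{B}_{[0,1)}$. Then every coefficient of $S_1-S_t$ is close to $0$.
\item Consequently $(S_1-S_t)^{-1}$ need not be bounded, and hypothesis (iv) of \Cref{thm:vantrick1} fails. This is precisely the step the whole argument rests on.
\item Discarding finitely many levels does not help, since deep coefficients are averages over equally spread-out sets.
\end{itemize}

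\textbf{What is missing} is what the paper secures in conditions (a)--(c) of its proof. Restrict $U$ further so that each limit is nearly constant there: $|M^{D,k}-c_t|\leq\delta/4$ on $U$ for $k\in\mathcal{K}_t$, with $|c_t-c_{t'}|\geq\delta$ for $t\neq t'$. A fixed sign of each between-class difference on $U$ would also suffice.

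This costs one more pigeonhole step. Cover the bounded range of $(M^{D,1},\dots,M^{D,r})$ on $U$ by finitely many small cubes, and keep one whose preimage has positive measure. The separation of $k_0$ and $k_1$, already fixed on $U$, is inherited by this subset. Then:
\begin{itemize}
\item every average over a subset of $U$ stays within $\delta/4$ of the corresponding $c_t$;
\item a separation of order $\delta/2$ persists through both blockings, up to the errors from the $\kappa_P$-portions outside $U$ and from $A^{D,k}_m\neq M^{D,k}$ on $U$, which you control anyway;
\item the rest of your argument goes through.
\end{itemize}

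Incidentally, once $|d'^{k_1}_P-d'^{k_t}_P|\geq\delta/2$ holds for all $P$, Step 3 needs no extra re-blocking. The reciprocals have variation at most $4\delta^{-2}$ times that of $S_1-S_t$, so the first part of \Cref{pro:variation-norm} already bounds $(S_1-S_t)^{-1}$.
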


Before we prove \Cref{lem:agreement}, we state and prove the following consequence.
\begin{thm}\label{thm:agreement}
  Let $D = (d_P^k : P\in \mathcal{A},\ 1\leq k\leq r)$ denote a bounded diagonal operator such that $(A_n^{D,k})_{n=0}^{\infty}$ converges to
  $M^{D,k}$ on $[0,1)$ for all $1\leq k\leq r$.  Suppose that $\mathcal{K}_1,\dots, \mathcal{K}_s$ is the finest bounded partition of
  $\{1,\dots, r\}$.  Then $M^{D,k} = M^{D,k'}$ almost everywhere on $[0,1)$ for all $k,k'\in \mathcal{K}_t$ and $1\leq t\leq s$.
\end{thm}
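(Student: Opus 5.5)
We prove \Cref{thm:agreement} by contradiction, deriving it from \Cref{lem:agreement}. Assume that for some $1\leq t\leq s$ there are $k,k'\in\mathcal{K}_t$ with $M^{D,k}\neq M^{D,k'}$ on a set of positive measure. Everywhere convergence of $(A_n^{D,k})$ on $[0,1)$ gives in particular almost everywhere convergence to $M^{D,k}$ for each $k$. Hence \Cref{lem:agreement} applies with $k_0=k$ and $k_1=k'$. It yields a partition $\{1,\dots,r\}=\mathcal{K}\cup\mathcal{L}$ with $k\in\mathcal{K}$, $k'\in\mathcal{L}$, such that $Q_{\mathcal{K}}$ and $Q_{\mathcal{L}}$ are both bounded on $Z_V$.

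Next we intersect this partition with the finest bounded partition. The canonical projections $Q_k$, $1\leq k\leq r$, are mutually orthogonal idempotents on the dense span of $(\varphi_P^k)$, so for any $\mathcal{K}',\mathcal{K}''\subset\{1,\dots,r\}$ we have
\begin{equation*}
  Q_{\mathcal{K}'}Q_{\mathcal{K}''}=Q_{\mathcal{K}'\cap\mathcal{K}''}
\end{equation*}
on the span, and hence on $Z_V$ whenever the left side is bounded. This is the same computation used in~\eqref{eq:othepartitioneqb0}. Consequently $Q_{\mathcal{K}_t\cap\mathcal{K}}=Q_{\mathcal{K}_t}Q_{\mathcal{K}}$ is bounded as a product of bounded operators.

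Now $\mathcal{K}_t\cap\mathcal{K}$ contains $k$, so it is non-empty. It does not contain $k'$, because $k'\in\mathcal{L}=\{1,\dots,r\}\setminus\mathcal{K}$, so it is a proper subset of $\mathcal{K}_t$. By \Cref{dfn:finest-bounded-partition}, $Q_{\mathcal{K}_t\cap\mathcal{K}}$ must therefore be unbounded. This contradicts the previous step, so $M^{D,k}=M^{D,k'}$ almost everywhere for all $k,k'\in\mathcal{K}_t$, as claimed.

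The only points needing care are the following.
\begin{itemize}
\item Everywhere convergence is stronger than almost everywhere convergence, so the hypothesis of \Cref{lem:agreement} is met.
\item The product identity for possibly unbounded canonical projections extends from the dense span to all of $Z_V$. This follows by continuity, since the bounded product operator agrees with $Q_{\mathcal{K}_t\cap\mathcal{K}}$ on the span.
\end{itemize}
The substantive work lies entirely in \Cref{lem:agreement}, which is taken as given here. Its proof would presumably apply the Vandermonde technique of \Cref{thm:vantrick1} to the commuting diagonal operators obtained from $D$ restricted to the level sets of the multipliers.
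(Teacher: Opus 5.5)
Your proposal is correct and follows the paper's proof essentially verbatim: argue by contradiction, invoke \Cref{lem:agreement} to obtain bounded $Q_{\mathcal{K}}$ with $k_0\in\mathcal{K}$ and $k_1\notin\mathcal{K}$, and then observe that $Q_{\mathcal{K}}Q_{\mathcal{K}_{t_0}}=Q_{\mathcal{K}\cap\mathcal{K}_{t_0}}$ is bounded, with $\mathcal{K}\cap\mathcal{K}_{t_0}$ a non-empty proper subset of $\mathcal{K}_{t_0}$, contradicting \Cref{dfn:finest-bounded-partition}. Your two remarks, that everywhere convergence gives the almost everywhere hypothesis of the lemma and that the product identity extends from the dense span, are exactly the small points the paper leaves implicit.
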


\begin{proof}[Proof of \Cref{thm:agreement}]
  Let $\mathcal{K}_1,\dots, \mathcal{K}_s$ denote the finest bounded partition of $\{1,\dots, r\}$ according to \Cref{dfn:finest-bounded-partition}.
  Suppose to the contrary that there exist $1\leq t_0\leq s$ and $k_0\neq k_1\in \mathcal{K}_{t_0}$ such that the set $\{ M^{D,k_0} \neq M^{D,k_1} \}$
  has positive measure.  Utilizing \Cref{lem:agreement} shows that there exists a partition $\mathcal{K}\ni k_0$ and $\mathcal{L}\ni k_1$ of
  $\{1,\dots, r\}$ such that the projections
  \begin{equation*}
    Q_{\mathcal{K}} = \sum_{k\in \mathcal{K}} Q_k
    \qquad\text{and}\qquad
    Q_{\mathcal{L}} = \sum_{k\in \mathcal{L}} Q_k
  \end{equation*}
  are bounded operators on $Z_V$.  Consequently, the operator $Q_{\mathcal{K}} Q_{\mathcal{K}_{t_0}} = Q_{\mathcal{K}\cap \mathcal{K}_{t_0}}$ is also
  bounded.  On the other hand, since $\emptyset\neq \mathcal{K}\cap \mathcal{K}_{t_0}\subsetneq \mathcal{K}_{t_0}$ and
  $\mathcal{K}_1,\dots, \mathcal{K}_s$ is the finest bounded partition, we arrive at the contradiction that the operator
  $Q_{\mathcal{K}\cap \mathcal{K}_{t_0}}$ is unbounded.
\end{proof}

\begin{proof}[Proof of \Cref{lem:agreement}]
  First, we choose a subset $U$ of $S$ with positive measure, such that there exists a partition $\mathcal{K}_1,\dots, \mathcal{K}_s$ of
  $\{1, \dots, r\}$ as well as constants $c_1, \dots, c_s$ and $\delta > 0$ with the following properties:
  \begin{enumerate}[(a)]
  \item\label{enu:lem:agreement:a} $M^{D,k} = M^{D,k'}$ almost everywhere on $U$ for all $1\leq t\leq s$ and $k,k'\in \mathcal{K}_{t}$,
  \item\label{enu:lem:agreement:b} $|M^{D,k}(x) - c_t| \leq \frac{\delta}{4}$ for almost all $x\in U$, $1\leq t\leq s$ and $k\in \mathcal{K}_{t}$,
  \item\label{enu:lem:agreement:c} $|c_t - c_{t'}| \geq \delta$ for all $1\leq t\neq t'\leq s$, and
  \item\label{enu:lem:agreement:d} $(A_n^{D,k})_{n=0}^{\infty}$ converges uniformly to $M^{D,k}$ on $U$, for all $1\leq k\leq r$.
  \end{enumerate}
  Since $U$ is a subset of $S$, there exist $1\leq t_0\neq t_1\leq s$ such that $k_0\in \mathcal{K}_{t_0}$ and $k_1\in \mathcal{K}_{t_1}$.

  Recall that $\lambda_{Z_V}$ denotes the basis constant of $(\varphi_P^k : P\in \mathcal{A},\ 1\leq k\leq r)$ and pick any sequence
  $(\eta_i)_{i=0}^{\infty}$ of positive numbers and $\eta > 0$ such that
  \begin{equation}\label{eq:85}
    4\lambda_{Z_V} r \sum_{i=0}^{\infty} (\# \mathcal{AF}_i) \eta_i
    \leq \eta
    < \min(1,\tfrac{\delta}{16}).
  \end{equation}
  Next, pick any sequence of positive numbers $(\kappa_P : P\in \mathcal{A})$ with $0\leq \kappa_P < 1$ and
  $\sum_{P\in \mathcal{A}} \kappa_P < \infty$.  As in the proof of \Cref{pro:stab-3}, we apply \Cref{lem:block-basis} to our $U$, $n_i= i$,
  $i\in \mathbb{N}_0$ and obtain a $(1 - \kappa_P : P\in \mathcal{A})$-faithful sequence of collections $(\mathcal{B}_P : P\in \mathcal{A})$ with
  respect to $(m_i)_{i=0}^{\infty}$ mounted on $U$.  Let $(\psi_P^k : P\in \mathcal{A},\ 1\leq k\leq r)$ and $A,B\colon Z_V\to Z_V$ denote the
  associated block basis and canonical factorization operators according to \Cref{dfn:canonical}, and put $\widetilde{D} = ADB$.  By
  \Cref{thm:canonicalkp}, the operators $A$ and $B$ are bounded and therefore $\widetilde{D}$ is too.  In the proof of \Cref{pro:stab-3}, it was shown
  that
  \begin{equation}\label{eq:53}
    \bigl| A_i^{\widetilde{D},k}(x) - A_j^{\widetilde{D},k}(x) \bigr|
    \leq \eta_i + \eta_j,
    \qquad x\in [0,1),\ 1\leq k\leq r.
  \end{equation}
  We claim that
  \begin{enumerate}[(a)]
    \setcounter{enumi}{4}
  \item\label{enu:lem:agreement:e} $M^{\widetilde{D},k} = M^{\widetilde{D},k'}$ everywhere on $[0,1)$ for all $1\leq t\leq s$ and
    $k,k'\in \mathcal{K}_{t}$,
  \item\label{enu:lem:agreement:f} $|M^{\widetilde{D},k}(x) - c_t| \leq \frac{\delta}{4}$ for all $x\in [0,1)$, $1\leq t\leq s$ and
    $k\in \mathcal{K}_{t}$,
  \item\label{enu:lem:agreement:g} $|c_t - c_{t'}| \geq \delta$ for all $1\leq t\neq t'\leq s$, and
  \item\label{enu:lem:agreement:h} $(A_i^{\widetilde{D},k})_{i=0}^{\infty}$ converges uniformly on $[0,1)$ to $M^{\widetilde{D},k}$ for all
    $1\leq k\leq r$.
  \end{enumerate}

  Property~\eqref{enu:lem:agreement:c} is identical to Property~\eqref{enu:lem:agreement:g}, and thus requires no verification.  Furthermore, by
  equation \eqref{eq:53}, the sequence $(A_i^{\widetilde{D}, k})_{i=0}^{\infty}$ converges uniformly on the interval $[0, 1)$, thereby verifying
  condition \eqref{enu:lem:agreement:h}.  We will now show that~\eqref{enu:lem:agreement:e} holds true.  Note that
  $\widetilde{D} = (\widetilde{d}_P^k : P\in \mathcal{A},\ 1\leq k\leq r)$ is given by
  \begin{equation}\label{eq:16}
    \widetilde{d}_P^k
    = \frac{\langle \psi_P^k, D \psi_P^k \rangle}{|\mathcal{B}_P^{*}|}
    = \sum_{R\in \mathcal{B}_{P}} \frac{|R|}{|\mathcal{B}_P^{*}|} d_R^k,
    \qquad P\in \mathcal{A},\ 1\leq k\leq r.
  \end{equation}
  Let $x\in P\in \mathcal{AF}_i$ for some $i\in \mathbb{N}_0$, and note that $\mathcal{B}_P\subset \mathcal{AF}_{m_i}$ by
  \textrefp[C]{enu:dfn:compatibility:iv}.  Hence, by the definition of $A_i^{\widetilde{D},k}$ (see~\eqref{eq:1}) and the above
  identity~\eqref{eq:16}, we obtain
  \begin{equation}\label{eq:55}
    A_i^{\widetilde{D},k}(x)
    = \sum_{R\in \mathcal{B}_{P}} \frac{|R|}{|\mathcal{B}_P^{*}|} \frac{1}{|R|} \int_{R} A_{m_i}^{D,k}
    = \frac{1}{|\mathcal{B}_P^{*}|} \int_{\mathcal{B}_P^{*}} A_{m_i}^{D,k},
  \end{equation}
  for all $x\in P\in \mathcal{AF}_i$, $i\in \mathbb{N}_0$ and $1\leq k\leq r$.

  Now let $1\leq t\leq s$, pick any $k,k'\in \mathcal{K}_t$ and let $x\in [0,1)$ be fixed.  Moreover, we fix $i\in \mathbb{N}_0$,
  $x\in P\in \mathcal{AF}_{m_i}$ and note that by~\eqref{eq:55}
  \begin{equation}\label{eq:79}
    \begin{aligned}
      |A_i^{\widetilde{D},k}(x) - A_i^{\widetilde{D},k'}(x)|
      &\leq \frac{1}{|\mathcal{B}_P^{*}|} \int_{\mathcal{B}_P^{*}} |A_{m_i}^{D,k} - A_{m_i}^{D,k'}|\\
      &\leq \frac{1}{|\mathcal{B}_P^{*}|} \int_{\mathcal{B}_P^{*}\cap U} |A_{m_i}^{D,k} - A_{m_i}^{D,k'}|
        + \frac{1}{|\mathcal{B}_P^{*}|} \int_{\mathcal{B}_P^{*}\setminus U} |A_{m_i}^{D,k} - A_{m_i}^{D,k'}|\\
      &=: \alpha + \beta.
    \end{aligned}
  \end{equation}
  First, we will estimate $\alpha$ by
  \begin{equation}\label{eq:60}
    \alpha
    \leq \|A_{m_i}^{D,k} - A_{m_i}^{D,k'}\|_{L^{\infty}(U)}.
  \end{equation}
  For $\beta$, recall that $\|A_{m_i}^{D,k}\|_{L^{\infty}}\leq \|D\|$, and hence
  \begin{equation}\label{eq:41}
    \beta
    \leq 2\frac{|\mathcal{B}_P^{*}\setminus U|}{|\mathcal{B}_P^{*}|} \|D\|.
  \end{equation}
  By~\textrefp[C]{enu:dfn:compatibility:v}, we obtain
  \begin{equation}\label{eq:82}
    |\mathcal{B}_P^{*}\setminus U|
    = \sum_{R\in \mathcal{B}_{P}} |R\setminus U|
    \leq \kappa_P \sum_{R\in \mathcal{B}_{P}} |R|
    = \kappa_P |\mathcal{B}_P^{*}|,
  \end{equation}
  which together with~\eqref{eq:41} gives us
  \begin{equation}\label{eq:78}
    \beta
    \leq 2\kappa_P \|D\|.
  \end{equation}
  Combining~\eqref{eq:79}, \eqref{eq:60} and~\eqref{eq:78} yields
  \begin{equation*}
    |A_i^{\widetilde{D},k}(x) - A_i^{\widetilde{D},k'}(x)|
    \leq \|A_{m_i}^{D,k} - A_{m_i}^{D,k'}\|_{L^{\infty}(U)} + 2\kappa_P \|D\|,
  \end{equation*}
  for all $i\in \mathbb{N}_0$ and $x\in P\in \mathcal{AF}_{m_i}$.  By Property~\eqref{enu:lem:agreement:d}, the first term on the right hand side of the preceding
  inequality converges to $0$ as $i\to \infty$.  For the second term, recall that $P\in \mathcal{AF}_{m_i}$ and that $\kappa_P\to 0$ for $|P| \to 0$. Hence the second
  term also converges to $0$ as $i\to \infty$.  Together, we proved that
  \begin{equation*}
    \lim_i |A_i^{\widetilde{D},k} - A_i^{\widetilde{D},k'}| = 0
    \quad \text{uniformly on $[0,1)$}.
  \end{equation*}
  Appealing to Property~\eqref{enu:lem:agreement:h}, we conclude the proof of Property~\eqref{enu:lem:agreement:e}.

  To verify Property~\eqref{enu:lem:agreement:f}, we essentially replicate the argument used in the proof of Property~\eqref{enu:lem:agreement:e}.
  Let $1\leq t\leq s$, $k\in \mathcal{K}_t$, $x\in P\in \mathcal{AF}_i$ and note that by~\eqref{eq:55}
  \begin{equation}\label{eq:61}
    \begin{aligned}
      |A_i^{\widetilde{D},k}(x) - c_t|
      &\leq \frac{1}{|\mathcal{B}_P^{*}|} \int_{\mathcal{B}_P^{*}} |A_{m_i}^{D,k} - c_t|\\
      &\leq \frac{1}{|\mathcal{B}_P^{*}|} \int_{\mathcal{B}_P^{*}\cap U} |A_{m_i}^{D,k} - c_t|
        + \frac{1}{|\mathcal{B}_P^{*}|} \int_{\mathcal{B}_P^{*}\setminus U} |A_{m_i}^{D,k} - c_t|\\
      &= \alpha' + \beta'
    \end{aligned}
  \end{equation}
  The following estimates for $\alpha'$ and $\beta'$ are obtained analogously to those for $\alpha$ and $\beta$.  Exploiting
  Property~\eqref{enu:lem:agreement:b}, we obtain
  \begin{equation}\label{eq:62}
    \begin{aligned}
      \alpha'
      &\leq \frac{1}{|\mathcal{B}_P^{*}|} \int_{\mathcal{B}_P^{*}\cap U} |A_{m_i}^{D,k} - M^{D,k}|
        + \frac{1}{|\mathcal{B}_P^{*}|} \int_{\mathcal{B}_P^{*}\cap U} |M^{D,k} - c_t|\\
      &\leq \|A_{m_i}^{D,k} - M^{D,k}\|_{L^{\infty}(U)}
        + \tfrac{\delta}{4}.
    \end{aligned}
  \end{equation}
  Since $\|A_{m_i}^{D,k}\|_{L^{\infty}}\leq \|D\|$, \eqref{eq:82} gives us
  \begin{equation}\label{eq:63}
    \beta'
    \leq \kappa_P(\|D\| + |c_t|).
  \end{equation}
  Combining~\eqref{eq:62} with~\eqref{eq:63} we obtain
  \begin{equation*}
    |A_i^{\widetilde{D},k}(x) - c_t|
    \leq \|A_{m_i}^{D,k} - M^{D,k}\|_{L^{\infty}(U)} + \tfrac{\delta}{4} + \kappa_P(\|D\| + |c_t|).
  \end{equation*}
  Finally, note that since $\lim_i \|A_{m_i}^{D,k} - M^{D,k}\|_{L^{\infty}(U)} = 0$ by Property~\eqref{enu:lem:agreement:d}, and since $\kappa_P\to 0$
  whenever $|P| \to 0$, we have established Property~\eqref{enu:lem:agreement:f}.

  Above, we have just established that the bounded diagonal operator $\widetilde{D} = (\widetilde{d}_P^k : P\in \mathcal{A},\ 1\leq k\leq r)$
  satisfies the Properties~\eqref{enu:lem:agreement:e}--\eqref{enu:lem:agreement:h}.  Next, pick $k_t\in \mathcal{K}_t$ for each $1\leq t\leq s$, and
  define the following (a priori possibly unbounded) diagonal operator $\widehat{D} = (\widehat{d}_P^k : P\in \mathcal{A},\ 1\leq k\leq r)$ by
  \begin{equation}\label{eq:83}
    \widehat{d}_P^k
    = \widetilde{d}_P^{k_t},
    \qquad P\in \mathcal{A},\ 1\leq t\leq s,\ k\in \mathcal{K}_t.
  \end{equation}
  We claim that
  \begin{equation}\label{eq:84}
    \|\widehat{D} - \widetilde{D}\colon Z_V\to Z_V\|
    \leq \eta < \infty;
  \end{equation}
  in particular, $\widehat{D}\colon Z_V\to Z_V$ is bounded.  Now let $i\in \mathbb{N}_0$, $P\in \mathcal{AF}_i$, $1\leq t\leq s$, $k\in \mathcal{K}_t$
  be fixed.  Observe that, by~\eqref{eq:1} and since $k_t\in \mathcal{K}_t$, it follows from Property~\eqref{enu:lem:agreement:e} and~\eqref{eq:53}
  that
  \begin{equation*}
    |\widehat{d}_P^k - \widetilde{d}_P^k|
    = |\widetilde{d}_P^{k_t} - \widetilde{d}_P^k|
    \leq |A_i^{\widetilde{D},k_t}(x) - M^{\widetilde{D},k_t}(x)| + |A_i^{\widetilde{D},k}(x) - M^{\widetilde{D},k}(x)|
    \leq 2\eta_i,
  \end{equation*}
  for all $x\in P\in \mathcal{AF}_i$.  Thus, by \Cref{lem:schauder} and~\eqref{eq:85}, we obtain
  \begin{equation*}
    \|\widehat{D} - \widetilde{D}\colon Z_V\to Z_V\|
    \leq 4\lambda_{Z_V} \sum_{i=0}^{\infty} \sum_{P\in \mathcal{AF}_i} \sum_{k=1}^r \eta_i
    = 4\lambda_{Z_V} r \sum_{i=0}^{\infty} (\# \mathcal{AF}_i) \eta_i
    = \eta
    < \infty,
  \end{equation*}
  as claimed in~\eqref{eq:84}.

  Next, we define for each $1\leq t\leq s$ the diagonal operator $D_t = (d_{t,P}^k : P\in \mathcal{A},\ 1\leq k\leq r)$ by
  \begin{equation}\label{eq:87}
    d_{t,P}^k
    = \widehat{d}_P^{k_t},
    \qquad P\in \mathcal{A},\ 1\leq k\leq r.
  \end{equation}
  We will now prove that $D_t\colon Z_V\to Z_V$ is bounded for each $1\leq t\leq s$.  Let $1\leq t_0\leq s$ be fixed and define the martingale
  operator $\mathring{D} = (\mathring{d}_P^k : P\in \mathcal{A},\ 1\leq k\leq r)$ by
  \begin{equation}\label{eq:88}
    \mathring{d}_P^k
    = d_{t_0,[0,1)}^k
    = \widehat{d}_{[0,1)}^{k_{t_0}},
    \qquad P\in \mathcal{A},\ 1\leq k\leq r.
  \end{equation}

  First, we will verify that $\mathring{D}\colon Z_V\to Z_V$ is indeed bounded. Let $f = \sum_{P,k} a_P^k \varphi_P^k$ denote a finite sum and note
  that
  \begin{align*}
    \|\mathring{D} f\|_{Z_V}
    &= \bigl\| \sum_{P,k} \mathring{d}_P^k a_P^k \varphi_P^k \bigr\|_{Z_V}
      = \bigl\| \sum_{P\in \mathcal{A}}  \sum_{k=1}^r \widehat{d}_{[0,1)}^{k_{t_0}} a_P^k \varphi_P^k \bigr\|_{Z_V}\\
    &= |\widehat{d}_{[0,1)}^{k_{t_0}}|\cdot \|f\|_{Z_V}
      \leq  \|\widehat{D}\|\cdot \|f\|_{Z_V}.
  \end{align*}

  Secondly, applying \Cref{pro:variation-norm} to the trivial partition $\{1,\dots, r\}$ of $\{1,\dots, r\}$, recalling that
  $d_{t_0,P}^k = d_{t_0,P}^{k_{t_0}}$ for all $P\in \mathcal{A}$, $1\leq k\leq r$ (see~\eqref{eq:87}) and
  $\mathring{d}_P^{k_{t_0}} = \mathring{d}_{[0,1)}^{k_{t_0}}$ for all $P\in \mathcal{A}$ (see~\eqref{eq:88}) yields
  \begin{equation}\label{eq:90}
    \begin{aligned}
      &\|D_{t_0} - \mathring{D}\colon Z_V\to Z_V\|
        \leq 2\cdot \|D_{t_0} - \mathring{D}\|_{\{1,\dots, r\}}\\
      &\qquad \leq 2\cdot \bigl(
        |d_{t_0,[0,1)}^{k_{t_0}} - \mathring{d}_{[0,1)}^{k_{t_0}}|
        + \sum_{\substack{P\in \mathcal{A}\\P\subsetneq [0,1)}} |(d_{t_0,\pi(P)}^{k_{t_0}} - \mathring{d}_{\pi(P)}^{k_{t_0}}) - (d_{t_0,P}^{k_{t_0}} - \mathring{d}_P^{k_{t_0}})|
      \bigr)\\
      &\qquad = 2\cdot \sum_{\substack{P\in \mathcal{A}\\P\subsetneq [0,1)}} |d_{t_0,\pi(P)}^{k_{t_0}} - d_{t_0,P}^{k_{t_0}}|
      = 2\cdot \sum_{\substack{P\in \mathcal{A}\\P\subsetneq [0,1)}} |\widehat{d}_{\pi(P)}^{k_{t_0}} - \widehat{d}_P^{k_{t_0}}|.
    \end{aligned}
  \end{equation}
  By~\eqref{eq:83} and~\eqref{eq:1}, we see that $\widehat{d}_P^{k_{t_0}} = \widetilde{d}_P^{k_{t_0}} = A_i^{\widetilde{D},k_{t_0}}(x)$, whenever
  $x\in P\in \mathcal{AF}_i$, and thus, by~\eqref{eq:53} we obtain
  \begin{equation}\label{eq:89}
    |\widehat{d}_{\pi(P)}^{k_{t_0}} - \widehat{d}_P^{k_{t_0}}|
    \leq \eta_{i-1} + \eta_i,
    \qquad P\in \mathcal{AF}_i,\ i\in \mathbb{N}.
  \end{equation}
  Combining~\eqref{eq:90} with~\eqref{eq:89} and recalling the definition of $\eta$ (see~\eqref{eq:85}) yields
  \begin{equation*}
    \|D_{t_0} - \mathring{D}\colon Z_V\to Z_V\|
    \leq 2\cdot \sum_{i=1}^{\infty} (\#\mathcal{AF}_i) (\eta_{i-1} + \eta_i)
    < \infty.
  \end{equation*}
  In particular, the operators $D_t\colon Z_V\to Z_V$, $1\leq t\leq s$ are all bounded.

  Our next objective is to verify the prerequisites of the Vandermonde technique \Cref{thm:vantrick1} applied in the following setting: We put
  $S_t := D_t$, $g_t:= Q_{\mathcal{K}_t}$, $1\leq t\leq s$ and $T_j := \widehat{D}^j$, $0\leq j\leq s$.

  First, we recall that, as established above, the operator $D_t\colon Z_V\to Z_V$ is continuous for each $1\leq t\leq s$, verifying
  \Cref{thm:vantrick1}~\eqref{enu:th:vantrick1:i}.  Secondly, observe that the operators $D_t$, $1 \leq t \leq s$, commute with one another, which
  gives us \Cref{thm:vantrick1}~\eqref{enu:th:vantrick1:ii}.  Thirdly, note the identity
  \begin{equation}\label{eq:100}
    \widehat{D}
    = \sum_{t=1}^s D_t Q_{\mathcal{K}_t}.
  \end{equation}
  Since $\widehat{D}$ is a bounded operator, taking powers $j=0,\dots, s$ in the above identity shows that the operators $T_j\colon Z_V\to Z_V$,
  $0\leq j\leq s$ are all bounded, which establishes \Cref{thm:vantrick1}~\eqref{enu:th:vantrick1:iii}.  Moreover, note that
  \begin{equation}\label{eq:94}
    T_j
    = \Bigl( \sum_{t=1}^s D_t Q_{\mathcal{K}_t} \Bigr)^j
    = \sum_{t=1}^s D_t^j Q_{\mathcal{K}_t},
    \qquad 0\leq j\leq s,
  \end{equation}
  which shows that $T_j$ has the required algebraic form~\eqref{eq:77}.  The latter identity is a consequence of the following observations:
  \begin{itemize}
  \item $D_{t_1} Q_{\mathcal{K}_{t_1}} D_{t_0} Q_{\mathcal{K}_{t_0}} = D_{t_1} D_{t_0} Q_{\mathcal{K}_{t_1}} Q_{\mathcal{K}_{t_0}}$ for all
    $1\leq t_0, t_1\leq s$,
  \item $Q_{\mathcal{K}_{t_1}} Q_{\mathcal{K}_{t_0}} = 0$ for all $1\leq t_0\neq t_1 \leq s$, and
  \item $Q_{\mathcal{K}_{t}}$ is a projection for $1\leq t\leq s$.
  \end{itemize}
  The final ingredient for the application of the Vandermonde technique is to establish \Cref{thm:vantrick1}~\eqref{enu:th:vantrick1:iv}, i.e., that
  the operators $D_{t_0}- D_{t_1}$, $1\leq t_0\neq t_1\leq s$ are continuously invertible.  To accomplish this goal, we want to apply the additional
  part of \Cref{pro:variation-norm}.  Clearly, $D_{t_0}- D_{t_1} = (d_{t_0,P}^{k_{t_0}} - d_{t_1,P}^{k_{t_1}} : P\in \mathcal{A},\ 1\leq k\leq r)$ is
  a diagonal operator, and by~\eqref{eq:87} as well as~\eqref{eq:83}, we obtain
  \begin{equation}\label{eq:95}
    \begin{aligned}
      |(d_{t_0,[0,1)}^{k_{t_0}} - d_{t_1,[0,1)}^{k_{t_1}}) - (d_{t_0,P}^{k_{t_0}} - d_{t_1,P}^{k_{t_1}})|
      &= |(\widetilde{d}_{[0,1)}^{k_{t_0}} - \widetilde{d}_{[0,1)}^{k_{t_1}}) - (\widetilde{d}_P^{k_{t_0}} - \widetilde{d}_P^{k_{t_1}})|\\
      &\leq |\widetilde{d}_{[0,1)}^{k_{t_0}} - \widetilde{d}_P^{k_{t_0}}| + |\widetilde{d}_{[0,1)}^{k_{t_1}} - \widetilde{d}_P^{k_{t_1}}|.
    \end{aligned}
  \end{equation}
  Recalling that by~\eqref{eq:1}, $\widetilde{d}_P^{k_{t_0}} = A_i^{\widetilde{D},k_{t_0}}(x)$, whenever $x\in P\in \mathcal{AF}_i$,
  $i\in \mathbb{N}_0$, which combined with~\eqref{eq:53}, \eqref{eq:95} and~\eqref{eq:85} yields for $x\in P\in \mathcal{AF}_i$
  \begin{equation}\label{eq:64}
    \begin{aligned}
      |(d_{t_0,P_0}^{k_{t_0}} - d_{t_1,P_0}^{k_{t_1}}) - (d_{t_0,P}^{k_{t_0}} - d_{t_1,P}^{k_{t_1}})|
      &\leq |A_0^{\widetilde{D},k_{t_0}}(x) - A_i^{\widetilde{D},k_{t_0}}(x)| + |A_0^{\widetilde{D},k_{t_1}}(x) - A_i^{\widetilde{D},k_{t_1}}(x)|\\
      &\leq \eta_0 + \eta_i \leq \eta.
    \end{aligned}
  \end{equation}
  Similarly, we can show that for all $x\in [0,1)$, we have
  \begin{align*}
    |d_{t_0,[0,1)}^{k_{t_0}} - d_{t_1,[0,1)}^{k_{t_1}}|
    &= |A_0^{\widetilde{D},k_{t_0}}(x) - A_0^{\widetilde{D},k_{t_1}}(x)|\\
    &\geq |M^{\widetilde{D},k_{t_0}}(x) - M^{\widetilde{D},k_{t_1}}(x)|
      - |A_0^{\widetilde{D},k_{t_0}}(x) - M^{\widetilde{D},k_{t_0}}(x)|\\
    &\qquad - |A_0^{\widetilde{D},k_{t_1}}(x) - M^{\widetilde{D},k_{t_1}}(x)|.
  \end{align*}
  Appealing to~\eqref{eq:85} as well as the Properties~\eqref{enu:lem:agreement:f} and~\eqref{enu:lem:agreement:g} together with~\eqref{eq:53} and the
  above estimate yields
  \begin{equation}\label{eq:99}
    \begin{aligned}
      |d_{t_0,[0,1)}^{k_{t_0}} - d_{t_1,[0,1)}^{k_{t_1}}|
      &\geq |M^{\widetilde{D},k_{t_0}}(x) - M^{\widetilde{D},k_{t_1}}(x)| - 2\eta_0\\
      &\geq |c_{t_0} - c_{t_1}| - \tfrac{2\delta}{4} - \tfrac{\delta}{4}
        \geq \tfrac{\delta}{4}.
    \end{aligned}
  \end{equation}

  Observe that by~\eqref{eq:99}, \eqref{eq:64} and~\eqref{eq:85}, we have
  \begin{equation}\label{eq:65}
    \begin{aligned}
      |d_{t_0,P}^{k_{t_0}} - d_{t_1,P}^{k_{t_1}}|
      &\geq |d_{t_0,[0,1)}^{k_{t_0}} - d_{t_1,[0,1)}^{k_{t_1}}|
        - |d_{t_0,P}^{k_{t_0}} - d_{t_0,[0,1)}^{k_{t_0}}|
        - |d_{t_1,P}^{k_{t_1}} - d_{t_1,[0,1)}^{k_{t_1}}|\\
      &\geq \tfrac{\delta}{4} - 2\eta
        \geq \tfrac{\delta}{8},
    \end{aligned}
  \end{equation}
  and hence~\eqref{eq:pro:variation-norm:3} of \Cref{pro:variation-norm} is satisfied for
  $D_{t_0} - D_{t_1} = (d_{t_0,P}^{k_{t_0}} - d_{t_1,P}^{k_{t_1}} : P\in \mathcal{A},\ 1\leq k\leq r)$.

  By~\eqref{eq:64} and~\eqref{eq:99}, we obtain
  \begin{equation}\label{eq:96}
    |(d_{t_0,[0,1)}^k - d_{t_1,[0,1)}^k) - (d_{t_0,P}^k - d_{t_1,P}^k)|
    \leq \eta
    \leq \frac{4}{\delta}\eta |d_{t_0,[0,1)}^k - d_{t_1,[0,1)}^k|,
  \end{equation}
  where we obtain from~\eqref{eq:85} that $0 < \frac{4}{\delta}\eta < 1$.
  
  By the above, all the prerequisites for applying the additional part of \Cref{pro:variation-norm} have been met:
  \begin{itemize}
  \item We refer to the definition of $D_t$ (see~\eqref{eq:87}) for the general hypothesis of \Cref{pro:variation-norm} to see that
    $d_{t,P}^k = d_{t,P}^{k'}$ for all $P\in \mathcal{A}$ and $k\neq k'$.
  \item By~\eqref{eq:96}, we see that~\eqref{eq:pro:variation-norm:4} is satisfied for $D_{t_0} - D_{t_1}$ and $\frac{4}{\delta}\eta$.
  \item Note that by~\eqref{eq:99}, the inequality in~\eqref{eq:pro:variation-norm:3} is satisfied for $\frac{\delta}{8}$, and by~\eqref{eq:65} we
    also obtain that $d_{t_0,P}^{k_{t_0}} - d_{t_1,P}^{k_{t_1}} \neq 0$.
  \end{itemize}
  Finally, we are in a position to apply \Cref{pro:variation-norm}, which gives us the boundedness of $(D_{t_0} - D_{t_1})^{-1}$:
  \begin{equation}\label{eq:97}
    \|(D_{t_0} - D_{t_1})^{-1}\|_{\{1,\dots, r\}}
    \leq \frac{64}{(1-\frac{4}{\delta}\eta)^2 \delta^2}\cdot \|D_{t_0} - D_{t_1}\|_{\{1,\dots, r\}}
    < \infty.
  \end{equation}

  With~\eqref{eq:97} established, all the prerequisites for the Vandermonde technique \Cref{thm:vantrick1} have also been met:
  Recalling~\eqref{eq:100} and applying \Cref{thm:vantrick1} finally yields the boundedness of the projections $Q_{\mathcal{K}_t}$, $1\leq t\leq s$.
  Recall that there exist $1\leq t_0\neq t_1\leq s$ such that $k_0\in \mathcal{K}_{t_0}$ and $k_1\in \mathcal{K}_{t_1}$, so we complete the proof by
  putting $\mathcal{K} = \mathcal{K}_{t_0}$ and $\mathcal{L} = \{1,\dots, r\}\setminus \mathcal{K}$.
\end{proof}

\subsection{Reduction to canonical projections}
\label{sec:reduct-canon-proj}

In this final step of the stabilization procedure, we reduce any diagonal operator via factorization to a linear combination of canonical projections
associated with the finest bounded partition of the underlying space.  The coefficients in this linear combination can be computed from the data of
the original diagonal operator via linear bounded functionals on the space of diagonal operators.  A further analysis of these linear combinations
permits us to establish the diagonal primary factorization property for Janson-Sobolev and Janson-Sobolev-Hardy spaces.

In our context, the canonical projections play the role of the Capon projection in~\cite[Theorem~2.14]{MR4816119}
(see~\Cref{thm:picewise-constant,thm:main-prop-I}).  For the reader's convenience, we restate our first main theorem,
\Cref{mainthm:picewise-constant}, below.

\begin{thm}[\Cref{mainthm:picewise-constant}]\label{thm:picewise-constant}
  Let $Z_V = Z_V((\mathcal{F}_j),X,\boldsymbol{\sigma};\mathbb{R}^l)$ denote a Janson-Sobolev space or a Janson-Sobolev-Hardy space, and let
  $\mathcal{K}_1, \dots, \mathcal{K}_s\neq \emptyset$ denote the finest bounded partition of $\{1, \dots, r\}$ for the orthonormal basis $\mathcal{B}$ of
  $V$ with respect to $Z_V$.  Suppose that $D\colon Z_V\to Z_V$ is a bounded diagonal operator with respect to
  $(\varphi_P^{k,\mathcal{B}} : P\in \mathcal{A},\ 1\leq k\leq r)$, let $k_t\in \mathcal{K}_t$, $1\leq t\leq s$ and pick any non-principal ultrafilter $\mathcal{U}$ on $\mathbb{N}_0$.  Then $S\colon Z_V\to Z_V$ given by
  \begin{equation}\label{eq:172}
    S = \sum_{t=1}^{s} \lambda_{\mathcal{U}}^{k_t}(D) Q_{\mathcal{K}_t}^{\mathcal{B}}
  \end{equation}
  almost projectionally factors through $D$ with constant~$1^{+}$.
\end{thm}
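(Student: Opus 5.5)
We chain the stabilization steps already established and finish with a final averaging step. Fix $\eta>0$ and $\delta>0$. First apply \Cref{pro:stab-2} to $D$. This gives a diagonal operator $D_1$ that almost projectionally factors through $D$ with constant $1$. Its adapted processes $(A_n^{D_1,k})$ are closed martingales, hence converge almost everywhere, and $\lambda_{\mathcal U}^k(D_1)=\lambda_{\mathcal U}^k(D)$ for all $k$.

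Next apply \Cref{pro:stab-3} to $D_1$ with a small $\kappa$ and a rapidly decaying sequence $(\eta_n)$. This produces $D_2$, projectionally factoring through $D_1$ with constant $1^+$. Its processes converge uniformly on $[0,1)$ with the rate \eqref{eq:pro:stab-3:1}, and $|\lambda_{\mathcal U}^k(D_2)-\lambda_{\mathcal U}^k(D)|\le \frac{6\kappa}{1-2\kappa}\|D\|$. By \Cref{thm:agreement}, the limits $M^{D_2,k}$ coincide almost everywhere for $k$ in each $\mathcal K_t$.

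It remains to reduce $D_2$ to a constant on each $\mathcal K_t$, equal to $\int M^{D_2,k_t}$. By uniform convergence and \Cref{rem:functionals}, this integral equals $\lambda_{\mathcal U}^{k_t}(D_2)$. Since the finitely many limits $M^{D_2,k_t}$ are bounded measurable functions, I would use the Gamlen--Gaudet construction with a faithful sequence mounted on $[0,1)$ (\Cref{lem:block-basis}, \Cref{rem:block-basis:3}). The blocks $\mathcal B_P$ are chosen at levels $n_i$ so deep that the following holds: for every $P$ and $k$, the average $\frac{1}{|\mathcal B_P^*|}\int_{\mathcal B_P^*} A^{D_2,k}_{n_i}$ is within $\eta_P$ of $\int M^{D_2,k}$.

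Achieving this is the step I expect to be the main obstacle. The sets $\mathcal B_P^*$ must equidistribute with respect to $M^{D_2,k}$. One option is to exploit that, by \eqref{eq:136}, each $\mathcal B_Q^*$ is a union of successors $S$ of atoms $R\in\mathcal B_P$ with prescribed $\ord(S)$. Since $M$ is approximately constant on the deep atoms $R$ (after conditioning), each such union samples $M$ proportionally, and so $\frac{1}{|\mathcal B_Q^*|}\int_{\mathcal B_Q^*}M\approx \frac{1}{|\mathcal B_P^*|}\int_{\mathcal B_P^*}M$. Alternatively, one can use a weak$^*$ or averaging argument in the spirit of \eqref{item:1}--\eqref{item:2}, choosing $m_j$ inductively with $\Gamma$ ranging over $\mathcal F_{m_{j-1}+1}$.

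With $A,B$ the canonical factorization operators (so $AB=\Id$ and $\|A\|=\|B\|=1$ by \Cref{thm:canonical}), \eqref{eq:171} shows that $AD_2B$ has entries within $\eta_P$ of $\lambda^{k}_{\mathcal U}(D_2)$. The entries of $D_2$ are uniformly close to the martingale limits, and these agree on each $\mathcal K_t$. Replacing the entries by the constant $\lambda^{k_t}_{\mathcal U}(D_2)$ therefore costs at most $\eta$ in norm, by \Cref{lem:schauder}, provided $\sum_P\eta_P$ is small. Note that the constant diagonal $\sum_t\lambda^{k_t}Q_{\mathcal K_t}$ is bounded because each $Q_{\mathcal K_t}$ is bounded.

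Composing the three factorizations gives $\|A'DB'-S'\|\le 3\eta$ with $A'B'=\Id$ and $\|A'\|\|B'\|\le 1+\delta$, where $S'=\sum_t\lambda_{\mathcal U}^{k_t}(D_2)Q_{\mathcal K_t}$. Finally, $\|S'-S\|\le \sum_t\|Q_{\mathcal K_t}\|\frac{6\kappa}{1-2\kappa}\|D\|$, which is small once $\kappa$ is chosen small relative to $\eta$ at the start. Since $\eta$ and $\delta$ were arbitrary, $S$ almost projectionally factors through $D$ with constant $1^+$.
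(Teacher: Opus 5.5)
Your first two steps (\Cref{pro:stab-2}, then \Cref{pro:stab-3}, then \Cref{thm:agreement}) are exactly the paper's. The final reduction, however, has a genuine gap.

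\textbf{The gap.} You want faithful blocks for which every entry $\widehat d_P^k=\frac{1}{|\mathcal B_P^*|}\int_{\mathcal B_P^*}A^{D_2,k}_{n_i}$ of $AD_2B$ lies within $\eta_P$ of the single constant $\lambda^k_{\mathcal U}(D_2)$, with $\sum_P\eta_P$ small, so that \Cref{lem:schauder} applies. No choice of faithful blocks achieves this in general. For $Q\in\mathcal S([0,1))$, faithfulness forces $\mathcal B_Q^*=\Gamma^Q_{[0,1)}$. Hence, up to $\eta_{n_1}$, the difference $\widehat d_Q^k-\int M^{D_2,k}$ is the average over $\Gamma^Q_{[0,1)}$ of $\cond(M^{D_2,k}|\mathcal F_{n_0+1})-\cond(M^{D_2,k}|\mathcal F_{n_0})$. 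This is small but in general not zero, and it is inherited by the descendants of $Q$.

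\textbf{A concrete counterexample.} Let $q=2$, and let $D_2$ have, for every $k$, the adapted process $c+\sum_{m<n}\epsilon_m\sum_{R\in\mathcal{AF}_m}h_R$. Here $h_R=\cha_{R_1}-\cha_{R_2}$ for the successors $R_1<R_2$ of $R$, and $\epsilon_m>0$ decays as fast as your $\eta_n$ require. This $D_2$ is bounded by \Cref{pro:variation-norm}. It also has every property of the output of \Cref{pro:stab-3} that your last step uses. For faithful blocks with respect to $(n_i)$ mounted on $[0,1)$, the process of $AD_2B$ is again a martingale, and $\widehat d_{Q_1}-\widehat d_{Q_2}=2\epsilon_{n_0}\neq 0$ for the two children $Q_1,Q_2$ of $[0,1)$. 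The martingale property gives $|\widehat d_{Q_m}-c'|\le 2^{1-i}\sum_{P\in\mathcal{AF}_i,\,P\subset Q_m}|\widehat d_P-c'|$. Therefore $\sum_P|\widehat d_P-c'|=\infty$ for every constant $c'$. So \Cref{lem:schauder}, which needs absolutely summable deviations, cannot close the argument, however well the blocks equidistribute.

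\textbf{The missing tool.} What you need is the variation-norm bound \Cref{pro:variation-norm}. It controls $\|D\|$ by $2\sum_t\|Q_{\mathcal K_t}\|\bigl(|d^{k_t}_{[0,1)}|+\sum_j\max_{P\in\mathcal{AF}_j}|d^{k_t}_{\pi(P)}-d^{k_t}_P|\bigr)$. It charges only the level-wise parent--child jumps and tolerates a common offset.

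With it, your extra Gamlen--Gaudet step is unnecessary, because \eqref{eq:pro:stab-3:1} already holds from level $0$. Write $d_P^k$ for the entries of $D_2$. The paper then proceeds in three moves.
\begin{enumerate}[(i)]
\item It aligns the entries within each $\mathcal K_t$ via \Cref{lem:schauder}. This is legitimate, since $|d_P^k-d_P^{k_t}|\le 2\eta_i$ for $P\in\mathcal{AF}_i$, by \Cref{thm:agreement} and \eqref{eq:188}; here there is no offset.
\item It compares the aligned operator with $\sum_t d^{k_t}_{[0,1)}Q_{\mathcal K_t}$ through \Cref{pro:variation-norm}. The difference has root value $0$ and jumps at most $\eta_{i-1}+\eta_i$.
\item It notes that $|d^{k_t}_{[0,1)}-\lambda^{k_t}_{\mathcal U}(D_2)|\le\eta_0$. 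This remaining discrepancy is a constant multiple of $Q_{\mathcal K_t}$ and is paid for with $\|Q_{\mathcal K_t}\|$, exactly as you already handle $\lambda(D_2)$ versus $\lambda(D)$.
\end{enumerate}
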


\begin{proof}
  Let $\eta > 0$, pick a sequence of positive numbers $(\eta_P : P\in \mathcal{A})$ such that
  \begin{equation}\label{eq:132}
    \sum_{P\in \mathcal{A}} \eta_P
    \leq \eta/(8\lambda_{Z_V} r (q+1))
    \leq \eta /(8r (q+1))
  \end{equation}
  and define $\eta_j = \min_{P\in \mathcal{AF}_j} \eta_P > 0$, $j\in \mathbb{N}_0$.  Recall that $\lambda_{Z_V}$ denotes the basis constant of
  $(\varphi_P^{k,\mathcal{B}} : P\in \mathcal{A},\ 1\leq k\leq r)$ and note that $\eta_j\to 0$.

  For any fixed $0 < \kappa < \tfrac{1}{2}$, we combine \Cref{pro:stab-2} with \Cref{pro:stab-3}. Thus, there exists a bounded diagonal operator
  $\widehat{D} = (\widehat{d}_P^k : P\in \mathcal{A},\ 1\leq k\leq r)$ with respect to
  $(\varphi_P^{k,\mathcal{B}} : P\in \mathcal{A},\ 1\leq k\leq r)$ which almost projectionally factors through $D$ with constant $1^{+}$ and satisfies
  the estimate~\eqref{eq:pro:stab-3:1} in \Cref{pro:stab-3} for the sequence $(\eta_j)_{j=0}^{\infty}$ we chose in~\eqref{eq:132}.  In particular,
  $(A_j^{\widehat{D},k})_{j=0}^{\infty}$ converges uniformly on $[0,1)$ to $M^{\widehat{D},k}$ for all $1\leq k\leq r$.  Moreover, we have that
  \begin{equation}\label{eq:184}
    |\lambda_{\mathcal{U}}^k(D) - \lambda_{\mathcal{U}}^k(\widehat{D})|
    \leq \frac{6\kappa}{1-2\kappa} \|D\|,
    \qquad 1\leq k\leq r.
  \end{equation}
  We would like to emphasize that $\|D\|$ in the above estimate arises from applying \Cref{pro:stab-3} to the output, denoted by $D_1$, of
  \Cref{pro:stab-2}, together with the estimate $\|D_1\|\leq \|D\|$.

  Since $\mathcal{K}_1, \dots, \mathcal{K}_s\neq \emptyset$ is the finest bounded partition, \Cref{thm:agreement} forces
  \begin{equation}\label{eq:189}
    M^{\widehat{D},k} = M^{\widehat{D},k'} \text{almost everywhere on $[0,1)$}
  \end{equation}
  for all $k,k'\in \mathcal{K}_t$ and $1\leq t\leq s$.  Letting $j\to \infty$ in~\eqref{eq:pro:stab-3:1} yields
  \begin{equation}\label{eq:188}
    \bigl| A_i^{\widehat{D},k}(x) - M^{\widehat{D},k}(x) \bigr|
    \leq \eta_i,
    \qquad x\in [0,1),\ 1\leq k\leq r,\ i\in \mathbb{N}_0.
  \end{equation}
  Thus, we obtain
  \begin{equation*}
    \bigl| A_i^{\widehat{D},k}(x) - A_j^{\widehat{D},k'}(x) \bigr|
    \leq \bigl| A_i^{\widehat{D},k}(x) - M^{\widehat{D},k}(x) \bigr| + \bigl| M^{\widehat{D},k'}(x) - A_j^{\widehat{D},k'}(x) \bigr|
    \leq \eta_i + \eta_j
  \end{equation*}
  for all $x\in [0,1)$, $k,k'\in \mathcal{K}_t$, $1\leq t\leq s$ and $i,j\in \mathbb{N}_0$.  In other words,
  \begin{equation}\label{eq:133}
    |\widehat{d}_P^k - \widehat{d}_Q^{k'}|
    \leq \eta_P + \eta_Q,
    \quad P\in \mathcal{AF},\ Q\in \mathcal{AF},\ P\subset Q,\ k,k'\in \mathcal{K}_t,\ 1\leq t\leq s.
  \end{equation}

  Now, select $k_t\in \mathcal{K}_t$, $1\leq t\leq s$, and define $\overline{D} = (\overline{d}_P^k : P\in \mathcal{A},\ 1\leq k\leq r)$ by
  \begin{equation}\label{eq:218}
    \overline{d}_P^k
    = \widehat{d}_P^{k_t},
    \qquad P\in \mathcal{AF},\ k\in \mathcal{K}_t,\ 1\leq t\leq s.
  \end{equation}
  Hence, \Cref{lem:schauder} and~\eqref{eq:133} together with~\eqref{eq:132} give us
  \begin{equation}\label{eq:138}
    \begin{aligned}
      \|(\overline{D} - \widehat{D})\colon Z_V\to Z_V\|
      &\leq 2 \lambda_{Z_V} \sum_{P\in \mathcal{A}} \sum_{t=1}^{s}\sum_{k\in \mathcal{K}_t}  |\overline{d}_P^k - \widehat{d}_P^k|\\
      &\leq 4 \lambda_{Z_V} \sum_{P\in \mathcal{A}} \sum_{k=1}^{r} \eta_P
        \leq \frac{\eta}{2}.
    \end{aligned}
  \end{equation}

  Next, we define $\widetilde{D} = (\widetilde{d}_P^k : P\in \mathcal{A},\ 1\leq k\leq r)$ by
  \begin{equation}\label{eq:114}
    \widetilde{d}_P^k
    := \overline{d}_{[0,1)}^k
    = \widehat{d}_{[0,1)}^{k_t},
    \qquad P\in \mathcal{AF},\ k\in \mathcal{K}_t,\ 1\leq t\leq s,
  \end{equation}
  and note that $\widetilde{D} = \sum_{t=1}^{s} \widehat{d}_{[0,1)}^{k_t}\cdot Q_{\mathcal{K}_t}$.

  Using \Cref{pro:variation-norm} and combining~\eqref{eq:133} and~\eqref{eq:132} gives us
  \begin{align*}
    \|\widetilde{D} - \overline{D}\|_{\mathcal{K}_1,\dots, \mathcal{K}_s}
    &\leq \sum_{k=1}^r \Bigl(
      \sum_{\substack{P\in \mathcal{A}\\P\subsetneq [0,1)}} |(\widetilde{d}_{\pi(P)}^k - \overline{d}_{\pi(P)}^k) - (\widetilde{d}_P^k - \overline{d}_P^k)|
    \Bigr)\\
    &= \sum_{k=1}^r \sum_{\substack{P\in \mathcal{A}\\P\subsetneq [0,1)}} |\widehat{d}_{\pi(P)}^k - \widehat{d}_P^k|
    \leq \sum_{k=1}^r \sum_{\substack{P\in \mathcal{A}\\P\subsetneq [0,1)}} (\eta_{\pi(P)} + \eta_P)\\
    &\leq 2 r (q+1) \sum_{P\in \mathcal{A}} \eta_P
      \leq \frac{\eta}{4}.
  \end{align*}
  Inserting the above estimate into the first result of \Cref{pro:variation-norm} yields
  \begin{equation*}
    \|(\widetilde{D} - \overline{D})\colon Z_V\to Z_V\|
    \leq 2\cdot \|\widetilde{D} - \overline{D}\|_{\{1, \dots, r\}}
    \leq \frac{\eta}{2}.
  \end{equation*}

  To summarize, we combine~\eqref{eq:138} with the latter estimate and record that
  \begin{equation}\label{eq:185}
    \|(\widetilde{D} - \widehat{D})\colon Z_V\to Z_V\|
    \leq \eta,
  \end{equation}

  Now we analyze $\lambda_{\mathcal{U}}^k$ for the constructed diagonal operators above.  To this end, let $1\leq t\leq s$ and $k\in \mathcal{K}_t$
  and first observe that by \Cref{dfn:functionals}, \eqref{eq:189} and~\eqref{eq:218}, we obtain
  \begin{equation}\label{eq:190}
    \lambda_{\mathcal{U}}^k(\widehat{D})
    = \int M^{\widehat{D},k}
    = \int M^{\overline{D},k_t}
    = \int M^{\overline{D},k}
    = \lambda_{\mathcal{U}}^k(\overline{D}).
  \end{equation}
  Second, \eqref{eq:114} and~\eqref{eq:188} yield
  \begin{equation}\label{eq:187}
    \begin{aligned}
      |\lambda_{\mathcal{U}}^k(\widetilde{D}) - \lambda_{\mathcal{U}}^k(\overline{D})|
      &= \lim_{i\to \mathcal{U}} \Bigl| \int A_i^{\widetilde{D},k} - A_i^{\overline{D},k}\Bigl|
        = \lim_{i\to \infty} \Bigl| \int \widehat{d}_{[0,1)}^{k_t} - A_i^{\widehat{D},k_t} \Bigl|\\
      &= \Bigl| \int A_0^{\widehat{D},k_t} - M^{\widehat{D},k_t} \Bigl|
        \leq \eta.
    \end{aligned}
  \end{equation}
  Thus, using~\eqref{eq:114} we obtain
  \begin{equation}\label{eq:186}
    \begin{aligned}
      \|S - \widetilde{D}\|
      &= \Bigl\| \sum_{t=1}^{s} \lambda_{\mathcal{U}}^{k_t}(D) Q_{\mathcal{K}_t}^{\mathcal{B}} - \sum_{t=1}^{s} \lambda_{\mathcal{U}}^{k_t}(\widetilde{D}) Q_{\mathcal{K}_t}^{\mathcal{B}} \Bigr\|
        \leq \sum_{t=1}^{s} |\lambda_{\mathcal{U}}^{k_t}(D) - \lambda_{\mathcal{U}}^{k_t}(\widetilde{D})|\cdot \|Q_{\mathcal{K}_t}^{\mathcal{B}}\|.
    \end{aligned}
  \end{equation}
  Combining~\eqref{eq:190}, \eqref{eq:187} and~\eqref{eq:184} gives us
  \begin{equation}\label{eq:192}
    |\lambda_{\mathcal{U}}^{k_t}(D) - \lambda_{\mathcal{U}}^{k_t}(\widetilde{D})|
    \leq \eta + |\lambda_{\mathcal{U}}^{k_t}(D) - \lambda_{\mathcal{U}}^{k_t}(\widehat{D})|
    \leq \eta + \frac{6\kappa}{1-2\kappa} \|D\|.
  \end{equation}
  Inserting~\eqref{eq:192} into~\eqref{eq:186} yields
  \begin{equation}\label{eq:193}
    \|S - \widetilde{D}\|
    \leq \bigl(\eta + \frac{6\kappa}{1-2\kappa} \|D\|\bigr) \sum_{t=1}^{s} \|Q_{\mathcal{K}_t}^{\mathcal{B}}\|.
  \end{equation}

  Since $\eta, \kappa > 0$ were chosen arbitrarily, and all the canonical projections $Q_{\mathcal{K}_t}^{\mathcal{B}}$, $1\leq t\leq s$ are bounded,
  we see that $S$ almost projectionally factors through $\widetilde{D}$ with constant $1^+$.  We conclude the proof by recalling that $\widehat{D}$
  almost projectionally factors through $D$ and referring to~\eqref{eq:185}.
\end{proof}

In the following corollary, we identify subspaces $E,F$ of $Z_V$ via the linear functionals $\lambda_{\mathcal{U}}^k$ such that $\Id_E$ and $\Id_F$ factor through
$D$ and $\Id_{Z_V} - D$, respectively.
\begin{cor}\label{cor:factor-1}
  Let $Z_V = Z_V((\mathcal{F}_j),X,\boldsymbol{\sigma};\mathbb{R}^l)$ denote a Janson-Sobolev space or a Janson-Sobolev-Hardy space, and let
  $D\colon Z_V\to Z_V$ denote a bounded diagonal operator.  Pick any non-principal ultrafilter $\mathcal{U}$ on $\mathbb{N}_0$ and define
  \begin{equation}\label{eq:194}
    \begin{aligned}
      \mathcal{K}
      &= \mathcal{K}(D)
        = \{ 1\leq k\leq r : \lambda_{\mathcal{U}}^k(D) \neq 0\},\\
      \mathcal{L}
      &= \mathcal{L}(D)
        = \{ 1\leq k\leq r : \lambda_{\mathcal{U}}^k(D) \neq 1\}.
    \end{aligned}
  \end{equation}
  Define the constants $\gamma = \sum_{t=1}^s \|Q_{\mathcal{K}_t}\|$, where $\mathcal{K}_t$, $1\leq t\leq s$ is the finest bounded partition, and put
  \begin{equation}\label{eq:202}
    \alpha
    = \alpha(D)
    = \max_{k\in \mathcal{K}}|\lambda_{\mathcal{U}}^k(D)|^{-1} \gamma,
    \qquad
    \beta
    = \beta(D)
    = \max_{k\in \mathcal{L}}|1 - \lambda_{\mathcal{U}}^k(D)|^{-1} \gamma.
  \end{equation}
  The spaces $E$ and $F$ are given by
  \begin{equation}\label{eq:195}
    E
    = Q_{\mathcal{K}}(Z_V),
    \qquad
    F
    = Q_{\mathcal{L}}(Z_V).
  \end{equation}
  
  Then $\Id_E$ factors through $D$ with constant $\alpha^+$ and $\Id_F$ factors through $\Id_{Z_V} - D$ with constant $\beta^+$.  In particular, if
  $\mathcal{K} = \{1, \dots , r\}$, then $\Id_{Z_V}$ factors through $D$ with constant $\alpha^+$, and if $\mathcal{L} = \{1, \dots , r\}$, then
  $\Id_{Z_V}$ factors through $\Id_{Z_V} - D$ with constant $\beta^+$.
\end{cor}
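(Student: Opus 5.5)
The plan is to derive the corollary from \Cref{thm:picewise-constant}, using the fact that projectional factorization treats $D$ and $\Id_{Z_V}-D$ simultaneously. The estimate is then upgraded to an exact factorization on $E$ (respectively $F$) by a Neumann series argument.

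\emph{Step 1: $\mathcal{K}$ and $\mathcal{L}$ are unions of blocks of the finest bounded partition.} I first check that $\lambda_{\mathcal{U}}^k(D)$ is constant for $k$ in each $\mathcal{K}_t$. This should come from the proof of \Cref{thm:picewise-constant}. For $k,k'\in\mathcal{K}_t$, \eqref{eq:190} gives $\lambda_{\mathcal{U}}^k(\widehat{D})=\lambda_{\mathcal{U}}^{k'}(\widehat{D})$, and \eqref{eq:184} gives $|\lambda_{\mathcal{U}}^k(D)-\lambda_{\mathcal{U}}^k(\widehat{D})|\leq \frac{6\kappa}{1-2\kappa}\|D\|$, where $\kappa>0$ is arbitrary. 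Hence $\lambda_{\mathcal{U}}^k(D)=\lambda_{\mathcal{U}}^{k'}(D)$, and we may write $\lambda_t$ for the common value on $\mathcal{K}_t$. Consequently
\begin{equation*}
  \mathcal{K}=\bigcup_{t:\lambda_t\neq 0}\mathcal{K}_t
  \qquad\text{and}\qquad
  \mathcal{L}=\bigcup_{t:\lambda_t\neq 1}\mathcal{K}_t .
\end{equation*}
In particular $Q_{\mathcal{K}}$ and $Q_{\mathcal{L}}$ are finite sums of the bounded projections $Q_{\mathcal{K}_t}$. So $E$ and $F$ are complemented, and $\|Q_{\mathcal{K}}\|,\|Q_{\mathcal{L}}\|\leq\gamma$.

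\emph{Step 2: approximate factorization.} Fix $\varepsilon,\eta>0$. By \Cref{thm:picewise-constant}, there are $A,B$ with $AB=\Id_{Z_V}$, $\|A\|\|B\|\leq 1+\varepsilon$, and $\|ADB-S\|\leq\eta$, where $S=\sum_t\lambda_tQ_{\mathcal{K}_t}$. Since $AB=\Id_{Z_V}$, the same operators give
\begin{equation*}
  \Bigl\|A(\Id_{Z_V}-D)B-\sum_t(1-\lambda_t)Q_{\mathcal{K}_t}\Bigr\|\leq\eta .
\end{equation*}
Now put $R=\sum_{t:\lambda_t\neq0}\lambda_t^{-1}Q_{\mathcal{K}_t}$. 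Then $\|R\|\leq\max_{k\in\mathcal{K}}|\lambda_{\mathcal{U}}^k(D)|^{-1}\gamma=\alpha$. By orthogonality of the canonical projections, $RS=Q_{\mathcal{K}}$ and $RQ_{\mathcal{K}}=R$.

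\emph{Step 3: exact factorization on $E$.} Define $T\colon E\to E$ by $T=RADBJ$, where $J\colon E\to Z_V$ is the inclusion. Note that $T$ maps into $E$ because $R=Q_{\mathcal{K}}R$ (the projections commute). Since $RSJ=\Id_E$, we get $\|\Id_E-T\|\leq\alpha\eta$. Choosing $\eta$ with $\alpha\eta<\varepsilon/(1+\varepsilon)$ makes $T$ invertible by a Neumann series, with $\|T^{-1}\|\leq 1+\varepsilon$. Then
\begin{equation*}
  \Id_E=(T^{-1}RA)\,D\,(BJ),
\end{equation*}
and the factorization constant is at most $(1+\varepsilon)\alpha\|A\|\|B\|\leq(1+\varepsilon)^2\alpha$. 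As $\varepsilon$ is arbitrary, $\Id_E$ factors through $D$ with constant $\alpha^+$.

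The same argument applies to $\Id_{Z_V}-D$, using $R'=\sum_{t:\lambda_t\neq1}(1-\lambda_t)^{-1}Q_{\mathcal{K}_t}$ with $\|R'\|\leq\beta$; it gives the factorization of $\Id_F$ through $\Id_{Z_V}-D$ with constant $\beta^+$. The \emph{in particular} statements are the cases $E=Z_V$ and $F=Z_V$.

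The main obstacle is Step 1, namely that $\lambda_{\mathcal{U}}^k(D)$ does not depend on $k$ within a block $\mathcal{K}_t$. Without it, $\mathcal{K}$ might cut through a block, and $Q_{\mathcal{K}}$ could then be unbounded. I would extract this constancy from the stabilization estimates \eqref{eq:184} and \eqref{eq:190}, as described above.
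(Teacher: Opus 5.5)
Your proposal is correct and follows essentially the same route as the paper. You apply \Cref{thm:picewise-constant}, compose with the inverse-coefficient operator $\sum_{t:\lambda_t\neq 0}\lambda_t^{-1}Q_{\mathcal{K}_t}$ (the paper's $T$), invert the resulting approximate identity on $E$ by a Neumann series, and treat $\Id_{Z_V}-D$ symmetrically. Your Step 1, which obtains block-constancy of $\lambda_{\mathcal{U}}^k(D)$ from \eqref{eq:184} and \eqref{eq:190} by letting $\kappa\to 0$, makes explicit a fact the paper's proof uses without comment when it sums over $\{t:\mathcal{K}_t\subset\mathcal{K}\}$ and asserts $TS|_E=\Id_E$.
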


\begin{proof}
  First, we note that since $\lambda_{\mathcal{U}}^k(\Id_{Z_V}) = 1$, $1\leq k\leq r$ and $\mathcal{L}(\Id_{Z_V} - D) = \mathcal{K}(D)$, we only need
  to prove the case where $\Id_E$ factors through $D$.

  By \Cref{mainthm:picewise-constant} the operator $S\colon Z_V\to Z_V$ given by
  \begin{equation}
    S
    = \sum_{t=1}^{s} \lambda_{\mathcal{U}}^{k_t}(D) Q_{\mathcal{K}_t}^{\mathcal{B}}
  \end{equation}
  almost projectionally factors through $D$ with constant~$1^{+}$, i.e., for any $\eta > 0$, there exist bounded operators $A,B\colon Z_V\to Z_V$ with
  $AB = \Id_{Z_V}$ and such that
  \begin{equation}\label{eq:200}
    \|A\|\cdot \|B\| \leq 1+\eta
    \qquad\text{and}\qquad
    \|S - ADB\|\leq \eta.
  \end{equation}
  Define the bounded operator $T\colon Z_V\to E$ by
  \begin{equation}\label{eq:196}
    T
    = \sum_{\{t : \mathcal{K}_t\subset \mathcal{K}\}} \bigl(\lambda_{\mathcal{U}}^{k_t}(D)\bigr)^{-1} Q_{\mathcal{K}_t},
  \end{equation}
  note that $\|T\|\leq \alpha$ and $TS|_E = \Id_E$ and observe that by~\eqref{eq:200}
  \begin{equation}\label{eq:197}
    \|\Id_E - TADB|_E\|
    = \|T(S|_E - ADB|_E)\|
    \leq \eta \|T\|.
  \end{equation}
  For $\eta < \|T\|^{-1}$, the Neumann series implies that $TADB|_E$ is invertible and
  \begin{equation}\label{eq:198}
    \|(TADB|_E)^{-1}\|
    \leq \frac{1}{1 - \eta \|T\|}.
  \end{equation}
  Thus, $\Id_E = A'DB'$, where $A' = (TADB|_E)^{-1}TA$, $B' = B|_E$.  Using~\eqref{eq:200}, \eqref{eq:198} and~\eqref{eq:196} yields
  \begin{equation}\label{eq:199}
    \|A'\|\cdot \|B'\|
    \leq \frac{1+\eta}{1 - \eta \|T\|} \|T\|
    \leq \frac{1+\eta}{1 - \eta \|T\|} \alpha.
  \end{equation}
  Since $\eta > 0$ can be chosen arbitrarily close to $0$ in \eqref{eq:199}, it follows that $\Id_E$ factors through $D$ with constant $\alpha^+$.
\end{proof}

We next repeat and then prove our second main result, \Cref{mainthm:main-prop-I}.
\begin{thm}[\Cref{mainthm:main-prop-I}]\label{thm:main-prop-I}
  Every Janson-Sobolev space or Janson-Sobolev-Hardy space $Z_V$ with Property~I has the $2^+$-diagonal primary factorization property.
\end{thm}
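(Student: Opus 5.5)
The plan is to combine \Cref{mainthm:picewise-constant} with the rigidity imposed by Property~I, and then apply \Cref{cor:factor-1}. Fix the basis $\mathcal{B}$ of $V$ witnessed by Property~I, so that the only bounded canonical projections $Q_{\mathcal{K}}^{\mathcal{B}}$ are $0$ and $\Id_{Z_V}$. By \Cref{dfn:finest-bounded-partition}, the finest bounded partition is then the trivial one, $s=1$ and $\mathcal{K}_1=\{1,\dots,r\}$, with $\gamma=\|Q_{\mathcal{K}_1}\|=\|\Id_{Z_V}\|=1$. Given a bounded diagonal operator $D$ with respect to $(\varphi_P^{k,\mathcal{B}})$, \Cref{mainthm:picewise-constant} shows that $S=\lambda_{\mathcal{U}}^{k_1}(D)\,\Id_{Z_V}$ almost projectionally factors through $D$ with constant $1^+$. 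Applying the same theorem to $\Id_{Z_V}-D$, or equivalently using $AB=\Id$, shows that $(1-\lambda_{\mathcal{U}}^{k_1}(D))\Id_{Z_V}$ almost projectionally factors through $\Id_{Z_V}-D$ with the same factors.

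The central point is that under Property~I all the functionals $\lambda_{\mathcal{U}}^k(D)$, $1\le k\le r$, coincide. Choosing $k_1$ arbitrarily in $\mathcal{K}_1$ in \Cref{mainthm:picewise-constant} gives this directly. Alternatively, one can argue through \Cref{thm:agreement}: after the stabilization of \Cref{pro:stab-2} and \Cref{pro:stab-3}, the limits $M^{\widehat D,k}$ agree almost everywhere, and by \eqref{eq:184} the values $\lambda_{\mathcal{U}}^k(D)$ differ from $\int M^{\widehat D,k}$ by at most $\tfrac{6\kappa}{1-2\kappa}\|D\|$. Letting $\kappa\to0$ yields $\lambda_{\mathcal{U}}^k(D)=\lambda:=\lambda_{\mathcal{U}}^{k_1}(D)$ for all $k$. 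The first route is the cleaner one, but I would verify it against the proof of \Cref{thm:picewise-constant}, since that proof treats $k_t$ as a free choice.

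Next, either $|\lambda|\ge\tfrac12$ or $|1-\lambda|\ge\tfrac12$. In the first case $\mathcal{K}(D)=\{1,\dots,r\}$ in \Cref{cor:factor-1}, and $\alpha(D)=|\lambda|^{-1}\gamma\le 2$. The corollary then says that $\Id_{Z_V}$ factors through $D$ with constant $2^+$. In the second case $\mathcal{L}(D)=\{1,\dots,r\}$ and $\beta(D)\le2$, so $\Id_{Z_V}$ factors through $\Id_{Z_V}-D$ with constant $2^+$. Since $(\varphi_P^{k,\mathcal{B}})$ is a Schauder basis by \Cref{pro:schauder-Z}, this is exactly the $2^+$-diagonal primary factorization property of \Cref{dfn:factorization-modes}.

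I expect the main obstacle to be justifying that $\lambda_{\mathcal{U}}^k(D)$ is independent of $k$. In the theorem the coefficient is written as $\lambda_{\mathcal{U}}^{k_t}(D)$ for a chosen representative, so I must make sure the error bounds in the proof of \Cref{thm:picewise-constant}, namely \eqref{eq:190}--\eqref{eq:192}, hold for every $k\in\mathcal{K}_t$ and not only for the selected one. Once this is settled, the remaining steps are bookkeeping with $\gamma=1$.
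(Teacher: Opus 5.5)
Your overall argument matches the paper's: Property~I makes the finest bounded partition trivial, so $\gamma=1$; all $\lambda_{\mathcal U}^k(D)$ coincide; and \Cref{cor:factor-1}, combined with $\max(|\lambda|,|1-\lambda|)\geq\tfrac12$, gives constant $2^+$.

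You should discard your ``first route''. The statement of \Cref{mainthm:picewise-constant}, used as a black box, cannot force $\lambda_{\mathcal U}^k(D)=\lambda_{\mathcal U}^{k'}(D)$. The reason is that distinct multiples of $\Id_{Z_V}$ can projectionally factor through the same $D$ with constant $1$.

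For example, in $Z_V(L^1)$ take $d_P^k=1$ if $P\subset[0,1/q)$ and $d_P^k=0$ otherwise, so that $D=\Theta_{[0,1/q)}$ is bounded. The faithful collections from \Cref{lem:block-basis} and \Cref{rem:block-basis:3} mounted on $[0,1/q)$ give $ADB=\Id_{Z_V}$. Those mounted on $[1/q,2/q)$ give $ADB=0$. In both cases $AB=\Id_{Z_V}$ and $\|A\|\|B\|\leq\gamma_{L^1}^u(1/q)\gamma_{L^1}^d(1/q)=1$ by \Cref{thm:canonical}.

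Your second route is the correct one. It is in fact more careful than the paper's proof, which obtains the equality by citing \Cref{thm:agreement} and \Cref{dfn:functionals} for $D$ directly, even though \Cref{thm:agreement} presupposes convergent adapted processes. The route runs as follows:
\begin{enumerate}[(i)]
\item Pass to the stabilized $\widehat D$ from \Cref{pro:stab-2} and \Cref{pro:stab-3}.
\item Apply \Cref{thm:agreement} to $\widehat D$ to get $M^{\widehat D,k}=M^{\widehat D,k'}$ almost everywhere.
\item Use $\lambda_{\mathcal U}^k(\widehat D)=\int M^{\widehat D,k}$ from \Cref{rem:functionals}, together with \eqref{eq:184}, to obtain $|\lambda_{\mathcal U}^k(D)-\lambda_{\mathcal U}^{k'}(D)|\leq\frac{12\kappa}{1-2\kappa}\|D\|$ for every $\kappa\in(0,\tfrac12)$.
\item Let $\kappa\to0$ to conclude equality.
\end{enumerate}
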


\begin{proof}
  Since $Z_V$ has Property~I (see \Cref{dfn:space-type}), there exists a basis $\mathcal{B}$ of $V$ such that the only bounded canonical projections
  $Q_{\mathcal{K}}^{\mathcal{B}}\colon Z_V\to Z_V$ with respect to $\mathcal{B}$ are either $Q_{\mathcal{K}}^{\mathcal{B}} = \Id_{Z_V}$ or
  $Q_{\mathcal{K}}^{\mathcal{B}} = 0$, i.e., the finest bounded partition of $\{1, \dots, r\}$ is $\{1, \dots, r\}$ itself.

  Using \Cref{thm:agreement} and \Cref{dfn:functionals} yields $\lambda:=\lambda_{\mathcal{U}}^k(D) = \lambda_{\mathcal{U}}^{k'}(D)$ for all
  $k,k'\in \{1, \dots, r\}$, and we note that $\lambda\geq 1/2$ or $1-\lambda \geq 1/2$.  Thus, the constants $\alpha(D) = |\lambda|^{-1}$ and
  $\beta(D) = |1-\lambda|^{-1}$ defined in~\eqref{eq:202} satisfy either $\alpha(D)\leq 2$ and $\mathcal{K}(D) = \{1, \dots, r\}$ or $\beta(D)\leq 2$
  and $\mathcal{L}(D) = \{1, \dots, r\}$.  Noting that $\gamma = 1$ and applying \Cref{cor:factor-1} gives us that $\Id_{Z_V}$ either factors through
  $D$ or $\Id_{Z_V} - D$ with constant $2^{+}$ as desired.
\end{proof}

For Janson-Sobolev and Janson-Sobolev-Hardy spaces equipped with the $L^1$-norm, we restate below our main theorem \Cref{mainthm:main-L1}, which gives
the following additional results beyond Property~I.
\begin{thm}\label{thm:main-L1}
  Let $Z_V(L^1) = Z_V((\mathcal{F}_j),L^1,\boldsymbol{\sigma};\mathbb{R}^l)$ denote a Janson-Sobolev space or a Janson-Sobolev-Hardy space.  Then
  $Z_V(L^1)$ has the diagonal primary factorization property.  Moreover, if $Z_V(L^1)$ has Property~IIB, then $Z_V(L^1)$ also has the primary
  factorization property.
\end{thm}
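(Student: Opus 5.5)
The plan is to split according to the trichotomy of \Cref{thm:trichotomy}: $Z_V(L^1)$ has Property~I, Property~IIA or Property~IIB. In each case we either apply \Cref{thm:main-prop-I} directly or transfer a known factorization property through an isomorphism. For transfers we use that the diagonal primary factorization property is an isomorphic invariant (\Cref{rem:diagonal-primary-factorization-property}), and that the primary factorization property implies it for spaces with a Schauder basis (\Cref{rem:factorization-properties}).

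\emph{Property~I.} Here \Cref{thm:main-prop-I} gives the $2^+$-diagonal primary factorization property at once.

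\emph{Property~IIB.} The space is either a Janson-Sobolev space with Property~IIB or, by \Cref{thm:janson-sobolev-hardy-property-I}, possibly a Janson-Sobolev-Hardy space. In both situations \Cref{thm:isomorphic-classification-H1} yields $Z_V(L^1)\simeq H^1$. By \Cref{rem:H1}, $H^1$ has the primary factorization property, and this property is an isomorphic invariant, since the conjugation argument of \Cref{rem:diagonal-primary-factorization-property} works verbatim for arbitrary operators. Hence $Z_V(L^1)$ has the primary factorization property. Since it has a Schauder basis by \Cref{pro:schauder-Z}, it also has the diagonal primary factorization property. This also proves the "moreover" clause.

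\emph{Property~IIA.} First, a Janson-Sobolev-Hardy space with Property~IIA is still isomorphic to $H^1$ by \Cref{thm:isomorphic-classification-H1}, so it is handled exactly as above. It therefore remains to treat Janson-Sobolev spaces with Property~IIA. By \Cref{thm:isomorphic-classification-L1}, $Z_V(L^1)\simeq Z_{\operatorname{span}\{b^t\}}(L^1)$, a Janson-Sobolev space with $\dim V=1$. Since $r=1$, the only canonical projections are $0$ and $\Id$. By \Cref{rem:dimension-1} this one-dimensional space has Property~I, so \Cref{thm:main-prop-I} applies to it. Transferring through the isomorphism via \Cref{rem:diagonal-primary-factorization-property} gives the claim for $Z_V(L^1)$.

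The main obstacle is the Property~IIA case. The diagonal primary factorization property for $Z_V(L^1)$ is only obtained by transport, with respect to the image of the basis of $Z_{\operatorname{span}\{b^t\}}(L^1)$ rather than the natural basis. I would check that the definition only requires the existence of some Schauder basis, which it does, so the argument goes through. The factorization constant $2^+$ degrades to some constant $C^+$, which is permitted in the statement.
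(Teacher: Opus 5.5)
Your proposal is correct and takes essentially the same route as the paper. The paper also splits by \Cref{thm:trichotomy}, applies \Cref{thm:main-prop-I} for Property~I, uses the isomorphism with $H^1$ from \Cref{thm:isomorphic-classification-H1} for every Janson-Sobolev-Hardy space and every Property~IIB Janson-Sobolev space, and treats Property~IIA Janson-Sobolev spaces via \Cref{thm:isomorphic-classification-L1} plus transfer through \Cref{rem:diagonal-primary-factorization-property}. The differences are cosmetic: you make explicit that a Schauder basis (\Cref{pro:schauder-Z}) is needed to pass from the primary to the diagonal primary factorization property, and your citation of \Cref{thm:janson-sobolev-hardy-property-I} in the Property~IIB case is not needed.
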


\begin{proof}
  By \Cref{thm:trichotomy}, the space $Z_V(L^1)$ satisfies one of the following: Property~I, Property~IIA, or Property~IIB.
  
  In the case where $Z_V(L^1)$ satisfies Property~I, the conclusion follows directly from \Cref{mainthm:main-prop-I}.
  
  If $Z_V(L^1)$ is either a Janson-Sobolev-Hardy space or a Janson-Sobolev space with Property~IIB, then by \Cref{thm:isomorphic-classification-H1}
  $Z_V(L^1)$ is isomorphic to $H^1$.  By \Cref{rem:H1}, $H^1$ has the primary factorization property, and consequently, so does $Z_V(L^1)$.

  If $Z_V(L^1)$ is a Janson-Sobolev space with Property~IIA, \Cref{thm:isomorphic-classification-L1} gives us that $Z_V(L^1)$ is isomorphic to
  $Z_{\operatorname{span}\{b^t\}}(L^1)$, which clearly is a space with Property~I.  Thus, by \Cref{mainthm:main-prop-I},
  $Z_{\operatorname{span}\{b^t\}}(L^1)$ and therefore also $Z_V(L^1)$ has the diagonal primary factorization property (see
  \Cref{rem:diagonal-primary-factorization-property}).
\end{proof}

Observe that in the above proof, if $Z_V(L^1)$ has either Property~IIA or Property~IIB, then the preceding argument is valid for any choice of basis
of $V$.

In \Cref{sec:an-example-non}, we gave an example of a Janson-Sobolev space $Z_V(L^1)$ satisfying neither Property~IIA nor Property~IIB.  However, it
follows from \Cref{thm:trichotomy}, established in the next section, that $Z_V(L^1)$ has Property~I.  Therefore, by \Cref{mainthm:main-prop-I},
$Z_V(L^1)$ has the diagonal primary factorization property.  In contrast, \Cref{thm:non-primary-janson-sobolev} implies that $Z_V(L^1)$ fails to
possess the primary factorization property.

%

\section{Plateau martingales}
\label{sec:plateau-martingales}

N.~Nadirashvili's constructed three bounded harmonic functions~\cite{MR1419004} in the unit disc under differential constraints with unbounded
variance along each ray from the origin to the boundary of the disc.  Later in~\cite{MR1757799}, P.~F.~X.~Müller published the following martingale
model of this construction.

A sequence of semi-continuous functions $G_j\colon \Omega\to \mathbb{R}^3$, $j\in \mathbb{N}_0$, with respect to the filtration
$(\mathcal{G}_j)_{j=0}^{\infty}$ is called a \emph{Plateau martingale} provided that
\begin{equation}\label{eq:154}
  \Delta_j^{(\mathcal{G}_n)} G_j
  = R_{j,0}g + R_{j,1}h,
\end{equation}
where $g,h\in \mathbb{R}^3$ satisfy
\begin{equation}\label{eq:157}
  |g| - |h|
  = 0
  = \langle g, h \rangle
\end{equation}
and the independent random variables $R_{j,m}\colon \Omega\to \mathbb{R}$, $m=0,1$ are also independent of $\mathcal{G}_j$, measurable with respect to $\mathcal{G}_{j+1}$, and such that
\begin{equation}\label{eq:158}
  \cond(R_{j,m} | \mathcal{G}_j)
  = 0
  \qquad\text{and}\qquad
  \cond(R_{j,m}^2 | \mathcal{G}_j)
  = 1.
\end{equation}

P.~F.~X.~Müller's model is the concrete realization of the extension of the notion of a conformal (complex valued) martingale to $\mathbb{R}^3$.  Recall that a
\emph{conformal martingale} is a complex valued martingale $G_j = F_{j,0} + i F_{j,1}$ satisfying
\begin{equation}\label{eq:159}
  \begin{aligned}
    \cond(\|\Delta_jF_{j,0}\|_{\ell^2_2}^2 | \mathcal{G}_{j-1})
    &= \cond(\|\Delta_jF_{j,1}\|_{\ell^2_2}^2 | \mathcal{G}_{j-1}),\\
    \cond(\langle\Delta_jF_{j,0}, \Delta_jF_{j,1}\rangle | \mathcal{G}_{j-1})
    &= 0.
  \end{aligned}
\end{equation}

We use~\eqref{eq:159} to now define a model for $q$-adic filtrations with Janson-type restrictions, which can be viewed as a discrete counterpart for
differential constraints.  First, we put $\mathcal{G}_0 = \mathcal{F}_0$ and $\mathcal{G}_j = \mathcal{F}_{2j}$ and
$\mathcal{F}_{j,m} = \mathcal{F}_{2j-1+m}$, $j\in \mathbb{N}$, $m=0,1$.  A martingale $(G_j)_{j=0}^{\infty}$ with respect to the filtration
$(\mathcal{G}_j)_{j=0}^{\infty}$ is called a \emph{Janson Plateau martingale} if $G_0 = 0$ and for all $j\in \mathbb{N}$
\begin{equation}\label{eq:160}
  \Delta_j G_j
  = F_{j,0} + F_{j,1},
\end{equation}
where $F_{j,m}\colon [0,1)\to \mathbb{R}^l$ belongs to $Z_V$, is $\mathcal{F}_{j,m}$-measurable, and satisfies
\begin{equation}\label{eq:161}
  \cond( F_{j,1} | \mathcal{F}_{j,0})
  = 0,
  \qquad
  \langle F_{j,0}, F_{j,1} \rangle_{\ell^2_{\ell}}
  = 0
\end{equation}
and
\begin{equation}\label{eq:162}
  \cond ( \|F_{j,m}\|_{\ell^2_l}^2 | \mathcal{G}_{j-1})
  = \alpha_j^2,
  \qquad m=0,1.
\end{equation}
The core features of a Janson Plateau martingale are illustrated in \Cref{fig:plateau}.

\begin{figure}
  \centering
  \begin{tikzpicture}
    \node[anchor=south west,inner sep=0] (img) {\includegraphics[width=.8\textwidth]{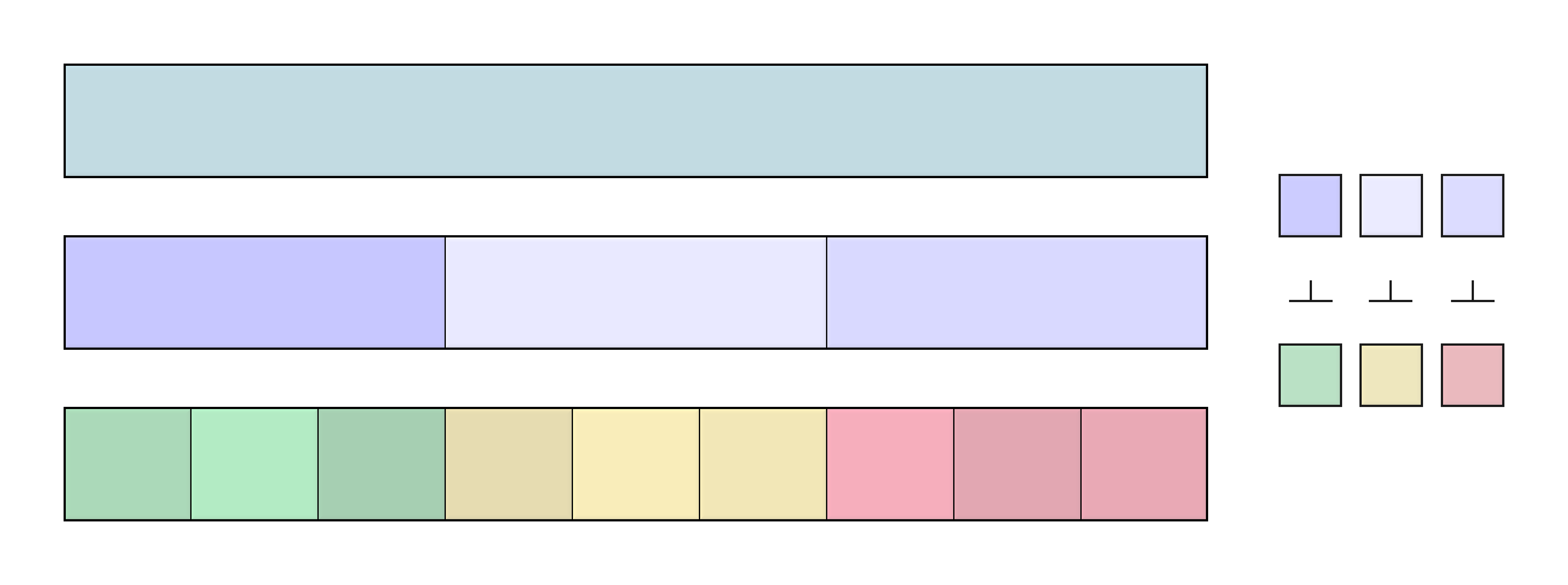}};
    \begin{scope}[x={(img.south east)},y={(img.north west)}]
      \node at (0.4,0.775) {{\small $P\in \mathcal{G}_{j-1}$}};%
      \node at (-0.03,0.775) {{\small $G_{j-1}$}};%
      \node at (-0.03,0.49) {{\small $F_{j,0}$}};%
      \node at (0.16,0.49) {{\small $Q\in \mathcal{F}_{j,0}$}};%
      \node at (-0.03,0.2) {{\small $F_{j,1}$}};%
    \end{scope}
  \end{tikzpicture}
  \caption{Depicted by the colors are the values of the functions $G_{j-1}$, $F_{j,0}$, and $F_{j,1}$, whose sum equals $G_j$, each restricted to the
    atom $P\in\mathcal{G}_{j-1}$.  On the right, we indicate that each value of $F_{j,0}$ represented in the top row of the small squares is orthogonal to all the
    corresponding values of $F_{j,1}$ represented by the colored squares in the second row.  For example, the value of $F_{j,0}$ on $Q$ is orthogonal
    to each of the values of $F_{j,1}$ represented by the green color tones.}
  \label{fig:plateau}
\end{figure}

\begin{dfn}\label{dfn:strong-column}
  Given $c > 0$, we say that $V$ is a \emph{$c$-strong column subspace} of $\mathbb{R}^{l\times q}_0$ if for all matrices $[v_1| \dots | v_q]\in V$ we have that
  $\|v_j\|_{\ell^2_l}\geq c \|v\|_V$ for all $1\leq j\leq q$.
\end{dfn}

We now construct an example of such a space.  Pick any $r\geq 1$, $q\geq 2$ and unit vectors $w^k = (w^k_j)_{j=1}^q$ with $\min_j |w_j^k| \geq c$ in
$\mathbb{R}^q_0$, $1\leq k\leq r$.  Now, we pick $l\geq r$ and define $v^k\in \mathbb{R}_0^{l\times q}$ to be the matrix whose $k$-th row is
$w^k/\sqrt{q}$ and is zero otherwise.  Note that $(v^k)_{k=1}^r$ is an orthonormal basis for $V = \spn\{ v^k : 1\leq k\leq r\}$.  Now let
$v = \sum_{k=1}^{r} a_k v^k \in V$ and observe that
\begin{equation}\label{eq:166}
  \Bigl\| \sum_{k=1}^r a_k v_j^k \Bigr\|_{\ell^2_l}^2
  = \sum_{i=1}^l \bigl|\sum_{k=1}^r a_k \langle v_j^k, e_i \rangle \bigr|^2
  = \frac{1}{q}\sum_{i=1}^l |a_i|^2 |w_j^i|^2
  \geq \frac{c^2}{q} \sum_{i=1}^l |a_i|^2
  = c^2 \|v\|_V^2.
\end{equation}

\begin{thm}\label{thm:janson-plateau}
  Suppose that $V$ is a $c$-strong column subspace of $\mathbb{R}^{l\times q}_0$ with dimension $r > q$.  Then there exists a Janson Plateau martingale
  $(G_j)_{j=0}^{\infty}$ which is closed in $Z_V$ such that
  \begin{equation}\label{eq:163}
    \sup_{j\in \mathbb{N}_0} \|G_j(x)\|_{\ell^2_l}
    \leq 1
    \qquad\text{and}\qquad
    \sum_{j=0}^{\infty} \|\Delta_jG_j(x)\|_{\ell^2_l}
    = \infty
  \end{equation}
  for all $x\in [0,1)$.
\end{thm}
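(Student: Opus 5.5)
We construct $(G_j)$ inductively. On each atom $P\in\mathcal{G}_{j-1}=\mathcal{F}_{2j-2}$ we choose the increment $\Delta_jG_j=F_{j,0}+F_{j,1}$ using the current value $G_{j-1}|_P=:y\in\mathbb{R}^l$; the goal is to keep $\|G_j\|_{\ell^2_l}$ bounded while $\|\Delta_jG_j\|$ is not summable.

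The key observation is a Pythagorean identity. If $F_{j,0}\perp F_{j,1}$ pointwise, and both are orthogonal to $y$ pointwise, then
\begin{equation*}
  \|G_j\|^2=\|y\|^2+\|F_{j,0}\|^2+\|F_{j,1}\|^2 .
\end{equation*}
If $\|F_{j,m}\|\equiv\alpha_j$ on $P$, this gives $\|G_j\|^2=\|y\|^2+2\alpha_j^2$. Choosing $\alpha_j\simeq c_0/\sqrt{j}$ then yields $\sum_j\alpha_j=\infty$ while $\sum_j\alpha_j^2$ diverges only logarithmically. That still grows, so the pure construction is not enough. The plan is therefore the standard Plateau trick: allow a small component along $-y$ in $F_{j,0}$ so that the norm is pulled back. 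Concretely, we prescribe the increment to be orthogonal to $y$ up to a correction of size $\alpha_j^2$ in the $-y/\|y\|$ direction. This correction has mean zero only if it is realized by a $Z_V$-admissible difference with nonzero columns, and this is where the constraint bites.

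The admissible differences on $P$ are $\sum_{R\in\mathcal S(P)}v_{\ord(R)}\cha_R$ with $v\in V$. On an atom $P$ we need $v\in V$ with every column $v_i$ orthogonal to $y$, and every column of nearly the same length (to get $\|F_{j,0}\|\approx\alpha_j$ pointwise). We get this from $\dim V=r>q$: the linear map $v\mapsto(\langle v_i,y\rangle)_{i=1}^q$ has rank at most $q$, indeed at most $q-1$ since the columns sum to zero. Hence there is a nonzero $v\in V$ with all columns orthogonal to $y$. The $c$-strong column property gives $\|v_i\|\ge c\|v\|_V$, so the columns are comparable in length, with lower bound $c$.

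We then do the same on each successor $Q\in\mathcal{F}_{j,0}$ for $F_{j,1}$, now requiring orthogonality to both $y$ and the current value $u$ of $F_{j,0}$ on $Q$. This needs $r>2(q-1)$ a priori. To avoid that, we exploit that orthogonality to $u$ must hold only pointwise against the single vector $u=F_{j,0}|_Q$, and $u$ together with $y$ spans a 2-dimensional space. I expect the main obstacle here: reconciling $r>q$ with two orthogonality constraints, and making the pointwise norms exactly equal, $\cond(\|F_{j,m}\|^2|\mathcal G_{j-1})=\alpha_j^2$. First I would try absorbing the constraint against $y$ into a radial correction, imposing only orthogonality to $u$ at the second stage. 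Varying norms are handled by normalizing through the conditional expectation in (162) rather than pointwise, using the $c$-strong bound to control the pointwise deviations by a factor $c^{-1}$.

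Finally we verify the conclusions. The bound $\sup_j\|G_j\|\le1$ follows from the radial correction: it keeps $\|G_j\|^2\le\|G_{j-1}\|^2+O(\alpha_j^2)-$(pullback), and we tune it to stay below $1$. Pointwise divergence follows from $\|\Delta_jG_j\|\ge c\,\alpha_j$ everywhere and $\sum_j\alpha_j=\infty$. Closedness in $Z_V$ follows since $(G_j)$ is uniformly bounded, so it converges in $L^2$ and hence in every r.i. space. The $Z_V$-membership of each $F_{j,m}$ holds by construction.
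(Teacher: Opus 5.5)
The decisive error is your claim that the purely orthogonal construction is insufficient. It rests on the choice $\alpha_j\simeq c_0/\sqrt{j}$. All that is needed is $\sum_j\alpha_j^2<\infty$ together with $\sum_j\alpha_j=\infty$, i.e.\ $(\alpha_j)\in\ell^2\setminus\ell^1$ (e.g.\ $\alpha_j\propto 1/j$), normalized so that $2q\sum_j\alpha_j^2=1$. This is exactly the paper's choice, and with it no correction is needed. Normalizing $\|n\|_V=\alpha_j$ gives $c\alpha_j\le\|n_t\|_{\ell^2_l}\le\sqrt{q}\,\alpha_j$ for every column, because $\|n\|_V^2=\frac1q\sum_t\|n_t\|_{\ell^2_l}^2$. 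The Pythagorean identity then yields $\|G_j\|_{\ell^2_l}^2\le\|G_{j-1}\|_{\ell^2_l}^2+2q\alpha_j^2\le 2q\sum_t\alpha_t^2=1$ at every point.

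Conversely, with $\alpha_j\simeq j^{-1/2}$ no radial correction can rescue the plan. For every Janson Plateau martingale, three facts combine: $L^2$-orthogonality of martingale differences, $\cond(F_{t,1}|\mathcal{F}_{t,0})=0$, and \eqref{eq:162}. Together they give $\int\|G_j\|_{\ell^2_l}^2=2\sum_{t\le j}\alpha_t^2$, which cannot stay below $1$ once $\sum_t\alpha_t^2=\infty$. Concretely, a component of $F_{j,0}$ along $-y/\|y\|_{\ell^2_l}$ has mean zero on $P$, since the columns sum to zero. It therefore pulls $\|G_j\|$ in on some successors and pushes it out on others. Indeed $\|G_j\|_{\ell^2_l}^2$ is a submartingale and cannot be pulled back.

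Your first-stage dimension count is fine; it is \Cref{lem:janson-plateau}. For the upper bound, your second-stage obstacle also disappears. Since $y\perp u$ already, it suffices that all columns of the new matrix $\tilde n(R)\in V$ be orthogonal to the single vector $y+u=G_{j-1}+F_{j,0}$ on $R$. This vector lies in $\mathcal{C}$, so the lemma supplies $\tilde n(R)$ from $r>q$, and then $\|y+u+w\|^2=\|y\|^2+\|u\|^2+\|w\|^2$. This is what the paper does.

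Note, however, that this only gives $\langle F_{j,0},F_{j,1}\rangle_{\ell^2_l}=-\langle G_{j-1},F_{j,1}\rangle_{\ell^2_l}$. So the pointwise orthogonality demanded in \eqref{eq:161} is not automatic once $G_{j-1}\neq 0$. The paper also uses that orthogonality to write $\|\Delta_jG_j\|_{\ell^2_l}^2=\|F_{j,0}\|_{\ell^2_l}^2+\|F_{j,1}\|_{\ell^2_l}^2\ge 2c^2\alpha_j^2$. Your instinct that two constraints are in play there is sound, and the paper's ``one can easily check'' passes over it. A radial correction does not address it, though.

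Finally, ``uniformly bounded, hence convergent in $L^2$, hence in every r.i.\ space'' is not a valid implication. It fails for $X=L^\infty$, and in the Hardy case the $Z_V$-norm is a square-function norm. The paper instead controls $\|G_j-G_i\|_{Z_V}$ through the size $\sqrt{2q}\,(\sum_{n>i}\alpha_n^2)^{1/2}$ of the tail increments, which once more relies on $(\alpha_j)\in\ell^2$.
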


In the proof of the theorem, we will use the following lemma.
\begin{lem}\label{lem:janson-plateau}
  Let $V$ denote a linear subspace of $\mathbb{R}^{l\times q}_0$ with dimension $r > q$ and put
  \begin{equation}\label{eq:column-space}
    \mathcal{C}
    = \spn\{ m_1, \dots , m_q : [m_1| \dots | m_q]\in V\}.
  \end{equation}
  Then for each $m\in \mathcal{C}$, we can find $n = [n_1| \dots | n_q]\in V$ such that $n_j\perp m$ for all $1\leq j\leq q$.
\end{lem}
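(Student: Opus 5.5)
We need to prove: if $\dim V = r > q$, then for each $m\in\mathcal{C}$ there exists $n=[n_1|\dots|n_q]\in V$, $n\neq 0$ presumably (the statement doesn't say nonzero, but the intended content is a nonzero $n$), with $n_j\perp m$ for all $j$. The approach is a dimension count applied to a linear map.

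Consider the linear map $\Lambda_m\colon V\to\mathbb{R}^q$ given by
\begin{equation*}
  \Lambda_m\bigl([n_1|\dots|n_q]\bigr) = \bigl(\langle n_1, m\rangle_{\ell^2_l}, \dots, \langle n_q, m\rangle_{\ell^2_l}\bigr).
\end{equation*}
Since every matrix in $V\subset\mathbb{R}^{l\times q}_0$ has mean-zero rows, we have $\sum_{j=1}^q n_j = 0$. Hence $\sum_j \langle n_j, m\rangle = 0$, so $\Lambda_m$ actually takes values in $\mathbb{R}^q_0$, which has dimension $q-1$.

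By rank–nullity,
\begin{equation*}
  \dim\ker\Lambda_m \geq r - (q-1) \geq 2.
\end{equation*}
Any nonzero $n\in\ker\Lambda_m$ then satisfies $n_j\perp m$ for all $1\leq j\leq q$, which is the claim. The bound $r>q-1$ alone already gives a nontrivial kernel; the stronger hypothesis $r>q$ gives a kernel of dimension at least $2$. That extra room may be useful later, for example to choose $n$ linearly independent from a prescribed element, or to choose it orthogonal to some additional constraint.

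The hypothesis $m\in\mathcal{C}$ plays no role in this argument. The statement holds for every $m\in\mathbb{R}^l$, and $\mathcal{C}$ is presumably singled out only because of how the lemma is used in \Cref{thm:janson-plateau}.

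There is no real obstacle here. The only points that need care are, first, noticing that the mean-zero row condition cuts the target dimension down to $q-1$, and second, making explicit that $n$ is nonzero. If the later application needs $n$ normalized, for instance $\|n\|_V=1$, we simply rescale; this is harmless because $\ker\Lambda_m$ is a linear subspace.
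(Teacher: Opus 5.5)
Your argument is correct and is essentially the paper's: the paper also writes $n=\sum_{k} a_k v^k$ and notes that the $r$ vectors $(v^{k})^t m\in\mathbb{R}^q$ must be linearly dependent because $r>q$, which is exactly rank--nullity for your map $\Lambda_m(n)=n^t m$. Your two refinements are valid sharpenings that the paper does not make. First, the mean-zero rows give $\Lambda_m(V)\subset\mathbb{R}^q_0$, so $r\geq q$ would already suffice. Second, the representation $m=\sum_k v^k b^k$, which the paper sets up using $m\in\mathcal{C}$, is never actually needed.
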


\begin{proof}[Proof of \Cref{lem:janson-plateau}]
  First, recall that $(v^k)_{k=1}^r$ is an orthonormal basis for $V$.  Note that
  \begin{equation}\label{eq:209}
    \mathcal{C}
    = \spn\{ v_j^k : 1\leq j\leq q,\ 1\leq k\leq r\}
  \end{equation}
  and observe that $m\in \mathcal{C}$ if and only if there exist vectors $b^k = (b_j^k)_{j=1}^q\in \mathbb{R}^q$, $1\leq k\leq r$ such that
  \begin{equation}\label{eq:22}
    m
    = \sum_{k=1}^r \sum_{j=1}^q v_j^k b_j^k
    = \sum_{k=1}^r v^k b^k. 
  \end{equation}
  Next, given such $m\in \mathcal{C}$, observe that for all $n = [n_1| \dots | n_q]\in V$, we have that $n_j\perp m$, $1\leq j\leq q$ if and only if
  $m\in \ker n^t$.  Thus, the following conditions are equivalent:
  \begin{enumerate}[(a)]\label{enu:plateau-conditions}
  \item\label{enu:plateau-conditions:a} For all $m\in \mathcal{C}$ there exists $n = [n_1| \dots | n_q]\in V$ such that $n_j\perp m$ for all $1\leq j\leq q$.
  \item\label{enu:plateau-conditions:b} For all $b^1, \dots b^r\in \mathbb{R}^q$, there exist $a_1, \dots a_r\in \mathbb{R}$ such that
    \begin{equation}\label{eq:23}
      \Bigl(\sum_{k_0=1}^r a_{k_0} v^{k_0}\Bigr)^t
      \cdot \sum_{k_1=1}^r v^{k_1} b^{k_1}
      = 0.
    \end{equation}
  \end{enumerate}

  Using the above equivalence, we obtain that
  \begin{equation}\label{eq:211}
    \Bigl(\sum_{k_0=1}^r a_{k_0} v^{k_0}\Bigr)^t
    \cdot \sum_{k_1=1}^r v^{k_1} b^{k_1}
    = \sum_{k_0=1}^r a_{k_0}
    \sum_{k_1=1}^r (v^{k_0})^t\cdot v^{k_1} b^{k_1}
    = 0
  \end{equation}
  for some choice of $a_1, \dots, a_r\in \mathbb{R}$, since the vectors $\sum_{k_1=1}^r (v^{k_0})^t\cdot v^{k_1} b^{k_1}$, $1\leq k_0\leq r$ are in $\mathbb{R}^q$ and we have $r > q$.
\end{proof}

\begin{proof}[Proof of \Cref{thm:janson-plateau}]
  We will construct $(G_{j})_{j=0}^{\infty}$, $(F_{j,0})_{j=0}^{\infty}$, and $(F_{j,1})_{j=0}^{\infty}$ inductively.  Fix a sequence
  $(\alpha_j)_{j=0}^{\infty} \in \ell^2 \setminus \ell^1$ with $\|(\alpha_j)_{j=0}^{\infty}\|_{\ell^2} = \frac{1}{\sqrt{2q}}$ and let $G_0$ be the constant function equal to
  $0 \in \mathbb{R}^{l}$.  Assume we have already constructed $G_{j-1}\in Z_V((\mathcal{F}_j))$ which is measurable with respect to
  $\mathcal{G}_{j-1} = \mathcal{F}_{2j-2}$, we proceed as follows.  For each $x \in [0,1)$, there exists a unique atom $P \in \mathcal{AF}_{2j-2}$ such that
  $x \in P$.  The random variable $G_{j-1}$ is constant on $P$.  Note that by~\eqref{eq:column-space}, we have $G_{j-1}(x) \in \mathcal{C}$, and thus by
  \Cref{lem:janson-plateau}, there exists a matrix $n(P) = [n_1(P) | \dots | n_q(P)] \in V$ such that $G_{j-1}(x) \perp n_t(P)$ for
  $t \in \{1, \dots, q\}$.  We normalize $\|n\|_V = \alpha_j$ and define
  \begin{equation}\label{eq:165}
    F_{j,0}
    = \sum_{P \in \mathcal{AF}_{2j-2}} \sum_{R \in \mathcal{S}(P)} n_{\mathrm{ord}(R)}(P) \cha_R.
  \end{equation}
  Observe that $F_{j,0}$ is $\mathcal{F}_{2j-1}$-measurable and constant on each $R \in \mathcal{AF}_{2j-1}$.  Since
  $G_{j-1}(x) + F_{j,0}(x) \in \mathcal{C}$, by \Cref{lem:janson-plateau} we find $\tilde{n}(R) = [\tilde{n}_1(R) | \dots | \tilde{n}_q(R)] \in V$ such that
  $\tilde{n}_t(R) \perp (G_{j-1}(x) + F_{j,0}(x))$ for all $t\in \{1, \dots, q\}$.  By rescaling, we may assume $\|\tilde{n}\|_V = \alpha_j$, and we put
  \begin{equation}\label{eq:167}
    F_{j,1}
    = \sum_{R \in \mathcal{AF}_{2j-1}} \sum_{Q \in \mathcal{S}(R)} \tilde{n}_{\mathrm{ord}(Q)}(R) \cha_Q.
  \end{equation}
  Clearly, $F_{j,1}$ is $\mathcal{F}_{2j}$-measurable, and therefore, the function
  \begin{equation}\label{eq:156}
    G_{j}
    := G_{j-1} + F_{j,0} + F_{j,1}
  \end{equation}
  is $\mathcal{G}_j$-measurable.

  To summarize, we constructed $G_{j} = G_{j-1} + F_{j,0} + F_{j,1}$ such that
  \begin{equation}\label{eq:155}
    G_{j-1}(x) \perp F_{j,0}(x)
    \qquad\text{and}\qquad
    G_{j-1}(x) + F_{j,0}(x) \perp F_{j,1}(x)
  \end{equation}
  for all $x\in[0,1)$.  One can easily check that $(G_j)_{j=0}^{\infty}$ is indeed a Janson Plateau martingale.

  To compute $\|G_j(x)\|_{\ell^2_l}$ for $x \in [0,1)$, let $Q \in \mathcal{AF}_{2j}$ be such that $x \in Q \in \mathcal{S}(R)$ and
  $R \in \mathcal{S}(P)$.  Using~\eqref{eq:156} and~\eqref{eq:155}, we obtain
  \begin{equation}
    \begin{split}
      \|G_j(x)\|^2_{\ell^2_l}
      &= \|G_{j-1}(x) + F_{j,0}(x) + F_{j,1}(x)\|^2_{\ell^2_l}\\
      &= \|G_{j-1}(x) + F_{j,0}(x)\|^2_{\ell^2_l} + \|F_{j,1}(x)\|^2_{\ell^2_l}\\
      &= \|G_{j-1}(x)\|^2_{\ell^2_l} + \|F_{j,0}(x)\|^2_{\ell^2_l} + \|F_{j,1}(x)\|^2_{\ell^2_l}.
    \end{split}
  \end{equation}
  By~\eqref{eq:165} and~\eqref{eq:167}, we obtain
  \begin{equation}\label{eq:164}
    \|G_j(x)\|^2_{\ell^2_l}
    = \|G_{j-1}(x)\|^2_{\ell^2_l} + \|n_{\mathrm{ord}(R)}(P)\|^2_{\ell^2_l} + \|\tilde{n}_{\mathrm{ord}(Q)}(R)\|^2_{\ell^2_l}.
  \end{equation}
  Thus,
  \begin{equation}\label{eq:168}
    \|G_j(x)\|^2_{\ell^2_l}
    \leq \|G_{j-1}(x)\|^2_{\ell^2_l} + q \|n\|^2_V + q \|\tilde{n}\|^2_V
    \leq 2 q \sum_{t=1}^{j} \alpha_t^2
    \leq 2 q \sum_{t=1}^{\infty} \alpha_t^2
    =1.
  \end{equation}
  On the other hand, by the $c$-strong columns property of V and an analogous calculation, we have
  \begin{equation}
    \|\Delta_{j}^{(\mathcal{G}_j)}G_j(x)\|^2_{\ell^2_l}
    = \|n_{\mathrm{ord}(R)}(P)\|^2_{\ell^2_l} + \|\tilde{n}_{\mathrm{ord}(Q)}(R)\|^2_{\ell^2_l}
    \geq c^2\|n\|^2_V + c^2\|\tilde{n}\|^2_V
    = 2c^2 \alpha_j^2.
  \end{equation}
  This implies
  \begin{equation}
    \sum_{j \in \mathbb{N}} \|\Delta_{j}^{(\mathcal{G}_j)}G_j(x)\|
    \geq \sqrt{2} c \sum_{j \in \mathbb{N}} |\alpha_j|
    = +\infty.  
  \end{equation}

  Finally, fix $i < j\in \mathbb{N}_0$ and observe that the orthogonality in~\eqref{eq:155} gives us
  \begin{align}\label{eq:169}
    \|G_i - G_j\|_{Z_V}
    &= \Bigl\|\sum_{n=i+1}^j F_{n,0} + F_{n,1}\Bigr\|_{Z_V}
      = \Bigl\|
      x\mapsto \cond_{\boldsymbol{\sigma}}
      \Bigl\|
      \sum_{n=i+1}^j \sigma_{2n-1} F_{n,0} + \sigma_{2n} F_{n,1}
      \Bigr\|_{\ell^2_l}
      \Bigr\|_X\\
    &= \Bigl\|
      x\mapsto       
      \Bigl(\sum_{n=i+1}^j \|F_{n,0}\|_{\ell^2_l}^2 + \|F_{n,1}\|_{\ell^2_l}^2\Bigr)^{1/2}
      \Bigr\|_X.
  \end{align}
  Using similar steps as in the calculation of~\eqref{eq:168} and exploiting the monotonicity of the norm of $X$ yields
  \begin{equation}
    \|G_i - G_j\|_{Z_V}
    \leq \sqrt{2q} \Bigl(\sum_{n=i+1}^j \alpha_n^2\Bigr)^{1/2},
  \end{equation}
  and thus, $(G_j)_{j=0}^{\infty}$ is a Cauchy sequence in $Z_V$.
\end{proof}

%

\section*{Acknowledgements}
\label{sec:acknowledgements}

The authors declare that no artificial intelligence tools were used in the conception, derivation, verification, or development of the mathematical
content of this work.

We would like to thank M.~Wojciechowski for suggesting to investigate Janson-Sobolev and Janson-Sobolev-Hardy spaces, and P.~F~.~X.~Müller for
suggesting the problem concerning Plateau martingales.

%

\bibliographystyle{abbrv}%
\bibliography{bibliography}%

\end{document}